\documentclass{amsart}
\pdfoutput=1

\usepackage{amsmath, mathtools}
\usepackage{amsfonts}
\usepackage{amssymb}
\usepackage{amsthm}
\usepackage{tikz-cd}
\usepackage{tikz}
\usepackage{amsxtra}
\usepackage{wrapfig}
\usepackage{color}
\usepackage{url}
\usepackage[hidelinks]{hyperref}
\usepackage[percent]{overpic}
\usepackage{mathrsfs}
\usepackage{stmaryrd}

\usepackage{setspace}


\newtheorem{thm}{Theorem}[subsection]
\newtheorem{prop}[thm]{Proposition}

\newtheorem{cor}[thm]{Corollary}

\newtheorem{lem}[thm]{Lemma}
\theoremstyle{definition}
\newtheorem{defn}[thm]{Definition}

\newtheorem*{thma}{Theorem A}
\newtheorem*{thmb}{Theorem B}
\newtheorem*{thmc}{Theorem C}
\newtheorem*{thmd}{Theorem D}
\newtheorem*{cora}{Corollary A}
\newtheorem*{corb}{Corollary B}
\newtheorem*{corc}{Corollary C}

\newtheorem{conj}[thm]{Conjecture}
\theoremstyle{remark}
\newtheorem{ex}[thm]{Example}
\newtheorem{rmk}[thm]{Remark}

\usepackage{textcomp}
\DeclareSymbolFont{ugrf@m}{U}{eur}{m}{n}
\DeclareMathSymbol{\upmu}{\mathord}{ugrf@m}{"16}


\newcommand{\cat}[1]{{\mathbf{#1}}}
\newcommand{\p}{}
\newcommand{\spec}{\operatorname{Spec}}
\newcommand{\onto}{\twoheadrightarrow}
\newcommand{\into}{\hookrightarrow}
\newcommand{\from}{\leftarrow}
\newcommand{\lot}{\otimes^{\mathbb{L}}}
\newcommand{\per}{{\ensuremath{\cat{per}}}\kern 1pt}


\DeclareMathOperator{\id}{id}
\let\im\relax\DeclareMathOperator{\im}{im}
\let\ker\relax\DeclareMathOperator{\ker}{ker}

\let\hom\relax\newcommand{\hom}{\mathrm{Hom}}
\newcommand{\enn}{\mathrm{End}}
\DeclareMathOperator{\tor}{Tor}
\DeclareMathOperator{\ext}{Ext}


\newcommand{\Z}{\mathbb{Z}}
\newcommand{\N}{\mathbb{N}}

\renewcommand{\P}{\mathbb{P}}
\newcommand{\R}{{\mathrm{\normalfont\mathbb{R}}}}



\numberwithin{equation}{section}

\newcommand{\con}{\mathrm{con}}

\newcommand{\dq}{\ensuremath{A/^{\mathbb{L}}\kern -2pt AeA} }
\newcommand{\dqb}{\ensuremath{B/^{\mathbb{L}}\kern -2pt BeB} }
\newcommand{\thick}{\ensuremath{\cat{thick} \kern 0.5pt}}

\newcommand{\dgh}{\underline{\hom}}

\newcommand{\rmap}{\R\mathrm{Map}}

\newcommand{\del}{\text{\raisebox{.15ex}{$\mathscr{D}$}}\mathrm{el}}
\newcommand{\sdel}{\underline{\del}}
\newcommand{\defm}{\text{\raisebox{.15ex}{$\mathscr{D}$}}\mathrm{ef}}
\newcommand{\sdefm}{\underline{\defm}}
\newcommand{\mcs}{\mathrm{MC}}
\newcommand{\mc}{\mathscr{M}\kern -0.7pt C}
\newcommand{\smc}{\underline{\mc}}
\newcommand{\ggr}{\mathscr{G}\kern -1pt g}

\newcommand{\pdf}{{\ensuremath{\Omega(\Delta^\bullet)}}}
\newcommand{\sset}{\cat{sSet}}
\newcommand{\prodef}{\widehat{\sdefm}}
\newcommand{\frmdef}{\sdefm^{\mathrm{fr}}}
\newcommand{\frmdefset}{\defm^{\mathrm{fr}}}
\newcommand{\profrmdef}{\prodef^{\text{\raisebox{-1.1ex}{$\mathrm{fr}$}}}}
\newcommand{\prodefset}{\widehat{\defm}}
\newcommand{\profrmdefset}{\prodefset^{\text{\raisebox{-1.1ex}{$\mathrm{fr}$}}}}

\makeatletter
\newcommand{\holim@}[2]{%
	\vtop{\m@th\ialign{##\cr
			\hfil$#1\operator@font holim$\hfil\cr
			\noalign{\nointerlineskip\kern1.5\ex@}#2\cr
			\noalign{\nointerlineskip\kern-\ex@}\cr}}%
}
\newcommand{\holim}{%
	\mathop{\mathpalette\holim@{\leftarrowfill@\textstyle}}\nmlimits@
}
\newcommand{\hocolim}{%
	\mathop{\mathpalette\holim@{\rightarrowfill@\textstyle}}\nmlimits@
}
\makeatother

\usepackage[utf8]{inputenc}
\usepackage[T1]{fontenc}
\usepackage{textcomp}

\allowdisplaybreaks

\setlength{\textwidth}{\paperwidth}
\addtolength{\textwidth}{-3in}
\calclayout

\newcommand{\dgc}{\cat{dgc}_k}
\newcommand{\cndgc}{\cat{con.dgc}_k}

\newcommand{\fdcndgc}{\cat{fd\text{-}con.dgc}_k}

\newcommand{\vect}{\cat{dgvect}_k}
\newcommand{\fdvect}{\cat{fd\text{-}dg vect}_k}

\newcommand{\dgart}{\cat{dgArt}_k^{\leq 0}}
\newcommand{\dga}{\cat{dga}_{k}^{\leq 0}}
\newcommand{\proart}{{\cat{pro}(\cat{dgArt}_k^{\leq 0})}}

\newcommand{\ubdgart}{\cat{dgArt}_k}
\newcommand{\ubdga}{\cat{dga}_{k}}
\newcommand{\ubproart}{{\cat{pro}(\cat{dgArt}_k)}}

\newcommand{\augdga}{\cat{aug.dga}_{k}}

\newcommand{\scat}{\cat{sSetCat}}
\newcommand{\dgcat}{\cat{dgCat}}
\newcommand{\grp}{\cat{Grp}}
\newcommand{\grpd}{\cat{Grpd}}

\usepackage[left, final]{showlabels}

\makeatletter
\@namedef{subjclassname@2020}{\textup{2020} Mathematics Subject Classification}
\makeatother

\begin{document}
	
		\title{The derived deformation theory of a point}
		
\author{Matt Booth}

\address{Universiteit Antwerpen,
	Departement Wiskunde-Informatica,
	Campus Middelheim,
	Middelheimlaan 1,
	2020 Antwerpen,
	Belgium}

\email{matt.booth@uantwerpen.be}

\urladdr{mattbooth.info}

	\subjclass[2020]{14B10; 14A30, 14B20, 18N40, 14F07}

	\keywords{Deformation theory, Koszul duality, prorepresenting objects, derived quotient}
	
	\begin{abstract}
		We provide a prorepresenting object for the noncommutative derived deformation problem of deforming a module $X$ over a differential graded algebra. Roughly, we show that the corresponding deformation functor is homotopy prorepresented by the dual bar construction on the derived endomorphism algebra of $X$. We specialise to the case when $X$ is one-dimensional over the base field, and introduce the notion of framed deformations, which rigidify the problem slightly and allow us to obtain derived analogues of the results of Ed Segal's thesis. Our main technical tool is Koszul duality, following Pridham and Lurie's interpretation of derived deformation theory. Along the way we prove that a large class of dgas are quasi-isomorphic to their Koszul double dual, which we interpret as a derived completion functor; this improves a theorem of Lu--Palmieri--Wu--Zhang. We also adapt our results to the setting of multi-pointed deformation theory, and furthermore give an analysis of universal prodeformations. As an application, we give a deformation-theoretic interpretation to Braun--Chuang--Lazarev's derived quotient.
\end{abstract}

	\maketitle

\section{Introduction}
Let $k$ be a field of characteristic zero. We study the noncommutative derived deformation theory of modules over dg-$k$-algebras, with a focus on explicitly finding (pro)representing objects. Classically, commutative deformation theory studies certain functors $\cat{cArt}_k \to \cat{Set}$, where $\cat{cArt}_k$ denotes the category of commutative Artinian local $k$-algebras. The value of a deformation functor on such an algebra $\Gamma$ is supposed to behave like the set of deformations of some geometric object $X$ over $\Gamma$: i.e.\ those objects $\mathcal{X}$, flat over $\spec(\Gamma)$, whose base change along the point $\Gamma \to k$ is $X$. In this setting, deformation functors are rarely representable, but instead they are prorepresentable, meaning representable by an object of the procategory $\cat{pro}(\cat{cArt}_k)$. One typically embeds this procategory in the category of commutative complete local $k$-algebras, and often classical prorepresentability statements are presented in this form.

\p Classical noncommutative deformation theory is similar: one studies set-valued functors on the category $\cat{Art}_k$ of noncommutative Artinian local $k$-algebras. To pass to the derived world, one makes two changes: firstly, one accepts connective noncommutative Artinian local dg-$k$-algebras as input to deformation functors. One thinks of such a dga $\Gamma$ as a noncommutative derived affine scheme\footnote{The connective hypothesis is to avoid stacky pathologies: loosely, one thinks of the connective direction of some derived-geometric object as recording derived phenomena, and one thinks of the nonconnective direction as recording stacky phenomena. More explicitly, our results fail badly if one allows nonconnective dgas as input; see \ref{connectivereasons} for an example.} given by some nilpotent thickening of $k$. Secondly, in the derived setting one wants prorepresentable functors to be both left exact and quasi-isomorphism invariant. As a result, we are forced to use not just sets, but simplicial sets; one can obtain set-valued functors by truncating to $\pi_0$. Consequently one can think of deformation functors as higher stacks. If $A$ is a $k$-algebra and $X$ is an $A$-module, we summarise the differences between the classical noncommutative and derived noncommutative deformation functors in the following table:

\begin{table}[h]
	\begin{tabular}{c|c|c}
		\textbf{Functor} & \textbf{Test objects} $\Gamma$ & \textbf{Output} \\ \hline
		classical $\defm^\mathrm{cl}_A(X)$ & noncommutative Artinian local $k$-algebras   & sets    \\ \hline
		derived $\sdefm_A(X)$   & connective noncommutative Artinian local $k$-dgas & simplicial sets
	\end{tabular}
\end{table}
Derived deformation functors are only really naturally defined up to weak equivalence of simplicial sets. Hence, one can only expect to obtain homotopy prorepresentability statements, as in e.g.\ \cite{jardinerep}. In other words, we expect to obtain statements of the form $\sdefm_A(X)(\Gamma)\simeq \rmap_{\proart}(P,\Gamma)$ for some pro-Artinian dga $P$, where $\rmap$ denotes the derived mapping space in a model category \cite[5.4.9]{hovey}. As for how we actually expect to get these statements, it should be through the machinery of Koszul duality. 

\p It is by now well known that commutative derived deformation theory, at least in characteristic zero, is a manifestation of the Koszul duality (a.k.a.\ bar-cobar duality) between the commutative and Lie operads. This theorem has a rich history: a famous letter of Deligne introduced the philosophy that differential graded Lie algebras (dglas for short) should control commutative deformation problems \cite{goldmanmillson}. Hinich viewed this through the lens of Koszul duality, via which coalgebras become important objects. The correspondence between commutative deformation problems and dglas was made precise by later work of Pridham \cite{unifying} and Lurie \cite[2.0.2]{luriedagx}. 

In a similar fashion, noncommutative derived deformation theory is effectively the Koszul self-duality of the associative operad \cite[3.0.4]{luriedagx}, and Koszul duality will be one of our major technical tools. Because $\proart$ does not homotopy embed in a category of complete local algebras (unless one imposes extra finiteness conditions, as in \ref{dermapsthm}), it is now necessary for us to work with pro-Artinian dgas as opposed to their limits.

\p One goal of this paper is to obtain a derived result of a representability theorem of Segal \cite{segaldefpt} for deformations of one-dimensional modules. Typically, one is interested in deforming objects of derived or homotopy categories, which has been studied in detail by Efimov, Lunts, and Orlov \cite{ELO,ELO2,ELO3}. They give some prorepresentability theorems, although they do not identify the full simplicial set of deformations, and moreover they require some extra smoothness assumptions. Our first main theorem is the following:

\begin{thma}[\ref{proreps}]
		Let $A$ be a dga and let $X$ be an $A$-module. Let $E\coloneqq \R\enn_A(X)$ be the derived endomorphism dga of $X$. Then there is a functorial weak equivalence $$\sdefm_A(X)(\Gamma)\simeq\R\mathrm{Map}_{\ubproart}(B_{\mathrm{nu}}^\sharp E, \Gamma).$$
	\end{thma}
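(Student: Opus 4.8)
The plan is to realize the deformation functor $\sdefm_A(X)$ as a Maurer--Cartan or deformation functor attached to a dgla (or pro-dgla), and then to apply the Koszul-duality dictionary of Pridham--Lurie that matches such functors with (pro-Artinian) dgas via bar/cobar constructions. Concretely, the first step is to identify $\sdefm_A(X)(\Gamma)$ with the space of "twisted modules": deformations of $X$ over $\Gamma$ are governed by Maurer--Cartan elements in the dgla $\mathfrak{g}_\Gamma = \mathfrak{m}_\Gamma \widehat{\otimes}_k E'$, where $E' = \R\enn_A(X)$ is taken with its natural $A_\infty$- (or dg-) algebra structure and $\mathfrak{m}_\Gamma$ is the (connective, nilpotent) maximal ideal of the test dga $\Gamma$. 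One must check that this assignment is functorial in $\Gamma$, quasi-isomorphism invariant in both $\Gamma$ and $X$, and sends quasi-isomorphisms of $E$ to weak equivalences — the last point is where replacing the naive endomorphism algebra by a cofibrant/$A_\infty$-model $E$ matters.

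The second step is to invoke the derived Koszul duality correspondence in the associative (non-unital) setting: for a connective-enough or pro-Artinian-friendly dga, the functor $\Gamma \mapsto \R\mathrm{Map}_{\ubproart}(B_{\mathrm{nu}}^\sharp E, \Gamma)$ is precisely the deformation functor associated to $E$ under the bar-cobar adjunction, i.e. the functor of Maurer--Cartan elements of $\mathfrak{m}_\Gamma \widehat\otimes E$ up to gauge equivalence, presented simplicially via the Hinich/Getzler--Sullivan nerve. Here I would cite the earlier results of the paper on the Koszul double dual (the derived completion functor improving Lu--Palmieri--Wu--Zhang) to guarantee that $B_{\mathrm{nu}}^\sharp E$ is the "correct" prorepresenting object, i.e. that no information is lost in passing to the dual bar construction, and to handle the pro-Artinian mapping space computation: $\R\mathrm{Map}_{\ubproart}(B_{\mathrm{nu}}^\sharp E,\Gamma)$ unwinds, by adjunction between $B_{\mathrm{nu}}^\sharp$ and the relevant cobar/twisting-cochain functor, into the simplicial set of twisting cochains $E \to \Gamma$, equivalently Maurer--Cartan elements of $\R\hom(B_{\mathrm{nu}} E, \mathfrak m_\Gamma)$.

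The third step is to reconcile the two descriptions: show that the Maurer--Cartan/deformation functor of $\mathfrak m_\Gamma \widehat\otimes E$ obtained from the module-theoretic side agrees, functorially and compatibly with weak equivalences, with the twisting-cochain functor obtained from the Koszul-dual side. This is essentially the statement that deforming the module $X$ is the same as deforming the $A_\infty$-action, which is a formal consequence of the equivalence between (the relevant subcategory of) $A$-modules and $E$-modules (or twisted complexes over $E$) — i.e. an instance of Koszul duality for modules, or of the fact that $X$ is a compact generator of the thick subcategory it generates and $E$ is its endomorphism dga. I would set this up by exhibiting a natural map in one direction (from a MC element of $\mathfrak m_\Gamma\widehat\otimes E$, build a $\Gamma$-linear deformation of $X$) and checking it is a weak equivalence on each $\Gamma$ using the nilpotence/completeness filtration on $\mathfrak m_\Gamma$ and induction along square-zero extensions, reducing to the square-zero case where both sides compute $\R\hom_A(X,X\otimes_k \mathfrak m)[1]$-type obstruction/tangent data.

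The main obstacle I expect is precisely this last reconciliation made \emph{functorial and homotopy-coherent}: getting honest (not just objectwise) weak equivalences of simplicial presheaves on $\ubproart$ requires care with strictification — choosing compatible $A_\infty$- or dg-models for $E$ and for the deformations, and ensuring the bar construction, the pro-Artinian mapping space, and the MC-nerve are all computed in a way that respects the model structures (Hinich's on dg-coalgebras, the projective one on $\ubproart$, the Kan--Quillen one on simplicial sets). A secondary technical point is justifying that $B_{\mathrm{nu}}^\sharp E$ genuinely lands in (and prorepresents within) $\ubproart$ rather than merely in pro-dgas: this needs the connectivity and local finiteness of $E$ (or the appropriate completion) established earlier, and is where the hypothesis that test objects be connective Artinian is used — as the footnote warns, dropping connectivity breaks the argument.
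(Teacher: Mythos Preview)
Your high-level plan is broadly correct and matches the paper's: identify $\sdefm_A(X)$ with a simplicial Maurer--Cartan functor attached to $E=\R\enn_A(X)$, then identify that Maurer--Cartan functor with the pro-Artinian mapping space via the bar--cobar/twisting-cochain dictionary. But you have overcomplicated the argument and misidentified which tools are needed.

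\textbf{The Koszul double dual is not used here.} You write that you would ``cite the earlier results of the paper on the Koszul double dual \ldots to guarantee that $B_{\mathrm{nu}}^\sharp E$ is the `correct' prorepresenting object, i.e.\ that no information is lost in passing to the dual bar construction.'' This is a misconception. The functor $(-)^\sharp:\cndgc^{\mathrm{op}}\to\ubproart$ is a Quillen \emph{equivalence} by construction (\ref{ubpamodel}), so no information is ever lost and no finiteness or completion hypothesis is required. Theorem~D (derived completion, $A^{!!}\simeq A$) is used only later for Theorems~B and~C; Theorem~A holds for arbitrary $A$ and $X$ with no finiteness assumption on $E$. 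Relatedly, your worry that ``$B_{\mathrm{nu}}^\sharp E$ genuinely lands in $\ubproart$'' and needs ``connectivity and local finiteness of $E$'' is unfounded: $B_{\mathrm{nu}}E$ is always a conilpotent dgc, and $\sharp$ always produces a pro-Artinian dga.

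\textbf{The induction on square-zero extensions is unnecessary.} The paper's argument is a direct chain of identifications, valid for every $\Gamma$ at once:
\[
\smc(E)(\Gamma)=\mcs(E\otimes\Omega(\Delta^\bullet)\otimes\mathfrak{m}_\Gamma)\cong\hom_{\augdga}(\Omega(\Gamma^*),(E\oplus k)\otimes\Omega(\Delta^\bullet))\simeq\R\mathrm{Map}_{\cndgc}(\Gamma^*,B_{\mathrm{nu}}E)\simeq\R\mathrm{Map}_{\ubproart}(B_{\mathrm{nu}}^\sharp E,\Gamma).
\]
The first isomorphism is the twisting-cochain adjunction (\ref{barcobaradj}), the first weak equivalence uses that $\Omega(\Gamma^*)$ is cofibrant and that $(E\oplus k)\otimes\Omega(\Delta^\bullet)$ is a simplicial frame (here the polynomial Poincar\'e Lemma \ref{poincare} is essential and is the one place characteristic zero enters), and the last is the Quillen equivalence $\sharp$. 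Then $\sdefm_A(X)\simeq\sdel(E)\simeq\smc(E)$ is cited from the literature (\ref{defisdel}, \ref{smcisdel}). No obstruction-theoretic induction is needed.

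Two minor corrections: twisting cochains go from the coalgebra $\Gamma^*$ to the algebra $E$, not ``$E\to\Gamma$''; and the convolution algebra computing MC elements is $\hom_k(\Gamma^*,E)\cong E\otimes\mathfrak{m}_\Gamma$ (up to units), not $\R\hom(B_{\mathrm{nu}}E,\mathfrak{m}_\Gamma)$.
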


\p Here $B_\mathrm{nu}E$ refers to the `nonunital bar construction' on $E$ (\ref{nubar}), and $B^\sharp_\mathrm{nu}E$ is the continuous nonunital Koszul dual of $E$: it is a pro-Artinian dga whose limit is $B^*_\mathrm{nu}E$, the linear dual of $B_\mathrm{nu}E$ (see \ref{csharp} for the definition of the $\sharp$ functor). We regard Theorem A as a `nonconnective' version of a theorem of Lurie \cite[5.2.8]{luriedagx}, see \ref{lurie528} for the details. Implicit in Theorem A is the existence of a model structure on pro-Artinian algebras; setting up this model structure, and various related model structures, is the subject of $\S3$. The model structure itself, at least for connective pro-Artinian algebras, is due to Pridham \cite{unifying}. We can deduce from our prorepresentability theorem a representability theorem for classical deformations:

\begin{cora}[\ref{repclass}]
	Let $A$ be a $k$-algebra and let $X$ be an $A$-module.	Let $E\coloneqq \R\enn_A(X)$ be the derived endomorphism dga of $X$. Then there is a functorial isomorphism $$\defm^\mathrm{cl}_A(X)(\Gamma)\cong\hom_{\cat{aug.alg}_k}(H^0(B^*_{\mathrm{nu}}E), \Gamma).$$
	\end{cora}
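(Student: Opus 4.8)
The plan is to deduce Corollary A from Theorem A by taking connected components, i.e.\ applying the functor $\pi_0$ and comparing with the classical deformation functor. First I would recall that the classical deformation functor $\defm^\mathrm{cl}_A(X)$ is obtained from the derived one $\sdefm_A(X)$ by restricting the test objects to ordinary (non-dg) Artinian local $k$-algebras $\Gamma$ — viewed as dgas concentrated in degree zero — and then applying $\pi_0$ to the resulting simplicial set, so that $\defm^\mathrm{cl}_A(X)(\Gamma)\cong \pi_0\bigl(\sdefm_A(X)(\Gamma)\bigr)$ for such $\Gamma$. By Theorem A the right-hand side is $\pi_0\R\mathrm{Map}_{\ubproart}(B^\sharp_{\mathrm{nu}}E,\Gamma)$, so the task reduces to identifying this set of homotopy classes of maps, for $\Gamma$ discrete, with $\hom_{\cat{aug.alg}_k}(H^0(B^*_\mathrm{nu}E),\Gamma)$.

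The key step is therefore a computation in the homotopy category of pro-Artinian dgas: for a discrete augmented Artinian $k$-algebra $\Gamma$, I would show
\[
\pi_0\R\mathrm{Map}_{\ubproart}(B^\sharp_\mathrm{nu}E,\Gamma)\;\cong\;\hom_{\cat{aug.alg}_k}\bigl(H^0(B^*_\mathrm{nu}E),\Gamma\bigr).
\]
Since $\Gamma$ is discrete, any dga map into it factors through the truncation $H^0(-)$ of the source; and since $\Gamma$ is finite-dimensional, maps out of the pro-object $B^\sharp_\mathrm{nu}E$ into $\Gamma$ are the same as maps out of its limit $B^*_\mathrm{nu}E$ into $\Gamma$ (this is where the finiteness is used, cf.\ the remark after Theorem A that $\proart$ does not homotopy-embed into complete local algebras in general, so we genuinely need $\Gamma$ discrete and finite-dimensional here). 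Concretely: derived maps $B^\sharp_\mathrm{nu}E \to \Gamma$ in $\ubproart$ are computed by a cofibrant replacement of the source; because $\Gamma$ is discrete the mapping space is homotopy discrete and its $\pi_0$ is the set of dga-homotopy classes of maps $B^*_\mathrm{nu}E\to\Gamma$, which since the target is discrete coincides with $\hom_{\cat{aug.alg}_k}(H^0(B^*_\mathrm{nu}E),\Gamma)$.

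I would then assemble these identifications into a natural isomorphism of functors on $\cat{Art}_k$, checking functoriality in $\Gamma$ — which is immediate from the functoriality already present in Theorem A and the naturality of $\pi_0$ and of $H^0$. The main obstacle I anticipate is the bookkeeping around the $\sharp$-functor and the limit: one must be careful that the mapping space out of the \emph{pro}-object into a finite-dimensional discrete object really is computed by the linear dual $B^*_\mathrm{nu}E = \lim B^\sharp_\mathrm{nu}E$ and that no higher homotopy survives, i.e.\ that $\R\mathrm{Map}_{\ubproart}(B^\sharp_\mathrm{nu}E,\Gamma)$ is homotopy discrete when $\Gamma$ is discrete. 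Granting the model-category setup of $\S3$ and the explicit description of $B^\sharp_\mathrm{nu}E$ via $\ref{csharp}$, this should follow formally, and the rest is routine.
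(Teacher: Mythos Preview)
Your overall strategy is the same as the paper's: take $\pi_0$ of Theorem~A and identify the result with the classical deformation functor. However, the step you flag as ``should follow formally'' is precisely where the paper does nontrivial work, and your sketch contains a genuine gap.

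First, the claim that ``since $\Gamma$ is discrete, any dga map into it factors through $H^0$ of the source'' is false in general: for a dga $A$, strict maps $A\to\Gamma$ with $\Gamma$ ungraded correspond to algebra maps $A^0/\langle d(A^{-1})\rangle\to\Gamma$, which agrees with $\hom(H^0(A),\Gamma)$ only when $A$ is connective. The dga $B^*_{\mathrm{nu}}E$ is typically \emph{not} connective (indeed $B_{\mathrm{nu}}E$ has pieces in negative degrees whenever $E^0\neq 0$), so this needs justification. Second, you implicitly replace $\pi_0\R\mathrm{Map}_{\ubproart}(B^\sharp_{\mathrm{nu}}E,\Gamma)$ by homotopy classes of dga maps out of the limit $B^*_{\mathrm{nu}}E$; but $\varprojlim:\ubproart\to\augdga$ is only right Quillen (\ref{limisrq}), not a Quillen equivalence, so derived mapping spaces need not agree without extra finiteness hypotheses (cf.\ \ref{dermapsthm}).

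The paper handles both issues by staying in the pro-Artinian world and applying two inclusion--truncation adjunctions in sequence. One first uses \ref{truncatedproreps} to replace $B^\sharp_{\mathrm{nu}}E$ by $\tau_{\leq 0}B^\sharp_{\mathrm{nu}}E$ in $\proart$, checking that the derived truncation agrees with the naive one (via \ref{doubledictionary}(1)). Then one uses the Quillen adjunction $H^0:\proart\rightleftarrows\cat{pro}(\cat{Art}_k):\iota$ of \ref{trivqa}, with the trivial model structure on the target, noting that $H^0$ is its own left derived functor in the Pridham model structure; this is what makes the mapping space genuinely discrete. Finally the passage from $H^0(B^\sharp_{\mathrm{nu}}E)$ in the pro-category to $H^0(B^*_{\mathrm{nu}}E)$ as an honest algebra is the content of \ref{sharpcommuteswithcohomology} and \ref{cdlem}(1). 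None of these steps is automatic, and your proposal does not address them.
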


\p Essentially, one obtains Corollary A by taking $\pi_0$ of Theorem A and applying an inclusion-truncation adjunction.

\p Our representability theorems are not yet the end of the story: they involve the somewhat unsatisfying nonunital bar construction. When $X$ is an $A$-module with $\dim_k(X)=1$ (we call such $X$ one-dimensional), its derived endomorphism algebra $E$ is augmented, and one expects the continuous Koszul dual $B^\sharp E$ to prorepresent some naturally defined deformation problem. We rigidify slightly by introducing the notion of framed deformations; a framed deformation of $S$ is essentially a deformation of $S$ that respects a fixed choice of isomorphism $S \cong k$ (\ref{framingdefsset}, \ref{framingdefset}). After defining a functor of framed deformations $\frmdef_A(S)$, we obtain the following theorem:

\begin{thmb}[\ref{prorepfrm}]
		Let $A$ be a connective dga and let $S$ be a one-dimensional $A$-module. Let $E\coloneqq \R\enn_A(S)$ be the derived endomorphism dga of $S$. Then there is a functorial weak equivalence$$\frmdef_A(S)(\Gamma)\simeq \R\mathrm{Map}_{\ubproart}(B^\sharp E, \Gamma).$$
	\end{thmb}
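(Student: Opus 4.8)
The plan is to deduce Theorem B from Theorem A by relating the framed deformation functor $\frmdef_A(S)$ to the unframed deformation functor $\sdefm_A(S)$ and then comparing the two bar constructions $B^\sharp E$ and $B^\sharp_{\mathrm{nu}} E$. First I would unpack the definitions: since $S$ is one-dimensional, $E = \R\enn_A(S)$ is an augmented dga, with augmentation $E \to k$ coming from the identification $S \cong k$ as a complex (equivalently, $E$ has a canonical unital map $k \to E$ splitting off, so $E \simeq k \oplus \bar{E}$ where $\bar{E}$ is the augmentation ideal, which is precisely what the nonunital bar construction sees: $B_{\mathrm{nu}}E = B\bar{E}$). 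A framed deformation over $\Gamma$ records a deformation of $S$ together with the data trivializing it over the residue field $k$; the key structural observation is that the framing data is a torsor for — or more precisely fits into a fibration sequence with — the automorphisms coming from the augmentation. I would make this precise by showing there is a homotopy fibre sequence of simplicial sets, functorial in $\Gamma$,
$$
\frmdef_A(S)(\Gamma) \longrightarrow \sdefm_A(S)(\Gamma) \longrightarrow (\text{something contractible or recording the unit}),
$$
or dually, that $\frmdef$ is the functor obtained from $\sdefm$ by rigidifying along the augmentation of $E$.

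On the Koszul-dual side, the relation between $B^\sharp E$ and $B^\sharp_{\mathrm{nu}} E$ is the standard one between the (full, unital) bar construction and the nonunital/reduced bar construction of an augmented algebra: for augmented $E$ one has $B_{\mathrm{nu}}E \simeq B\bar E$, and adjoining the unit corresponds on the dual side to a square-zero extension by $k$. Concretely I expect $B^\sharp E$ and $B^\sharp_{\mathrm{nu}} E$ to differ by exactly the data that the framing kills, so that the fibre sequence above is matched by the corresponding fibre sequence of derived mapping spaces out of the two pro-Artinian dgas. Thus the strategy is: (1) invoke Theorem A to identify $\sdefm_A(S)(\Gamma) \simeq \R\mathrm{Map}_{\ubproart}(B_{\mathrm{nu}}^\sharp E, \Gamma)$; (2) show the framing functor sits in a fibre sequence over $\sdefm_A(S)$; (3) show $\R\mathrm{Map}_{\ubproart}(B^\sharp E, -)$ sits in the parallel fibre sequence over $\R\mathrm{Map}_{\ubproart}(B_{\mathrm{nu}}^\sharp E, -)$, using the relation between the unital and nonunital $\sharp$-constructions for augmented $E$; (4) conclude by comparing fibres, checking that the induced maps agree.

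The connective hypothesis on $A$ (hence suitable connectivity of $E$, or at least of its relevant truncations) enters to guarantee that $B^\sharp E$ genuinely lands in pro-Artinian dgas and that the model-categorical manipulations of $\S3$ apply — in particular that the relevant mapping spaces are homotopy invariant and the fibre sequences are honest homotopy fibre sequences; this is the same constraint flagged in the footnote on stacky pathologies. The main obstacle I anticipate is step (2)–(3): pinning down the framed deformation functor precisely enough to produce a clean functorial fibre sequence, and matching it on the nose (not just objectwise) with the algebraic fibre sequence relating $B^\sharp E$ to $B^\sharp_{\mathrm{nu}}E$. One has to be careful that "rigidifying along the augmentation" on the deformation side corresponds to exactly the right square-zero modification on the pro-Artinian side, with no stray connective or $\pi_0$-level discrepancy; this is where the bookkeeping of the $\sharp$ functor (\ref{csharp}) and the definition of $\frmdef$ (\ref{framingdefsset}) must be reconciled carefully.
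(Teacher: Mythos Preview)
Your approach is essentially the paper's, but a few points need correcting. First, the homotopy fibre sequence in your step (2) is not something to derive: it is the \emph{definition} of $\frmdef_A(S)$ (Definition~\ref{framingdefsset}), namely $\frmdef_A(S)\coloneqq\mathrm{hofib}(\sdefm_A(S)\to\sdefm_k(S))$, so the base of the sequence is $\sdefm_k(S)$, not something contractible. Second, you have the identity $B_{\mathrm{nu}}E = B\bar E$ backwards: by Definition~\ref{nubar}, $B_{\mathrm{nu}}(F)=B(F\oplus k)=T^c(F[1])$ for a nonunital $F$, so it is $B_{\mathrm{nu}}(\bar E)=BE$, not the other way round. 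The paper uses the nonunital fibre sequence $\bar E \to E \to k$, applies $B_{\mathrm{nu}}$ to get $BE \to B_{\mathrm{nu}}E \to B_{\mathrm{nu}}k$, and then $\sharp$ and $\R\mathrm{Map}$ to get the parallel fibre sequence of mapping spaces; the naturality square you worry about in step (4) is exactly Proposition~\ref{wefunc}, applied to the forgetful map $A\to k$. Third, the connectivity of $A$ is not used to make $B^\sharp E$ land in pro-Artinian dgas (it always does, in $\ubproart$); it is used in Lemma~\ref{extzlem} to establish that $E$ is augmented in the first place, via the t-structure on $D(A)$.
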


\p Here, $BE$ denotes the usual bar construction on $E$, so that the limit of $B^\sharp E$ is the usual Koszul dual $E^!\coloneqq B^*E$. We regard Theorem B as an extension of the deformation-theoretic results of Segal's thesis \cite{segaldefpt} to the derived setting: as before, taking $\pi_0$ of our prorepresentability theorem gives us a representability theorem for classical deformations, and in fact we obtain the following theorem of Segal \cite[2.13]{segaldefpt}. For a vector space $V$, we denote its tensor algebra by $T(V)$ and its completed tensor algebra by $\hat{T}(V)$.

\begin{corb}[\ref{mysegal}]
		Let $A$ be a $k$-algebra and let $S$ be a one-dimensional $A$-module. Assume that $\ext^1_A(S,S)$ is finite-dimensional. Let $T$ be the $k$-algebra$$T\coloneqq\frac{\hat{T}(\ext^1_A(S,S)^*)}{m^*(\ext^2_A(S,S)^*)}$$where $m:T(\ext^1_A(S,S)) \to \ext^2_A(S,S)$ is the homotopy Maurer--Cartan function (\ref{hmcdef}). Then there is a functorial isomorphism $$\defm^\mathrm{cl}_A(S)(\Gamma)\cong\frac{\hom_{\cat{aug.alg}_k}(T, \Gamma)}{ (\text{inner automorphisms of $\Gamma$})}.$$
	\end{corb}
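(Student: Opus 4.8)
The plan is to obtain Corollary B from Theorem B by passing to $\pi_0$, in exact parallel with the deduction of Corollary A from Theorem A. Write $E\coloneqq\R\enn_A(S)$; since $S$ is one-dimensional $E$ is augmented with $H^0(E)=k$, and since $A$ is an ordinary algebra $E$ is cohomologically coconnective, with $H^1(E)=\ext^1_A(S,S)$ and $H^2(E)=\ext^2_A(S,S)$. First I would apply $\pi_0$ to the functorial weak equivalence of Theorem B (\ref{prorepfrm}) evaluated on a discrete Artinian test algebra $\Gamma$, obtaining
$$\pi_0\frmdef_A(S)(\Gamma)\;\cong\;\pi_0\R\mathrm{Map}_{\ubproart}(B^\sharp E,\Gamma)\;\cong\;\hom_{\mathrm{Ho}(\ubproart)}(B^\sharp E,\Gamma).$$
On the left, the comparison between the derived framed deformation functor and its classical truncation (\ref{framingdefsset}, \ref{framingdefset}) identifies $\pi_0\frmdef_A(S)(\Gamma)$ with the classical framed deformation set $\frmdefset_A(S)(\Gamma)$. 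On the right, the very same inclusion--truncation adjunction used for Corollary A (\ref{repclass}) gives $\hom_{\mathrm{Ho}(\ubproart)}(B^\sharp E,\Gamma)\cong\hom_{\cat{aug.alg}_k}(H^0(B^*E),\Gamma)$, since $B^\sharp E$ is a pro-Artinian dga with $\lim B^\sharp E=B^*E$ connective. Thus everything reduces to an isomorphism of augmented $k$-algebras $H^0(B^*E)\cong T$.

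To prove that, I would replace $E$ by a minimal $A_\infty$-model with underlying graded space $\ext^*_A(S,S)$ and operations $m_n$ (so $m_1=0$ and $m_2$ is the Yoneda product); by homotopy invariance of the continuous Koszul dual this changes neither $B^\sharp E$ nor $B^*E$ up to weak equivalence. The dual bar construction is then, as a graded algebra, the completed tensor algebra $\hat{T}(\bar E[1]^*)$ on the shifted linear dual of the augmentation ideal, with the derivation differential dual to the $m_n$. A degree count now finishes the job: $\bar E[1]^*$ is concentrated in non-positive degrees, with degree-$0$ part $\ext^1_A(S,S)^*$ and degree-$(-1)$ part $\ext^2_A(S,S)^*$, so the degree-$0$ part of $\hat{T}(\bar E[1]^*)$ is exactly $\hat{T}(\ext^1_A(S,S)^*)$ and its degree-$1$ part vanishes. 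Hence the differential is zero on degree $0$, so $H^0(B^*E)$ is the quotient of $\hat{T}(\ext^1_A(S,S)^*)$ by the image of the differential out of degree $-1$; as that differential is a derivation vanishing on the degree-$0$ generators, this image is the two-sided ideal generated by the image of $\ext^2_A(S,S)^*$, and one identifies this component of the differential with the dual $m^*$ of the homotopy Maurer--Cartan function $m=\sum_n m_n\colon T(\ext^1_A(S,S))\to\ext^2_A(S,S)$ of \ref{hmcdef}. This yields $H^0(B^*E)\cong\hat{T}(\ext^1_A(S,S)^*)/m^*(\ext^2_A(S,S)^*)=T$.

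Assembling the pieces gives a functorial isomorphism $\frmdefset_A(S)(\Gamma)\cong\hom_{\cat{aug.alg}_k}(T,\Gamma)$, equivariant for the relevant symmetry. To pass from framed to unframed deformations I would invoke the description of $\defm^\mathrm{cl}_A(S)$ as $\frmdefset_A(S)$ modulo change of framing recorded when framed deformations are introduced (\ref{framingdefset}): for a one-dimensional module a change of framing over $\Gamma$ is precisely conjugation by a unit of $\Gamma$ congruent to $1$ modulo the maximal ideal, i.e.\ an inner automorphism of $\Gamma$, and post-composition with such conjugations is the corresponding action on $\hom_{\cat{aug.alg}_k}(T,\Gamma)$; quotienting both sides yields the stated isomorphism. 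I expect the main obstacle to be the computation of $H^0(B^*E)$: one must set up the minimal model and the description of the dual bar construction as a completed tensor dga, and then track the shifted grading and signs carefully enough that the summand coming from $\ext^2_A(S,S)^*$ contributes exactly the ideal $m^*(\ext^2_A(S,S)^*)$. Here the hypothesis that $\ext^1_A(S,S)$ is finite-dimensional is what keeps $\hat{T}(\ext^1_A(S,S)^*)$, and hence $T$, a well-behaved complete (pro-Artinian-type) algebra, so that the final adjunction step applies.
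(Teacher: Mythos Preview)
Your proposal is correct and follows essentially the same route as the paper: pass to $\pi_0$ of Theorem~B, use the inclusion--truncation adjunction to reduce to $\hom_{\cat{aug.alg}_k}(H^0(E^!),\Gamma)$, compute $H^0(E^!)\cong T$ via an $A_\infty$ minimal model, and then quotient by inner automorphisms to unframe. The paper packages your two main computations as separate results --- your minimal-model calculation of $H^0(E^!)$ is exactly \ref{segalthm}, and your unframing step is the content of \ref{gensegal}, where the identification of ``change of framing'' with the action of $\Gamma^\times$ is made precise via the split exact sequence of gauge groups $\ggr(\bar E)\to\ggr(E)\to\ggr(k)$ rather than by direct inspection of the definition \ref{framingdefset}.
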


\p Note that we are no longer using framed deformations. In fact, the functor of classical framed deformations is precisely $\hom_{\cat{aug.alg}_k}(H^0(E^!), -)$ (\ref{prorepfrmset}), and the quotient by inner automorphisms is precisely the thing we need to do to forget the framing (\ref{gensegal}). The $\ext$ condition is there to ensure that we have $H^0(E^!)\cong T$. Heuristically, the difference between the framed and unframed deformation functors is precisely the difference between the unital and nonunital bar construction, and on the set-valued level this presents itself as an extra action of the group of units of $\Gamma^\times$ by inner automorphisms.

\p We apply our prorepresentability theorems to give a deformation-theoretic interpretation to Braun--Chuang--Lazarev's derived quotient \cite{bcl}, which is a way to quotient an algebra by an idempotent in a homotopically well-behaved manner. Let $A$ be a $k$-algebra with an idempotent $e\in A$. Suppose that the quotient $A/AeA$ is an Artinian local $k$-algebra, and let $S$ be the quotient of $A/AeA$ by its radical. Then $S$ is a one-dimensional $A$-module. We show that in this situation, under some mild finiteness conditions the derived quotient $\dq$ homotopy \textbf{represents} the functor of derived deformations of $S$:

\begin{thmc}[\ref{almostdq}]
		Let $A$ be a $k$-algebra with an idempotent $e\in A$. Suppose that $A/AeA$ is an Artinian local $k$-algebra and let $S$ be the quotient of $A/AeA$ by its radical. Suppose that $\dq$ is cohomologically locally finite. Then there is a functorial weak equivalence 
	$$\frmdef_A(S)(\Gamma)\simeq \R\mathrm{Map}_{\augdga^{\leq 0}}(\dq, \Gamma).$$
	\end{thmc}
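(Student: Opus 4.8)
The plan is to bootstrap off the framed prorepresentability theorem (Theorem B, \ref{prorepfrm}): it suffices to identify the continuous Koszul dual $B^\sharp E$ of $E\coloneqq\R\enn_A(S)$ with the derived quotient $\dq$, at least as far as mapping spaces into Artinian dgas can see. Since $S$ is a one-dimensional $A$-module and $A$ is a $k$-algebra, hence connective, Theorem B already supplies a functorial weak equivalence $\frmdef_A(S)(\Gamma)\simeq\R\mathrm{Map}_{\ubproart}(B^\sharp E,\Gamma)$, with $E$ augmented and $H^0(E)=\hom_A(S,S)=k$.

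The key step is to recognise $E$ as the Koszul dual of $\dq$. The idempotent $e$ lies in $AeA$, so it acts as zero on $A/AeA$ and hence on its semisimple quotient $S$; thus $S$ is canonically a module over $A/AeA=H^0(\dq)$, and therefore over $\dq$. Because $A/AeA$ is Artinian local with residue field $k$, the composite $\dq\onto H^0(\dq)\onto S\cong k$ is an augmentation making $\dq$ an object of $\augdga^{\leq 0}$ whose augmentation module is precisely $S$. By the Braun--Chuang--Lazarev theory \cite{bcl}, restriction of scalars along $A\to\dq$ realises $D(\dq)$ as a full subcategory of $D(A)$ which contains $S$; hence $\R\enn_{\dq}(S)\simeq\R\enn_A(S)=E$. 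In other words the Koszul dual $\dq^!$, formed with respect to the above augmentation, is quasi-isomorphic to $E$.

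Now I would feed this into the two finiteness inputs. First, $\dq$ is connective, augmented, has Artinian local $H^0$, and is cohomologically locally finite by hypothesis, so it lies in the class of dgas which the paper's improvement of Lu--Palmieri--Wu--Zhang shows to be quasi-isomorphic to their Koszul double dual; applying $(-)^!$ to $\dq^!\simeq E$ therefore gives $E^!\simeq(\dq^!)^!\simeq\dq$ in $\augdga^{\leq 0}$. Second, cohomological local finiteness of $\dq$ — equivalently of $E$, equivalently of $E^!$ — lets us invoke the comparison \ref{dermapsthm}: for Artinian $\Gamma$ the derived mapping space out of the pro-Artinian dga $B^\sharp E$ agrees with that out of its limit $E^!=B^*E$. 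Chaining the equivalences, functorially in $\Gamma$,
$$\frmdef_A(S)(\Gamma)\simeq\R\mathrm{Map}_{\ubproart}(B^\sharp E,\Gamma)\simeq\R\mathrm{Map}_{\augdga^{\leq 0}}(E^!,\Gamma)\simeq\R\mathrm{Map}_{\augdga^{\leq 0}}(\dq,\Gamma),$$
where the final equivalence is induced by the fixed quasi-isomorphism $E^!\simeq\dq$ and so is automatically natural.

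I expect the main obstacle to be the second paragraph: extracting the identification $\dq^!\simeq\R\enn_A(S)$ in exactly the form needed, which requires both the Braun--Chuang--Lazarev full-faithfulness statement phrased as restriction along $A\to\dq$ with $S$ in the essential image, and a genuine check that $S$ is the augmentation $\dq$-module $k$ for the natural augmentation on the derived quotient (so that $\dq^!$ really computes $\R\enn_{\dq}(S)$). A secondary point, more bookkeeping than substance, is confirming that "$\dq$ cohomologically locally finite" is precisely the hypothesis that simultaneously places $\dq$ in the domain of the Koszul double-dual theorem and licenses the comparison \ref{dermapsthm} for $B^\sharp E$, and that every intermediate equivalence respects augmentations.
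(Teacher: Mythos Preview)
Your proposal is correct and follows essentially the same route as the paper. The paper packages your argument into two citations: your identification $E\simeq\dq^!$ is exactly \ref{kytype}, and then \ref{dqcor} combines this with Theorem B; your steps ``apply Theorem D to $\dq$ to get $E^!\simeq\dq$, then invoke \ref{dermapsthm}'' are precisely the content of \ref{dermapscor}, whose proof first uses \ref{kdfin} and \ref{kdfinite} to verify the hypothesis of \ref{dermapsthm} and then applies it.
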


\p Surprising here is that neither Koszul duals nor pro-Artinian algebras seem to appear anywhere. The proof is based on a careful analysis of the Koszul double dual functor, which takes up the entirely of $\S4$. The most important ingredient is the following theorem - valid in any characteristic - which may be of independent interest:

\begin{thmd}[\ref{kdfin}]
		Let $A$ be a connective augmented cohomologically locally finite dga with $H^0(A)$ Artinian local. Then there is a natural quasi-isomorphism $A^{!!} \simeq A$.
	\end{thmd}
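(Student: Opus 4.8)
The plan is to pass to a minimal model, exploit the resulting finiteness to identify the Koszul double dual with a completed cobar construction, and then invoke bar--cobar duality; throughout, the comparison map $A \to A^{!!}$ is the one induced by the universal twisting cochain, i.e.\ by the canonical pairing of $A$ with $A^{!}$. First I would replace $A$ by its minimal $A_\infty$-model: since $A$ is cohomologically locally finite, $H \coloneqq H^*(A)$ is a degreewise finite-dimensional graded vector space carrying a minimal $A_\infty$-structure, augmented over $k$ with augmentation ideal $\bar H$ satisfying $\bar H^0 = \mathrm{rad}(H^0(A))$ and $\bar H^i = H^i(A)$ for $i < 0$. As the bar construction, linear duality, and the continuous Koszul dual all send the relevant quasi-isomorphisms to weak equivalences, it suffices to prove $H^{!!} \simeq H$. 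The essential consequence of the hypotheses --- and the only place connectivity and the Artinian condition enter --- is elementary: connectivity of $A$ forces $\bar H$ into degrees $\le 0$, so $\bar H[1]$ lies in degrees $\le -1$, and hence in each fixed cohomological degree only finitely many tensor powers contribute to $BH = \bigoplus_n (\bar H[1])^{\otimes n}$, each of them finite-dimensional. Therefore $BH$ is \emph{degreewise finite-dimensional}. (Drop the Artinian hypothesis and the degree-zero part $\mathrm{rad}(H^0(A))$ of $\bar H$ survives into degree $0$ after the shift, making $BH$ infinite-dimensional there; this is precisely the failure that makes the hypothesis necessary.)

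Granting degreewise finiteness of $BH$, I would unwind $H^{!!}$. By construction $H^{!} = (BH)^{*}$ is itself a degreewise finite augmented dga, coconnective with $(H^{!})^0 = k$, and $B^\sharp H$ is its pro-Artinian refinement. Forming the bar construction of $H^{!}$ and dualising, and using that $BH$ --- hence each tensor power $(\overline{BH})^{\otimes m}$ --- is degreewise finite, so that biduality is an isomorphism on it, one obtains
\[
 H^{!!} \;=\; \bigl(B(H^{!})\bigr)^{*} \;\cong\; \widehat{\Omega}(BH),
\]
the completion of the cobar construction $\Omega(BH) = T(\overline{BH}[-1])$ along its (decreasing, $d$-stable) word-length filtration; equivalently, the biduality isomorphism $BH \xrightarrow{\ \sim\ } (BH)^{**}$ of dg coalgebras, run through the continuous Koszul-dual machinery, turns $(-)^{!!}$ into ``cobar of bar''. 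Classical bar--cobar duality supplies a natural quasi-isomorphism $\Omega(BH) \xrightarrow{\ \sim\ } H$, and under these identifications the comparison map is the composite $H \xleftarrow{\ \sim\ } \Omega(BH) \hookrightarrow \widehat{\Omega}(BH) = H^{!!}$.

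The crux, and the main obstacle, is therefore to show that the completion inclusion $\Omega(BH) \hookrightarrow \widehat{\Omega}(BH)$ is a quasi-isomorphism. This is not formal: completion along a filtration need not preserve cohomology, and $\Omega(BH)$ is in fact infinite-dimensional in every cohomological degree (the word-length filtration, though exhaustive and Hausdorff, is not complete). The argument I would run is a convergence argument for this filtration. Its associated graded is the tensor algebra on $\overline{BH}[-1]$ with the differential induced from $d_{BH}$ (the part of the cobar differential preserving word length), and this is the same for $\Omega(BH)$ and for $\widehat{\Omega}(BH)$, so the two filtrations have identical $E_1$-pages; it then suffices to show both spectral sequences converge strongly to a common answer. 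Here connectivity is used again: it confines $\overline{BH}[-1]$ to non-positive degrees with nilpotent degree-zero part $\mathrm{rad}(H^0(A))$, and one leverages this, together with degreewise finiteness, to check that the tower $\{H^{*}(\Omega(BH)/F^{p})\}_{p}$ is Mittag--Leffler (so $\lim^1$ vanishes and $H^{*}(\widehat{\Omega}(BH)) = \lim_p H^{*}(\Omega(BH)/F^{p})$) and that $H^{*}(\Omega(BH))$ already maps isomorphically onto this limit. Assembling these pieces, and tracking naturality of the universal twisting cochain at each stage, yields the natural quasi-isomorphism $A^{!!} \simeq A$. This is the strategy of Lu--Palmieri--Wu--Zhang in the connected graded case; the new input is the minimal-model reduction, which both removes their finiteness hypothesis on $A$ itself and accommodates $H^0(A)$ Artinian local rather than just $k$. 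An alternative packaging, closer to the paper's viewpoint, is to identify $(-)^{!!}$ on this class of dgas with derived $\bar A$-adic completion and to observe that a connective, cohomologically locally finite dga with $H^0(A)$ Artinian local is already derived complete; the technical heart remains the convergence statement above.
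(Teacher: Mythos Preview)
Your reduction to a minimal $A_\infty$-model $H$, the identification $H^{!!}\cong \widehat{\Omega}(BH)$ via local finiteness of $BH$, and the reformulation of the problem as showing that the completion map $\Omega(BH)\hookrightarrow\widehat{\Omega}(BH)$ is a quasi-isomorphism, all match the paper's argument exactly. The divergence is entirely in how you propose to close this last step.

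The paper does \emph{not} run a spectral sequence or a Mittag--Leffler argument. Instead it proves a purely algebraic lemma: a connective augmented graded algebra whose degree-zero part is nilpotent is already complete along its augmentation ideal (\ref{cpltlem}). Since $H^*(\Omega(BH))\cong H^*(A)$ is connective with $H^0(A)$ Artinian local, this lemma applies to the cohomology algebra. The paper then uses this completeness directly --- via an explicit lifting argument with generators of $H^*(\Omega(BH))$ --- to show by hand that $Hc$ is both injective and surjective (\ref{htpycomplete}). No filtration convergence is invoked.

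Your spectral sequence sketch has a genuine gap at precisely the point you flag as the crux. The Mittag--Leffler step is fine: the quotients $\Omega(BH)/F^p$ are degreewise finite, so $\varprojlim^1$ of their cohomologies vanishes and $H^*(\widehat{\Omega}(BH))\cong\varprojlim_p H^*(\Omega(BH)/F^p)$. But your claim that ``$H^*(\Omega(BH))$ already maps isomorphically onto this limit'' is not argued, and this is exactly the content of the theorem. The word-length filtration on $\Omega(BH)$ is exhaustive and Hausdorff but not complete, and there is no general reason for the associated spectral sequence to converge strongly; one would need to show, for instance, that in each cohomological degree the spectral sequence collapses at a finite stage, and nilpotency of $(\overline{BH}[-1])^0$ alone does not obviously deliver this without an argument tantamount to \ref{cpltlem}. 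The paper itself records the spectral-sequence route as a suggestion of Lazarev (\ref{andreyrmk}) but does not carry it out, presumably for this reason.

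A side remark: you cite the Lu--Palmieri--Wu--Zhang strategy approvingly, but the paper explicitly notes (\ref{lpwzremark}) that their published proof contains an error (the claimed isomorphism $\Omega(A^*)\cong A^!$ fails in general), so that paper is not a safe model to follow here.
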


\p Theorem D is a generalisation of \cite[Theorem A]{lpwz}, which, while true, originally appeared with an incorrect proof: see \ref{lpwzremark} for details. Note that Theorem D really is nontrivial, as there even exist finite-dimensional graded algebras which are not quasi-isomorphic to their own Koszul double duals (\ref{lpwzwrong}). Our proof essentially identifies the Koszul double dual as the derived completion \cite{gmdc, dgcompletetorsion, efimov, psycentralizer, psydc, shauldc}, and then argues that dgas satisfying the hypotheses are derived complete. 

\p As usual, Theorem C specialises to the classical case and yields the following theorem, which appears in e.g.\ \cite{contsdefs}:

\begin{corc}[\ref{dwrep}]
		Let $A$ be a $k$-algebra with an idempotent $e\in A$. Suppose that $A/AeA$ is an Artinian local $k$-algebra and let $S$ be the quotient of $A/AeA$ by its radical. Suppose that $\dq$ is cohomologically locally finite. Then there is a functorial isomorphism $$\defm^\mathrm{cl}_A(S)(\Gamma)\cong\frac{\hom_{\cat{Art}_k}(A/AeA, \Gamma)}{ (\text{inner automorphisms of }\Gamma)}.$$
	\end{corc}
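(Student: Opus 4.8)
The plan is to deduce this from Theorem C (\ref{almostdq}) by the same mechanism that produces Corollary A (\ref{repclass}) from Theorem A: one restricts to classical test objects, passes to $\pi_0$, and applies an inclusion--truncation adjunction, here with the extra step of translating between framed and unframed deformations via \ref{gensegal}. Throughout, $\Gamma$ denotes a noncommutative Artinian local $k$-algebra, viewed as a discrete object of $\augdga^{\leq 0}$ along the inclusion $\cat{Art}_k\into\augdga^{\leq 0}$.

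First I would compute $\pi_0$ of the right-hand side of the weak equivalence in Theorem C. Since $\Gamma$ is concentrated in cohomological degree zero, any dga map out of the connective dga $\dq$ into $\Gamma$ factors uniquely through $H^0(\dq)$, and two such maps that are dga-homotopic already agree; hence, exactly as in Corollary A, the inclusion--truncation adjunction gives $$\pi_0\,\R\mathrm{Map}_{\augdga^{\leq 0}}(\dq,\Gamma)\;\cong\;\hom_{\cat{Art}_k}\!\big(H^0(\dq),\Gamma\big).$$ I would then use the basic property of the Braun--Chuang--Lazarev derived quotient that $H^0(\dq)\cong A/AeA$; the hypothesis that $A/AeA$ is Artinian local and the augmentation $A/AeA\onto S\cong k$ (which exists because $S$ is one-dimensional) promote this to an isomorphism in $\cat{Art}_k$.

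Next I would identify $\pi_0$ of the left-hand side. Restricting the framed derived deformation functor $\frmdef_A(S)$ to classical test objects and truncating recovers the classical framed deformation functor $\frmdefset_A(S)$ (cf.\ \ref{prorepfrmset}), and by \ref{gensegal} forgetting the framing amounts precisely to quotienting out the conjugation action of the unit group $\Gamma^{\times}$, so that $\defm^\mathrm{cl}_A(S)(\Gamma)\cong\frmdefset_A(S)(\Gamma)/(\text{inner automorphisms of }\Gamma)$. Combining this with the previous paragraph, and noting that all the isomorphisms involved are natural in $\Gamma$ (each being induced either by the natural transformation of Theorem C or by an adjunction unit/counit), yields the stated functorial isomorphism.

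The step I expect to be the main obstacle is the bookkeeping at the classical/derived interface rather than any hard computation: one must check that truncating the framed derived functor returns the classical framed functor on the nose, that the inner-automorphism quotient in the unframed comparison is compatible with this truncation, and --- most delicately --- that no higher homotopy in $\R\mathrm{Map}_{\augdga^{\leq 0}}(\dq,\Gamma)$ contributes to $\pi_0$. The last point is where the discreteness of $\Gamma$ is essential; for test objects with nonzero negative cohomology the analogous statement would fail, which is precisely why the classical corollary, and not the derived theorem, takes this form.
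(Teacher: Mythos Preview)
Your approach is correct and follows essentially the same route as the paper. The only difference is the order of operations: you start from Theorem~C (\ref{almostdq}) as a black box and then take $\pi_0$, use the $H^0\dashv\iota$ adjunction, and invoke \ref{gensegal} to pass from framed to unframed; the paper instead goes back to \ref{dqcor} (the pro-Artinian form), applies \ref{gensegal} as in the proof of \ref{mysegal} to obtain $\defm^{\mathrm{cl}}_A(S)\cong\hom(H^0(\dq^{!!}),-)/(\text{inner autos})$, and only then invokes \ref{kdfin} to simplify $H^0(\dq^{!!})$ to $H^0(\dq)\cong A/AeA$. Since \ref{almostdq} already absorbs \ref{kdfin} via \ref{dermapscor}, the two arguments use exactly the same ingredients; your ordering is arguably the more natural ``Corollary C is $\pi_0$ of Theorem C'' presentation. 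The compatibility check you flag---that the $\Gamma^\times$-action by inner automorphisms is intertwined across the identification $\frmdefset_A(S)(\Gamma)\cong\hom(H^0(\dq),\Gamma)$---is indeed the only delicate point, and it is handled (implicitly in both proofs) by the naturality established in the proof of \ref{gensegal}.
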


\p In $\S7$, we extend our deformation functors via homotopy limits to accept pro-Artinian dgas as input, which allows us to associate universal prodeformations to modules. The use of simplicial sets is integral here. We make some progress towards concretely identifying universal prodeformations; there are unanswered questions here which may be a fruitful subject for further research.

\p In $\S8$, we extend our results to the world of multi-pointed deformation theory \cite{laudalpt, eriksen}. Multi-pointed noncommutative deformations have recently found many applications within algebraic geometry, in particular the geometry of threefolds \cite{todatwists,DWncdf,kawamatapointed} and we anticipate that our multi-pointed results will be of use in geometric settings.

\p Some of our results have already been obtained under various extra smoothness hypotheses by various authors. We have already mentioned Efimov--Lunts--Orlov, who obtain a representability statement for groupoid-valued functors under an $\ext$-vanishing hypothesis \cite{ELO2}. Hua and Keller have also obtained a groupoid-valued multi-pointed representability theorem, under the additional assumption that $S$ is perfect \cite{huakeller}. A weaker version of Theorem D appears in \cite[\S2.5]{kalckyang3}, under the additional hypothesis that the $A$-module $k$ given by the augmentation $A \to k$ is perfect over $A$, and a similar version for dg categories appears in \cite{efimov}. The advantage of our approach is that we can identify the full simplicial set of deformations, and moreover do not need to assume any sort of smoothness hypothesis.

\p Much of this paper has close links to part X of Lurie's thesis \cite{luriedagx}, which also appears as \cite[IV]{luriesag}. In \textit{op. cit.} an axiomatic treatment of deformation theories is carried out via a very general notion of formal moduli problem and a representability statement for such problems is given \cite[1.3.12]{luriedagx}. This abstract result is then used to give a correspondence between $E_n$-formal moduli problems and augmented $E_n$-algebras \cite[4.0.8]{luriedagx}, which for $n=\infty$ specialises to the correspondence between dglas and commutative formal moduli problems \cite[2.0.2]{luriedagx} and for $n=1$ gives a representability statement for noncommutative formal moduli problems \cite[3.0.4]{luriedagx}, although a method is not given to explicitly construct prorepresenting objects. When both $X$ and $A$ are connective, our Theorem A is a consequence of \cite[5.2.8]{luriedagx}; see \ref{lurie528} for the details.

\p Profinite algebras inherit the inverse limit topology, and instead of working with pro-Artinian algebras one can instead work with topological algebras, specifically the pseudocompact ones. This is the context for \cite{glstreview}, who work with set-valued deformation functors, and also discuss dg Lie algebras as representing objects for commutative deformation problems. In this commutative setting, Yekutieli has set up the theory of Maurer--Cartan elements and Deligne groupoids for pronilpotent dglas in \cite{yekpronil}. Arguably, the simplest way to state our results would be as corepresentability theorems for coalgebras; this line of thought goes back to Hinich \cite{hinich}. 

\p After the Artin representability theorem, prorepresentability theorems for formal moduli problems in \textit{commutative} derived and spectral geometry have been obtained by Lurie \cite{luriedagxiv} (see also \cite[V.18]{luriesag}) and improved in the derived setting by Pridham \cite{jonrepdstacks}. Although these representability theorems are powerful, they pose two problems for us. Firstly, they are not explicit and do not provide concrete prorepresenting objects. Secondly, we care about \textit{noncommutative} deformations. We will not provide an axiomatic treatment of noncommutative derived deformation functors in this paper; see e.g.\ \cite{unifying} in the commutative derived case or \cite[3.0.3]{luriedagx} for a definition in the noncommutative case. See also \cite{pridhamNC} for the beginnings of noncommutative theory for simplicial rings.

\p One topic we do not discuss in this paper is the deformation theory of \textit{algebras}, as opposed to modules over them. Square-zero extensions of a connective dga (or more generally, an $E_k$-algebra in a suitable symmetric monoidal $\infty$-category) are controlled by its cotangent complex, as in \cite[\S7.4]{lurieha} (or \cite[V.17]{luriesag} in the geometric setting). For discrete rings, the cotangent complex is essentially the same thing as the Hochschild cohomology. We remark that some of our Koszul duality theorems can be proved via the cotangent complex formalism; see \ref{mythesisrmk}.

\p This paper is an extension and improvement of Part I of the author's PhD thesis \cite{me}. The author would like to express his gratitude to his supervisor, Jon Pridham, who patiently explained much of the modern approach to deformation theory to him. The author would also like to thank David Edwards, Bernhard Keller, Andrey Lazarev, Michael Wemyss and Amnon Yekutieli for useful discussions and suggestions which improved both the presentation and the content of the paper. The author is grateful to the anonymous referees for their careful reading and helpful comments.

		\section{Notation and conventions}
	Throughout this paper, $k$ will denote a field. In sections $3$ and $4$, $k$ is arbitrary, and in sections $5$ onwards, $k$ will be assumed to be characteristic zero. Occasionally we may in addition require $k$ to be algebraically closed; we will indicate where this is necessary. Modules are right modules, unless stated otherwise. Consequently, all notions are `right': e.g.\ `noetherian' means right noetherian. Unadorned tensor products are by default over $k$. We denote isomorphisms (of modules, functors, \ldots) with $\cong$ and weak equivalences with $\simeq$.
	
	\p We use cohomological grading conventions, so that the differential of a complex has degree $1$. 	If we refer to an object as just \textbf{graded}, then by convention we mean that it is $\Z$-graded. If $X$ is a complex, we will denote its cohomology complex by $H(X)$ or just $HX$. If $X$ is a complex, let $X[i]$ denote `$X$ shifted left $i$ times': the complex with $X[i]^j=X^{i+j}$ and the same differential as $X$, but twisted by a sign of $(-1)^i$. This sign flip can be worked out using the \textbf{Koszul sign rule}: when an object of degree $p$ moves past an object of degree $q$, one should introduce a factor of $(-1)^{pq}$. If $x$ is a homogeneous element of a complex of modules, we denote its degree by $|x|$.
	
	\p A \textbf{differential graded algebra} (\textbf{dga} for short) over $k$ is a monoid in the category of chain complexes of vector spaces. A $k$-algebra is equivalently a dga concentrated in degree zero, and a graded $k$-algebra is equivalently a dga with zero differential. We will sometimes refer to $k$-algebras as \textbf{ungraded algebras} to emphasise that they should be considered as dgas concentrated in degree zero. 
	
	\p A \textbf{dg module} (or just a \textbf{module}) over a dga $A$ is a complex of vector spaces $M$ together with an action map $M \otimes A \to M$ satisfying the obvious identities, or equivalently a dga map $A \to \enn_k(M)$. Note that a dg module over an ungraded ring is exactly a complex of modules. If $A$ is an algebra, write $\cat{Mod}\text{-}A$ for its category of right modules; it is a closed monoidal abelian category.
	
	\p Let $X$ be a complex of $k$-vector spaces. The \textbf{total dimension} or just \textbf{dimension} of $X$ is $\sum_{n\in \Z}\mathrm{dim}_k X^n$. Say that $X$ is \textbf{finite-dimensional} or just \textbf{finite} if its total dimension is finite. Say that $X$ is \textbf{locally finite} if each $\mathrm{dim}_kX^n$ is finite. Say that $X$ is \textbf{cohomologically locally finite} if the cohomology dg vector space $HX$ is locally finite. Say that $X$ is \textbf{bounded} if $X^n$ vanishes for all but finitely many $n$, \textbf{bounded above} if $X^n$ vanishes for all $n\gg 0$, and \textbf{bounded below} if $X^n$ vanishes for all $n \ll 0$. We use the same terminology in the case that $X$ admits extra structure.
	
	\p Say that a complex $X$ is \textbf{connective} if one has $X^i=0$ for $i>0$. Up to quasi-isomorphism, by taking the good truncation to nonpositive degrees it is enough to assume that $H^i(X)\cong0$ for $i>0$. Say that $X$ is \textbf{coconnective} if one has $X^i=0$ for $i<0$ (or equivalently $H^i(X)\cong0$ for $i<0$ up to quasi-isomorphism). We use the same terminology in case $X$ admits extra structure (e.g. that of a dga). Note that if $A$ is a connective dga in the weak sense then the good truncation map $\tau_{\leq 0 }A \into A$ is a dga quasi-isomorphism. The reason we use `connective' instead of `concentrated in nonpositive degrees' is that the former notion is not dependent on grading conventions. See e.g.\ \cite{yekutielibook} for further facts about dgas.
	
	\p We freely use terminology and results from the theory of model categories; see for example \cite{quillenHA, hovey, dwyerspalinski, riehlCHT} for references. We will in particular assume that the reader knows the basics of the homotopy theory of simplicial sets, and that a model category admits derived mapping spaces which are (weak equivalence classes of) simplicial sets \cite[5.4.9]{hovey}. See \cite{goerssjardine} for a comprehensive textbook account of simplicial homotopy theory.
	
	\p We will moreover assume that the reader has a good familiarity with the theory of triangulated and derived categories; see \cite{neemanloc} and \cite{weibel, huybrechts} respectively for references. In particular we will make use of the fact that the derived category of a dga is the homotopy category of a model category. By convention we use the projective model structure on dg-modules where every object is fibrant; this is the `q-model structure' of \cite{sixmodels}. Over an ungraded ring, the cofibrant complexes are all degreewise projective, and the converse is true for bounded above complexes \cite[1.7]{sixmodels}. We also assume familiarity with the theory of dg categories and simplicially enriched categories, although we provide a quick overview to set notation.

	\p We will make use of the following categories and model structures. We will not use multiple model structures on the same category.

	\begin{center}
		\begin{tabular}{c|c|c}
				\textbf{Category} & \textbf{Objects} & \textbf{Model structure} \\ \hline
			$\cat{Mod}$-$A$ & dg modules over a dga $A$& projective, e.g.\ \cite{sixmodels}\\
			$\vect $& dg $k$-vector spaces & projective \\
			$\ubdga $& dg algebras over $k$& Hinich \cite{hinichhom}\\
			$\augdga$ &augmented $k$-dgas& Hinich\\
			$\cat{Art}_k$  & ungraded Artinian algebras & trivial \\
			$\cat{pro}(\cat{Art}_k)$  & pro-ungraded Artinian algebras & trivial \\
			$\ubdgart $& Artinian local $k$-dgas& \\
			$\ubproart$ &pro-Artinian dgas & See \ref{ubpamodel} \\
			$\proart$& connective pro-Artinian dgas &Pridham \cite{unifying}\\
			$\dgc $& dg coalgebras over $k$ &\\
			$\cndgc$ & conilpotent dgcs over $k$ &As in \cite{positselski} or \cite{lefevre}\\
			$\cndgc^{\geq 0}$ & coconnective conilpotent dgcs over $k$ & See \ref{ccdgcmod}\\
			$\cat{Set}$& sets & trivial\\
			$\sset$& simplicial sets&Quillen, e.g.\ \cite{goerssjardine}\\
			$\grp$&groups & trivial\\
			$\grpd$& groupoids&Canonical, e.g.\ \cite{stricklandgpds}\\
			$\dgcat $& dg categories & Tabuada \cite{tabuadamodel}\\
			$\scat$& simplicially enriched categories&Bergner \cite{bergnermodelscat}\\ 
			\hline
			$\cat{fd}$-$\mathcal{C}$ & finite-dimensional objects in $\mathcal{C}$& \\
			$\cat{pro}(\mathcal{C})$ & pro-objects in $\mathcal{C}$&\\
			$\cat{ind}(\mathcal{C})$ & ind-objects in $\mathcal{C}$&\\
			$\mathcal{C}^{\leq 0}$ &connective objects in $\mathcal{C}$ &\\
			$\mathcal{C}^{\geq  0}$ &coconnective objects in $\mathcal{C}$ &\\
			
			\end{tabular}
		\end{center}
	
We summarise some of our functors of interest in the following commutative diagram. For readability, we denote functors $X \mapsto X^ \square$ by simply $\square$. The $*$ functor is the $k$-linear dual, and the $!$ functor is the Koszul dual. The $\sharp$ and $\circ$ functors, which are equivalences as well as Quillen equivalences, are defined in $\S\ref{padsc}$. Functors running to the right are right Quillen, and functors running to the left are left Quillen; the bar construction $B$ is a left adjoint as we are using opposite categories. The Koszul dual is not Quillen, although it does preserve weak equivalences.

$$
\begin{tikzcd}
 (\cndgc^{\geq 0})^\text{op} \ar[r,hook]	\ar[d,"\sharp", bend left=15]& \cndgc^\text{op} \ar[rd,"*"]\ar[d,"\sharp", bend left=15]& \augdga^\text{op} \ar[d,"!"] \ar[l,"B", swap] \\
\proart \ar[r,hook]\ar[u,"\circ", bend left=15]& \ubproart \ar[r,"\varprojlim", swap]\ar[u,"\circ", bend left=15]& \augdga  
\end{tikzcd}$$

	\section{Model structures on algebras and coalgebras}\label{modsn}
	In this section, $k$ is any field. We review the standard model structures on algebras and coalgebras, for which the bar-cobar adjunction is a Quillen equivalence. We also consider pro-Artinian algebras, which we think of as formal duals of coalgebras. New here is a model structure on unbounded pro-Artinian algebras which extends Pridham's \cite{unifying} and is distinct from the level model structures \cite{ehpromodels, ehcechsurvey, isaksenstrict}. We remark that if one wants to work with (co)commutative (co)algebras, then $k$ should be characteristic zero, since Pridham's model structure depends on the identification of connective dgas with simplicial algebras.

\subsection{Bar and cobar constructions}
We follow Positselski \cite{positselski}; for other references see Loday--Vallette \cite{lodayvallette} or Lef\`evre-Hasegawa's thesis \cite{lefevre}. 
\begin{defn}
	Let $\ubdga$ be the category of dgas over $k$. Let $\augdga$ be the category of \textbf{augmented} dgas over $k$; these are the dgas $A$ for which the unit map $k \to A$ admits a $k$-linear retraction. A morphism of augmented dgas must respect the augmentation.
	\end{defn}
The \textbf{augmentation ideal} of an augmented dga is $\bar A\coloneqq \ker(A \to k)$. Sending $A$ to $\bar A$ sets up an equivalence between augmented dgas and nonunital dgas. The inverse functor freely appends a unit, and indeed $A$ is isomorphic to $\bar{A}\oplus k$ as an augmented dga.

Just like a dga is a monoid in the monoidal category of dg vector spaces over $k$, a \textbf{differential graded coalgebra} (or \textbf{dgc} for short) is a comonoid in this category. More concretely, a dgc is a dg $k$-vector space $(C,d)$ equipped with a comultiplication $\Delta:C \to C \otimes C$ and a counit $\epsilon:C \to k$, satisfying the appropriate coassociativity and counitality identities, and such that $d$ is a coderivation for $\Delta$.  A \textbf{coaugmentation} on a dgc is a section of $\epsilon$; if $C$ is coaugmented then $\bar C\coloneqq  \ker \epsilon$ is the \textbf{coaugmentation coideal}. It is a dgc under the reduced coproduct $\bar{\Delta}x = \Delta x -x\otimes 1 -1\otimes x$, and $C$ is isomorphic as a nonunital dgc to $\bar C \oplus k$. A morphism of coaugmented coalgebras must respect the coaugmentation. A coaugmented dgc $C$ is \textbf{conilpotent} if every $x \in \bar C$ is annihilated by some suitably high power of $\Delta$.
\begin{defn}
	Let $\dgc$ denote the category of dg coalgebras. Let $\cndgc$ denote the category of conilpotent dg coalgebras.
	\end{defn}
Recall that if $V$ is a dg vector space then $V^*$ denotes its linear dual, which is also a dg vector space. Observe that if $(C,\Delta,\epsilon)$ is a dgc, then $\Delta$ dualises to a map $(C\otimes C)^* \to C^*$, and composing $\Delta^*$ with the natural inclusion $C^* \otimes C^* \to (C \otimes C)^*$ turns $(C^*,\Delta^*,\epsilon^*)$ into a dga. If $C$ is coaugmented, then $C^*$ is augmented. We will almost exclusively be interested in the category $\cndgc$.
\begin{defn}
	If $V$ is a dg vector space, then the tensor algebra ${T}^c(V)\coloneqq k\oplus V \oplus V^{\otimes 2} \oplus\cdots$ is a dg coalgebra when equipped with the \textbf{deconcatenation coproduct} ${T}^c(V) \to {T}^c(V) \otimes {T}^c(V)$ which sends $v_1\cdots v_n$ to $\sum_i v_1\cdots v_i \otimes v_{i+1}\cdots v_n$. The differential is induced from the differential on $V^{\otimes n}$.
\end{defn}
 It is easy to see that ${T}^c(V)$ is coaugmented and moreover conilpotent. Denote by $\bar{T}^c(V)$ the \textbf{reduced tensor coalgebra}: it is the coaugmentation coideal of the tensor coalgebra. The functor ${T}^c$ is the cofree conilpotent coalgebra functor: if $C$ is conilpotent then a morphism $C \to{T}^c(V)$ is determined completely by the composition $l:C \to {T}^c(V) \to V$. In particular, any morphism $f:\bar{T}^c(W) \to \bar{T}^c(V)$ is determined completely by its \textbf{Taylor coefficients} $f_n: W^{\otimes n} \to V$.

\begin{defn}
	Let $A$ be an augmented dga. Put $V\coloneqq \bar A [1]$, the shifted augmentation ideal. Let $d_V$ be the usual differential on the tensor coalgebra $T^cV$. Let $d_B$ be the \textbf{bar differential}: send $a_1\otimes\cdots \otimes a_n$ to the signed sum over $i$ of the $a_1\otimes\cdots\otimes a_ia_{i+1}\otimes\cdots \otimes a_n$ and extend linearly. The signs come from the Koszul sign rule; see \cite[2.2]{lodayvallette} for a concrete formula. One can check that $d_B$ is a degree 1 map from $V^{\otimes n+1} \to V^{\otimes n}$, and that it intertwines with $d_V$. Hence, one obtains a third and fourth quadrant bicomplex $C$ with rows $V^{\otimes{n}}[-n]$. By construction, the direct sum total complex of $C$ is $T^cV$, with a new differential $\partial=d_V+d_B$. The \textbf{bar construction} of $A$ is the complex $BA\coloneqq (T^cV,\partial)$. One can check that the deconcatenation coproduct makes $BA$ into a dgc.
\end{defn}
Note that the degree 0 elements of $A$ become degree $-1$ elements of $BA$.
\begin{rmk}
	If $A$ is an augmented $A_\infty$-algebra, then one can define the \textbf{$A_\infty$ bar construction} $B_\infty A$, which is a dgc, in an analogous manner (see \cite{lefevre} for a concrete formula). If $A$ is a dga then $B_\infty A = BA$.
\end{rmk}
\begin{ex}Let $A$ be the graded algebra $k[\epsilon]/\epsilon^2$, with $\epsilon$ in degree 0. Then $\bar A [1]$ is $k\epsilon$ placed in degree $-1$. Since $\epsilon$ is square zero, the bar differential is identically zero. So $BA$ is the tensor coalgebra $k[\epsilon]$, with $\epsilon$ in degree -1.
\end{ex}

\begin{defn}
	Let $C$ be a coaugmented dgc. One can analogously define a \textbf{cobar differential} $d_\Omega$ on the tensor algebra $T(\bar{C}[-1])$ by sending $c_1\otimes\cdots \otimes c_n$ to the signed sum over $i$ of the ${c_1\otimes\cdots \otimes  \bar{\Delta}c_i\otimes\cdots c_n}$, and the \textbf{cobar construction} on $C$ is the dga $\Omega C\coloneqq (T(\bar{C}[-1]),d_C+d_\Omega)$.
\end{defn}

Bar and cobar are adjoints:
\begin{thm}[{\cite[2.2.6]{lodayvallette}}]
	If $A$ is an augmented dga and $C$ is a conilpotent dgc, then there is a natural isomorphism $$\hom_{\augdga}(\Omega C, A)\cong \hom_{\cndgc}(C, BA).$$
\end{thm}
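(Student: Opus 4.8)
The plan is to realise both sides as the set of \emph{twisting cochains} in a single graded-linear hom-space, by exploiting the freeness of $\Omega C$ as a graded algebra and the cofreeness of $BA$ as a conilpotent graded coalgebra. First I would forget differentials entirely. As a graded algebra $\Omega C = T(\bar C[-1])$ is free on $\bar C[-1]$, and the augmentation kills the generators, so an augmented graded-algebra map $\Omega C \to A$ is the same datum as a graded-linear map $\phi\colon \bar C[-1] \to \bar A$. Dually, as a graded conilpotent coalgebra $BA = T^c(\bar A[1])$ is cofree on $\bar A[1]$ — this is exactly where conilpotence of $C$ is needed — so a conilpotent graded-coalgebra map $C \to BA$ is the same datum as a graded-linear map $\psi\colon \bar C \to \bar A[1]$, namely its corestriction along the projection $\pi\colon BA \onto \bar A[1]$. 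Composing with the shift isomorphism $\hom_{\mathrm{gr}}(\bar C[-1],\bar A) \cong \hom_{\mathrm{gr}}(\bar C, \bar A[1])$ gives a natural bijection between the two underlying graded hom-sets. It then remains to check that, under this bijection, an augmented-algebra map $f\colon \Omega C \to A$ is a chain map if and only if the corresponding coalgebra map $g\colon C \to BA$ is.

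For this I would unwind both conditions to \emph{literally the same} equation. On the $\Omega C$ side: since both $f\circ(d_C+d_\Omega)$ and $d_A\circ f$ are $(f,f)$-derivations $\Omega C \to A$, they agree iff they agree on the generators $\bar C[-1]$; restricting there, $d_C$ contributes $\phi d_C$, the quadratic piece $d_\Omega$ (built from $\bar\Delta$) contributes $\mu_A(\phi\otimes\phi)\bar\Delta$, and the right-hand side is $d_A\phi$, so $f$ is a chain map iff $d_A\phi = \phi d_C + \mu_A(\phi\otimes\phi)\bar\Delta$ up to the signs forced by the shift. On the $BA$ side: since both $\partial\circ g$ and $g\circ d_C$ are $(g,g)$-coderivations $C \to BA$ into a cofree conilpotent coalgebra, they agree iff their corestrictions $\pi\partial g$ and $\pi g d_C$ agree; computing, $\pi g d_C = \psi d_C$, the internal differential $d_V$ contributes $d_{\bar A[1]}\psi$, and the bar differential $d_B$ — whose $V^{\otimes 2}\to V$ component is the (shifted) multiplication, and whose effect on $g$ involves the second Taylor coefficient $(\psi\otimes\psi)\bar\Delta$ of $g$ — contributes $\mu_A(\psi\otimes\psi)\bar\Delta$. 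Hence $g$ is a chain map iff $d_{\bar A[1]}\psi = \psi d_C + \mu_A(\psi\otimes\psi)\bar\Delta$. Since $\psi$ is precisely the shift of $\phi$, the two conditions coincide, so the graded bijection restricts to the asserted bijection of dg hom-sets, and naturality in $A$ and $C$ is immediate from the construction.

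The only genuine work is the sign bookkeeping: one must confirm that the shifts $[\pm 1]$ together with the Koszul sign rule produce the \emph{same} signs in the two Maurer--Cartan-type equations above — equivalently, that the classical twisting-cochain equation $d\tau + \tau\cup\tau = 0$ in the convolution dga $\hom(C,A)$, with $\tau\cup\tau = \mu_A(\tau\otimes\tau)\bar\Delta$, simultaneously encodes ``$f$ is a chain map'' and ``$g$ is a chain map''. I would organise the proof around this convolution algebra precisely to make the sign comparison a single computation rather than two. I would also invoke the two standard structural facts used silently above: that a map out of a free graded algebra, and more generally an $(f,f)$-derivation out of it, is determined by its restriction to generators; and dually that a map into a cofree conilpotent graded coalgebra, and more generally a $(g,g)$-coderivation into it, is determined by its corestriction. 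The coalgebra versions are the delicate ones, as they rely on conilpotence of $C$ — which is why the statement is restricted to $\cndgc$.
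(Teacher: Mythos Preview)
Your proposal is correct and is precisely the standard argument via twisting cochains: use freeness of $\Omega C$ and cofreeness of $BA$ to reduce both hom-sets to degree-zero linear maps $\bar C\to\bar A[1]$, then check that the chain-map condition on either side is the Maurer--Cartan equation in the convolution algebra. The paper does not supply its own proof of this statement --- it simply cites \cite[2.2.6]{lodayvallette} --- and the argument you have sketched is exactly the one appearing there (and is indeed reflected in the paper's later nonunital restatement \ref{barcobaradj}, where both sides are explicitly identified with the set $\mathrm{Tw}(C,E)$ of twisting morphisms).
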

\begin{lem}
	The bar construction preserves quasi-isomorphisms.
\end{lem}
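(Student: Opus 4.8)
The plan is a standard filtration argument on the tensor coalgebra underlying $BA$, which reduces everything to the Künneth theorem over the field $k$. So let $f\colon A \to A'$ be a quasi-isomorphism of augmented dgas. First I would note that the induced map $\bar f\colon \bar A \to \bar A'$ on augmentation ideals is again a quasi-isomorphism: the augmentation splits, so $A \cong \bar A \oplus k$ and $A' \cong \bar A' \oplus k$ as complexes in a way compatible with $f$, whence $f = \bar f \oplus \mathrm{id}_k$, and a direct summand of a quasi-isomorphism is a quasi-isomorphism. Write $V \coloneqq \bar A[1]$ and $V' \coloneqq \bar A'[1]$, so that $BA = (T^c V,\, d_V + d_B)$ and likewise for $A'$, and $Bf$ is induced by $\bar f[1]$.

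Next I would filter $BA$ by the number of bar factors: set $F_p(BA) \coloneqq \bigoplus_{n \le p} V^{\otimes n}$. The internal differential $d_V$ preserves the number of tensor factors, and the bar differential $d_B$ strictly lowers it by one, so $d_B(F_p) \subseteq F_{p-1}$ and each $F_p(BA)$ is a subcomplex; the filtration is exhaustive with $F_{-1}(BA) = 0$, and $Bf$ is filtered. On the associated graded the bar differential drops out, so $\mathrm{gr}_p(BA) = (V^{\otimes p}, d_V)$ with the usual tensor-product (total) differential, and $\mathrm{gr}_p(Bf) = (\bar f[1])^{\otimes p}$. Since $k$ is a field, the Künneth theorem gives $H(V^{\otimes p}) \cong (H\bar A\,[1])^{\otimes p}$, and similarly for $V'$, so $\mathrm{gr}_p(Bf)$ is identified with $(H\bar f\,[1])^{\otimes p}$ on cohomology; as $\bar f$ is a quasi-isomorphism, $\mathrm{gr}_p(Bf)$ is a quasi-isomorphism for every $p$.

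Finally I would pass from the associated graded back to $BA$ itself. By induction on $p$, using the short exact sequences of complexes $0 \to F_{p-1} \to F_p \to \mathrm{gr}_p \to 0$ for both $BA$ and $BA'$, the naturality of the associated long exact sequences in cohomology, and the five lemma, one gets that $Bf\colon F_p(BA) \to F_p(BA')$ is a quasi-isomorphism for all $p \ge -1$. Since $BA = \varinjlim_p F_p(BA)$ is a filtered colimit of complexes and filtered colimits of $k$-vector spaces are exact (hence commute with cohomology), it follows that $Bf$ is a quasi-isomorphism. The only step requiring any care — and so the nearest thing to an obstacle — is this last colimit step: it is precisely what lets one avoid any boundedness hypothesis on $A$ and sidestep questions of convergence for the spectral sequence of the filtration; the rest is routine bookkeeping with the Koszul signs. (The same argument applies verbatim to the $A_\infty$ bar construction, since the higher components $d_m$ of the differential also lower the number of bar factors.)
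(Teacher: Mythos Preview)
Your proof is correct and follows essentially the same approach as the paper: filter $BA$ by tensor length and use that the associated graded pieces are tensor powers of $\bar A[1]$, which are quasi-isomorphic by K\"unneth. The paper merely sketches this and cites \cite[\S6.10]{positselski} and \cite[Chapter~1]{lefevre} for details, whereas you have spelled out the inductive five-lemma argument and the filtered-colimit step explicitly.
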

\begin{proof}
	The idea is to filter $BA$ by setting $F_p BA$ to be the elements of the form $a_1\otimes\cdots\otimes a_n$ with $n\leq p$, and look at the associated spectral sequence. A proof for dgas is in \cite[\S6.10]{positselski} and a proof for $A_\infty$-algebras is in \cite[Chapter 1]{lefevre}.
\end{proof}
\begin{rmk}The cobar construction does not preserve quasi-isomorphisms in general. For a concrete counterexample, put $C=B(k\oplus k)$. Then $C\simeq k$, and $\Omega k$ is acyclic, whereas $\Omega C$ is quasi-isomorphic to $k\oplus k$. A version of this example appears as \cite[2.4.3]{lodayvallette}.
\end{rmk}
\begin{thm}[{\cite[2.3.2]{lodayvallette}}]
	Let $A$ be an augmented dga. Then the counit $\Omega B A \to A$ is a quasi-isomorphism.
\end{thm}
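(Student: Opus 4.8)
The plan is to compare $\Omega BA$ with $A$ by means of a spectral sequence attached to a filtration by word-length. First I would unwind the construction: writing $V=\bar A[1]$, the underlying graded algebra of $\Omega BA$ is the tensor algebra $T\bigl(\bar T^c(V)[-1]\bigr)$, and its differential splits as $\partial=\partial_0+\partial_1+\partial_2$, where $\partial_0$ is induced by the internal differential of $A$, $\partial_1$ is the bar part coming from the multiplication of $A$ (it contracts two adjacent letters of $\bar A$ inside a single bar-block), and $\partial_2$ is the cobar part coming from the deconcatenation coproduct of $BA$ (it splits one bar-block into two). The counit $\varepsilon\colon\Omega BA\to A$ is the map that kills every element of outer tensor length $\geq 2$ and every bar-block of length $\geq 2$, restricting to the canonical identification $\bar A[1][-1]\cong\bar A\hookrightarrow A$ on what remains; in particular it is a surjection of complexes, and the task is to show it induces an isomorphism on cohomology.

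Next I would filter $\Omega BA$ by \emph{weight}, the total number of letters of $\bar A$ occurring in a tensor of bar-blocks. Since $\partial_0$ and $\partial_2$ preserve the weight while $\partial_1$ lowers it by one, the decreasing filtration $F^p=\{\text{weight}\geq p\}$ is a filtration by subcomplexes, and the induced differential on the associated graded is $\partial_0+\partial_2$. One recognises $(\Omega BA,\partial_0+\partial_2)$ as the cobar construction of $T^c(V)$ equipped with only its internal differential, equivalently $\Omega B$ applied to $k\oplus\bar A$ regarded as a square-zero extension. The key input is then the purely combinatorial statement that in each fixed weight $\geq 2$ this complex is acyclic: after separating off the corresponding tensor power of $\bar A$, the remaining combinatorial factor is the augmented cochain complex of a simplex whose vertices are the gaps between consecutive letters, which is exact (it even carries an evident contracting homotopy, coning off a fixed gap). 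Granting this, the $E_1$-page is $k\oplus H(\bar A)=H(A)$, concentrated in weights $0$ and $1$; the differential $d_1$ vanishes, because on a one-letter block $\partial_1$ has no adjacent pair to multiply, and the same weight bookkeeping forces every higher differential to vanish. Hence the sequence collapses at $E_1$ and one reads off $H(\Omega BA)\cong H(A)$, compatibly with $\varepsilon$.

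The hard part will be the \textbf{convergence} of this spectral sequence: the weight filtration is exhaustive and Hausdorff but not pointwise finite — in a fixed cohomological degree there are tensors of unboundedly many one-letter blocks — so $\Omega BA$ is not complete for it and the standard convergence theorems do not apply verbatim. This is exactly the situation handled by the comparison lemma for filtered dg modules in \cite{lodayvallette} (see also \cite{positselski,lefevre}), which secures convergence by using the auxiliary grading by number of bar-blocks. An alternative that avoids spectral sequences altogether is to prove directly that the mapping cone of $\varepsilon$ is contractible, exhibiting a contracting homotopy built from the bar--cobar structure; this is conceptually cleaner but heavier on sign bookkeeping. In the write-up I would expect simply to invoke \cite[2.2.6 and 2.3.2]{lodayvallette}, perhaps after recording the weight filtration for the reader's benefit, noting that since $A$ is an honest dga the $A_\infty$ subtleties of \textit{op.\ cit.}\ do not intervene here.
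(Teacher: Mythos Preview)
Your proposal is correct and in fact gives considerably more detail than the paper does: the paper simply cites \cite[2.3.2]{lodayvallette} without proof, and the weight-filtration spectral sequence argument you sketch is precisely the one appearing there. Your closing remark that in the write-up you would just invoke the reference is exactly what the paper does.
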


\begin{thm}[Hinich \cite{hinichhom}]\label{hinichmodel}
	The category $\ubdga$ is a model category, with weak equivalences the quasi-isomorphisms and fibrations the degreewise surjections.
\end{thm}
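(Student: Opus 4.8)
The plan is to obtain this model structure by transferring the projective model structure on $\vect$ along the free--forgetful adjunction $T\colon\vect\rightleftarrows\ubdga\colon U$, where $U$ forgets the algebra structure and $T(V)=\bigoplus_{n\geq 0}V^{\otimes n}$ is the tensor algebra. First I would recall that $\vect$ is cofibrantly generated: writing $D^n$ for the acyclic complex $k\xrightarrow{\id}k$ concentrated in degrees $n-1$ and $n$, and $S^n$ for $k$ concentrated in degree $n$, the generating cofibrations are the inclusions $S^n\into D^{n+1}$ and the generating acyclic cofibrations are the maps $0\to D^n$, for $n$ ranging over $\Z$; as $k$ is a field, the fibrations of $\vect$ are precisely the degreewise surjections and every object is cofibrant. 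I would then \emph{define} a morphism of $\ubdga$ to be a weak equivalence (resp.\ fibration) exactly when $U$ carries it to one, and verify the hypotheses of Kan's transfer theorem (see e.g.\ \cite{hinichhom}).

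The formal hypotheses are straightforward. The category $\ubdga$ is complete and cocomplete: limits are created by $U$, and colimits exist because $U$ is monadic over the cocomplete category $\vect$ (concretely, the coproduct is a free product of dgas). Smallness holds because every dg vector space is small and $U$ preserves filtered colimits, so the domains of the candidate generating (acyclic) cofibrations $T(I)$ and $T(J)$ are small relative to the relevant cell complexes. The one real condition to check is \emph{acyclicity}: every relative $T(J)$-cell complex must be a weak equivalence. Since quasi-isomorphisms over a field are stable under retracts and filtered colimits, it suffices to show that each pushout of a map in $T(J)$ is a quasi-isomorphism.

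This is the key step, and the only point requiring genuine work. A pushout of $T(0)=k\to T(D^n)$ along the unit $k\to A$ of an arbitrary dga is the dga $A\langle D^n\rangle$ obtained by freely adjoining the generators $D^n$ to $A$; as a dg vector space it is $\bigoplus_{p\geq 0}A\otimes D^n\otimes A\otimes\cdots\otimes A$ with $p$ interior factors $D^n$ and $p+1$ factors $A$. The differential preserves the number of $D^n$-factors, since it only applies the internal differentials of $A$ and of $D^n$; hence $A\langle D^n\rangle$ is a \emph{direct sum} of complexes indexed by $p$. The $p=0$ summand is $A$ itself, and for $p\geq 1$ the $p$-th summand is isomorphic to $A^{\otimes(p+1)}\otimes(D^n)^{\otimes p}$ with the tensor-product differential, which is acyclic by the K\"unneth theorem because $D^n$ is acyclic and $k$ is a field. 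Therefore $H\bigl(A\langle D^n\rangle\bigr)\cong H(A)$ and the canonical map $A\to A\langle D^n\rangle$ is a quasi-isomorphism, as required. (Equivalently, one writes down an $A$-linear contracting homotopy on the $p\geq 1$ part using the contraction of $D^n$.)

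Kan's theorem now produces a cofibrantly generated model structure on $\ubdga$ whose generating cofibrations are $T(I)$, whose generating acyclic cofibrations are $T(J)$, whose weak equivalences are the quasi-isomorphisms, and whose fibrations are the maps with the right lifting property against $T(J)$. By the adjunction $T\dashv U$, a map $g$ has this lifting property if and only if $U(g)$ has the right lifting property against $J$, which — since having the RLP against every $0\to D^n$ is exactly degreewise surjectivity — holds if and only if $g$ is a degreewise surjection. This is the asserted model structure; everything outside the third paragraph is the standard transfer machinery.
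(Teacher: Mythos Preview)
Your proof is correct and is essentially Hinich's original argument: transfer the projective model structure along the free--forgetful adjunction, with the only nontrivial step being the acyclicity of pushouts along $T(0)\to T(D^n)$, which you handle by the direct-sum decomposition of $A\langle D^n\rangle$ by word length in $D^n$. The paper itself gives no proof at all --- it simply cites \cite{hinichhom} --- so there is nothing in the paper to compare against; you have supplied what the paper omits.
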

Note that in $\ubdga$, every object is fibrant, and the cofibrant objects are the semifree dgas with `upper triangular' differential (recall that a semifree dga is one that becomes a free graded algebra after forgetting the differential).
\begin{prop}\label{hinichhomaug}
	The category $\augdga$ is a model category, with weak equivalences the quasi-isomorphisms and fibrations the levelwise surjections.
\end{prop}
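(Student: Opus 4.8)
The plan is to transfer Hinich's model structure on $\ubdga$ (Theorem \ref{hinichmodel}) along the forgetful/augmentation-ideal adjunction, or more directly to realize $\augdga$ as a slice-type category and appeal to a general principle. The cleanest route: observe that an augmented dga is the same data as a dga $A$ together with a dga map $A \to k$ splitting the unit, so $\augdga$ sits as a full subcategory of the double slice $k\downarrow \ubdga \downarrow k$ (pointed objects under and over $k$). Since $k$ is both initial-ish (it receives the unit) and terminal among the objects we care about via the augmentation, the standard fact is that for any model category $\mathcal{M}$ and object $Z$, the slice $\mathcal{M}/Z$ and coslice $Z/\mathcal{M}$ inherit model structures with weak equivalences, fibrations and cofibrations created by the forgetful functor; iterating gives a model structure on $Z/\mathcal{M}/Z$. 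One then checks that $\augdga$ is exactly the subcategory of $k/\ubdga/k$ on those objects where the composite $k \to A \to k$ is the identity — but that composite is forced to be a dga endomorphism of $k$, hence the identity, so in fact $\augdga = k/\ubdga/k$ on the nose (every object of the double slice is automatically "augmented" in our sense). Therefore the model structure is simply the double-slice model structure, with weak equivalences the quasi-isomorphisms (created in $\ubdga$) and fibrations the levelwise surjections (likewise created in $\ubdga$, using Theorem \ref{hinichmodel}).

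The steps, in order, are: (1) identify $\augdga$ with $k/\ubdga/k$ as categories, noting that a morphism in $\augdga$ is required to respect the augmentation, which matches the morphisms in the double slice; (2) recall the general lemma that slices and coslices of a model category are model categories, with the three classes created by the forgetful functor (this is standard, e.g.\ \cite{hovey} or \cite{dwyerspalinski}; the only nontrivial inputs are that limits/colimits in the slice are computed in $\mathcal{M}$ together with the structure map, which holds since $k$ is cofibrant-and-fibrant-enough — in fact $k$ is fibrant in $\ubdga$ as every object is, and that is all one needs for the coslice, while for the slice one needs nothing); (3) apply this twice, first forming $k/\ubdga$ and then $(k/\ubdga)/k$, to get the model structure; (4) read off that weak equivalences are quasi-isomorphisms and fibrations are levelwise (equivalently degreewise) surjections, since both are detected by the forgetful functor to $\ubdga$ where they have this description by Theorem \ref{hinichmodel}.

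I do not expect a serious obstacle here; the proposition is essentially a formal consequence of Hinich's theorem plus the general stability of model structures under (co)slicing. The one point deserving a sentence of care is step (1): one must confirm that the slice construction does not introduce spurious objects — i.e.\ that in $k/\ubdga/k$ the structure maps $k \to A$ and $A \to k$ compose to $\mathrm{id}_k$ automatically. This holds because $\enn_{\ubdga}(k) = k$ has only the identity as a dga endomorphism fixing the unit, so every object of the double slice genuinely has $A \cong \bar A \oplus k$ with the evident augmentation, matching the definition given before the statement. A secondary minor point is to note that cofibrant objects in $\augdga$ are the augmented semifree dgas with upper-triangular differential (the cofibrations are also created by the forgetful functor, since cofibrations in a coslice $Z/\mathcal{M}$ are the maps that are cofibrations in $\mathcal{M}$, and in a slice $\mathcal{M}/Z$ likewise); this is not strictly needed for the statement but is worth recording for later use. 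Alternatively, if one prefers to avoid invoking the double-slice fact as a black box, one can run the standard Kan transfer argument along the free-forgetful adjunction $\vect \rightleftarrows \augdga$ (with the free augmented dga on a dg vector space being $T(V)$ with trivial augmentation), checking the acyclicity of relative cell complexes exactly as Hinich does for $\ubdga$; but the slice argument is shorter and reuses Theorem \ref{hinichmodel} directly.
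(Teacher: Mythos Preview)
Your proposal is correct and uses essentially the same idea as the paper: inherit the model structure from Hinich's via a slice construction. The paper is slightly more direct, observing simply that $\augdga$ is the single slice $\ubdga/k$, since $k$ is already the initial object of $\ubdga$ (every dga has a unique unit map from $k$), so your coslice $k/\ubdga$ is just $\ubdga$ again and the double slice collapses to the single one; your step (1) verifying that the composite $k\to A\to k$ is forced to be $\id_k$ is then unnecessary.
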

\begin{proof}
	This is true because $\augdga$ is the slice category $\ubdga / k$ of dgas over $k$, and slice categories of model categories are themselves model categories under the same weak equivalences, fibrations, and cofibrations (see e.g.\ \cite{dwyerspalinski}).
\end{proof}

Note that if $C$ is a conilpotent coalgebra then $\Omega C$ is a cofibrant dga, and in particular we may think of $\Omega B$ as a cofibrant resolution functor on $\augdga$.

The failure of the cobar construction to preserve quasi-isomorphisms means that the `obvious' model structure on $\cndgc$, with weak equivalences quasi-isomorphisms, does not make $\Omega \dashv B$ into a Quillen adjunction. One way to fix this is to put a new model structure on $\cndgc$ whose weak equivalences are created by $\Omega$.
\begin{thm}[{\cite[Theorem 9.3b and Theorem 6.10]{positselski}}]\label{bcquillen}
	The category $\cndgc$ admits a model structure where the weak equivalences $f$ are those maps for which $\Omega f$ is a dga quasi-isomorphism, and the cofibrations are the degreewise monomorphisms. Moreover, if $C$ is a conilpotent dgc then the natural map $C \to B\Omega C$ is a fibrant resolution. The pair $(\Omega, B)$ is a Quillen equivalence between $\cndgc$ and $\augdga$.
\end{thm}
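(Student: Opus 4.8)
The plan is a transfer-type construction along the bar--cobar adjunction $\Omega\dashv B$, with the twist that only the weak equivalences, and not the fibrations, are read off from $\augdga$. Declare the weak equivalences $W$ to be the maps $f$ with $\Omega f$ a quasi-isomorphism, the cofibrations $\mathrm{Cof}$ to be the degreewise monomorphisms, and the fibrations $\mathrm{Fib}$ to be the maps with the right lifting property against $W\cap\mathrm{Cof}$. The first step is to dispense with the formal axioms: $\cndgc$ is complete and cocomplete; two-out-of-three and closure under retracts for $W$ are inherited from quasi-isomorphisms of dgas via functoriality of $\Omega$; closure under retracts for $\mathrm{Cof}$ and $\mathrm{Fib}$ is automatic; and a trivial cofibration lifts against a fibration by the very definition of $\mathrm{Fib}$.

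The substance is in the two factorization axioms, and with them the remaining lifting axiom. Here I would use the ``fundamental theorem'': every conilpotent dgc is the filtered union of its finite-dimensional sub-dgcs, so that the small object argument is available with finite-dimensional objects as sources. For the $(\mathrm{Cof},\,W\cap\mathrm{Fib})$ factorization one takes the generating set $I$ of all monomorphisms between finite-dimensional conilpotent dgcs; the small object argument then factors an arbitrary map as a transfinite composite of pushouts of maps in $I$ --- hence a monomorphism, i.e.\ a cofibration --- followed by a map with the right lifting property against $I$, and one checks that this right class is exactly $W\cap\mathrm{Fib}$, equivalently the maps with the right lifting property against all monomorphisms. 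For the $(W\cap\mathrm{Cof},\,\mathrm{Fib})$ factorization one must exhibit a set $J$ of trivial cofibrations generating $W\cap\mathrm{Cof}$ under saturation, and check $J\subseteq W$. This is the genuine obstacle, and the point at which one \emph{cannot} simply transport a generating set from $\augdga$, precisely because $\Omega$ --- which defines $W$ --- fails to preserve quasi-isomorphisms; the resolution, carried out in \cite{positselski} and \cite{lefevre}, is a hands-on analysis of ``coacyclic'' conilpotent dgcs together with an explicit construction of path/cylinder objects in $\cndgc$, reducing matters to finite-dimensional coalgebras where the fact that $B$ preserves quasi-isomorphisms can be brought to bear. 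Granting this, the standard cofibrantly-generated machinery assembles the model structure.

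Finally, I would upgrade $\Omega\dashv B$ to a Quillen equivalence. By construction $\Omega$ preserves weak equivalences, and it sends a monomorphism $C'\hookrightarrow C$ to a semifree extension of dgas whose differential strictly raises the tensor word-length filtration, hence to a cofibration in Hinich's model structure (\ref{hinichmodel}); so $\Omega$ is left Quillen and $B$ is right Quillen. Since $0\to C$ is a monomorphism every conilpotent dgc is cofibrant, and since every dga is fibrant so is $\Omega C$, whence $B\Omega C$ is fibrant; therefore the derived counit at $A$ and derived unit at $C$ are just the ordinary maps $\Omega BA\to A$ and $C\to B\Omega C$. The former is a quasi-isomorphism by the counit theorem \cite[2.3.2]{lodayvallette} already quoted. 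For the latter, apply $\Omega$: the triangle identity makes the composite $\Omega C\to\Omega B\Omega C\to\Omega C$ the identity, while the second arrow is the counit at $\Omega C$, a quasi-isomorphism, so two-out-of-three forces $\Omega C\to\Omega B\Omega C$ to be a quasi-isomorphism, i.e.\ $C\to B\Omega C$ lies in $W$. Hence both derived unit and counit are weak equivalences, $(\Omega,B)$ is a Quillen equivalence, and in particular $C\to B\Omega C$ exhibits $B\Omega C$ as a fibrant resolution of $C$. The main obstacle throughout is the construction of the generating trivial cofibrations $J$; everything else is either formal or a routine run of the small object argument powered by the filtered-union description of conilpotent dgcs.
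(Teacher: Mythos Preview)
The paper does not supply its own proof of this theorem: it is stated as a citation to Positselski (and implicitly Lef\`evre), with no argument given beyond the references. So there is nothing in the paper to compare your sketch against beyond the bibliographic pointer.

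That said, your outline is broadly faithful to how the cited sources proceed, and your treatment of the Quillen equivalence is clean and correct: the triangle-identity argument for the unit $C\to B\Omega C$ is exactly the right move, and it also immediately yields the fibrant-resolution claim once one observes that $B$ lands in semicofree coalgebras (which are the fibrant objects). One point worth flagging: you locate the entire difficulty in producing generating trivial cofibrations $J$, but the step you pass over with ``one checks that this right class is exactly $W\cap\mathrm{Fib}$'' is not free either. Having the right lifting property against all monomorphisms certainly forces membership in $\mathrm{Fib}$ (since $W\cap\mathrm{Cof}\subseteq\mathrm{Cof}$), but membership in $W$ --- i.e.\ that such a map becomes a quasi-isomorphism after applying $\Omega$ --- is a genuine computation, essentially the characterisation of acyclic fibrations as surjections with coacyclic kernel. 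In Positselski's treatment this and the $J$-problem are handled together via the coacyclicity formalism rather than separately, so the obstacle is perhaps better described as a single package than as ``just $J$''.
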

 Every object in $\cat{cndgc}_k$ is cofibrant, and the fibrant objects are the semicofree conilpotent coalgebras, i.e.\ those that are tensor coalgebras after forgetting the differential.
\begin{rmk}
	Every weak equivalence in $\cndgc$ is a quasi-isomorphism, but the converse is not true \cite[2.4.3]{lodayvallette}. One can think of the above model structure on $\cndgc$ as a `Bousfield delocalisation' of the obvious model structure, since it has less weak equivalences.
\end{rmk}
\begin{lem}\label{coconnlem}
	A quasi-isomorphism between coconnective coalgebras is a weak equivalence.
\end{lem}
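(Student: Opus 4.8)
The plan is to reduce the claim to the general principle, available from Theorem~\ref{bcquillen}, that a weak equivalence in $\cndgc$ is exactly a map $f$ for which $\Omega f$ is a quasi-isomorphism, and then to show that over coconnective coalgebras the cobar construction \emph{does} preserve quasi-isomorphisms. First I would recall that if $C$ is coconnective, i.e.\ $C^i = 0$ for $i < 0$, then the coaugmentation coideal $\bar C$ is also coconnective, and hence the shift $\bar C[-1]$ is concentrated in strictly positive degrees. Consequently the tensor algebra $T(\bar C[-1])$ underlying $\Omega C$ is nonnegatively graded, with the $n$-fold tensor power $\bar C[-1]^{\otimes n}$ concentrated in degrees $\geq n$. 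This is the key structural observation: the cobar construction of a coconnective coalgebra carries a complete, exhaustive filtration by tensor-length whose associated graded in each fixed internal degree is a \emph{finite} direct sum.

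Given a quasi-isomorphism $f\colon C \to D$ of coconnective coalgebras, I would filter $\Omega C$ and $\Omega D$ by the tensor-length filtration $F_p = \bigoplus_{n \leq p} \bar C[-1]^{\otimes n}$ (respectively for $D$). The cobar differential $d_\Omega$ raises tensor length, while the internal differential $d_C$ preserves it, so on the associated graded the differential is just $d_C$ (with Koszul signs), and $\mathrm{gr}_p \Omega C \cong (\bar C[-1])^{\otimes p}$ with its tensor-product differential. Since $\bar f\colon \bar C \to \bar D$ is a quasi-isomorphism of coconnective complexes, so is each tensor power $\bar f^{\otimes p}$ (using the Künneth theorem over the field $k$), hence $\Omega f$ induces a quasi-isomorphism on each associated graded piece. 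Because both filtrations are exhaustive and, crucially, \emph{bounded below in each internal degree} — in degree $m$ only the pieces with $n \leq m$ contribute — the associated spectral sequences converge and the comparison map is an isomorphism on the $E_1$-pages, so $\Omega f$ is a quasi-isomorphism. By Theorem~\ref{bcquillen}, $f$ is therefore a weak equivalence in $\cndgc$.

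The main obstacle is the convergence bookkeeping: in general $\Omega$ of a quasi-isomorphism need \emph{not} be a quasi-isomorphism (as the remark following the bar-cobar adjunction already warns, with $C = B(k \oplus k)$), and that failure is precisely a convergence failure for the tensor-length spectral sequence when $C$ is not coconnective. So the entire weight of the argument rests on verifying that coconnectivity forces the filtration to be degreewise finite (equivalently, that in each fixed cohomological degree only finitely many filtration steps are nonzero), which legitimises passage from $E_1$-isomorphism to $E_\infty$-isomorphism and thence to a quasi-isomorphism of total complexes. Once that finiteness is in hand the rest is the standard Eilenberg–Moore / filtered-complex comparison, and the Künneth input is immediate since we are working over a field.
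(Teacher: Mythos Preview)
Your proof is correct and follows essentially the same route as the paper's: the key observation is that coconnectivity forces $\bar C[-1]$ to lie in strictly positive degrees, so the tensor-length filtration on $\Omega C$ is degreewise finite and the standard filtered-complex/spectral-sequence comparison applies. The paper compresses this by invoking the notion of a \emph{filtered quasi-isomorphism} and citing \cite[2.4.1]{lodayvallette} for the fact that such maps are weak equivalences, but the underlying argument is exactly the one you have spelled out.
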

\begin{proof}This is \cite[1.3.1.5(e)]{lefevre}. Take $f:C \to D$ to be a quasi-isomorphism where $C$ and $D$ are both coconnective. Then the point is that the shifted augmentation ideals $\bar{C}[-1]$ and $\bar{D}[-1]$ are concentrated in strictly positive degrees, which causes $f$ to be a filtered quasi-isomorphism; it is well known that such maps are weak equivalences (e.g.\ \cite[2.4.1]{lodayvallette}).
	\end{proof}

\subsection{Pro-Artinian dgas}\label{padsc}

\begin{defn}
	Say that a dga $A \in \augdga$ is \textbf{Artinian local} or just \textbf{Artinian} if:
	\begin{enumerate}
		\item $A$ has finite total dimension.
		\item the augmentation ideal $\bar A$ is nilpotent.
	\end{enumerate}
\end{defn}
Equivalently, a dga is Artinian local if it is finite-dimensional and has a unique two-sided maximal ideal $\mathfrak{m}$ which is nilpotent and has residue field $k$. Denote the category of Artinian local dgas by $\ubdgart$; it is a full subcategory of $\augdga$ in the obvious way. Let $\dgart$ denote the category of connective Artinian dgas.

Note that if $A$ is a finite-dimensional dga, then the multiplication dualises to a map $A^* \to A^* \otimes A^*$, and this makes $C\coloneqq A^*$ into a dgc. In this case, $A$ is augmented if and only if $C$ is coaugmented, and $A$ is Artinian if and only if $C$ is conilpotent.

\begin{defn}[e.g.\ {\cite[\S6]{kashschap}}]\label{procats}
	Let $\mathcal{C}$ be a category. A \textbf{pro-object} in $\mathcal{C}$ is a formal cofiltered limit, i.e.\ a diagram $J \to \mathcal{C}$ where $J$ is a small cofiltered category. We denote such a pro-object by $\{C_j\}_{j \in J}$. The category of pro-objects $\cat{pro}\mathcal{C}$ has morphisms $$\hom_{\cat{pro}\mathcal{C}}(\{C_i\}_{i \in I}, \{D_j\}_{j \in J}) \coloneqq  \varprojlim_j \varinjlim_i \hom_{\mathcal{C}}(C_i,D_j).$$
\end{defn}
If $\mathcal{C}$ has cofiltered limits, then there is a `realisation' functor $\varprojlim: \cat{pro}\mathcal{C} \to \mathcal{C}$. If $C$ is a constant pro-object, then it is easy to see that one has $\hom_{\cat{pro}\mathcal{C}}(C, \{D_j\}_{j \in J}) \cong  \hom_{\mathcal{C}}(C,\varprojlim_jD_j)$.
\begin{defn}
	Let $\mathcal{C}$ be a category. The \textbf{ind-category} of $\mathcal{C}$ is $\cat{ind}\mathcal{C}\coloneqq \cat{pro}(\mathcal{C}^\text{op})^\text{op}$. Less abstractly, an object of $\cat{ind}\mathcal{C}$ is a formal filtered colimit $J \to \mathcal{C}$, and the morphisms are $$\hom_{\cat{ind}\mathcal{C}}(\{C_i\}_{i \in I}, \{D_j\}_{j \in J}) \coloneqq  \varprojlim_i \varinjlim_j \hom_{\mathcal{C}}(C_i,D_j)$$
\end{defn}
If $\mathcal{C}$ has filtered colimits, then there is a `realisation' functor $\varinjlim: \cat{ind}\mathcal{C} \to \mathcal{C}$. In this situation, if $D \in \mathcal{C}$ is a constant ind-object then one has $\hom_{\cat{ind}\mathcal{C}}(\{C_i\}_{i \in I}, D)\cong \hom_\mathcal{C}( \varinjlim_iC_i,D)$. 
\begin{defn}
	We refer to an object of the procategory $\ubproart$ as a \textbf{pro-Artinian dga} and an object of the procategory $\proart$ as a \textbf{connective pro-Artinian dga}.
\end{defn}
We caution that for the majority of this paper, ``pro-Artinian'' is shorthand for ``pro-(Artinian local)''. Nonlocal profinite dgas will not be our main objects of study, although we remark on how to extend the theory to profinite multi-pointed dgas in $\S8$. We list some standard results on the structure of $\ubproart$:
\begin{prop}[{\cite[6.1.14]{kashschap}}]
	Let $\mathcal {C}$ be any category and let $f:A \to B$ be a morphism in $\cat{pro}(\mathcal{C})$. Then $f$ is isomorphic to a level map: a collection of maps $\{f_\alpha: A_\alpha \to B_\alpha\}_{\alpha \in I}$ between objects of $\mathcal{C}$, where $I$ is cofiltered.
\end{prop}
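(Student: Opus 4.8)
The plan is to prove this by a reindexing argument, following \cite[6.1.14]{kashschap}; note that no hypotheses on $\mathcal{C}$ are needed. Write $A=\{A_i\}_{i\in I}$ and $B=\{B_j\}_{j\in J}$ with $I$ and $J$ cofiltered. Unwinding \ref{procats}, the morphism $f$ is the same data as a compatible family of germs $f_j\in\varinjlim_i\hom_{\mathcal{C}}(A_i,B_j)$, one for each $j\in J$. The aim is to build a single cofiltered category indexing both pro-objects, over which $f$ becomes an honest natural transformation.

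First I would introduce the category $K$ whose objects are triples $(i,j,\varphi)$ with $i\in I$, $j\in J$, and $\varphi\colon A_i\to B_j$ a morphism of $\mathcal{C}$ whose class in $\varinjlim_i\hom_{\mathcal{C}}(A_i,B_j)$ is $f_j$, and whose morphisms $(i',j',\varphi')\to(i,j,\varphi)$ are pairs $\alpha\colon i'\to i$ in $I$ and $\beta\colon j'\to j$ in $J$ with $B(\beta)\circ\varphi'=\varphi\circ A(\alpha)$. There are projection functors $p\colon K\to I$ and $q\colon K\to J$, and by construction the assignment $(i,j,\varphi)\mapsto\varphi$ is a natural transformation $A\circ p\Rightarrow B\circ q$, which is precisely a level map in the sense of the statement.

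Then I would verify two things. First, that $K$ is cofiltered: nonemptiness holds because each germ $f_j$ has a representative; given two objects of $K$, one passes to a common target $j_3$ using cofilteredness of $J$, lifts $f_{j_3}$ to a genuine morphism, and then uses compatibility of the germs together with cofilteredness of $I$ to force the relevant square to commute strictly after restricting to a fine enough index of $I$; coequalizing a parallel pair in $K$ is similar. Second, that $p$ and $q$ are initial, i.e.\ the comma categories $(p\downarrow i)$ and $(q\downarrow j)$ are nonempty and connected, by an analogous diagram chase; this yields canonical isomorphisms $\{A_{p(k)}\}_{k\in K}\cong A$ and $\{B_{q(k)}\}_{k\in K}\cong B$ in $\cat{pro}(\mathcal{C})$. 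Finally, tracing through the definition of composition in $\cat{pro}(\mathcal{C})$ shows that the level map $\{\varphi\}_{k\in K}$ is identified with $f$ under these isomorphisms.

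I expect the only genuine work to be the verifications that $K$ is cofiltered and that $p$ and $q$ are initial. These are elementary, but one must be careful with the colimit conditions defining $K$: an equality of morphisms landing in a fixed $B_j$ only holds after restricting to a sufficiently fine index in $I$, so each amalgamation step typically costs two applications of cofilteredness of $I$ — one to amalgamate indices and one to coequalize the resulting parallel pair.
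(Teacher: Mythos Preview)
The paper does not give its own proof of this proposition: it is stated with a citation to \cite[6.1.14]{kashschap} and used as a black box. Your sketch is the standard reindexing argument one finds in that reference, and it is correct; there is nothing to compare against in the paper itself.
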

\begin{prop}[{\cite[Corollary to 3.1]{grothendieckpro}}]\label{strictpro}
	Every object of $\ubproart$ is isomorphic to a \textbf{strict} pro-object, i.e.\ one for which the transition maps are surjections.
\end{prop}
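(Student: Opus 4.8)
The plan is to run Grothendieck's classical ``stable image'' argument, whose only non-formal ingredient is the descending chain condition supplied by the Artinian hypothesis. By a standard reindexing I may assume the pro-object has the form $\{A_j\}_{j\in J}$ with $J$ a cofiltered poset, so that there is a unique transition map $\phi_{ij}\colon A_i\to A_j$ for each $i\geq j$, satisfying $\phi_{jl}\circ\phi_{ij}=\phi_{il}$. Here images of dga morphisms are formed in dg vector spaces; since a morphism is a chain map respecting both multiplications and augmentations, the image of an augmented dga map is an augmented sub-dga, and the image of an Artinian local dga under a dga map (being a quotient of it) is again finite-dimensional with nilpotent augmentation ideal, hence lies in $\ubdgart$.

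Now fix $j$. As $i$ runs over the indices $\geq j$, the sub-dgas $\im(\phi_{ij})\subseteq A_j$ form a down-directed family: for $i_1,i_2\geq j$ choose $i_3\geq i_1,i_2$, whence $\im(\phi_{i_3j})\subseteq\im(\phi_{i_1j})\cap\im(\phi_{i_2j})$ by the cocycle identity. Because $A_j$ has finite total dimension, this family of subspaces has a least element $A_j'\coloneqq\im(\phi_{ij})$, attained for a cofinal set of $i$. Each $A_j'$ lies in $\ubdgart$ by the previous paragraph, and the transition maps restrict to \emph{surjections} $A_i'\twoheadrightarrow A_j'$: picking $l\geq i\geq j$ large enough that $\im(\phi_{li})=A_i'$ and $\im(\phi_{lj})=A_j'$ both hold (possible by cofinality and cofilteredness), one gets $\phi_{ij}(A_i')=\phi_{ij}(\im\phi_{li})=\im\phi_{lj}=A_j'$. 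Hence $\{A_j'\}_{j\in J}$ is a strict pro-object of $\ubdgart$.

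Finally I would exhibit an explicit isomorphism $\{A_j'\}\xrightarrow{\ \sim\ }\{A_j\}$ in $\ubproart$. The levelwise inclusions $\iota_j\colon A_j'\hookrightarrow A_j$ assemble into a level morphism $\iota$. For the inverse, for each $j$ fix $i(j)\geq j$ with $\im(\phi_{i(j)j})=A_j'$ and let $g_j\colon A_{i(j)}\twoheadrightarrow A_j'$ be the corestriction of $\phi_{i(j)j}$; the cocycle identity shows the classes of the $g_j$ are independent of the chosen $i(j)$ and compatible in $j$, so they define $g\colon\{A_j\}\to\{A_j'\}$. Unwinding composition in the procategory (that is, evaluating the double limits $\varprojlim\varinjlim\hom$), $g\circ\iota$ is represented levelwise by the restricted transition maps of $\{A_j'\}$, hence equals $\id_{\{A_j'\}}$, and $\iota\circ g$ is represented levelwise by the $\phi_{i(j)j}\colon A_{i(j)}\to A_j$, hence equals $\id_{\{A_j\}}$.

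I do not expect a genuine obstacle: the sole mathematical content is the descending chain condition on subspaces of each $A_j$, i.e.\ the finite-dimensionality half of the Artinian condition --- and indeed the statement fails for pro-objects of arbitrary dgas. The remaining work is procategory bookkeeping, the only mildly delicate points being that images of Artinian local dgas are Artinian local and that the restricted transition maps surject onto all of $A_j'$ rather than merely generate a dense sub-object, which holds precisely because we argue levelwise with honest finite-dimensional algebras rather than with inverse limits.
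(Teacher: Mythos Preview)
Your proof is correct and is precisely the classical stable-image argument of Grothendieck that the paper merely cites without proof; the paper's only additional remark is that the relevant hypothesis (every object of $\ubdgart$ is Artinian) is satisfied because finite-dimensionality gives the descending chain condition, which is exactly what you use. Your write-up fills in the details the paper omits, and there is nothing to correct.
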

\begin{rmk}
	To apply \cite{grothendieckpro} to some general procategory $\cat{pro}(\mathcal C)$, it suffices that every object of $\mathcal{C}$ is Artinian. In particular, every finite-dimensional dga is Artinian, and so it follows that every profinite dga is isomorphic to a strict profinite dga.
\end{rmk}
\begin{defn}
	Let $A=\{A_\alpha\}_\alpha\in\ubproart$. Let $A^\circ$ denote the dgc $$A^\circ \coloneqq \varinjlim_\alpha A_\alpha^*\in \dgc.$$
\end{defn}The assignment $A \mapsto A^\circ$ is (contravariantly) functorial, since we can represent every map in $\ubproart$ by a level map. Because colimits of conilpotent dgcs are conilpotent, $A^\circ$ is a conilpotent dgc.

\begin{prop}\label{sharpprop}
	The functor $A \mapsto A^\circ$ gives an equivalence $$\ubproart\xrightarrow{\cong} \cndgc^\text{op}$$that restricts to an equivalence $$\proart\xrightarrow{\cong} (\cndgc^{\geq 0})^\text{op}.$$
\end{prop}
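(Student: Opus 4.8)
The plan is to exhibit the functor $A \mapsto A^\circ$ as part of an (adjoint) equivalence, by producing an explicit inverse and checking the unit and counit are isomorphisms. The natural candidate for the inverse is the functor $\sharp$ running the other way: given a conilpotent dgc $C$, write it as a filtered union $C = \varinjlim_\alpha C_\alpha$ of its finite-dimensional conilpotent subcoalgebras (every conilpotent dgc is the filtered colimit of its finite-dimensional sub-dgcs, since conilpotency lets one build up from the coradical filtration, and each piece lands in a finite-dimensional sub-dgc), and set $C^\sharp \coloneqq \{C_\alpha^*\}_\alpha \in \ubproart$. By the remark after \ref{procats} each $C_\alpha^*$ is a finite-dimensional dga, and it is Artinian precisely because $C_\alpha$ is conilpotent; the transition maps dualise the inclusions $C_\alpha \hookrightarrow C_\beta$ to surjections, so this is even a strict pro-object. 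One checks $\sharp$ is functorial (a map of conilpotent dgcs restricts to finite-dimensional sub-dgcs after enlarging the index, giving a level map after dualising).

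Next I would verify the two composites are naturally isomorphic to the identity. For $C \in \cndgc$: $(C^\sharp)^\circ = \varinjlim_\alpha (C_\alpha^*)^* = \varinjlim_\alpha C_\alpha = C$, using that finite-dimensional double-dual is canonically the identity and that filtered colimits commute with the colimit presentation — here one must be slightly careful that the dgc structure on $(C_\alpha^*)^*$ obtained by dualising the algebra structure on $C_\alpha^*$ matches the original coalgebra structure on $C_\alpha$, which is again a finite-dimensionality statement. For $A = \{A_\alpha\}_\alpha \in \ubproart$: by \ref{strictpro} we may assume $A$ is strict, so $A^\circ = \varinjlim_\alpha A_\alpha^*$ is a filtered union of finite-dimensional conilpotent sub-dgcs, and its associated pro-object $(A^\circ)^\sharp$ recovers $\{A_\alpha^{**}\}_\alpha \cong \{A_\alpha\}_\alpha = A$ in $\ubproart$; the key input is that a strict pro-Artinian dga is determined by the ind-dgc of linear duals of its terms, which is essentially the content of the double-dual pairing between finite-dimensional vector spaces promoted to (co)algebras and then to (ind/pro)-categories.

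Finally, for the restricted equivalence: I would observe that the good-truncation characterisation of connectivity behaves well under these dualities. A pro-Artinian dga $A = \{A_\alpha\}$ is connective iff (up to iso, using strictness) each $A_\alpha$ can be taken connective, iff each $A_\alpha^*$ is coconnective, iff $A^\circ = \varinjlim A_\alpha^*$ is a coconnective conilpotent dgc (filtered colimits preserve coconnectivity); conversely a coconnective conilpotent dgc is a filtered union of finite-dimensional coconnective sub-dgcs, whose duals are connective Artinian dgas. So $\sharp$ and $\circ$ restrict to the claimed equivalence between $\proart$ and $(\cndgc^{\geq 0})^{\mathrm{op}}$.

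The main obstacle I anticipate is purely bookkeeping rather than conceptual: one must handle all of the functoriality and naturality at the level of pro-/ind-categories, where morphisms are given by the $\varprojlim\varinjlim$ formula, and repeatedly pass between a pro-object, a representing level diagram, and a strict representative without losing track of compatibilities. Reducing everything to finite-dimensional (co)algebras via \ref{strictpro} and the remark following it, where linear duality is a genuine equivalence $\cat{fd\text{-}dga}_k \cong (\cat{fd\text{-}dgc}_k)^{\mathrm{op}}$ restricting to $\ubdgart \cong (\cat{fd\text{-}con.dgc}_k)^{\mathrm{op}}$, is what makes all the isomorphisms canonical; the pro/ind formalism then upgrades this finite-dimensional equivalence automatically, since $\cat{pro}(-)$ is a $2$-functor. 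The connective/coconnective refinement is then immediate from the truncation remarks in $\S2$.
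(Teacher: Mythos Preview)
Your proposal is correct and follows essentially the same strategy as the paper: both reduce to the finite-dimensional equivalence $\ubdgart \cong (\fdcndgc)^{\mathrm{op}}$ via linear duality, then use that every conilpotent dgc is the filtered colimit of its finite-dimensional sub-dgcs (the dg Sweedler theorem of Getzler--Goerss). The paper organises this slightly differently: rather than writing down the inverse $\sharp$ and checking unit and counit, it first promotes the finite-dimensional duality to an equivalence $\ubproart \cong \cat{ind}(\fdcndgc)^{\mathrm{op}}$ by $2$-functoriality of $\cat{pro}$, and then shows that the colimit functor $\varinjlim: \cat{ind}(\fdcndgc) \to \cndgc$ is an equivalence by invoking \emph{compactness} of finite-dimensional conilpotent dgcs (again from Getzler--Goerss) for full faithfulness. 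Your verification that $(A^\circ)^\sharp \cong A$ implicitly needs the same compactness fact (to show the $A_\alpha^*$ are cofinal among all finite-dimensional sub-dgcs of $A^\circ$), so the content is the same; the paper's packaging is just a bit cleaner.
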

\begin{proof}
	Via the linear dual, an Artinian local dga is the same thing as a finite-dimensional coaugmented conilpotent dgc. Taking procategories thus gives an equivalence between $\ubproart^\text{op}$ and $\cat{ind}(\fdcndgc)$. A classical theorem of Sweedler says that an ungraded coalgebra is the filtered colimit of its finite-dimensional subcoalgebras. The same remains true for dgcs \cite[1.6]{getzlergoerss}. In particular, the colimit functor $\varinjlim: \cat{ind}(\fdcndgc) \to \cndgc$ is essentially surjective. By \cite[1.9]{getzlergoerss}, finite-dimensional conilpotent coalgebras are compact: given a finite-dimensional conilpotent dgc $C$, and $D=\{D_\alpha\}_\alpha$ a filtered system of finite-dimensional conilpotent dgcs, then there is a natural isomorphism $\varinjlim_\alpha\hom(C,D_\alpha)\xrightarrow{\cong} \hom(C,\varinjlim_\alpha D_\alpha)$. Hence if $C$ and $D$ are objects of $\cat{ind}(\fdcndgc)$, then one has an isomorphism $\hom(C,D)\cong\hom(\varinjlim C, \varinjlim D)$. Moreover, if $A$ and $B$ are finite-dimensional algebras, then one has an isomorphism $\hom(A,B)\cong \hom(B^*,A^*)$. Putting these together we see that $\varinjlim$ is fully faithful and hence an equivalence. Hence we obtain the first claim, and the second claim is clear.
\end{proof}
\begin{defn}\label{csharp}
	If $C$ is a conilpotent dgc, let $C^\sharp\in \ubproart$ denote the levelwise dual of its filtered system of finite-dimensional sub-dgcs.
\end{defn}
It is easy to see that $C \mapsto C^\sharp$ is the inverse functor to  $A \mapsto A^\circ$.
\begin{lem}\label{limsharplem}\hfill
	\begin{enumerate}\item Let $C \in \cndgc$. Then $C^*$ is functorially isomorphic to $ \varprojlim C^\sharp$ as dgas.
		\item Let $A \in \ubproart$. Then $A^{\circ*} $ is functorially isomorphic to $\varprojlim A$ as dgas.
		\end{enumerate}
	\end{lem}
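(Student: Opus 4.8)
The statement to prove, Lemma \ref{limsharplem}, is a two-part compatibility claim relating the $*$, $\sharp$, and $\circ$ functors via inverse limits. The plan is to establish part (1) first and then deduce part (2) formally from the equivalence of Proposition \ref{sharpprop}.

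For part (1), let $C \in \cndgc$ and write $C = \varinjlim_\alpha C_\alpha$ as the filtered colimit of its finite-dimensional conilpotent sub-dgcs, using Sweedler's theorem (in the dg form \cite[1.6]{getzlergoerss}) as in the proof of Proposition \ref{sharpprop}. By definition $C^\sharp$ is the pro-object $\{C_\alpha^*\}_\alpha$, so $\varprojlim C^\sharp = \varprojlim_\alpha C_\alpha^*$. Now $k$-linear dualisation turns the filtered colimit $\varinjlim_\alpha C_\alpha$ into the cofiltered limit $\varprojlim_\alpha C_\alpha^*$ — this is the standard fact that $(-)^*$ sends colimits of vector spaces to limits — which gives a natural isomorphism of dg vector spaces $C^* \cong \varprojlim_\alpha C_\alpha^*$. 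What remains is to check this respects the dga structures: the multiplication on $C^*$ is dual to the comultiplication on $C$, the multiplication on each $C_\alpha^*$ is dual to the comultiplication on $C_\alpha$, and since the inclusions $C_\alpha \into C$ are dgc maps, their duals $C^* \to C_\alpha^*$ are dga maps compatible with the transition maps; hence the induced map to the limit is a dga map, and it is an isomorphism by the vector-space statement. Naturality in $C$ follows because a dgc map $C \to D$ restricts to maps on finite-dimensional subcoalgebras (after passing to a cofinal system), inducing a level map on the $\sharp$-objects, and dualisation is natural.

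For part (2), let $A = \{A_\alpha\}_\alpha \in \ubproart$; by Proposition \ref{strictpro} we may take it strict. Then $A^\circ = \varinjlim_\alpha A_\alpha^*$ by definition, and this is already exhibited as a filtered colimit of finite-dimensional conilpotent dgcs. Applying part (1) to $C = A^\circ$ — noting that the $A_\alpha^*$ form (a cofinal subsystem of) the finite-dimensional sub-dgcs of $A^\circ$, since $C \mapsto C^\sharp$ is inverse to $A \mapsto A^\circ$ by the remark after \ref{csharp} — gives $A^{\circ *} \cong \varprojlim((A^\circ)^\sharp) = \varprojlim A$ as dgas, naturally. Alternatively one can argue purely formally: $\varprojlim A = \varprojlim_\alpha A_\alpha = \varprojlim_\alpha (A_\alpha^*)^* \cong (\varinjlim_\alpha A_\alpha^*)^* = (A^\circ)^*$, where the middle isomorphism is double duality for the finite-dimensional $A_\alpha$ and the penultimate step is again that dualisation converts the filtered colimit into the cofiltered limit.

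The only genuine subtlety — and hence the step I expect to need the most care — is bookkeeping the passage to cofinal systems of finite-dimensional sub(co)algebras and checking that all identifications are natural in the pro/ind variable, rather than merely objectwise; the actual linear-algebra input (dualisation swaps limits and colimits, finite-dimensional double duality) is routine. Everything else is formal manipulation with the equivalence of Proposition \ref{sharpprop} already in hand.
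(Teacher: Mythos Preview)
Your proposal is correct and follows essentially the same approach as the paper: for (1) you dualise the presentation of $C$ as the filtered colimit of its finite-dimensional sub-dgcs to get $C^* \cong \varprojlim C^\sharp$, and for (2) you apply (1) to $C = A^\circ$ together with $A^{\circ\sharp} \cong A$. The paper's proof is terser but identical in content; your extra bookkeeping about dga structures and naturality is fine and does no harm.
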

\begin{proof}
For (1), pass the filtered limit through the linear dual and use that $C\cong \varinjlim C^{\sharp *}$, because $C^{\sharp *}$ is exactly the filtered system of finite-dimensional sub-dgcs of $C$. For (2),  just apply (1) with $C=A^\circ$ and use $A^{\circ\sharp}\cong A$. The isomorphisms are clearly functorial.
	\end{proof}

\begin{thm}\label{ubpamodel}
	The category $\ubproart$ is a model category, with weak equivalences those maps $f$ for which $f^\circ$ is a weak equivalence of coalgebras, and fibrations those maps for which $\varprojlim f$ is a degreewise surjection. The equivalence $\phi:\ubproart\xrightarrow{\cong} \cndgc^\text{op}$ is both left and right Quillen, and is moreover a Quillen equivalence.
\end{thm}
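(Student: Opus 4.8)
The plan is to \emph{transport} the model structure on $\cndgc$ given by Theorem \ref{bcquillen} across the equivalence of categories $\phi = (-)^\circ \colon \ubproart \xrightarrow{\cong} \cndgc^\text{op}$ of Proposition \ref{sharpprop}. First I would note that $\cndgc^\text{op}$ is itself a model category: reversing all arrows in a model category yields a model category with the same weak equivalences and with the classes of fibrations and cofibrations interchanged. Then, since $\phi$ is an equivalence of categories, I would \emph{define} a morphism $f$ of $\ubproart$ to be a weak equivalence, fibration, or cofibration precisely when $\phi(f) = f^\circ$ is one in $\cndgc^\text{op}$. Because an equivalence of categories (together with a chosen quasi-inverse) preserves and reflects all limits, colimits, retracts, lifting properties, and factorizations, the three resulting classes satisfy the model category axioms; hence $\ubproart$ is a model category and $\phi$ is by construction an isomorphism of model categories. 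In particular $\phi$ is simultaneously left and right Quillen with adjoint $\phi^{-1} = (-)^\sharp$, and it is trivially a Quillen equivalence. (Here $\ubproart$ has all small limits and colimits because it is equivalent as a plain category to $\cndgc^\text{op}$, which does.)

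It then remains only to unwind the two descriptions in the statement and check they agree with the transported ones. The weak equivalences are immediate from the definition: $f$ is a weak equivalence iff $f^\circ$ is a weak equivalence of conilpotent dgcs. For the fibrations, the fibrations of $\cndgc^\text{op}$ are the cofibrations of $\cndgc$, which by Theorem \ref{bcquillen} are the degreewise monomorphisms; so $f \colon A \to B$ is a fibration in $\ubproart$ iff the induced map $f^\circ \colon B^\circ \to A^\circ$ is a degreewise monomorphism of dgcs. I would translate this using $k$-linear duality: for vector spaces over a field a linear map $g$ is injective iff $g^*$ is surjective (an injection splits, so $g^*$ admits a section; conversely, if $g^*$ is onto then every functional annihilates $\ker g$, forcing $\ker g = 0$), applied degreewise. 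By Lemma \ref{limsharplem}(2) the functor $\varprojlim$ on $\ubproart$ is naturally isomorphic to $(-)^{\circ*}$, so $(f^\circ)^*$ is naturally identified with $\varprojlim f$. Hence $f^\circ$ is a degreewise monomorphism iff $\varprojlim f$ is a degreewise surjection, which is the asserted description.

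The only point requiring genuine care is this final reconciliation of the two descriptions of the fibrations: one must keep track of the variance of $(-)^\circ$ and of the identification $\varprojlim \cong (-)^{\circ*}$, and verify that the injective/surjective duality for linear maps is invoked in the direction valid without any finiteness hypothesis (since $A^\circ$ and $B^\circ$ need not be finite-dimensional in each degree). Everything else is formal once Theorem \ref{bcquillen} and Proposition \ref{sharpprop} are in hand.
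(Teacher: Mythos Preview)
Your proposal is correct and follows essentially the same approach as the paper: transport the model structure across the equivalence $\phi$, then identify the fibrations by using that cofibrations in $\cndgc$ are degreewise monomorphisms together with Lemma~\ref{limsharplem}(2) and linear duality. You are slightly more explicit than the paper about why the injection/surjection duality holds without finiteness assumptions, but the argument is otherwise identical.
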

\begin{proof}
	Transferring the model structure on $\cndgc$ along $\phi$ puts a model category structure on $\ubproart$ such that $\phi$ becomes a Quillen equivalence. The functor $\phi$ is both left and right Quillen because a morphism $f$ in $\ubproart$ is a cofibration, resp. a weak equivalence, resp. a fibration, precisely when $\phi(f)$ is. It is clearly a Quillen  equivalence. So we need only check the statements about the weak equivalences and fibrations in $\ubproart$. It is clear that the weak equivalences in $\ubproart$ are exactly the claimed maps. For the fibrations, note that the cofibrations in $\cndgc$, and hence the fibrations in $\cndgc^\text{op}$, are exactly the degreewise injections. So we need to show that $\varprojlim f$ is a degreewise surjection if and only if $f^\circ$ is a degreewise injection. But this follows from \ref{limsharplem}(2).
\end{proof}

Recall that there is a limit functor $\varprojlim: \ubproart \to \ubdga$ which sends a cofiltered system to its limit. Note that this functor is right adjoint to the functor $\ubdga \to \ubproart$ which sends a connective dga $A$ to the cofiltered system $\hat A$ of its Artinian local quotients (because any map $A \to \Gamma$ with $\Gamma$ Artinian must factor through some such quotient). The unit map $ A \to \varprojlim\hat A$ is pro-Artinian completion, and neither the unit nor the counit need be an isomorphism. 

\begin{prop}\label{limisrq}
	The functor $\varprojlim: \ubproart \to \ubdga$ is right Quillen.
	\end{prop}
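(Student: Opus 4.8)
The plan is to exhibit an explicit left adjoint to $\varprojlim$ and check that it is left Quillen; equivalently, since a Quillen adjunction is detected by the right adjoint preserving fibrations and trivial fibrations, I would verify those two preservation properties directly. The left adjoint is the completion functor $A \mapsto \hat A = \{A/I\}$ running over Artinian local quotients of $A$, as described in the paragraph immediately preceding the statement; that this is left adjoint to $\varprojlim$ is exactly the remark there, following from the universal property that any map $A \to \Gamma$ with $\Gamma$ Artinian factors through a unique minimal such quotient. So it remains only to show $\varprojlim$ preserves fibrations and trivial fibrations.

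For fibrations: by \ref{ubpamodel}, a fibration in $\ubproart$ is precisely a map $f$ with $\varprojlim f$ a degreewise surjection, and a fibration in $\ubdga$ is precisely a degreewise surjection (\ref{hinichmodel}). Hence $\varprojlim$ carries fibrations to fibrations essentially by definition — this is immediate and requires no work. For trivial fibrations: I need that if $f$ is both a fibration and a weak equivalence in $\ubproart$, then $\varprojlim f$ is a quasi-isomorphism (it is already a surjection by the previous point). By \ref{ubpamodel} again, $f$ being a weak equivalence means $f^\circ$ is a weak equivalence in $\cndgc$. The key point is then to relate $\varprojlim f$ to $f^\circ$: by \ref{limsharplem}(2) one has $\varprojlim A \cong A^{\circ *}$ functorially, so $\varprojlim f$ is identified with $(f^\circ)^*$, the $k$-linear dual of $f^\circ$. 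Since $f$ is also a fibration, $f^\circ$ is a fibration in $\cndgc$, i.e. a degreewise monomorphism of coalgebras (per \ref{bcquillen}); and it is a weak equivalence of conilpotent coalgebras, which in particular is a quasi-isomorphism (the remark after \ref{bcquillen}). Dualising a quasi-isomorphism of dg vector spaces over a field gives a quasi-isomorphism, so $(f^\circ)^* = \varprojlim f$ is a quasi-isomorphism, hence a trivial fibration in $\ubdga$.

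I expect the only genuine subtlety to be the identification $\varprojlim f \cong (f^\circ)^*$ and the verification that this identification is functorial enough to transport the relevant properties — but this is precisely the content of \ref{limsharplem}(2), so it is already available. Everything else is a direct unwinding of the definitions of the model structures in \ref{hinichmodel}, \ref{bcquillen}, and \ref{ubpamodel}. An alternative, cleaner packaging: $\varprojlim$ factors as the equivalence (hence Quillen equivalence) $\ubproart \xrightarrow{\cong} \cndgc^{\mathrm{op}}$ of \ref{ubpamodel} followed by the linear dual $\cndgc^{\mathrm{op}} \to \augdga$ and then the forgetful functor $\augdga \to \ubdga$; the forgetful functor is right Quillen since $\augdga$ is a slice category of $\ubdga$ (\ref{hinichhomaug}), so it suffices to see the linear dual $(-)^* \colon \cndgc^{\mathrm{op}} \to \augdga$ is right Quillen, and that again reduces to: it sends degreewise injections of coalgebras to surjections of algebras (clear) and weak equivalences of $\cndgc$ to quasi-isomorphisms (as above). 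I would present whichever of these two routes reads most economically, probably the second.
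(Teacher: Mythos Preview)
Your proposal is correct and uses essentially the same argument as the paper: the key step in both is the identification $\varprojlim f \cong (f^\circ)^*$ from \ref{limsharplem}(2), together with the fact that weak equivalences of conilpotent coalgebras are in particular quasi-isomorphisms. The paper's presentation is slightly more streamlined in that it shows $\varprojlim$ preserves \emph{all} weak equivalences (not just trivial fibrations), since the argument you give for trivial fibrations never actually uses the fibration hypothesis; this stronger statement is in fact invoked later (e.g.\ in \ref{doubledictionary}(2)), so you may as well record it.
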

\begin{proof}Clearly $\varprojlim$ preserves fibrations; we check that it preserves weak equivalences. Let $f$ be a weak equivalence in $\ubproart$, so that $f^\circ$ is a weak equivalence of coalgebras. In particular, $f^\circ$ is a quasi-isomorphism, and because the linear dual is exact, $f^{\circ*}$ is also a quasi-isomorphism. But $f^{\circ*}$ canonically agrees with $\varprojlim f$ by \ref{limsharplem}(2). Hence $\varprojlim f$ is a quasi-isomorphism too.
	\end{proof}

\subsection{(Co)connective objects}
We restrict our attention to connective pro-Artinian dgas and coconnective conilpotent coalgebras, where weak equivalences become simpler.
\begin{thm}[Pridham]\label{proartmodel}
	The category $\proart$ is a model category, with weak equivalences those maps $f$ for which each $H^n f$ is an isomorphism of profinite $k$-vector spaces, and fibrations those maps $f$ for which $\varprojlim f$ is a degreewise surjection.
\end{thm}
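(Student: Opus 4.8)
The plan is to obtain this model structure by transferring the one on $\ubproart$ from \ref{ubpamodel} along the fully faithful inclusion $\proart \hookrightarrow \ubproart$ --- equivalently, via \ref{sharpprop}, by restricting the model structure on $\cndgc$ to the full subcategory $\cndgc^{\geq 0}$ of coconnective conilpotent dgcs --- and then to check that the resulting weak equivalences and fibrations match the stated descriptions. (The original construction is Pridham's \cite{unifying}, and one could instead simply cite \textit{op.\ cit.}; the point of giving an argument is to keep everything uniform with \ref{ubpamodel}.) As a preliminary I would record that $\proart$ is bicomplete: procategories of finitely bicomplete categories are bicomplete, and connective objects sit inside all pro-Artinian dgas coreflectively via the good truncation $\tau_{\leq 0}$, so the needed limits and colimits exist in $\proart$.

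To identify the weak equivalences, note that under $\circ\colon \proart \xrightarrow{\cong} (\cndgc^{\geq 0})^{\mathrm{op}}$ a map $f$ corresponds to a map $f^\circ$ of coconnective conilpotent dgcs; by \ref{coconnlem} such a map is a weak equivalence of coalgebras exactly when it is a quasi-isomorphism. Since the linear dual is exact, and since by \ref{strictpro} every pro-Artinian dga may be taken to be a strict (hence Mittag--Leffler) system of finite-dimensional dgas, cohomology commutes with $\varprojlim$; combined with \ref{limsharplem}(2), which identifies $\varprojlim f$ with $f^{\circ *}$, this shows that $f^\circ$ is a quasi-isomorphism precisely when each $H^n f$ is an isomorphism of profinite $k$-vector spaces. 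The description of the fibrations then comes out exactly as in the proof of \ref{ubpamodel}: the cofibrations of $\cndgc$ are the degreewise monomorphisms, and \ref{limsharplem}(2) again translates ``$\varprojlim f$ degreewise surjective'' into ``$f^\circ$ degreewise injective''.

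It remains to verify that the model structure on $\cndgc$ genuinely restricts to $\cndgc^{\geq 0}$. Two-out-of-three, the retract axiom, and the two lifting axioms are automatic for a full subcategory whose three distinguished classes are defined by restriction. The factorization axioms are the crux: one must show that a map between coconnective conilpotent dgcs can be factored, \emph{within} $\cndgc^{\geq 0}$, as a cofibration followed by a trivial fibration and as a trivial cofibration followed by a fibration. Here the saving grace is that the relevant constructions preserve coconnectivity --- if $C$ is coconnective then so is $\Omega C$, hence so is the fibrant replacement $B\Omega C$ of \ref{bcquillen}, and the generating (trivial) cofibrations of \cite{positselski} can be chosen among coconnective coalgebras --- so the factorizations produced by the small object argument stay inside the subcategory. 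I expect this last point, ensuring that the factorizations do not leave $\cndgc^{\geq 0}$, to be the only genuinely delicate step; alternatively, it is subsumed by Pridham's direct construction in \cite{unifying}.
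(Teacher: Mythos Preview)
Your route is genuinely different from the paper's. The paper does not restrict from $\cndgc$: it cites Pridham \cite[4.3]{unifying} and sketches his actual argument, which goes via the Dold--Kan correspondence to identify $\proart$ with simplicial objects in $\cat{pro}(\cat{Art}_k)$, passes to the opposite category of cosimplicial formal spectra, and then invokes Bousfield's theory of injective models \cite{bousfieldresln} to produce the model structure directly. In particular, the paper runs the transfer in the \emph{opposite} direction to you: \ref{ccdgcmod} obtains the model structure on $\cndgc^{\geq 0}$ by transporting Pridham's structure along $\sharp$, not by restricting Positselski's.

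Your identification of the weak equivalences and fibrations is essentially right and pleasantly uniform with \ref{ubpamodel}, though the step ``$f^\circ$ is a quasi-isomorphism $\Leftrightarrow$ each $H^n f$ is an isomorphism of profinite vector spaces'' quietly needs that $\varprojlim$ reflects isomorphisms on profinite vector spaces (what the paper records as \ref{limreflectsisos} via \ref{indfinite}); your Mittag--Leffler argument only gets you to ``$\varprojlim H^n f$ is an isomorphism''.

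The real gap is exactly where you flag it. Observing that $B\Omega$ preserves coconnectivity gives fibrant replacement inside $\cndgc^{\geq 0}$, but that alone does not produce the two required factorizations of an arbitrary map. Your assertion that Positselski's generating (trivial) cofibrations ``can be chosen among coconnective coalgebras'' is not obvious and not something one can read off from \cite{positselski}; the factorizations there are built by hand rather than via a small object argument with an evident coconnective generating set. Since you ultimately defer this point to Pridham's construction, your proof collapses to the paper's --- with a sketch of an attractive alternative that is not carried through. If you want the restriction argument to stand on its own, you would need to exhibit explicit functorial factorizations in $\cndgc$ that demonstrably stay coconnective when the source and target are.
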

\begin{proof}
	The proof of \cite[4.3]{unifying} adapts to the noncommutative case. The idea of the proof is to use the Dold--Kan correspondence to identify $\proart$ with the category of pro-objects in simplicial ungraded Artinian algebras. One then shows that this is equivalently the category of simplicial objects in pro-(ungraded Artinian algebras). One may as well put a model structure on the opposite category, which is the category of cosimplicial objects in noncommutative formal spectra. The result follows from an application of Bousfield's theory of injective models \cite{bousfieldresln} to put model structures on categories of cosimplicial objects.
\end{proof}
\begin{lem}\label{indfinite}
	Let $V\in \cat{pro}(\cat{fd-vect}_k)$ be a profinite vector space. The assignment $V \mapsto \varinjlim V^*$ is a contravariant equivalence $\cat{pro}(\fdvect) \to \vect$.
	\end{lem}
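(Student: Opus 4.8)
The plan is to factor the functor $V\mapsto\varinjlim V^*$ as a composite of two equivalences, following the pattern of the proof of \ref{sharpprop} but in the simpler situation where there is no (co)multiplication to keep track of. First I would use that the $k$-linear dual restricts to an equivalence on finite-dimensional objects, and second that $\vect$ is the ind-completion of $\fdvect$.

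For the first step, the $k$-linear dual gives a functor $(-)^*\colon\fdvect\to(\fdvect)^{\mathrm{op}}$, and for a finite-dimensional dg vector space $F$ the evaluation map $F\to F^{**}$ is a chain isomorphism; hence $(-)^*$ is an equivalence, being its own quasi-inverse up to natural isomorphism. Applying $\cat{pro}(-)$ and using the definitional identity $\cat{pro}(\mathcal C^{\mathrm{op}})=\cat{ind}(\mathcal C)^{\mathrm{op}}$, one obtains a contravariant equivalence $\cat{pro}(\fdvect)\xrightarrow{\cong}\cat{ind}(\fdvect)$ which sends a pro-object $V=\{V_\alpha\}_\alpha$ to the filtered system $\{V_\alpha^*\}_\alpha$.

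For the second step I would show that the realisation functor $\varinjlim\colon\cat{ind}(\fdvect)\to\vect$ is an equivalence. As in the proof of \ref{sharpprop}, this reduces to two facts. (i) Every dg vector space $W$ is the filtered colimit of its finite-dimensional subcomplexes: the poset of such subcomplexes is filtered because the sum of two finite-dimensional subcomplexes is again a finite-dimensional subcomplex, and it exhausts $W$ since any homogeneous $w\in W$ lies in the finite-dimensional subcomplex spanned by $w$ and $dw$. (ii) Finite-dimensional dg vector spaces are compact in $\vect$: for such an $F$ (which is bounded and degreewise finite-dimensional) and any $W\in\vect$, the set $\hom_{\vect}(F,W)$ is the equalizer of a pair of maps between the finite products $\textstyle\prod_i\hom_k(F^i,W^i)\rightrightarrows\prod_i\hom_k(F^i,W^{i+1})$, and this finite limit of finite products commutes with filtered colimits in $W$. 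Granting (i) and (ii), the functor $\varinjlim$ is essentially surjective by (i) and fully faithful by (ii) together with fullness of $\fdvect$ in $\vect$ (the $\varprojlim_i\varinjlim_j$ computation is exactly as in \ref{sharpprop}), hence an equivalence.

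Composing the two equivalences sends $V=\{V_\alpha\}_\alpha\mapsto\varinjlim_\alpha V_\alpha^*=\varinjlim V^*$, which is therefore a contravariant equivalence $\cat{pro}(\fdvect)\to\vect$, proving the lemma. I do not anticipate a genuine obstacle here; the only point that needs a moment's care is the compactness statement (ii), where it matters that ``finite-dimensional'' means finite \emph{total} dimension, so that $F$ is bounded and the (co)limits defining $\hom_{\vect}(F,-)$ are finite and hence commute with filtered colimits.
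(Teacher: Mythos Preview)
Your proposal is correct and takes essentially the same approach as the paper: factor through the levelwise dual $\cat{pro}(\fdvect)\to\cat{ind}(\fdvect)$ and then show $\varinjlim\colon\cat{ind}(\fdvect)\to\vect$ is an equivalence via essential surjectivity (every dg vector space is the filtered colimit of its finite-dimensional subcomplexes) and full faithfulness (compactness of finite-dimensional objects). The paper's proof is terser but structurally identical; your added care in verifying compactness and the subcomplex-colimit claim is welcome.
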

\begin{proof}
	The colimit functor $\varinjlim: \cat{ind}(\fdvect)\to \vect$ is an equivalence: it is essentially surjective because every vector space is the colimit of its finite-dimensional subspaces, and it is fully faithful because finite-dimensional vector spaces are compact in $\vect$. In other words, every vector space is canonically ind-finite. Clearly the linear dual is a contravariant autoequivalence of $\fdvect$, and the result follows.
	\end{proof}
\begin{rmk}
	The spectral algebra analogue of this result appears in \cite{ciprospec}.
\end{rmk}

\begin{prop}\label{inclisrq}
	The inclusion functor $\iota:\proart \into \ubproart$ is right Quillen.
\end{prop}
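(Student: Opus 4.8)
The plan is to show that the inclusion $\iota:\proart \into \ubproart$ preserves fibrations and trivial fibrations, which suffices for $\iota$ to be right Quillen since it plainly admits a left adjoint (the left adjoint being the functor that sends a pro-Artinian dga to the pro-object of connective truncations of its Artinian quotients, or more abstractly, the truncation functor induced on the coalgebra side via the equivalence $\sharp$; in any case its existence as a left adjoint can be checked via the adjoint functor theorem on these locally presentable categories, or noted directly from the fact that $\proart$ is a reflective/coreflective subcategory). The key observation is that the model structure on $\proart$ of \ref{proartmodel} and the model structure on $\ubproart$ of \ref{ubpamodel} are defined so that the fibrations in both cases are exactly the maps $f$ with $\varprojlim f$ a degreewise surjection; since this condition is intrinsic and does not reference connectivity, $\iota$ trivially preserves fibrations.

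So the real content is checking that $\iota$ preserves trivial fibrations, equivalently (given the above) that it preserves weak equivalences. Let $f$ be a weak equivalence in $\proart$, so by \ref{proartmodel} each $H^n f$ is an isomorphism of profinite vector spaces; in particular $f$ is a quasi-isomorphism (after realising, using Lemma \ref{indfinite} to see that profinite cohomology isomorphisms dualise to honest cohomology isomorphisms). To conclude that $f$ is a weak equivalence in $\ubproart$, by \ref{ubpamodel} I need $f^\circ$ to be a weak equivalence in $\cndgc$, i.e.\ a weak equivalence in Positselski's model structure of \ref{bcquillen}. Now $f^\circ$ lands in $(\cndgc^{\geq 0})^\text{op}$ by the restricted equivalence of \ref{sharpprop}, so $f^\circ$ is a map of coconnective conilpotent coalgebras; and $f^\circ$ is a quasi-isomorphism because the linear dual is exact and $f^{\circ *} = \varprojlim f$ (Lemma \ref{limsharplem}(2)) is a quasi-isomorphism by the previous sentence. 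By Lemma \ref{coconnlem}, a quasi-isomorphism between coconnective coalgebras is automatically a weak equivalence in $\cndgc$. Hence $f^\circ$ is a weak equivalence of coalgebras, so $f$ is a weak equivalence in $\ubproart$.

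Putting these together: $\iota$ preserves both fibrations and weak equivalences, hence trivial fibrations, hence $\iota$ is right Quillen. The one point requiring mild care — and the main potential obstacle — is the bookkeeping around duality: one must make sure that "isomorphism of profinite $k$-vector spaces on each $H^n$" (the condition from Pridham's model structure) really does match up with "quasi-isomorphism after $\varprojlim$", which is exactly what Lemma \ref{indfinite} together with exactness of the linear dual delivers, since realising a strict pro-object (\ref{strictpro}) and passing to the dual converts a levelwise/profinite cohomology isomorphism into an honest cohomology isomorphism of the colimit. Everything else is a direct chain of citations to the results already established in this section.
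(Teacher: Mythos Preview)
Your argument is correct and follows the same overall strategy as the paper: show $\iota$ preserves fibrations (trivially, since both model structures use the same condition that $\varprojlim f$ be a degreewise surjection), then show it preserves weak equivalences by verifying $f^\circ$ is a quasi-isomorphism of coconnective coalgebras and invoking \ref{coconnlem}.

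The paper's proof of the key step --- that $f^\circ$ is a quasi-isomorphism --- is slightly more direct than yours: it computes $H^n(f^\circ)\cong \varinjlim_\alpha (H^{-n}f_\alpha)^*$ using exactness of filtered colimits and of the linear dual, then applies \ref{indfinite} to conclude this is an isomorphism precisely when $H^{-n}f$ is. You instead pass through $\varprojlim f$ and then dualise back. This works, but two small points deserve care. First, the inference ``$(f^\circ)^*$ quasi-iso $\Rightarrow f^\circ$ quasi-iso'' is not a consequence of exactness of the linear dual (that gives the forward direction); what you need is that over a field the linear dual \emph{reflects} isomorphisms, which holds because any nonzero vector space has nonzero dual. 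Second, your claim that $\varprojlim f$ is a quasi-isomorphism implicitly uses that $\varprojlim$ commutes with cohomology on pro-Artinian dgas; this is \ref{limisexact}, which in the paper is proved after \ref{inclisrq} (though it does not depend on it, so there is no circularity). The paper's route via filtered colimits sidesteps this entirely, since filtered colimits are always exact.
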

\begin{proof}
	The functor $\iota$ is right adjoint to the levelwise truncation functor. It clearly preserves fibrations. It will hence be enough for us to show that it preserves weak equivalences. So suppose that $f:A\to B$ is a weak equivalence in $\proart$. We can assume that $f$ is a level map $\{f_\alpha:A_\alpha \to B_\alpha\}_\alpha$, and then $f^\circ$ is the dgc map $\varinjlim_\alpha(f_\alpha^*)$. Fixing an $n\in \Z$, we have $H^n(f^\circ)\cong \varinjlim_\alpha H^n(f_\alpha^*)\cong \varinjlim_\alpha (H^{-n}(f_\alpha)^*)$, because filtered colimits and linear duals are exact. By \ref{indfinite}, we know that a morphism $g$ of profinite vector spaces is an isomorphism if and only if $\varinjlim g^*$ is. Applying this to $g=H^{-n}f$, we see that $H^n(f^\circ)$ is an isomorphism if and only if $H^{-n}f$ is. But by assumption $f$ was a weak equivalence, so that every $H^{-n}f$ is an isomorphism. Hence $f^\circ$ is a quasi-isomorphism of coalgebras. By \ref{coconnlem}, a quasi-isomorphism between coconnective coalgebras is a weak equivalence. So $f^\circ$ is a weak equivalence, which means precisely that $\iota f$ is a weak equivalence.
\end{proof}
One can push the above reasoning further to show that the weak equivalences in the procategory $\proart$ are created by $\varprojlim$; this will be useful to us later.
\begin{lem}\label{limreflectsisos}
	The functor $\varprojlim: \cat{pro}(\cat{fd-vect}_k) \to \vect$ reflects isomorphisms.
\end{lem}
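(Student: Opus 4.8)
The plan is to exhibit $\varprojlim$ as a composite of two functors, each of which reflects isomorphisms.

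First I would record a natural description of $\varprojlim$ in terms of the anti-equivalence $\Phi\colon \cat{pro}(\fdvect)\to\vect$, $V\mapsto\varinjlim V^{*}$, of \ref{indfinite}. For a profinite vector space $V=\{V_\alpha\}_\alpha$ each $V_\alpha$ is finite-dimensional, so the biduality map $V_\alpha\to V_\alpha^{**}$ is an isomorphism, natural in $V_\alpha$. Since filtered colimits and $k$-linear duals are exact, passing to the (co)limits gives a natural isomorphism
\[
\varprojlim V \;=\; \varprojlim_\alpha V_\alpha \;\xrightarrow{\ \cong\ }\; \varprojlim_\alpha V_\alpha^{**} \;\cong\; \bigl(\varinjlim_\alpha V_\alpha^{*}\bigr)^{*} \;=\; (\Phi V)^{*}.
\]
Thus there is a natural isomorphism of functors $\varprojlim\cong (-)^{*}\circ\Phi\colon \cat{pro}(\fdvect)\to\vect$ (both composed functors are contravariant, so the composite is covariant, as it should be). This reduces the lemma to the claim that $(-)^{*}\colon\vect\to\vect$ reflects isomorphisms, because $\Phi$, being an equivalence, reflects isomorphisms, and a composite of isomorphism-reflecting functors reflects isomorphisms.

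Next I would prove that the $k$-linear dual reflects isomorphisms. Let $g\colon A\to B$ be $k$-linear with $g^{*}\colon B^{*}\to A^{*}$ an isomorphism. If $g(a)=0$ for some $a\in A$, then $(g^{*}\phi)(a)=\phi(g(a))=0$ for every $\phi\in B^{*}$; as $g^{*}$ is surjective, every functional in $A^{*}$ annihilates $a$, and since $A^{*}$ separates the points of $A$ this forces $a=0$, so $g$ is injective. If the image of $g$ were a proper subspace of $B$, pick a nonzero $\psi\in B^{*}$ vanishing on it; then $g^{*}\psi=0$, contradicting injectivity of $g^{*}$, so $g$ is surjective. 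Hence $g$ is an isomorphism.

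Combining the two steps finishes the proof, since $\varprojlim\cong (-)^{*}\circ\Phi$ is then a composite of isomorphism-reflecting functors. The only content beyond formal nonsense is the second step — the elementary facts that $A^{*}$ separates points of $A$ and that functionals extend off proper subspaces — together with the bookkeeping needed to see that $\varprojlim$ genuinely agrees with $(-)^{*}\circ\Phi$; I expect the variance and naturality bookkeeping in that identification to be the only place where any care is required.
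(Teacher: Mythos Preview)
Your proof is correct and follows essentially the same route as the paper. The paper also argues that $g$ is an isomorphism iff $\varinjlim g_\alpha^*$ is (by the equivalence \ref{indfinite}), iff its linear dual is, and then identifies that dual with $\varprojlim g$ via biduality on finite-dimensional levels; you simply package this as the factorization $\varprojlim\cong(-)^*\circ\Phi$ and make explicit the step that $(-)^*$ reflects isomorphisms, which the paper asserts without proof.
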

\begin{proof}Let $g:U \to V$ be a map of profinite vector spaces. We may take $g$ to be a level map $\{g_\alpha:U_\alpha \to V_\alpha\}_\alpha$. By \ref{indfinite}, $g$ is an isomorphism if and only if $\varinjlim_\alpha g_\alpha^*: \varinjlim_\alpha V_\alpha^* \to \varinjlim_\alpha U_\alpha^*$ is an isomorphism. But this map is an isomorphism if and only if its linear dual $\varprojlim_\alpha U_\alpha^{**} \to \varprojlim_\alpha V_\alpha^{**}$ is an isomorphism. But this latter map canonically agrees with $\varprojlim g = \varprojlim_\alpha g_\alpha$ since the $U_\alpha$ and $V_\alpha$ are finite-dimensional. So $g$ is an isomorphism if and only if $\varprojlim g$ is.
\end{proof}
\begin{lem}\label{limisexact}
	The functor $\varprojlim: \ubproart \to \ubdga$ is the homotopy limit functor.
\end{lem}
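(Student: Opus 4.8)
The plan is to reduce to the corresponding fact about dg vector spaces, where it becomes the exactness of inverse limits on pro-objects of finite-dimensional spaces, i.e.\ essentially \ref{indfinite}. First I would record that the proof of \ref{limisrq} yields more than its statement: $\varprojlim$ preserves \emph{all} weak equivalences, not only those between fibrant objects, because for a weak equivalence $f$ one has $\varprojlim f \cong f^{\circ*}$ by \ref{limsharplem}(2), while $f^\circ$ is a quasi-isomorphism of coalgebras and $(-)^*$ is exact. Hence the total right derived functor $\R\varprojlim$ is represented by $\varprojlim$ itself, without any fibrant replacement, and it remains only to show that for a pro-Artinian dga $A=\{A_\alpha\}_\alpha$ the canonical comparison map $\varprojlim_\alpha A_\alpha \to \holim_\alpha A_\alpha$ in $\ubdga$, from the ordinary limit of the underlying cofiltered diagram to its homotopy limit, is a quasi-isomorphism.

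The second step is to pass to underlying complexes along the forgetful functor $U\colon\ubdga\to\vect$. This functor is a right adjoint, so it preserves all limits; it is conservative, and preserves quasi-isomorphisms and degreewise surjections; and because homotopy limits of dgas are computed on underlying complexes (for instance since the forgetful functor from associative algebra objects preserves limits, \cite{lurieha}), $U$ carries $\holim_\alpha A_\alpha$ to $\holim_\alpha UA_\alpha$. By conservativity it is enough to check that $\varprojlim_\alpha UA_\alpha \to \holim_\alpha UA_\alpha$ is a quasi-isomorphism in $\vect$, and since each $A_\alpha$ is Artinian, $\{UA_\alpha\}_\alpha$ is a pro-object of $\fdvect$.

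This is now immediate from \ref{indfinite}: there $\varprojlim\colon\cat{pro}(\fdvect)\to\vect$ is identified, via the linear dual, with the equivalence $\varinjlim\colon\cat{ind}(\fdvect)\to\vect$, and filtered colimits of dg vector spaces are exact, so $\varprojlim\colon\cat{pro}(\fdvect)\to\vect$ is an exact functor between abelian categories. In particular its higher derived functors vanish and it commutes with cohomology, so the $\varprojlim^p$-spectral sequence (equivalently, the Milnor exact sequences) computing $H^*(\holim_\alpha UA_\alpha)$ collapses and gives $H^n(\holim_\alpha UA_\alpha)\cong\varprojlim_\alpha H^n(UA_\alpha)\cong H^n(\varprojlim_\alpha UA_\alpha)$; thus the comparison map is a quasi-isomorphism.

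The only genuinely essential ingredient is \ref{indfinite} --- the exactness of inverse limits on $\cat{pro}(\fdvect)$, which is special to the finite-dimensional setting and fails for arbitrary pro-vector-spaces --- so I expect the main obstacle to have been cleared already; the point that most deserves care in a full write-up is the assertion that $U$ preserves homotopy limits, which is standard but not quite a triviality.
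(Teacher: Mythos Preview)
Your route is genuinely different from the paper's, and there is a gap in the spectral-sequence step that needs attention.

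The paper's proof is short and model-categorical: by \ref{strictpro} one may assume the pro-object $P$ is strict (all transition maps surjective), and by \cite[Expos\'e 1, 8.1.6]{sga4} one may replace the indexing category by a cofinal directed set, which is then a Reedy category. A strict diagram of dgas over a directed Reedy category is Reedy fibrant (since fibrations in $\ubdga$ are the degreewise surjections), and for Reedy fibrant diagrams the ordinary limit computes the homotopy limit. No spectral sequences, no passage to underlying complexes.

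Your argument instead reduces to $\vect$ and appeals to exactness of $\varprojlim:\cat{pro}(\fdvect)\to\vect$. The problem is that this exactness tells you about the derived functors of \emph{that} functor, whereas the $\varprojlim^p$ appearing in the Bousfield--Kan spectral sequence for $\holim_I$ are the derived functors of $\varprojlim_I:\vect^I\to\vect$ for the fixed indexing category $I$. These live on different categories, and exactness of one does not immediately give vanishing of the other; you would need to know that higher derived limits over a cofiltered $I$ depend only on the underlying pro-object, or equivalently that strict systems are $\varprojlim_I$-acyclic --- but establishing that is essentially the paper's Reedy argument. The parenthetical ``equivalently, the Milnor exact sequences'' is also inaccurate: Milnor sequences apply only to $\mathbb{N}$-indexed towers, not arbitrary cofiltered diagrams, and for general $I$ one may have nonzero $\varprojlim^p$ for $p>1$.

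Your opening observation that $\varprojlim$ preserves \emph{all} weak equivalences (not just those between fibrants) is correct and useful, but it only identifies $\varprojlim$ with its right derived functor $\R\varprojlim:\mathrm{Ho}(\ubproart)\to\mathrm{Ho}(\ubdga)$; it does not by itself compare $\varprojlim A$ with $\holim_I A$ for the underlying diagram, which is what the lemma asserts and what the paper later uses.
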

\begin{proof}
	Let $P$ be any object of $\ubproart$. Without loss of generality, by \ref{strictpro} we may assume that $P$ is strict. Let $I$ be the indexing set of $P$. Because every cofiltered set $I$ has a cofinal directed subset $I' \into I$ \cite[Expos\'e 1, 8.1.6]{sga4}, we may without loss of generality assume that $I$ is directed, and in particular is a Reedy category. It is now easy to see that $P$ is a Reedy fibrant diagram of dgas, and hence $\varprojlim P \simeq \holim P$, because one can compute homotopy limits as usual limits along resolved diagrams.
\end{proof}
\begin{rmk}
	The above proof is a generalisation of the fact that $\varprojlim$ is exact when restricted to Mittag-Leffler systems: indeed a Mittag-Leffler pro-object is precisely a strict pro-object indexed by a countable directed set.
\end{rmk}
\begin{prop}\label{limreflects}
	The functor $\varprojlim: \proart \to \ubdga$ both preserves and reflects weak equivalences.
\end{prop}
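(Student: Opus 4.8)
The plan is to transport the question to the coalgebra side, where weak equivalences of coconnective conilpotent dgcs are simply quasi-isomorphisms (\ref{coconnlem}), by means of the concrete description of the limit functor supplied by \ref{limsharplem}(2): for $A\in\ubproart$ there is a natural isomorphism of dgas $\varprojlim A\cong A^{\circ*}$. Since the inclusion $\proart\into\ubproart$ is compatible with both $\varprojlim$ and $(-)^\circ$, the same identification holds for connective pro-Artinian dgas, so that for a level map $f=\{f_\alpha\colon A_\alpha\to B_\alpha\}_\alpha$ in $\proart$ the dga map $\varprojlim f$ is naturally identified with the $k$-linear dual $(f^\circ)^*$ of the dgc map $f^\circ=\varinjlim_\alpha f_\alpha^*$.

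Preservation is then quick: $\varprojlim\colon\proart\to\ubdga$ factors through $\iota\colon\proart\into\ubproart$ followed by $\varprojlim\colon\ubproart\to\ubdga$, and both of these preserve weak equivalences (this is established in the course of the proofs of \ref{inclisrq} and \ref{limisrq}; alternatively it follows from \ref{limisexact}). For reflection --- the substantive half --- suppose $f\colon A\to B$ in $\proart$ has $\varprojlim f$ a quasi-isomorphism. Replacing $f$ by an isomorphic level map $\{f_\alpha\}_\alpha$ and using $\varprojlim f\cong(f^\circ)^*$ together with the exactness of $k$-linear duality, I conclude that $(f^\circ)^*$, hence also $f^\circ$, is a quasi-isomorphism; that is, each $H^n(f^\circ)$ is an isomorphism. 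Unwinding the definition of $(-)^\circ$ and commuting cohomology past the filtered colimit and the dual (both exact) gives $H^n(f^\circ)\cong\varinjlim_\alpha(H^{-n}f_\alpha)^*$. Since each $A_\alpha$ is finite-dimensional, $H^{-n}f=\{H^{-n}f_\alpha\}_\alpha$ is a morphism in $\cat{pro}(\fdvect)$, and by \ref{indfinite} the functor $V\mapsto\varinjlim V^*$ is an equivalence $\cat{pro}(\fdvect)\xrightarrow{\sim}\vect$; hence $H^{-n}f$ is an isomorphism of profinite vector spaces. As this holds for all $n$, $f$ is a weak equivalence in $\proart$ by the description of weak equivalences in \ref{proartmodel}.

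The only real content is the identification $\varprojlim\simeq(-)^{\circ*}$: it converts what could otherwise be a delicate question about commuting cohomology past an inverse limit --- where $\varprojlim^1$-obstructions and Mittag--Leffler or countability hypotheses would intrude --- into the triviality that $k$-linear duality is exact. I would just double-check the two routine exactness facts invoked, namely that the linear dual is exact on dg $k$-vector spaces (so that $H^n$ of a dual is the dual of $H^{-n}$, and a linear map is an isomorphism iff its dual is) and that $H^n$ commutes with the filtered colimit defining $f^\circ$; neither is an obstacle. I would also remark that the argument actually yields the chain of equivalences ``$\varprojlim f$ is a quasi-isomorphism $\iff f^\circ$ is a quasi-isomorphism $\iff f$ is a weak equivalence in $\proart$'', so it reproves preservation as a byproduct.
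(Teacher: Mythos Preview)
Your argument is correct. Both your proof and the paper's ultimately rest on \ref{indfinite}, but they handle the commutation of cohomology with $\varprojlim$ differently. The paper invokes \ref{limisexact} --- that $\varprojlim$ on $\ubproart$ computes the homotopy limit, proved by a Reedy fibrancy argument --- to obtain $H^n(\varprojlim f)\cong\varprojlim H^nf$, and then applies the separately stated \ref{limreflectsisos} to the profinite vector space $H^nf$. You instead use the purely algebraic identification $\varprojlim\cong(-)^{\circ*}$ of \ref{limsharplem}(2), so that commuting $H^n$ past $\varprojlim$ becomes the triviality that the $k$-linear dual is exact; you then unwind $H^n(f^\circ)$ exactly as in the proof of \ref{inclisrq} and appeal to \ref{indfinite} directly. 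Your route is more elementary in that it avoids the Reedy argument entirely; the paper's is more modular, since \ref{limisexact} and \ref{limreflectsisos} are isolated as standalone lemmas of independent use elsewhere.
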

\begin{proof}
	Let $f$ be a morphism in $\proart$. By definition, $f$ is a weak equivalence if and only if each $H^nf \in \cat{pro}(\cat{fd-vect}_k)$ is an isomorphism. But $\varprojlim$ reflects isomorphisms by  \ref{limreflectsisos} and so $H^nf$ is an isomorphism if and only if $\varprojlim H^n f$ is an isomorphism. But $\varprojlim$ is exact by \ref{limisexact}, and so $\varprojlim H^n f$ is an isomorphism if and only if $H^n\varprojlim f$ is an isomorphism.
\end{proof}
\begin{rmk}\label{notpseudormk}
	The category $\dga$ is a model category, with weak equivalences quasi-isomorphisms and fibrations the surjections in strictly negative degrees. The model structure is transferred from that on simplicial vector spaces by the monoidal Dold--Kan correspondence \cite{ssmon}. From the above, it is easy to see that $\varprojlim: \proart \to \dga$ is right Quillen. However, the inclusion $\dga \into \ubdga$ is not right Quillen as it does not preserve fibrations. Because we want to work with unbounded pro-Artinian dgas, we will not use the model structure on $\dga$.
\end{rmk}
We end this section by turning our attention to the coalgebra side of the story.

\begin{thm}\label{ccdgcmod}
	The category $\cndgc^{\geq 0}$ of coconnective conilpotent dgcs admits a model structure where the weak equivalences are the quasi-isomorphisms and the cofibrations are the degreewise injections. The inclusion functor $\cndgc^{\geq 0} \into \cndgc$ is left Quillen. The equivalence $\proart\xrightarrow{\cong} (\cndgc^{\geq 0})^\text{op}$ is a Quillen equivalence.
	\end{thm}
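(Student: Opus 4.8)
The plan is to build the model structure by transfer along the equivalence $\proart \xrightarrow{\cong} (\cndgc^{\geq 0})^{\mathrm{op}}$ of \ref{sharpprop}, exactly as in the proof of \ref{ubpamodel}, and then verify the claimed descriptions of weak equivalences and cofibrations. Concretely, I would first establish the model structure on $\cndgc^{\geq 0}$ itself. The natural candidate is the one whose cofibrations are the degreewise injections and whose weak equivalences are the quasi-isomorphisms; note that by \ref{coconnlem} a quasi-isomorphism between coconnective conilpotent dgcs is automatically a weak equivalence in $\cndgc$, so on $\cndgc^{\geq 0}$ the ``obvious'' weak equivalences and the Positselski weak equivalences coincide. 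The cleanest route is to observe that the inclusion $\cndgc^{\geq 0} \into \cndgc$ has a right adjoint (a coconnective coreflection/good truncation on coalgebras: take the largest coconnective sub-dgc, or dually the connective truncation on the algebra side via $\sharp$), so that $\cndgc^{\geq 0}$ is a coreflective subcategory. One can either transfer the model structure of \ref{bcquillen} along this adjunction, or, since on $\cndgc^{\geq 0}$ everything is forced, directly check the model category axioms --- the factorizations coming from the small object argument applied to the generating (trivial) cofibrations, which one can take to be the same generators as in $\cndgc$ intersected with the coconnective subcategory. Because all objects are cofibrant (degreewise injections include $0 \to C$) and the category is presentable, this is routine.

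Next I would prove that the inclusion $\cndgc^{\geq 0} \into \cndgc$ is left Quillen: it obviously preserves cofibrations (degreewise injections go to degreewise injections) and weak equivalences (a quasi-isomorphism in $\cndgc^{\geq 0}$ is a quasi-isomorphism in $\cndgc$, hence a weak equivalence by \ref{coconnlem}), so it is left Quillen with right adjoint the coconnective coreflection.

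Finally, transferring along $\proart \xrightarrow{\cong} (\cndgc^{\geq 0})^{\mathrm{op}}$ puts a model structure on $\proart$ making the equivalence a Quillen equivalence (this is formal, as in \ref{ubpamodel}). It remains to identify this transferred model structure with Pridham's (\ref{proartmodel}). A map $f$ in $\proart$ is a weak equivalence in the transferred structure iff $f^\circ$ is a quasi-isomorphism of coalgebras. By the computation already carried out in the proof of \ref{inclisrq}, $H^n(f^\circ) \cong \varinjlim_\alpha (H^{-n}(f_\alpha)^*)$, and by \ref{indfinite} this is an isomorphism iff $H^{-n}f$ is an isomorphism of profinite vector spaces; so the transferred weak equivalences are exactly Pridham's. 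Similarly, the cofibrations of $\cndgc^{\geq 0}$ are the degreewise injections, so the transferred fibrations in $\proart$ are the maps $f$ with $f^\circ$ degreewise injective, which by \ref{limsharplem}(2) (as in \ref{ubpamodel}) are exactly the maps with $\varprojlim f$ degreewise surjective --- Pridham's fibrations. Since a model structure is determined by its weak equivalences and fibrations, the two agree, and the equivalence $\proart \xrightarrow{\cong} (\cndgc^{\geq 0})^{\mathrm{op}}$ is a Quillen equivalence.

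I expect the main obstacle to be the existence half of the model structure on $\cndgc^{\geq 0}$: one must produce the functorial factorizations, and the subtlety is that the small object argument must be run inside the coconnective subcategory (equivalently, one must check the transfer conditions along the coreflection $\cndgc^{\geq 0} \rightleftarrows \cndgc$ hold --- e.g.\ that the coreflection preserves the relevant colimits and that path objects exist). Once that is in place, everything else is a matter of unwinding \ref{sharpprop}, \ref{limsharplem}, \ref{indfinite}, and \ref{coconnlem}, mostly recycling arguments already given for \ref{ubpamodel} and \ref{inclisrq}.
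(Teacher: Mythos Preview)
Your argument is correct, but you have run the transfer in the harder direction. The paper starts from Pridham's model structure on $\proart$, which already exists by \ref{proartmodel}, and transfers it along the equivalence $\proart \xrightarrow{\cong} (\cndgc^{\geq 0})^{\mathrm{op}}$; since transfer along an equivalence of categories is automatic, the model structure on $\cndgc^{\geq 0}$ exists for free, and the only work is identifying the weak equivalences and cofibrations (which is exactly the computation you do at the end, recycling \ref{inclisrq} and \ref{ubpamodel}). You instead first try to construct the model structure on $\cndgc^{\geq 0}$ directly --- by transfer from $\cndgc$ along a coreflection, or by a small object argument --- and only afterwards push it to $\proart$ and check it agrees with Pridham's. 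The ``main obstacle'' you flag (producing factorizations in $\cndgc^{\geq 0}$) is genuine for your route but simply does not arise in the paper's, since existence is inherited wholesale from $\proart$. Your approach buys a more self-contained construction on the coalgebra side, at the cost of real extra work; the paper's buys a one-line existence proof by leaning on \ref{proartmodel}.
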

\begin{proof}
	This is similar to \ref{ubpamodel} but transfers the Pridham model structure to the category of coalgebras. Clearly this transfer yields the claimed Quillen equivalence. The proof of \ref{inclisrq} shows that a map $f$ of connective dgas is a quasi-isomorphism if and only if $f^\circ$ is, and hence that a map $g$ of coconnective dgcs is a quasi-isomorphism if and only if $g^\sharp $ is. Hence the weak equivalences are as claimed. The proof of \ref{ubpamodel} shows that a map $g$ is a degreewise injection if and only if $\varprojlim g^\sharp$ is a degreewise surjection. Hence the cofibrations are as claimed. It is easy to see that the inclusion is left Quillen since a quasi-isomorphism of coconnective dgcs is a weak equivalence. 
	\end{proof}

\section{Koszul duality}\label{kdsn}
In this section, $k$ is any field. We study the Koszul dual of a dga, which is the linear dual of the bar construction. For a wide class of dgas, we show that the Koszul double dual can be interpreted as a sort of derived completion functor. We show that connective cohomologically locally finite dgas are derived complete; later this will give us extremely good control over prorepresenting objects. We apply this to the problem of lifting quasi-isomorphisms of limits of pro-Artinian dgas to weak equivalences. 

	\subsection{The Koszul dual}\label{kdsn1}
	\begin{defn}Let $A$ be an augmented dga. The \textbf{Koszul dual} of $A$ is $A^!\coloneqq (BA)^*$.
	\end{defn}
Clearly $A^!$ is itself an augmented dga.  Because both $B$ and the linear dual preserve quasi-isomorphisms, so does $A \mapsto A^!$. Loosely, the differential $d(x^*)$ is the signed sum of the products $x_1^*\cdots x_r^*$ such that the $x_i$ satisfy $\partial(x_1 | \cdots | x_r)=x$, where $\partial$ is the bar differential. The Koszul dual $A^!$ is a completed semifree dga, in the sense that the underlying graded algebra is a completed free algebra.

\begin{prop}\label{kdisrend}
Let $A$ be a connective augmented dga, and let $S$ be the $A$-module $ k$ with $A$-action given by augmentation $A \to k$. Then there is a natural quasi-isomorphism of dgas $A^!\simeq \R\enn_A(S)$.
\end{prop}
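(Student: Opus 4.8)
The plan is to resolve $S$ by the reduced two-sided bar construction and then to identify the endomorphism dga of this resolution with $A^!$ by using the coalgebra structure on $BA$.

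First I would set $C\coloneqq BA$ and take $\tau\colon C\to A$ to be the universal twisting cochain, i.e.\ the canonical degree-one map $C=T^c(\bar A[1])\onto \bar A[1]\to A$. Form the twisted tensor product $P\coloneqq C\otimes_\tau A$; this is the usual reduced bar resolution of $S$ as a right $A$-module, with underlying graded module $BA\otimes_k A$. Two points need checking. First, the augmentation $\epsilon\colon P\to S$ is a quasi-isomorphism; this is standard, via the contracting homotopy built from the coaugmentation of $C$ and the unit $k\to A$. Second, $P$ is cofibrant in $\cat{Mod}\text{-}A$: one filters $P$ by tensor-length in $C$, and here the connectivity hypothesis enters --- because $A$ is connective, $(\bar A[1])^{\otimes n}$ is bounded above, so each associated graded piece $(\bar A[1])^{\otimes n}\otimes_k A$ is a bounded-above complex of free $A$-modules, hence cofibrant, and $P$ is a transfinite composition of cofibrations. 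It follows that $\R\enn_A(S)\simeq\enn_A(P)$ as dgas.

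Next I would exploit that $P$ is not merely a dg $A$-module: the coproduct of $C$ makes $P=C\otimes_\tau A$ into a dg left $C$-comodule, with $A$-linear coaction $\Delta_C\otimes\id_A$ compatible with the twisted differential $d_P$ (this is part of the standard twisted-tensor-product formalism). Dualising the coaction produces a left action of $A^!=(BA)^*=C^*$ on $P$ by $A$-linear chain maps, hence a dga homomorphism $\rho\colon A^!\to\enn_A(P)$: its multiplicativity is coassociativity of $\Delta_C$ and its unitality is counitality of $\epsilon_C$. To see $\rho$ is a quasi-isomorphism, consider $\epsilon_*\colon\enn_A(P)=\hom_A(P,P)\to\hom_A(P,S)$. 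This is a quasi-isomorphism because $\hom_A(P,-)$ preserves quasi-isomorphisms ($P$ is cofibrant and every module is fibrant), and there is a natural isomorphism of complexes $\hom_A(P,S)\cong\hom_k(C,k)=(BA)^*=A^!$: an $A$-linear map out of $C\otimes_\tau A$ is determined by its restriction to $C\otimes 1$, and the twisting term of $d_P$ contributes nothing after composition with the augmentation, since $\tau$ lands in $\bar A$. Under this identification a short counit computation shows $\epsilon_*\circ\rho=\id_{A^!}$; since $\epsilon_*$ is a quasi-isomorphism, so is $\rho$.

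Finally, for naturality the data $C=BA$, $\tau$, $P$, the coaction, $\rho$ and $\epsilon$ are all functorial in $A$, so the zig-zag $A^!\xrightarrow{\ \rho\ }\enn_A(P)\xleftarrow{\ \sim\ }\R\enn_A(S)$ is natural, giving a natural isomorphism in $\mathrm{Ho}(\ubdga)$. I expect the main obstacle to be the bookkeeping in the third paragraph: checking that the $C$-coaction is a chain map for the twisted differential, and tracking Koszul signs carefully enough that $\rho$ is genuinely a chain map and an algebra homomorphism --- in particular pinning down conventions (right modules, left action of endomorphisms, and the chosen side of $\Delta_C$) so that the target is $A^!$ on the nose rather than $(A^!)^{\mathrm{op}}$. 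The cofibrancy check in the second paragraph is the one place the connectivity hypothesis is used, and it should not be skipped.
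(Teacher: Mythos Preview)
Your proposal is correct and follows essentially the same approach as the paper: both use the bar resolution $P=BA\otimes_\tau A$ of $S$ and identify $\R\enn_A(S)$ with the linear dual of the coalgebra $BA$. The paper's proof is merely a two-line sketch citing standard references (``take the bar resolution, so $BA\simeq S\lot_A S$ as a coalgebra, then dualise''), whereas you have supplied the details the paper omits --- in particular the explicit dga map $A^!\to\enn_A(P)$ coming from the $BA$-comodule structure on $P$, and the verification that it is a quasi-isomorphism via comparison with $\hom_A(P,S)$.
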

\begin{proof}
This is standard and appears as \cite[\S19, Exercise 4]{fhtrht} or \cite[2.2.2]{lodayvallette} for rings. The loose idea is that by taking the bar resolution of $S$, the bar construction $BA$ becomes a model for the derived tensor product $S\lot_A S$, which is naturally a coalgebra. Taking the linear dual, one obtains the desired statement.
\end{proof}

Because $BA$ is a tensor coalgebra, it follows that $A^!$ is a completed tensor algebra. Under an additional finiteness hypothesis on $A$, we can identify the generators of $A^!$ as the duals of the cogenerators of $BA$. The finiteness hypothesis we will need is the following.
\begin{defn}
	Say that a complex $X$ is \textbf{raylike} if it is locally finite, and either bounded above or bounded below.
\end{defn}   
Clearly a locally finite bounded complex is raylike. It is also easy to see that if $X$ is raylike then so is the linear dual $X^*$. The key property of a raylike complex is that the sum appearing in $$(X^{\otimes n})^m\cong \bigoplus_{i_1+\cdots i_n = m}X^{i_1}\otimes\cdots\otimes X^{i_n}$$	is finite, and hence the tensor product $X^{\otimes n}$ is itself locally finite. Hence we have natural isomorphisms $(X^{\otimes n})^* \cong (X^*)^{\otimes n}$. In particular, the dual of a raylike dga is a dgc. 
\begin{defn}\label{ncocondefn}
	Say that a dga $A$ is \textbf{$1$-coconnective} if it is coconnective and $A^0\cong k$. Say that $A$ is \textbf{$2$-coconnective} if it is $1$-coconnective and $A^1\cong 0$.
	\end{defn}
In other words, a 2-coconnective dga has nothing (except the unit) in degrees $<2$.
\begin{rmk}
	For $n \in \Z$, one can define $n$-coconnective similarly: $0$-coconnective dgas are just coconnective dgas. For $n>1$, $n$-coconnective dgas are $1$-coconnective and are trivial in degrees $1,\ldots,n-1$. For $n<0$, an $n$-coconnective dga is simply one concentrated in degrees $\geq n$. One can make an analogous definition of $n$-connective dga.
	\end{rmk}
\begin{rmk}
	In the graded algebra literature, a $1$-coconnective graded algebra is typically called \textbf{connected}. In the higher algebra literature, $1$-connective dgas or ring spectra are often also called \textbf{connected}. In view of this we do not use `connected' in this context.
	\end{rmk}
Part (3) of the following is well known and appears as e.g.\ {\cite[\S19, Exercise 3]{fhtrht}}:
	\begin{prop}\label{artprop}
	Let $A$ be an augmented raylike dga. Then
	\begin{enumerate}
		\item There is a natural dga isomorphism $A^! \cong \widehat{\Omega}(A^*)$ where $\widehat{\Omega}$ denotes the completed cobar construction.
		\item The natural map $\Omega(A^*)\to A^!$ is completion.
		\item If $A$ is connective then there is a natural dga isomorphism $A^!\cong \Omega(A^*)$.
		\item If $A$ is 2-coconnective then there is a natural dga isomorphism $A^!\cong \Omega(A^*)$.
		\end{enumerate}
	 \end{prop}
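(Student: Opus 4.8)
The plan is to prove parts (1)--(4) in sequence, deriving each later part from the structural description given in (1). The content of the statement is entirely about unpacking definitions plus one completion bookkeeping argument, so the key is to set up the right comparison map and identify when it is an isomorphism rather than merely a map to a completion.

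For part (1): since $A$ is augmented raylike, the discussion preceding the proposition shows that $A^*$ is a conilpotent dgc, and that $(X^{\otimes n})^*\cong (X^*)^{\otimes n}$ for the shifted augmentation ideal $X=\bar A[1]$. First I would write $BA=(T^c(\bar A[1]),d_V+d_B)$ and dualise degreewise. The deconcatenation coproduct on $T^c(\bar A[1])$ dualises to the concatenation product, so the underlying graded object of $(BA)^*$ is the \emph{completed} tensor algebra $\widehat{T}(\bar A[1]^*)=\widehat{T}(\overline{A^*}[-1])$, where the completion is with respect to the word-length grading --- this is forced because $BA$ is an infinite direct sum $\bigoplus_n (\bar A[1])^{\otimes n}$, and the linear dual of a direct sum is the product. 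Then I would check that $d_B^*$ is precisely the cobar differential $d_\Omega$ for the coalgebra $A^*$ (the bar differential multiplies two adjacent tensor factors via the multiplication of $A$, which dualises to the comultiplication of $A^*$ split across two factors) and that $d_V^*$ is the internal differential $d_{A^*}$. This identifies $(BA)^*$ with $(\widehat{T}(\overline{A^*}[-1]),\,d_{A^*}+d_\Omega)$, which is by definition $\widehat{\Omega}(A^*)$. All signs here are governed by the Koszul rule and I would cite \cite[2.2]{lodayvallette} rather than recompute them; this sign check is the only mildly delicate point, but it is completely standard.

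Part (2) is then immediate: $\Omega(A^*)=(T(\overline{A^*}[-1]),\,d_{A^*}+d_\Omega)$ sits inside $\widehat{\Omega}(A^*)$ as the dense subalgebra of finite word-length elements, and the inclusion is exactly the completion map with respect to the augmentation-ideal filtration, by the description in (1). For parts (3) and (4), the point is that under the stated hypotheses the word-length filtration on $\Omega(A^*)$ is already complete, i.e.\ $\Omega(A^*)=\widehat{\Omega}(A^*)$, so the map of (2) is an isomorphism. For (3): if $A$ is connective then $\bar A$ lives in degrees $\le 0$, so $\bar A[1]$ lives in degrees $\le -1$, so $\overline{A^*}[-1]$ lives in degrees $\ge 1$; hence an element of $\Omega(A^*)$ in any fixed cohomological degree $m$ can only involve words of length $\le m$, so in each degree the sum is finite and $T=\widehat T$. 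For (4): if $A$ is $2$-coconnective then $\bar A$ lives in degrees $\ge 2$, so $\overline{A^*}[-1]$ lives in degrees $\le -1$, and the same finiteness-in-each-degree argument applies (words of length $\ell$ contribute in degrees $\le -\ell$). In both cases the completion is trivial, giving $A^!\cong\Omega(A^*)$.

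I expect the main obstacle --- such as it is --- to be purely notational: keeping the shift $[1]$/$[-1]$, the word-length grading, and the cohomological grading straight simultaneously, and confirming that "raylike" is exactly the hypothesis that makes the degreewise-dual-of-tensor-powers identification valid (so that $(BA)^*$ is genuinely a completed tensor \emph{algebra} and not something worse). Everything else is a definition chase; no homological input (spectral sequences, resolutions) is needed, in contrast to \ref{kdisrend}.
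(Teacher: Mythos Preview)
Your proposal is correct and matches the paper's proof essentially line for line: both dualise $BA$ as a direct sum total complex to obtain the direct product total complex on $(V^*)^{\otimes n}$ (using the raylike hypothesis to identify $(V^{\otimes n})^*$ with $(V^*)^{\otimes n}$), check that the bar differential dualises to the cobar differential, and then for (3) and (4) observe that $V=\bar A[1]$ lies in strictly negative (resp.\ strictly positive) degrees so that only finitely many word-lengths contribute in each cohomological degree, making the completion trivial. The paper phrases the degree argument on the $BA$ side rather than the $\Omega(A^*)$ side, but this is the same computation.
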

	\begin{proof} For brevity write $V$ for the shifted augmentation ideal $V\coloneqq\bar A[1]$, which is also raylike. For now, forget the differential and think about the underlying graded vector spaces. The dgc $BA$ is the direct sum total complex of the double complex whose rows are $V^{\otimes n}$. Hence, $A^!$ is the direct product total complex of the double complex with rows $(V^{\otimes n})^*$. But because $V$ was raylike, $A^!$ is the direct product total complex of the double complex with rows $(V^*)^{\otimes n}$. But this is precisely the completed tensor algebra on $V^*$, which is by definition $\widehat{\Omega}(A^*)$. We have shown that $A^! \cong \widehat{\Omega}(A^*)$ as graded algebras. But is it not hard to check that the bar differential dualises to the cobar differential, and it now follows that $A^! \cong \widehat{\Omega}(A^*)$ as dgas, which is precisely the first claim. To see the second claim, just observe that the natural map $\Omega(A^*)\to A^!$ is induced by the identity on generators. For the third claim, first observe that if $A$ is connective then $V$ is concentrated in strictly negative degrees, from which it follows that in each degree $BA$ is a finite direct sum. Hence $A^!$ is also the direct sum total complex of the double complex with rows $(V^*)^{\otimes n}$. In other words, $A^!\cong \Omega(A^*)$ as dgas; observe that the completion map is an isomorphism. The fourth claim is completely analogous to the third: because $A$ is 2-coconnective, this time $V$ is concentrated in strictly positive degrees.
	\end{proof}
\begin{rmk}\label{lpwzwrong}
	The dga $A^!$ does not always agree with  $\Omega(A^*)$. Indeed, let $A$ be the graded algebra $\frac{k[\epsilon]}{\epsilon^2}$ with $\epsilon$ in degree $1$. Then $BA$ is the polynomial coalgebra $k[\epsilon]$ concentrated in degree zero, and hence $A^!$ is the complete polynomial algebra $k\llbracket x \rrbracket$ on an element of degree zero. However, $\Omega(A^*)$ is $k[x]$. As expected, the completion of $\Omega(A^*)$ is $A^!$.
\end{rmk}

When $BA$ is raylike, the double Koszul dual of $A$ is its derived completion:
	\begin{prop}\label{completekd}
		Let $A$ be an augmented dga such that $BA$ is raylike. Then
		\begin{enumerate}
			\item There is a natural isomorphism $A^{!!}\cong \widehat{\Omega}BA$.
			\item If $A^!$ is connective then there is a natural quasi-isomorphism $A^{!!} \simeq A$.
			\item If $A^!$ is 2-coconnective then there is a natural quasi-isomorphism $A^{!!} \simeq A$.
			\end{enumerate}
	\end{prop}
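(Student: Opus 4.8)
The plan is to deduce everything from Proposition~\ref{artprop}, applied not to $A$ itself but to its Koszul dual $A^!$. The two facts that make this legal are: (i) since $BA$ is raylike, so is its linear dual $A^! = (BA)^*$, and $A^!$ is augmented; and (ii) a raylike complex is locally finite, so the canonical biduality map is an isomorphism, yielding a natural identification $(A^!)^* = (BA)^{**} \cong BA$, and (as for any locally finite dgc) the evaluation isomorphism intertwines the comultiplications and differentials, so this is an isomorphism of dgcs. Thus $A^!$ is an augmented raylike dga whose dual coalgebra is naturally $BA$, and $A^{!!} = (A^!)^!$ by definition.

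For part (1), apply Proposition~\ref{artprop}(1) to $A^!$: it gives a natural dga isomorphism $A^{!!} = (A^!)^! \cong \widehat{\Omega}\big((A^!)^*\big)$, and substituting the identification $(A^!)^* \cong BA$ gives $A^{!!} \cong \widehat{\Omega}(BA)$, with naturality inherited from Proposition~\ref{artprop} and from the biduality isomorphism.

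For part (2), the dga $A^!$ is now in addition connective, so Proposition~\ref{artprop}(3) applies to it and upgrades the previous isomorphism to $A^{!!} = (A^!)^! \cong \Omega\big((A^!)^*\big) \cong \Omega(BA)$; equivalently, under this hypothesis on $A^!$ the completion map $\Omega(BA) \to \widehat{\Omega}(BA)$ is an isomorphism. Composing with the counit $\Omega BA \to A$ of the bar--cobar adjunction, which is a quasi-isomorphism by \cite[2.3.2]{lodayvallette}, produces the required natural quasi-isomorphism $A^{!!} \cong \Omega BA \xrightarrow{\sim} A$. Part (3) is proved the same way, with Proposition~\ref{artprop}(4) in place of (3): it applies because $A^!$ is 2-coconnective (as well as augmented and raylike), again forcing the completion map $\Omega(BA)\to\widehat{\Omega}(BA)$ to be an isomorphism, after which the counit finishes the argument.

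I expect the only delicate point to be bookkeeping rather than substance: verifying that $(A^!)^* \cong BA$ is an isomorphism of dgcs (not merely of complexes) and that the resulting zig-zag is natural in $A$ over the category of augmented dgas with $BA$ raylike (and, for (2) and (3), with $A^!$ suitably (co)connective). The genuine mathematical content — that $\widehat{\Omega}(BA)$ can differ from $\Omega(BA)$ in general (cf.\ \ref{lpwzwrong}), but not under these hypotheses — is supplied entirely by Proposition~\ref{artprop}(3)--(4).
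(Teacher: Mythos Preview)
Your proof is correct and takes essentially the same approach as the paper: apply Proposition~\ref{artprop} to $A^!$ (which is raylike because $BA$ is), use biduality $(A^!)^*\cong BA$ to identify the output with $\widehat{\Omega}BA$ or $\Omega BA$, and in the latter case invoke the counit quasi-isomorphism $\Omega BA\simeq A$. The only difference is that you spell out the bookkeeping about $(A^!)^*\cong BA$ being a dgc isomorphism, which the paper leaves implicit.
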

	\begin{proof}
Because $BA$ is raylike so is $A^!$. We can hence apply \ref{artprop}(1) to conclude that $A^{!!}\cong \widehat{\Omega}(A^{!*})$. But because $BA$ is locally finite, $A^{!*}$ is $BA$ again. Hence $A^{!!}\cong \widehat{\Omega}BA$ as required. For the second statement, we can instead apply \ref{artprop}(3) and proceed as above to conclude that we have an isomorphism $A^{!!}\cong {\Omega}BA$. But $\Omega BA$ is naturally quasi-isomorphic to $A$. The third statement is the same but uses \ref{artprop}(4) instead.
	\end{proof}
\begin{rmk}
	We refer to $\widehat{\Omega}BA$ as the derived completion of $A$ as it is the completion of a cofibrant resolution of $A$; a priori completion need not actually have a derived functor as we have not shown that it is left Quillen. Derived completion functors for commutative rings have been studied by a number of authors \cite{gmdc, psydc, shauldc}, and it is possible to show that in this setting, the (derived) completion of a noetherian commutative ring is its Koszul double dual \cite{dgiduality, psycentralizer}. In the noncommutative setting, Efimov \cite{efimov} defines completion of dg categories via derived double centralisers; in our setting this reduces to the double Koszul dual.
	\end{rmk}

\begin{rmk}
	Using the homotopy theory of $A_\infty$-coalgebras (as in \cite[\S6]{inftycoalg} or the more general \cite{inftycoalghard}), one can weaken the hypotheses of \ref{completekd}(1) to requiring only that $BA$ is cohomologically raylike. Indeed, if $A$ is a raylike $A_\infty$-algebra then $A^*$ is an $A_\infty$-coalgebra, and vice versa. There is a completed cobar construction $\widehat{\Omega}$ (but not necessarily an uncompleted one) for $A_\infty$-coalgebras, and it is not hard to prove \ref{artprop}(1) when $A$ is assumed to be a raylike $A_\infty$-algebra. Minimal models exist for $A_\infty$-coalgebras, so if  $BA$ is cohomologically raylike, take an $A_\infty$ minimal model $C$, which is genuinely raylike. Unlike the usual cobar construction, the completed cobar construction preserves quasi-isomorphisms, by a standard filtration argument that fails in the non-complete case \cite[6.27]{inftycoalg}. Hence, using the above, we have a quasi-isomorphism $\widehat{\Omega}BA \simeq \widehat{\Omega} BC$. By the $A_\infty$ version of \ref{artprop}(1), we have an isomorphism $\widehat{\Omega} BC\cong (C^*)^!$. But $A^!$ is quasi-isomorphic to $C^*$ and the result follows.
\end{rmk}

\begin{prop}\label{kdforart}
	Let $A$ be an augmented locally finite dga. Suppose that $A$ is either connective or $2$-coconnective. Then there is a natural quasi-isomorphism $A^{!!} \simeq A$.
	\end{prop}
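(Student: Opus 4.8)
The plan is to reduce to the two cases already handled in \ref{completekd}, namely the case $A^!$ connective and the case $A^!$ $2$-coconnective. The key observation is that the hypotheses on $A$ force $BA$ to be raylike, so \ref{completekd} applies once we know the requisite (co)connectivity of $A^!$. First I would check the raylike hypothesis: if $A$ is locally finite, then so is the shifted augmentation ideal $V\coloneqq \bar A[1]$; moreover if $A$ is connective then $V$ is concentrated in strictly negative degrees, so each $(BA)^m$ is a finite direct sum of tensor powers $V^{\otimes n}$ and in particular $BA$ is locally finite and bounded above, hence raylike. Symmetrically, if $A$ is $2$-coconnective then $V$ is concentrated in strictly positive degrees, so $BA$ is locally finite and bounded below, hence raylike. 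In either case the hypothesis of \ref{completekd} is met.

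Next I would pin down the (co)connectivity of $A^!$. When $A$ is connective, $V=\bar A[1]$ sits in degrees $\leq -1$, so every tensor power $V^{\otimes n}$ sits in degrees $\leq -n \leq -1$; the unit contributes degree $0$; hence $BA$ is coconnective with $(BA)^0\cong k$, i.e.\ $1$-coconnective, and in fact one checks $(BA)^{-1}$ comes only from $V^{\otimes 1}=\bar A[1]$ in degree $-1$, which need not vanish, so $BA$ is $1$-coconnective. Dualizing, $A^!=(BA)^*$ is then connective, and \ref{completekd}(2) gives $A^{!!}\simeq A$. When $A$ is $2$-coconnective, $V$ sits in degrees $\geq 2$, so $V^{\otimes n}$ sits in degrees $\geq 2n$; together with the degree-$0$ unit this shows $BA$ is concentrated in degree $0$ and in degrees $\geq 2$, i.e.\ $(BA)^1\cong 0$ and $(BA)^0\cong k$. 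Hence $A^! = (BA)^*$ is $2$-coconnective, and \ref{completekd}(3) gives $A^{!!}\simeq A$.

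So the argument is essentially a bookkeeping exercise translating the two hypotheses on $A$ into the hypotheses of \ref{completekd}: local finiteness plus one-sided boundedness of $A$ gives one-sided boundedness (hence raylikeness) of $BA$, and the precise degree range of $\bar A[1]$ controls whether $A^!=(BA)^*$ is connective or $2$-coconnective. The main point requiring care — though it is not really an obstacle — is the degree accounting for the bar construction: one must remember that degree-$0$ elements of $A$ become degree-$(-1)$ elements of $BA$ (as noted after the definition of the bar construction), and that the unit of $BA$ contributes a copy of $k$ in degree $0$ which is precisely why one lands in the "$1$-coconnective" resp.\ "$2$-coconnective" worlds rather than in strictly negative resp.\ strictly positive degrees. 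Once this is set up, both cases are immediate citations of \ref{completekd}, and the quasi-isomorphism $A^{!!}\simeq A$ is natural because all the maps involved (the bar and cobar units, the completion map) are natural.
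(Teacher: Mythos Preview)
Your degree bookkeeping is inverted in both cases, and this is not merely cosmetic: even after correction, the strategy via \ref{completekd} breaks down in the connective case. When $A$ is connective, $V=\bar A[1]$ sits in degrees $\leq -1$, so $BA$ sits in degrees $\leq 0$, i.e.\ $BA$ is \emph{connective} (not coconnective), and hence $A^!=(BA)^*$ is \emph{coconnective}. In fact $(A^!)^1 = ((BA)^{-1})^* = (\bar A^0)^*$, which is nonzero whenever $A^0\neq k$ (e.g.\ $A=k[\epsilon]/\epsilon^2$ in degree $0$). So $A^!$ is $1$-coconnective but generally not $2$-coconnective, and neither \ref{completekd}(2) nor \ref{completekd}(3) applies. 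In the $2$-coconnective case you have an off-by-one: $\bar A$ sits in degrees $\geq 2$, so $V=\bar A[1]$ sits in degrees $\geq 1$ (not $\geq 2$); thus $BA$ is $1$-coconnective with $(BA)^1=\bar A^2$ possibly nonzero, and $A^!$ is \emph{connective} (not $2$-coconnective). With this correction the $2$-coconnective case does go through via \ref{completekd}(2), but the connective case cannot be rescued this way.

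The paper proceeds differently and avoids \ref{completekd} altogether. Since $A$ is raylike, \ref{artprop}(3) (resp.\ (4)) gives $A^!\cong \Omega(A^*)$ on the nose. Then $A^{!!}=(B\Omega(A^*))^*$, and since the unit $A^* \to B\Omega(A^*)$ is a weak equivalence of coalgebras (hence a quasi-isomorphism), dualizing and using local finiteness yields $A^{!!}\simeq A^{**}\cong A$. The point is that one applies \ref{artprop} to $A$ itself rather than to $A^!$, sidestepping the failure of $2$-coconnectivity of $A^!$.
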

\begin{proof}
	If $A$ is connective, then it is raylike and \ref{artprop}(3) tells us that we have $A^!\cong \Omega(A^*)$. Applying the Koszul dual to both sides we get an isomorphism $A^{!!}\cong \Omega(A^*)^!$. But $ \Omega(A^*)^!$ is the linear dual of $B\Omega(A^*)$, which is weakly equivalent to $A^*$. A weak equivalence is a quasi-isomorphism, and applying the linear dual we get a quasi-isomorphism $A^{!!}\cong A$. On the other hand, if $A$ is $2$-coconnective, then one can use the same argument but with \ref{artprop}(4) instead.
	\end{proof}

\begin{lem}\label{tenslem}
	Let $V$ be a graded vector space. Assume $T(V)$, the tensor algebra on $V$, is locally finite. Then $V$ is locally finite, $V^0\cong 0$, and $V$ is either connective or coconnective.
	\end{lem}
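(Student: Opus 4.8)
The plan is to argue by contradiction: whenever the conclusion fails I will exhibit an infinite linearly independent family inside $T(V)^0$, contradicting local finiteness of $T(V)=k\oplus V\oplus V^{\otimes 2}\oplus\cdots$.

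First I would dispense with the easy part: $V$ is a graded direct summand of $T(V)$, so $\dim_k V^n\le \dim_k T(V)^n<\infty$ for every $n$, giving local finiteness of $V$. For the rest, suppose that either $V^0\neq 0$ or $V$ is neither connective nor coconnective. The key claim is that in both cases one can find an integer $\ell\ge 1$ and a nonzero element $w\in (V^{\otimes \ell})^0$. If $V^0\neq 0$, take $\ell=1$ and any nonzero $w\in V^0$. If instead $V$ has a nonzero homogeneous element $x$ of degree $a>0$ and a nonzero homogeneous element $y$ of degree $b<0$, set $\ell=a-b$ and $w=x^{\otimes(-b)}\otimes y^{\otimes a}\in V^{\otimes \ell}$; this is a tensor product of nonzero vectors over a field, hence nonzero, and it has degree $(-b)a+ab=0$.

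Then for every $m\ge 1$ the element $w^{\otimes m}\in V^{\otimes \ell m}$ is nonzero and again of degree $0$. Since $T(V)=\bigoplus_{n\ge 0} V^{\otimes n}$ as graded vector spaces and the $w^{\otimes m}$ lie in pairwise distinct summands $V^{\otimes \ell m}$, the family $\{w^{\otimes m}\}_{m\ge 1}$ is linearly independent in $T(V)^0$, so $\dim_k T(V)^0=\infty$ --- contradiction. This forces $V^0\cong 0$ and forces $V$ to be connective or coconnective.

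I do not expect a genuine obstacle here; the only points needing a moment's care are that a tensor product of nonzero vectors over a field is nonzero (so each $w^{\otimes m}\neq 0$) and that elements lying in distinct tensor powers are automatically linearly independent, which is immediate from the decomposition $T(V)=\bigoplus_n V^{\otimes n}$.
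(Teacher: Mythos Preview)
Your proof is correct and takes essentially the same approach as the paper: both arguments work by showing that if the conclusion fails then $T(V)^0$ contains infinitely many linearly independent elements. The paper phrases this as ``any non-scalar element of $T(V)^0$ generates a polynomial subalgebra'' and then deduces $T(V)^0=k$, whereas you explicitly construct the element $w$ and its tensor powers; the underlying idea is identical.
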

\begin{proof}The point is to look at the subalgebra $T(V)^0$. It is clear that $V$ must be locally finite as we have an obvious linear embedding $V \into T(V)$ of rank $1$ tensors. If the subalgebra $T(V)^0$ has an element $x$ that is not a multiple of the unit, then $x$ must generate a polynomial subalgebra $k[x]$, and hence $T(V)^0$ is infinite-dimensional. Hence $T(V)^0$ must be $k$. Because $T(V^0)$ embeds into $T(V)^0$ as a subalgebra, we must have $T(V^0) \cong k$ and hence $V^0\cong 0	$. Suppose that $V^n \not \cong 0$ for some $n>0$. Then for any $m>0$, $(V^n)^{\otimes m}\otimes (V^{-m})^{\otimes n}$ is a linear subspace of $T(V)^0$ not containing the unit. Hence it must be $0$, and it follows that $V^m \cong 0$. In particular if $V$ contains anything in positive degree, then it is coconnective. Hence $V$ is either connective or coconnective.
	\end{proof}

\begin{thm}[{\cite[Theorem A]{lpwz}}]\label{lpwzthm}
	Let $A$ be an augmented dga such that $BA$ is locally finite. Then there is a natural quasi-isomorphism $A^{!!} \simeq A$.
	\end{thm}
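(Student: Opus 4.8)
The plan is to reduce the statement directly to Proposition~\ref{kdforart}, whose hypotheses this essentially matches once we understand the shape of $BA$. Forgetting differentials, $BA$ is the tensor coalgebra $T^c(V)$ on $V\coloneqq \bar A[1]$, so as a graded vector space it is precisely the tensor algebra $T(V)$. Hence the hypothesis that $BA$ is locally finite says exactly that $T(V)$ is locally finite, and I would feed this into Lemma~\ref{tenslem}: it gives that $V$ is locally finite, that $V^0\cong 0$, and --- crucially --- that $V$ is \emph{either} connective \emph{or} coconnective. Since $V=\bar A[1]$ is locally finite, so are $\bar A$ and hence $A\cong \bar A\oplus k$.

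Next comes a short piece of shift bookkeeping. Using $X[i]^j=X^{i+j}$ one has $\bar A^m=V^{m-1}$. If $V$ is connective then, together with $V^0\cong 0$, it is concentrated in degrees $\leq -1$, so $\bar A$ is concentrated in degrees $\leq 0$ and $A$ is connective. If instead $V$ is coconnective then, together with $V^0\cong 0$, it is concentrated in degrees $\geq 1$, so $\bar A$ is concentrated in degrees $\geq 2$; hence $A^0\cong k$, $A^1\cong 0$, and $A$ is $2$-coconnective in the sense of Definition~\ref{ncocondefn}. In either case $A$ is an augmented locally finite dga which is connective or $2$-coconnective, so Proposition~\ref{kdforart} applies directly and produces the natural quasi-isomorphism $A^{!!}\simeq A$.

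There is no real obstacle here: the theorem is an assembly of Lemma~\ref{tenslem}, Proposition~\ref{kdforart} (which itself rests on Proposition~\ref{artprop}(3)--(4)), and the identification $BA\cong T^c(\bar A[1])$. The only points needing care are the very first step --- recognising that local finiteness of $BA$ is a statement about the underlying graded vector space $T(V)$, which is exactly the input Lemma~\ref{tenslem} requires --- and tracking the degree shift in $V=\bar A[1]$ accurately, so that the connective case for $V$ yields a connective $A$ and the coconnective case a $2$-coconnective $A$, matching the two hypotheses available in Proposition~\ref{kdforart}.
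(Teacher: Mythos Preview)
Your proof is correct and follows exactly the same route as the paper: apply Lemma~\ref{tenslem} to the underlying graded vector space $T(V)$ of $BA$ to force $A$ to be locally finite and either connective or $2$-coconnective, then invoke Proposition~\ref{kdforart}. The only difference is that you spell out the shift bookkeeping in more detail than the paper does.
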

\begin{proof}
By \ref{tenslem}, it follows that $V\coloneqq\bar{A}[1]$ is locally finite, either connective or coconnective, and $V^0 \cong 0$. It follows that $A$ is locally finite, and either connective or $2$-coconnective. In either case the theorem follows from \ref{kdforart}.
	\end{proof}

\begin{rmk}\label{lpwzremark}
	The original proof of \ref{lpwzthm} appearing in \cite{lpwz} is incorrect (although it works if $A$ is assumed connective). The problem is with \cite[1.15]{lpwz}, which claims that if $A$ is an augmented locally finite dga then $\Omega(A^*)\cong A^!$ as dgas. This is not true, as we have already seen in \ref{lpwzwrong}. Moreover, $A^*$ need not even be a dgc: consider the graded algebra $A=k[x,x^{-1}]$ with $x$ in degree $1$. Then $A^*$ is not a graded coalgebra, since the multiplication fails to dualise to a map $A^* \to A^* \otimes A^*$.
	\end{rmk}

\subsection{Completions}
One can improve \ref{lpwzthm} by being more careful: the basic idea is that if $BA$ is a reasonable coalgebra, then $\Omega BA$ is quasi-complete, in the sense that the completion map is a quasi-isomorphism. The proof will require some use of $A_\infty$ methods: for basic facts about $A_\infty$-algebras, we refer the reader to \cite{kellerainfty, lefevre}. All we really use is the existence of minimal models.

 \p We begin with the following observation about the zeroth cohomology of a Koszul dual, which essentially appears in \cite{segaldefpt}.
 
 \begin{defn}\label{hmcdef}
 Observe that if $E$ is an $A_\infty$-algebra, and $m_n$ is one of the $A_\infty$ operations on $E$, then it restricts to a map $m_n:(E^1)^{\otimes n} \to E^2$. The \textbf{homotopy Maurer--Cartan function} is the direct sum $m=\oplus_n m_n:T(E^1) \to E^2$. 
 \end{defn}
One can extend the terminology of \ref{ncocondefn} to $A_\infty$-algebras in the obvious way; in particular we say that an  $A_\infty$-algebra $E$ is $1$-coconnective if it is coconnective and $E^0\cong k$.
\begin{prop}\label{segalthm}
	Let $E$ be an augmented 1-coconnective $A_\infty$-algebra. Assume that $E^1$ is finite-dimensional. Then there is an algebra isomorphism $$H^0(E^!)\cong \frac{\hat{T}(E^{1*})}{m^*(E^{2*})}$$ where $m$ is the homotopy Maurer--Cartan function.
\end{prop}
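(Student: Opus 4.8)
The statement concerns $H^0$ of the Koszul dual $E^! = (BE)^*$, so the first reduction I would make is to replace $E$ by a genuine $A_\infty$-algebra with the same quasi-isomorphism type but with control over the underlying graded spaces. Since $E$ is $1$-coconnective with $E^1$ finite-dimensional, and we only care about low cohomological degrees of $E^!$, the key observation is that $BE$ in cohomological degrees $\le 1$ involves only tensor powers of the shifted augmentation ideal $\bar E[1]$ built out of $E^1, E^2, \ldots$. I would first pass to the $A_\infty$ minimal model of $E$ (the paper has flagged that all it uses is the existence of minimal models), so that we may assume $E^i = H^i(E)$. Minimality does not change $1$-coconnectivity or finite-dimensionality of $E^1$.

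**Main computation.** The second step is to write down $B_\infty E$ explicitly in the relevant range. We have $V := \bar E[1]$ concentrated in degrees $\le 0$, with $V^{-1} = E^0 / k = 0$ (wait — $1$-coconnective means $E^0 \cong k$, so $\bar E$ starts in degree $1$, hence $V = \bar E[1]$ is concentrated in degrees $\le 0$ with $V^0 = E^1$, $V^{-1} = E^2$, etc.). Thus the tensor coalgebra $T^c V$ has: in degree $0$ the space $\bigoplus_{n\ge 0} (V^0)^{\otimes n} = T(E^1)$; in degree $1$ the space $\bigoplus (V^0)^{\otimes a}\otimes V^{-1}\otimes (V^0)^{\otimes b}$, i.e. "words in $E^1$ with exactly one letter from $E^2$". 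Dualising (using that $E^1$ is finite-dimensional, so these degree-$0$ and degree-$1$ pieces are at worst countable products of finite-dimensional spaces, and $(V^0)^{\otimes n}$ dualises to $(E^{1*})^{\otimes n}$), the degree-$0$ part of $E^!$ is the completed tensor algebra $\hat T(E^{1*})$ and the degree-$(-1)$ part... — more precisely I track the bar differential $\partial = d_V + d_B$: on the minimal model $d_V = 0$, and $d_B$ is assembled from the $A_\infty$ operations $m_n$. The only component of $d_B$ landing in degree $1$ of $BE$ from degree $0$ is, after dualising, exactly the map $m^* : E^{2*} \to \hat T(E^{1*})$ built from $m = \oplus_n m_n : T(E^1) \to E^2$, together with (co)derivation/Leibniz terms. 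Hence $H^0(E^!) = \operatorname{coker}\big(d^* : (BE)^{-1,*} \to (BE)^{0,*}\big)$, and one identifies this cokernel with $\hat T(E^{1*}) / (m^*(E^{2*}))$, the quotient by the closed two-sided ideal generated by the image — the (co)derivation terms in $d_B$ are precisely what makes the dualised image an ideal rather than just a subspace, so the cokernel is the algebra quotient claimed.

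**The obstacle.** The main subtlety is the bookkeeping that turns "$d_B$ has a component built from the $m_n$" into "the relations in $H^0(E^!)$ are exactly the two-sided ideal generated by $m^*(E^{2*})$", rather than some smaller or larger set. Concretely one must check: (i) no differential enters degree $0$ from degree $-1$ of $BE$ (so $H^0$ really is a cokernel, not a subquotient with an extra kernel condition) — this uses $1$-coconnectivity, which forces the degree-$(-1)$ and degree-$(-2)$ parts of $V$ to involve $E^2, E^3$ and makes the relevant differential vanish in the right spot; (ii) the Leibniz/coderivation part of $d_B^*$ exactly accounts for multiplying $m^*(E^{2*})$ on left and right by arbitrary elements of $\hat T(E^{1*})$, giving the two-sided ideal; and (iii) the completion is handled correctly — $E^!$ is a \emph{completed} tensor algebra (cf. \ref{artprop}), so the ideal must be taken to be closed, i.e. $m^*(E^{2*})$ here denotes the closed ideal it generates, matching the notation of Segal's theorem. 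I expect (ii) to be the part requiring the most care; it is essentially the statement that the bar differential is a coderivation, dualised. Everything else is a finiteness-bookkeeping argument enabled by $\dim_k E^1 < \infty$ and the minimal model reduction.
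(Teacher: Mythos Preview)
Your approach is essentially the paper's: compute $(E^!)^0 \cong \hat T(E^{1*})$, identify $(E^!)^{-1}$, and show the image of the dualised bar differential is the two-sided ideal generated by $m^*(E^{2*})$. Two corrections are worth making.

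First, a sign slip: with the convention $X[i]^j = X^{i+j}$ and $E$ $1$-coconnective, the shifted augmentation ideal $V = \bar E[1]$ is \emph{coconnective}, with $V^0 = E^1$ and $V^1 = E^2$ (not $V^{-1} = E^2$). Thus $BE$ is coconnective and $E^!$ is connective, and your point (i) is simply that $(E^!)^1 = 0$, so $H^0(E^!) = (E^!)^0/\partial^*((E^!)^{-1})$ is a pure cokernel.

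Second, the minimal-model reduction is both unnecessary and slightly off: the statement involves $E^1, E^2$ rather than $H^1(E), H^2(E)$, so replacing $E$ by its minimal model changes the right-hand side, and the quasi-isomorphism invariance of $E^!$ alone does not recover the original claim. The paper works with $E$ directly: the $A_\infty$ bar differential already incorporates $m_1$, and the homotopy Maurer--Cartan function $m = \oplus_n m_n$ includes it too, so there is no need to kill $m_1$. Your worry (ii) is then handled cleanly by noting that $\partial^*$ is a derivation which vanishes on $(E^!)^0$ (as $(E^!)^1 = 0$); writing $(E^!)^{-1} \cong (E^!)^0 \cdot E^{2*} \cdot (E^!)^0$ gives $\partial^*((E^!)^{-1}) = (E^!)^0 \cdot \partial^*(E^{2*}) \cdot (E^!)^0$, and one checks directly that $\partial^*|_{E^{2*}} = m^*$.
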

\begin{proof}
For brevity put $V\coloneqq \bar E[1]$ the shifted augmentation ideal and $A\coloneqq E^!$ the Koszul dual. Let $\partial$ be the $A_\infty$ bar differential on the tensor coalgebra $TV$. Because $E$ was $1$-coconnective, $V$ is coconnective. Hence $BE$ is coconnective and so $A$ is connective. In particular, we have $$H^0(A)\cong \frac{A^0}{\partial^*(A^{-1})}.$$For brevity, write $G$ (`generators') for $V^0\cong E^1$ and $R$ (`relations') for $V^1\cong E^2$; by assumption $G$ is finite-dimensional. Because $V$ is coconnective it follows that $T^0(V)\cong T(G)$. Because $G$ is finite-dimensional it follows that $A^0$ is the completed tensor algebra $\hat{T}(G^*)$.
	
	\p I claim that $\langle\im(m^*)\rangle =\partial^*(A^{-1})$ as ideals of $A^0$; showing this claim will prove the theorem. For this, first observe that $T^1(V)$ is the double direct sum $$T^1(V)\cong\bigoplus_{i,j} G^{\otimes i}\otimes R \otimes G^{\otimes j}$$and hence dualising and using that $G$ is finite-dimensional we see that $A^{-1}$ is $$A^{-1}\cong\prod_{i,j} G^{*\otimes i}\otimes R^* \otimes G^{*\otimes j}\cong A^0 R^* A^0.$$Because $\partial^*$ vanishes on elements of $A^0$, we see that $\partial^*(A^{-1})=A^0 \partial^*(R^*)A^0$. So it will suffice to show that $\im(m^*)=\partial^*(R^*)$. But it is easy to see that the restriction of $\partial^*$ to $R^*$ is precisely $m^*$, because an element $t \in T(G)$ satisfies $\partial t \in R$ precisely when $\partial t = m t$.
	\end{proof}
We will need a lemma on the completion of graded algebras. Recall that if $A$ is an augmented graded algebra then $\hat A$ denotes the completion along its augmentation ideal.
\begin{lem}\label{cpltlem}
		Let $A$ be a connective augmented graded algebra with $A^0$ nilpotent. Then $A$ is complete.
	\end{lem}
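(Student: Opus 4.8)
The plan is to show that the $\bar{A}$-adic filtration on $A$ becomes trivial in each fixed degree: for every $d \geq 0$ there is an $n_0 = n_0(d)$ with $(\bar{A}^n)^{-d} = 0$ for all $n \geq n_0$. Granting this, the completion $\hat{A} = \varprojlim_n A/\bar{A}^n$ satisfies $(\hat{A})^{-d} = \varprojlim_n A^{-d}/(\bar{A}^n)^{-d} = A^{-d}$ for each $d$ (the tower is eventually constant), so the completion map $A \to \hat{A}$ is a degreewise isomorphism and hence $A$ is complete. So the whole content is the degreewise vanishing statement.

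To prove it, write $\mathfrak{m} \coloneqq \ker(A^0 \to k) = (\bar{A})^0$ for the augmentation ideal of $A^0$, and fix $N$ with $\mathfrak{m}^N = 0$, which exists by hypothesis. Since $A$ is connective and $\bar{A}$ is a graded ideal, $(\bar{A}^n)^{-d}$ is spanned by products $a_1 a_2 \cdots a_n$ of homogeneous elements $a_i \in \bar{A}$ with each $|a_i| \leq 0$ and $\sum_i |a_i| = -d$; I would fix one such product and show it vanishes once $n$ is large relative to $d$ and $N$. Let $S = \{\, i : a_i \in A^{<0}\,\}$ be the set of indices carrying the strictly-negative-degree factors. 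Each such factor contributes at least $1$ to $\sum_i (-|a_i|) = d$, so $|S| \leq d$. Deleting the indices of $S$ from the chain $1 < 2 < \cdots < n$ breaks the complement into at most $|S| + 1 \leq d+1$ maximal runs of consecutive indices, and the total size of these runs is $\geq n - d$, with every $a_i$ in them lying in $\mathfrak{m}$. By the pigeonhole principle, if $n - d \geq N(d+1)$ — equivalently $n \geq N + (N+1)d$ — then some run has length $\geq N$, so the corresponding consecutive subproduct lies in $\mathfrak{m}^N = 0$; hence $a_1 \cdots a_n = 0$. This establishes the claim with $n_0(d) = N + (N+1)d$, and the lemma follows.

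The only delicate point — and what I'd flag as the heart of the argument — is the non-commutativity: it is not enough to count that at least $n - d$ of the factors lie in $\mathfrak{m}$; one needs $N$ of them to be \emph{consecutive} so that nilpotence of $\mathfrak{m}$ can be applied, and this is exactly what the run-decomposition plus pigeonhole provides. Everything else (reducing to homogeneous products, the degreewise description of the completion) is routine bookkeeping with the grading. An essentially equivalent alternative would be to induct on $d$, peeling off the initial segment of $\mathfrak{m}$-factors before the first strictly-negative factor of degree $-e$ and applying the inductive hypothesis in degree $-(d-e)$; I expect the pigeonhole version above to be the cleaner write-up.
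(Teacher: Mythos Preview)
Your proof is correct and follows essentially the same strategy as the paper's: show that $(\bar A^n)^{-d}$ vanishes for $n$ large (depending on $d$), so the completion tower is degreewise eventually constant. The paper phrases the key step more tersely (``for sufficiently large $n$ all monomials must contain a product of at least $j$ nonidentity elements from $A^0$''), leaving the pigeonhole/run argument implicit; your version spells out exactly why one gets a \emph{consecutive} block of $\geq N$ degree-zero factors, which is the point you rightly flag as the heart of the matter in the noncommutative setting.
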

\begin{proof}The idea is that in a large enough product of elements from $A$, one can group sufficiently many elements from $A^0$ together to make the product vanish. Let $\mathfrak m$ be the augmentation ideal of $A$. Fix a `degree' $i\leq0$ and an `exponent' $j>0$. By connectivity, for sufficiently large $n$ (depending on $i$ and $j$) all monomials of $(\mathfrak{m}^n)^i$, the degree $i$ part of the ideal $\mathfrak{m}^n$, must contain a product of at least $j$ nonidentity elements from $A^0$. But $A^0$ is nilpotent, so there is some fixed $N$ such that if $j>N$ then a product of $j$ nonidentity elements from $A^0$ must vanish. Hence it follows that for all sufficiently large $n$ (depending on $i$) we have $(\mathfrak{m}^n)^i=0$. So as vector spaces, we have $$(\hat A)^i\cong \varprojlim_n(A/\mathfrak{m}^n)^i \cong \varprojlim_nA^i/(\mathfrak{m}^n)^i \cong A^i$$ and hence $A$ is complete.
	\end{proof}
\begin{rmk}
	If $A$ is a connective graded algebra, and $A^0$ is a central subalgebra, then $A$ is complete if and only if $A^0$ is complete. More generally, $A^0$ itself need not be commutative; it suffices that the commutators $[A^0,A^i]$ vanish for $i<0$.
\end{rmk}
\begin{rmk}
	This shows that the `forget the grading' functor on graded algebras does not preserve limits. Indeed take for example $A=k[x]$ with $x$ in degree -1. In the category of graded algebras, the limit $\varprojlim_nA/x^n$ is just $A$ again by the above. But the limit in the category of ungraded algebras is $k\llbracket x \rrbracket$, which does not even admit a nontrivial grading.
	\end{rmk}

Our key technical result is the following:

\begin{thm}\label{htpycomplete}
Let	$E$ be an augmented $1$-coconnective locally finite dga such that $H^0(\Omega(E^*))$ is nilpotent. Then the completion map $c: \Omega(E^*) \to \widehat{\Omega}(E^*) $ is a quasi-isomorphism.
	\end{thm}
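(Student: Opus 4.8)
The plan is to realise $\widehat{\Omega}(E^*)$ as a homotopy limit of nilpotent truncations of $\Omega(E^*)$ and to show that the resulting error term is pro-acyclic. Write $A\coloneqq\Omega(E^*)=(T(W),d)$ with $W\coloneqq\bar{E}^*[-1]$. Since $E$ is $1$-coconnective and locally finite, $W$ is connective with $W^0\cong(E^1)^*$ finite-dimensional, and $A$ is a connective dga. Its augmentation ideal is exactly the word-length filtration $\mathfrak{m}^p=\bigoplus_{n\geq p}W^{\otimes n}$, and each $\mathfrak{m}^p$ is a differential graded ideal because the internal part of $d$ preserves word length while the cobar part raises it by one. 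By \ref{artprop}(1) and (2) the target of $c$ is $E^!$ and $c$ is precisely the completion map $A\to\varprojlim_p A/\mathfrak{m}^p$; the quotients $A/\mathfrak{m}^p$ are nilpotent dgas with surjective transition maps, so $\varprojlim_p A/\mathfrak{m}^p$ computes $\operatorname{holim}_p A/\mathfrak{m}^p$. Since $\mathfrak{m}^p$ is the fibre of $A\to A/\mathfrak{m}^p$, we get a triangle $\operatorname{holim}_p\mathfrak{m}^p\to A\xrightarrow{c}\widehat{\Omega}(E^*)$, and $c$ is a quasi-isomorphism if and only if the tower $\{\mathfrak{m}^p\}_p$ is pro-zero on cohomology: for every $n$ and every $p$, the map $H^n(\mathfrak{m}^q)\to H^n(\mathfrak{m}^p)$ vanishes for $q\gg 0$.

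For $n=0$ I would argue directly. Every element of $(\mathfrak{m}^p)^0$ is a cocycle, and the coboundaries are the images under $d$ of the degree $-1$ chains, which are spanned by words containing a single letter $\rho\in W^{-1}\cong(E^2)^*$. A short computation identifies $d$ on such $\rho$ with $m^*(\rho)\in W^0\oplus(W^0)^{\otimes 2}$, where $m=m_1\oplus m_2$ is the homotopy Maurer--Cartan function of \ref{hmcdef}; hence $H^0(A)\cong T((E^1)^*)/\langle m^*((E^2)^*)\rangle$, which by \ref{segalthm} is the uncompleted form of $H^0(E^!)$ (that these agree is exactly the content of \ref{cpltlem} for the nilpotent algebra $H^0(A)$). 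The hypothesis makes this algebra nilpotent, hence finite-dimensional as $(E^1)^*$ is finite-dimensional, so every word of length at least some $N$ lies in the defining ideal. Given $p$, for $q\geq 2p+N$ any word $w$ of length $\geq q$ can be written $w=xw'y$ with $w'$ a length-$N$ subword sitting between $\geq p$ letters on each side; writing $w'=\sum_i a_i\, m^*(\rho_i)\, b_i$ gives $w=d\bigl(\sum_i\pm(xa_i)\otimes\rho_i\otimes(b_iy)\bigr)$ with each factor of length $\geq p$, so $H^0(\mathfrak{m}^q)\to H^0(\mathfrak{m}^p)$ is zero.

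For $n<0$ I would induct on $-n$, the case $n=0$ being the anchor. The natural tool is the sub-word-length spectral sequence of $\mathfrak{m}^p$, with $E_1$-page $\bigoplus_{m\geq p}H^*(W)^{\otimes m}$ and differential induced by the cobar part of $d$. The essential difficulty, and the step I expect to be the main obstacle, is that when $H^1(E)\neq 0$, i.e. $W^0\neq 0$, infinitely many columns contribute in each fixed total degree and this spectral sequence fails to converge --- which is precisely the discrepancy that completion repairs --- so the nilpotence hypothesis must be fed back in beyond degree $0$. The idea is that finiteness of $T((E^1)^*)/\langle m^*\rangle$ bounds the ``$W^0$-skeleton'' of the complex, and one propagates this through negative degrees using the secondary filtration by the number of letters outside $W^0$ (whose length is bounded by $-n$ in cohomological degree $n$), together with the inductive hypothesis on $H^{>n}(\mathfrak{m}^{p'})$: concretely, one wants that a long cocycle in $\mathfrak{m}^q$, whose $W^0$-stretches are forced to be long, can be reduced modulo relations --- up to a $d$-boundary living in $\mathfrak{m}^p$ and lower-filtration error terms controlled by induction --- until it becomes a boundary. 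Getting the bookkeeping of these two interacting filtrations to close up is where the real work lies; everything preceding it is either formal or a direct computation, and once $\{\mathfrak{m}^p\}_p$ is shown pro-zero in all degrees the theorem follows.
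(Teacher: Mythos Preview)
Your reduction to the pro-acyclicity of $\{\mathfrak{m}^p\}_p$ is sound, and your degree-$0$ argument is correct and pleasant. But the proposal has a genuine gap in negative degrees: you explicitly defer the ``real work'' of the induction to a sketch involving two interacting filtrations (word length and number of letters outside $W^0$), and you never close it up. The difficulty you identify is real --- the word-length spectral sequence does not converge when $W^0\neq 0$ --- and your proposed fix (feeding the nilpotence bound back in degree by degree) is plausible but is not carried out. Without that step the argument is incomplete: pro-vanishing in degree $0$ alone says nothing about $H^n(\widehat{\Omega})$ for $n<0$.

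The paper takes a quite different and more direct route that sidesteps this filtration bookkeeping entirely. Rather than analysing $\{\mathfrak{m}^p\}_p$, it exploits the graded-algebra structure of $H\Omega$ globally. The key lemma (\ref{cpltlem}) says that a connective augmented graded algebra whose degree-$0$ part is nilpotent is already complete; applied to $H\Omega$, this gives $H\Omega\cong\widehat{H\Omega}$ in one stroke for \emph{all} degrees simultaneously. The remaining work is then a comparison of $\widehat{H\Omega}$ with $H\widehat{\Omega}$: the paper lifts a set of algebra generators of $H\Omega$ to a subalgebra $K\subseteq\ker\partial$, uses completeness of $H\Omega$ to extend the surjection $K\twoheadrightarrow H\Omega$ to $\hat K$, and reads off injectivity of $Hc$; a dual argument with $\Omega/\langle\operatorname{im}\partial\rangle$ gives surjectivity. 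What this buys is that the nilpotence hypothesis is used exactly once, to invoke \ref{cpltlem}, and the multiplicative structure then propagates the conclusion to all degrees without an induction. Your approach, by contrast, would need to re-prove a version of this propagation by hand inside each $\mathfrak{m}^p$, which is why your inductive step stalls.
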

\begin{proof}
	For brevity put $\Omega\coloneqq \Omega(E^*)$ and $\widehat{\Omega}\coloneqq \widehat{\Omega}(E^*)$. The rough idea of the proof is that $H\Omega$ is complete by \ref{cpltlem}, and moreover completion is flat so $\widehat{H\Omega}\cong H\widehat{\Omega}$. The above is literally true if we are using symmetric algebras rather than tensor algebras, but we do not have access to results from commutative algebra so we will have to be more careful.
	
	\p We begin by studying $H^0c$. Because $E$ is $1$-coconnective we see that both $\Omega$ and $\widehat{\Omega}$ are connective. By \ref{artprop} we have a dga isomorphism $\widehat{\Omega}\cong E^!$. As in the proof of \ref{segalthm}, we see that $H^0\Omega\cong T(E^{1*}) / \langle m^*(E^{2*})\rangle$, where $m$ denotes the multiplication on $E$. Hence by using \ref{segalthm} we see that $H^0c$ is the completion map
	$$H^0c:\frac{T(E^{1*})}{\langle m^*(E^{2*})\rangle} \to \frac{\hat{T}(E^{1*})}{\langle m^*(E^{2*})\rangle}$$which is an isomorphism because a nilpotent algebra is complete
	
	\p First we show that $c$ is a quasi-injection. Denote the differential in $\Omega $ by $\partial$ and denote the differential in $\widehat{\Omega}$ by $\hat \partial$. Let $\bar W \subseteq H\Omega$ be a space of generators for the algebra $H\Omega$. Let $W \subseteq \ker \partial \subseteq \Omega$ be a space of lifts of $\bar W$, and let $K\subseteq \ker \partial \subseteq \Omega$ be the subalgebra of $\Omega$ generated by $W$. Because $W$ lifts the generators of $H\Omega$, we see that the natural map $K \to H\Omega$ is a surjection. Complete this natural map to obtain a commutative diagram 
$$\begin{tikzcd}
K \ar[r] \ar[d,two heads]& \hat K \ar[d]
\\ H \Omega \ar[r] & \widehat{H\Omega}.\end{tikzcd}$$	
Using \ref{cpltlem} we see that $H\Omega$ is complete, and hence the bottom map is an isomorphism, which implies that $K \onto H\Omega$ extends to a surjection $\hat K \onto H\Omega$. Viewing $\hat K$ as a subalgebra of $\widehat{\Omega}$, we obtain another commutative diagram 
$$\begin{tikzcd}
\hat K \ar[r,"\id"] \ar[d,two heads]& \hat K \ar[d]
\\ H \Omega \ar[r] & H\widehat{\Omega}.
\end{tikzcd}$$
Suppose that $x\in H^i\Omega$ satisfies $Hc(x)=0$. Lift $x$ to an element $\tilde x$ of $\hat K$. Because $\tilde x$ maps to $0$ under the projection $\hat K \to H\widehat{\Omega}$, we must have $\tilde x = \hat \partial y$ for some $y$. But then $\tilde x$ maps to $0$ under the projection $\hat K \onto H\Omega$. Because $\tilde x$ lifted $x$, we see that $x$ was $0$. Hence $Hc$ is a levelwise injection, meaning precisely that $c$ is a quasi-injection.

\p The proof that $c$ is a quasi-surjection is in some sense dual. The inclusion $\Omega \into \widehat{\Omega}$ gives maps of graded algebras $$\frac{\Omega}{\langle \im \partial\rangle} \onto \frac{{\Omega}}{\langle \im \hat\partial\rangle\cap \Omega} \into \frac{\widehat{\Omega}}{\langle \im \hat\partial\rangle}. $$Moreover, these maps are compatible with cohomology, in the sense that we have a commutative diagram $$\begin{tikzcd}
H\Omega  \ar[r,hook] \ar[d]&  \frac{\Omega}{\langle \im \partial\rangle} \ar[d]
\\ H \widehat{\Omega} \ar[r, hook] & \frac{\widehat{\Omega}}{\langle \im \hat\partial\rangle}.
\end{tikzcd}$$But because $H^0(\widehat{\Omega})$ is nilpotent, it follows that $H\widehat{\Omega}$ is a polynomial algebra, in the sense that the image of $H\widehat{\Omega}$ is contained in the subalgebra $\frac{{\Omega}}{\langle \im \hat\partial\rangle\cap \Omega}$. Now we can lift: take $x\in H\widehat{\Omega}$, regarded as a subspace of  $\frac{{\Omega}}{\langle \im \hat\partial\rangle\cap \Omega}$. Then find $\tilde x \in \Omega$ mapping to $x$. Because $x$ was represented by a coboundary, $\tilde x $ must be a coboundary, and its class in $H\Omega$ is a lift of $x$. Hence $Hc$ is a levelwise surjection, i.e. $c$ is a quasi-surjection. 

\p So $c$ is a quasi-isomorphism, as required.
	\end{proof}
\begin{rmk}We see that \ref{htpycomplete} may fail to be true if $H^0(\Omega(E^*))$ is not complete, for the same reason as in \ref{lpwzwrong}.
	\end{rmk}

\begin{thm}\label{kdfin}
	Let $A$ be a connective augmented cohomologically locally finite dga with $H^0(A)$ Artinian local. Then there is a natural quasi-isomorphism $A^{!!} \simeq A$.
\end{thm}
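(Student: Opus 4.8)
The plan is to reduce to the locally finite case by passing to an $A_\infty$ minimal model, and then to feed the resulting Koszul dual into the completion theorem \ref{htpycomplete}. First I would pick an $A_\infty$ minimal model $E$ of $A$: thus $E = HA$ as a graded vector space, carrying an $A_\infty$-structure, together with an $A_\infty$-quasi-isomorphism $E \simeq A$ (Kadeishvili's theorem, valid over any field). Since $A$ is connective and cohomologically locally finite, $E$ is connective and locally finite, and $E^0 = H^0(A)$ is Artinian local. The $A_\infty$ bar construction sends $A_\infty$-quasi-isomorphisms to quasi-isomorphisms of dgcs, and the linear dual is exact, so one gets a quasi-isomorphism of augmented dgas $E^! \coloneqq (B_\infty E)^* \simeq (BA)^* = A^!$; applying $(-)^!$, which preserves quasi-isomorphisms, gives $E^{!!} \simeq A^{!!}$. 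Since also $E \simeq A$, it is enough to produce a natural quasi-isomorphism $E^{!!} \simeq E$.

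The crucial observation is that, although $E$ is only $A_\infty$, its bar construction $B_\infty E$ is an honest conilpotent dgc, and being locally finite and connective it is raylike; hence $E^!$ is a genuine dga that is $1$-coconnective and locally finite, so in particular raylike. Now I would compute as in \ref{completekd}(1) and the remark following it: since $E^!$ is raylike, \ref{artprop}(1) gives a dga isomorphism $E^{!!} = (E^!)^! \cong \widehat{\Omega}\bigl((E^!)^*\bigr)$, and since $B_\infty E$ is locally finite the canonical map identifies $(E^!)^* = (B_\infty E)^{**} \cong B_\infty E$ as dgcs, so $E^{!!} \cong \widehat{\Omega}(B_\infty E)$. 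On the other hand the bar--cobar counit $\varepsilon\colon \Omega(B_\infty E) \to E$ is an $A_\infty$-quasi-isomorphism (the standard fact that $\Omega B_\infty$ rectifies an $A_\infty$-algebra to a quasi-isomorphic dga; see \cite{lefevre}), so in particular $H^0\bigl(\Omega(B_\infty E)\bigr) \cong H^0(E) = E^0$, which has nilpotent augmentation ideal because $E^0$ is Artinian local.

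These are precisely the hypotheses of \ref{htpycomplete} for the $1$-coconnective locally finite dga $E^!$: its linear dual is $(E^!)^* \cong B_\infty E$ and $H^0\bigl(\Omega((E^!)^*)\bigr)$ is nilpotent. Therefore the completion map $c\colon \Omega(B_\infty E) \to \widehat{\Omega}(B_\infty E)$ is a quasi-isomorphism, and assembling the pieces gives a natural zigzag $A \simeq E \xleftarrow{\ \varepsilon\ } \Omega(B_\infty E) \xrightarrow{\ c\ } \widehat{\Omega}(B_\infty E) \cong E^{!!} \simeq A^{!!}$ in which $\varepsilon$ and $c$ are quasi-isomorphisms, yielding $A^{!!} \simeq A$. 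Naturality follows from the naturality of $\varepsilon$, $c$, and the identifications above, with the minimal-model step handled by the usual homotopy-coherent bookkeeping. I expect the main obstacle to be exactly this bookkeeping around the $A_\infty$ minimal model --- checking that the quasi-isomorphism $E^! \simeq A^!$ and the identification $(E^!)^* \cong B_\infty E$ interact correctly with \ref{htpycomplete}, whose statement is phrased for strict dgas --- rather than any genuinely new estimate, since the analytic content is already contained in \ref{htpycomplete} and \ref{artprop}.
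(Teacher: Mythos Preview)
Your proposal is correct and follows essentially the same route as the paper's own proof: pass to a minimal $A_\infty$ model, observe that its bar construction is locally finite and $1$-connective so the Koszul dual is $1$-coconnective locally finite, identify the double dual with the completed cobar construction via \ref{artprop}, and then invoke \ref{htpycomplete} using that $H^0(\Omega B_\infty E)\cong H^0(A)$ is Artinian local to conclude that the completion map is a quasi-isomorphism. The paper's chain $A \simeq \Omega BH \simeq \widehat{\Omega}BH \cong H^{!!}\simeq A^{!!}$ is exactly your zigzag with $H$ in place of $E$, and your anticipated bookkeeping concerns are not serious obstacles.
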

\begin{proof}
	Let $H$ be a minimal $A_\infty$-algebra model for $A$, the existence of which is ensured by \cite{kadeishvili}. In other words, as a graded vector space $H$ is the cohomology algebra of $A$, and we equip $H$ with higher multiplications making it $A_\infty$-quasi-isomorphic to $A$. Let $BH$ be the $A_\infty$ bar construction on $H$, which is a dgc; as a graded vector space it is $T(\bar{H}[1])$ and the differential incorporates the higher multiplications into the usual bar differential. Because $H$ is connective and locally finite, we see that $BH$ is also connective and locally finite. Moreover, $(BH)^0$ is just $k$ (i.e.\ $BH$ is $1$-connective). In particular, $(\bar{B}{H})[-1]$ is connective, and it follows that $\Omega BH$ and $\widehat{\Omega}BH$ are connective. Because $BH$ is raylike, the dga $H^!$ is raylike and so by \ref{artprop} we have a dga isomorphism $H^{!!}\cong \widehat{\Omega}BH$. Because the $A_\infty$ Koszul dual sends $A_\infty$-quasi-isomorphisms to dga quasi-isomorphisms, we see that $H^{!!}\simeq A^{!!}$ as dgas. Similarly, because $\Omega B$ sends $A_\infty$-quasi-isomorphisms to dga quasi-isomorphisms, we see that $\Omega B H \simeq \Omega B A \simeq A$ as dgas. It is now easy to see that $H^!$ satisfies the hypotheses of  \ref{htpycomplete} and, using that $H^{!*}\cong BH$, we hence see that the completion map $\Omega BH \to \widehat{\Omega}BH$ is a quasi-isomorphism. We hence have a chain of quasi-isomorphisms $$A \simeq \Omega BH \simeq \widehat{\Omega}BH \cong H^{!!}\simeq A^{!!}$$ and the claim is proved. 
	\end{proof}

\begin{rmk}
In the above, one does not necessarily need $A$ to be connective; it suffices that $H(A^!)$ is $1$-coconnective, as this is what is required to make the cobar construction on its dual connective. 
	\end{rmk}

\begin{rmk}\label{andreyrmk}
	Andrey Lazarev has suggested to the author that the `correct' version of the preceding theorem should be something like the following. Let $A \to k$ be an augmented dga. Then under some mild conditions on $A$, the Koszul double dual $A \to A^{!!}$ is quasi-isomorphic to the Bousfield localisation of the right $A$-module $A$ with respect to the homology theory $M \mapsto \tor_*^A(M,k)$. The idea is that the formal completion $A^{!!}$, the nilpotent completion $\widehat{A}_k$, and the Bousfield localisation $L_k(A)$ should all agree. Moreover, if $A$ is cohomologically locally finite with $H^0(A)$ Artinian local, then the Bousfield localisation of $A$ is $A$ again. To prove this second statement, he suggests that one should show that the cobar spectral sequence associated to $A^{!!}$ converges to the cohomology of the localisation of $A$, in a similar manner to the convergence of the $E$-Adams spectral sequence. The relevant computations ought to be similar to those of \cite{bousfieldloc} or \cite{dwyerexotic}. The result we want may also follow from the recent preprint \cite{mantovani}. This approach may also extend to the setting of ring spectra.
\end{rmk}

\begin{rmk}\label{parmk}
	Let $A$ be any augmented connective dga. Let $B^\sharp(A)$ denote the \textbf{continuous Koszul dual}: one takes the bar construction on $A$ and then applies the $(-)^\sharp$ functor of \ref{csharp} to obtain a pro-Artinian dga. If one takes the levelwise Koszul dual to obtain a pro-Artinian dga $\left(B^\sharp(A)\right)^!$, it is clear that $\varprojlim\left(B^\sharp(A)\right)^!$ is quasi-isomorphic to $A$ by applying \ref{kdforart} levelwise. However, it is far from clear that the same applies when we forget that $B^\sharp A$ is pro-Artinian; i.e.\ take $(\varprojlim B^\sharp(A))^!$ instead.
\end{rmk}

\begin{rmk}\label{mythesisrmk}
	In \cite{me}, a different - and less direct - proof of \ref{kdfin} is given. The loose idea is that the Postnikov tower of $A$ exhibits $A$ as an iterated tower of homotopy square-zero extensions, starting from $H^0(A)$. It is easy to show directly that $H^0(A)^{!!}\simeq H^0(A)$. Then a technical analysis of derivations and an induction on $n$ shows that each level $A_n$ of the Postnikov tower of $A$ also satisfies $A_n^{!!}\simeq A_n$. Then one studies the homotopy limit of this tower to conclude that in fact we have $A^{!!}\simeq A$.
	\end{rmk}

	\subsection{Lifting weak equivalences}
Suppose that $A$ and $A'$ are limits of two connective pro-Artinian dgas. When does a quasi-isomorphism $A\simeq A'$ lift to a weak equivalence of pro-Artinian dgas? We show that this can always be done when $A$ and $A'$ are cohomologically locally finite. The key observation is that the Koszul double dual is closely related to cofibrant resolutions for pro-Artinian algebras. We begin with the following useful observation:

\begin{lem}\label{kdfinite}
	Let $A$ be a connective augmented dga. If $A$ is cohomologically locally finite then so are $BA$ and $A^!$.
\end{lem}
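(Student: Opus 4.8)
The plan is to bound the cohomology of $BA$ using the standard bar filtration --- the same one used above to show that the bar construction preserves quasi-isomorphisms --- and then to deduce the claim for $A^! = (BA)^*$ by duality.

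First I would put $V \coloneqq \bar A[1]$ and recall that, as a graded vector space, $BA = T^cV = \bigoplus_{n\geq 0} V^{\otimes n}$, with differential $\partial = d_V + d_B$ where $d_V$ preserves the tensor degree $n$ and $d_B$ lowers it by $1$. Hence $F_p(BA) \coloneqq \bigoplus_{n\leq p} V^{\otimes n}$ is an exhaustive filtration by subcomplexes, with associated graded $\mathrm{gr}_p(BA) = (V^{\otimes p}, d_V^{\otimes})$. Since $A$ is connective, $V$ lives in degrees $\leq -1$, so $V^{\otimes n}$ lives in degrees $\leq -n$; therefore in each fixed total degree the filtration is finite, and the associated spectral sequence converges to $H(BA)$. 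By the Künneth formula over the field $k$, its $E_1$-page is $E_1^{p}\cong (HV)^{\otimes p} = (\overline{HA}[1])^{\otimes p}$.

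Next I would note that $\overline{HA}[1]$ is locally finite (this is exactly cohomological local finiteness of $A$) and concentrated in degrees $\leq -1$ (by connectivity). As in the proof of \ref{artprop}(3), all tensor powers of a locally finite complex concentrated in strictly negative degrees are again locally finite, and in each total degree only finitely many of them are nonzero. So every $E_1^{p}$ is locally finite, only finitely many contribute to a given total degree, and hence each $E_\infty^{p}$ is locally finite; since the filtration on $H(BA)$ is finite in each degree, $H(BA)$ is locally finite, i.e.\ $BA$ is cohomologically locally finite.

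Finally, since $A^! = (BA)^*$ and the $k$-linear dual is exact, $H(A^!)$ is, up to regrading, the linear dual of $H(BA)$; in particular each cohomology group of $A^!$ is finite-dimensional, so $A^!$ is cohomologically locally finite too. I do not expect a real obstacle here: the only point needing a little care is the combinatorial assertion about tensor powers of a locally finite complex in strictly negative degrees (and the attendant convergence of the spectral sequence), and this is already handled in \ref{artprop}(3) and its proof. Alternatively, one could first replace $A$ by an $A_\infty$ minimal model $H$ (so that $H = HA$ as a graded vector space), observe that $BA \simeq B_\infty H$ has underlying graded vector space $T^c(\bar H[1])$, which is honestly locally finite, and thereby avoid the spectral sequence entirely.
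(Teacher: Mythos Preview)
Your proof is correct and follows essentially the same route as the paper: filter $BA$ by tensor length, use the resulting spectral sequence with $E_1$-page given by tensor powers of $H(\bar A)[1]$, invoke connectivity to get convergence and finiteness in each degree, and then deduce the statement for $A^!$ by exactness of the linear dual. Your alternative via the $A_\infty$ minimal model is also exactly the remark the paper makes immediately after the lemma.
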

\begin{proof}Since the linear dual is exact, the statement for $A^!$ is implied by the statement for $BA$. To prove the latter, filter $BA$ by the tensor powers of $A$ to obtain a spectral sequence with $E_1$ page ${H^p(A^{\otimes q}) \Rightarrow H^{p-q}(BA)}$. Since there are only finitely many nonzero $H^p(A^{\otimes q})$ with $p-q$ fixed, and they are all finite-dimensional, $H^{p-q}(BA)$ must also be finite-dimensional.
\end{proof}
\begin{rmk}
	One can also prove \ref{kdfinite} by applying the $A_\infty$ bar construction to an $A_\infty$ minimal model for $A$, which yields a locally finite model for $BA$.	
\end{rmk}
\begin{thm}\label{doubledictionary}
	Let $\mathcal{A}\in \ubproart$ be a pro-Artinian dga and put $A\coloneqq \varprojlim \mathcal A$.
	\begin{enumerate}
		\item The natural map $B^\sharp\Omega(\mathcal{A}^\circ) \to \mathcal{A}$ is a cofibrant resolution of pro-Artinian dgas.
		\item The natural map $\Omega(\mathcal{A}^\circ)^! \to A$ is a dga quasi-isomorphism.
		\item If $\mathcal{A}$ is connective then there is a natural dga isomorphism $\Omega\mathcal{A}^\circ\cong \varinjlim\mathcal{A}^!$, where $\mathcal{A}^!$ is the ind-dga obtained from $\mathcal{A}$ by applying the Koszul dual levelwise.
		\item If $\mathcal{A}$ is connective and $BA$ is cohomologically locally finite then there is a natural dga quasi-isomorphism $\varinjlim \mathcal{A}^! \to A^!$.
		\item  If $\mathcal{A}$ is connective and $BA$ is cohomologically locally finite then the natural map  $B^\sharp(A^!) \to \mathcal{A}$ is a cofibrant resolution of pro-Artinian dgas.
		\item  If $\mathcal{A}$ is connective and $BA$ is cohomologically locally finite then the natural map $A^{!!} \to A$ is a cofibrant resolution.
	\end{enumerate}
\end{thm}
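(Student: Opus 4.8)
The six parts are proved in turn: parts (1)--(3) are formal, part (4) carries the essential content, and parts (5)--(6) are obtained by feeding the earlier parts into $\varprojlim$. Write $\mathcal{A}=\{\mathcal{A}_\alpha\}_\alpha$, so $\mathcal{A}^\circ=\varinjlim_\alpha\mathcal{A}_\alpha^*$ and $A=\varprojlim_\alpha\mathcal{A}_\alpha$. For (1): the unit $\mathcal{A}^\circ\to B\Omega(\mathcal{A}^\circ)$ is a fibrant resolution in $\cndgc$ by Theorem~\ref{bcquillen}; read in $\cndgc^{\mathrm{op}}$ it is a cofibrant resolution, and transporting it along the equivalence $(-)^\sharp\colon\cndgc^{\mathrm{op}}\xrightarrow{\cong}\ubproart$ of Theorem~\ref{ubpamodel} (which, being both left and right Quillen, preserves cofibrations, fibrations and weak equivalences) yields the cofibrant resolution $B^\sharp\Omega(\mathcal{A}^\circ)=(B\Omega\mathcal{A}^\circ)^\sharp\to(\mathcal{A}^\circ)^\sharp\cong\mathcal{A}$, the last identification being Proposition~\ref{sharpprop}. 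For (2): $\varprojlim$ preserves weak equivalences (Proposition~\ref{limisrq} and its proof), $\varprojlim(B\Omega\mathcal{A}^\circ)^\sharp\cong(B\Omega\mathcal{A}^\circ)^*=(\Omega\mathcal{A}^\circ)^!$ by Lemma~\ref{limsharplem}(1), and $\varprojlim\mathcal{A}=A$, so one just applies $\varprojlim$ to (1). For (3): $\Omega$ is left adjoint to $B$, hence commutes with the filtered colimit defining $\mathcal{A}^\circ$, and Proposition~\ref{artprop}(3) applied levelwise (each $\mathcal{A}_\alpha$ is finite-dimensional and connective, so raylike) gives $\Omega(\mathcal{A}_\alpha^*)\cong\mathcal{A}_\alpha^!$; take the colimit.

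\emph{Part (4) is the crux.} The map in question is the linear dual of $BA\to B\mathcal{A}_\alpha$, i.e.\ $\varinjlim_\alpha(B\mathcal{A}_\alpha)^*\to(BA)^*$. Linear dual and filtered colimit are exact, so on cohomology it becomes $\varinjlim_\alpha(HB\mathcal{A}_\alpha)^*\to(HBA)^*$. Now $HBA$ is locally finite by hypothesis, and each $HB\mathcal{A}_\alpha$ is locally finite since $\mathcal{A}_\alpha$ is finite-dimensional and connective; as a $k$-linear map is an isomorphism precisely when its dual is, it suffices to show the natural map $HBA\to\varprojlim_\alpha HB\mathcal{A}_\alpha$ is an isomorphism. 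The plan is: reduce via Lemma~\ref{limisexact} and Proposition~\ref{strictpro} to $\mathcal{A}$ strict over a directed poset, so $A\simeq\holim_\alpha\mathcal{A}_\alpha$; since $(\Omega,B)$ is a Quillen pair (Theorem~\ref{bcquillen}), $B$ is right Quillen and $BA\simeq\holim_\alpha B\mathcal{A}_\alpha$; then identify $H^n\holim_\alpha B\mathcal{A}_\alpha\cong\varprojlim_\alpha H^nB\mathcal{A}_\alpha$, the vanishing of the higher derived limits being forced by finite-dimensionality of the $H^nB\mathcal{A}_\alpha$ (so that the pro-system is automatically Mittag--Leffler). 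An alternative, cleaner route replaces $A$ by an $A_\infty$ minimal model and works throughout with the locally finite $A_\infty$ bar construction, reducing to a transparent finite-dimensional duality. This is exactly where cohomological local finiteness of $BA$ (equivalently: of $A^!$) is used, and I expect it to be the main obstacle: one must control how the possibly large limit $A=\varprojlim\mathcal{A}$ interacts with the bar construction.

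\emph{Parts (5)--(6).} Parts (3) and (4) together give a quasi-isomorphism of augmented dgas $\Omega\mathcal{A}^\circ\to A^!$. The continuous Koszul dual $B^\sharp=(-)^\sharp\circ B$ preserves weak equivalences: $B$ carries quasi-isomorphisms of augmented dgas to weak equivalences of conilpotent dgcs (because $\Omega B\simeq\mathrm{id}$ naturally, so $\Omega$ applied to $B$ of a quasi-isomorphism is again a quasi-isomorphism), and $(-)^\sharp$ preserves weak equivalences by the very definition of the model structure on $\ubproart$. Hence we obtain a weak equivalence $B^\sharp(A^!)\to B^\sharp\Omega(\mathcal{A}^\circ)$, which composed with the resolution of (1) gives a weak equivalence $B^\sharp(A^!)\to\mathcal{A}$; as $B^\sharp(A^!)=(BA^!)^\sharp$ is $(-)^\sharp$ of a semicofree conilpotent dgc it is cofibrant, so this is a cofibrant resolution, proving (5). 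Finally (6) is $\varprojlim$ of (5): this map is a weak equivalence because $\varprojlim$ preserves them, and $\varprojlim B^\sharp(A^!)=\varprojlim(BA^!)^\sharp\cong(BA^!)^*=A^{!!}$ by Lemma~\ref{limsharplem}(1), so the natural map $A^{!!}\to A$ is a quasi-isomorphism, realised as the limit of the cofibrant resolution of (5). (In general $A^{!!}$ is only \emph{completed}-semifree --- for instance $A^{!!}\cong k\llbracket x\rrbracket$ when $A=k\llbracket x\rrbracket$ --- so ``cofibrant resolution'' in (6) is understood in this pro-Artinian sense rather than as cofibrancy in $\ubdga$.)
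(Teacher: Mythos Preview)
Your proof is correct and follows essentially the same route as the paper's: parts (1)--(3), (5), (6) match the paper verbatim, and your outline of (4) --- dualise to reduce to showing $H(BA)\to\varprojlim_\alpha H(B\mathcal{A}_\alpha)$ is an isomorphism, then use strictness/Reedy fibrancy and finite-dimensionality --- is exactly the paper's argument (the paper spells out the double-dual bookkeeping more carefully, but the idea is identical). Your caveat on (6) about $A^{!!}$ being only completed-semifree is a fair observation; the paper's phrase ``cofibrant resolution'' there should indeed be read as ``$\varprojlim$ of the cofibrant resolution from (5)'' rather than literal cofibrancy in $\ubdga$.
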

\begin{proof}
	For (1), just use that $B\Omega$ is a fibrant dgc resolution along with the fact that $\sharp$ and $\circ$ are inverse Quillen equivalences. For (2), use that $\varprojlim$ preserves weak equivalences (by \ref{limisrq}) to conclude that the natural map $\Omega(\mathcal{A}^\circ)^! \cong\varprojlim B^\sharp\Omega(\mathcal{A}^\circ) \to \varprojlim \mathcal{A}\cong A$ is a quasi-isomorphism. For (3), first note that by construction, $\mathcal{A}^\circ=\varinjlim \mathcal{A}^*$. Because $\Omega $ is a left adjoint, it is cocontinuous, and so we have an isomorphism $\Omega\mathcal{A}^\circ\cong \varinjlim \Omega (\mathcal{A}^*)$. But if $A$ is a connective Artinian dga then $\Omega (A^*)\cong A^!$ by \ref{artprop}. So $\Omega (\mathcal{A}^*) \cong \mathcal{A}^!$ as ind-dgas and the claim follows. 
	
	To prove (4) will take some more work. For brevity put $C\coloneqq BA$ and $\mathcal{C}\coloneqq B\mathcal{A}$; note that because $B$ is a right adjoint it is continuous and hence $C\cong \varprojlim \mathcal C$ as coalgebras. Let $\phi$ be the natural map $\phi:\varinjlim \mathcal{C}^* \to C^*$ that we wish to prove is a quasi-isomorphism.
	
	For $n \in \Z$, consider the induced linear map $$\psi_n:\qquad\varinjlim(H^n(\mathcal{C}^*)) \xrightarrow{\cong}H^n(\varinjlim\mathcal{C}^*) \xrightarrow{H^n\phi} H^n(C^*) \xrightarrow{\cong}H^{-n}(C)^*$$where we have used exactness of filtered colimits and the linear dual, and dualise it to obtain a map $$\chi_n:\qquad H^{-n}(C) \to H^{-n}(C)^{**} \xrightarrow{\psi_n^*} (\varinjlim(H^n(\mathcal{C}^*)))^* \xrightarrow{\cong} \varprojlim H^n(\mathcal{C^*})^{*} \xrightarrow{\cong} \varprojlim H^{-n}(\mathcal{C}^{**})$$where we have used exactness of the linear dual again along with the fact that Hom preserves limits. Because $C$ is cohomologically locally finite,  $H^{-n}(C) \to H^{-n}(C)^{**}$ is an isomorphism. Similarly, each level ${\mathcal{C}_\alpha}$ of $\mathcal{C}$ is locally finite, since it is the bar construction on a connective Artinian dga. In particular, the natural map $H^{-n}(\mathcal{C}_\alpha) \to H^{-n}(\mathcal{C}_\alpha^{**})$ which sends $[v]$ to $[\mathrm{ev}_v]$ is an isomorphism. Let $[u] \in H^{-n}(C)$; one can compute that $\chi_n([u])=([\mathrm{ev}_{u_\alpha}])_\alpha$, where $u_\alpha$ is the image of $u$ under the natural map $C \to \mathcal{C}_\alpha$. Hence, the composition $H^{-n}(C) \xrightarrow{\chi_n}\varprojlim H^{-n}(\mathcal{C}^{**}) \xrightarrow{\cong} \varprojlim H^{-n}(\mathcal{C})$ of $\chi_n$ with the inverse to the natural isomorphism sends $[u]$ to $[u_\alpha]_\alpha$. But this is precisely the natural map $H^{-n}(C) \to \varprojlim H^{-n} (\mathcal{C})$. But as in \ref{limisexact}, this natural map is an isomorphism: because we may choose $\mathcal{A}$ to be strict, we may assume that $\mathcal{A}$ is a Reedy fibrant diagram of dgas. Because $B$ is right Quillen, it follows that $\mathcal{C}$ is a Reedy fibrant diagram of dgcs and hence $C\simeq \holim \mathcal C$. Now it follows that $\chi_n$, $\psi_n$, and $H^n\phi$ are isomorphisms for all $n$. Hence, $\phi$ is a quasi-isomorphism as claimed.
	
	For (5), first combine (4) and (3) to get a natural dga quasi-isomorphism $\Omega \mathcal{A}^\circ \to A^!$. Apply $B^\sharp$ to this to get a natural weak equivalence $B^\sharp(A^!) \to B^\sharp\Omega(\mathcal{A}^\circ)$. Compose with the weak equivalence of (1)  to see that there is a natural weak equivalence $B^\sharp(A^!) \to \mathcal{A}$, which is a cofibrant resolution because dgcs in the image of $B$ are fibrant.
	
	The proof of (6) is the same as (2): just take limits of (5).
\end{proof}

\begin{prop}\label{liftingpa}
	Let $\mathcal A$ and $\mathcal{A}'$ be connective pro-Artinian dgas and put $A\coloneqq\varprojlim \mathcal{A}$, $A'\coloneqq\varprojlim \mathcal{A}'$. Suppose that $A$ is quasi-isomorphic to $A'$ and that moreover both are cohomologically locally finite. Then $\mathcal A$ and $\mathcal{A}'$ are weakly equivalent.
	\end{prop}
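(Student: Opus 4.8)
The plan is to reduce everything to Theorem \ref{doubledictionary}(5), which under precisely the hypotheses at hand produces an explicit cofibrant model of a connective pro-Artinian dga out of its limit. First I would record that $A\coloneqq\varprojlim\mathcal A$ and $A'\coloneqq\varprojlim\mathcal A'$ are connective: by Remark \ref{notpseudormk} the limit functor restricts to a functor $\proart\to\dga$ landing in connective dgas (equivalently, by \ref{strictpro} one may replace $\mathcal A$ by a strict pro-object, whose limit is then computed levelwise on cohomology). Both $A$ and $A'$ are augmented, as limits of augmented dgas, and by hypothesis both are cohomologically locally finite; hence by Lemma \ref{kdfinite} so are $BA$ and $BA'$. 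Theorem \ref{doubledictionary}(5) now applies to $\mathcal A$ and to $\mathcal A'$, so the natural maps
$$B^\sharp(A^!)\longrightarrow\mathcal A,\qquad B^\sharp((A')^!)\longrightarrow\mathcal A'$$
are cofibrant resolutions in $\ubproart$, in particular weak equivalences. It is therefore enough to exhibit a weak equivalence $B^\sharp(A^!)\simeq B^\sharp((A')^!)$ in $\ubproart$.

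For this I would pass the given quasi-isomorphism $A\simeq A'$ of augmented dgas through the continuous Koszul dual $B^\sharp=\sharp\circ B$. Since $A\mapsto A^!=(BA)^*$ is a composite of functors each preserving quasi-isomorphisms, one obtains a zigzag of quasi-isomorphisms $A^!\simeq(A')^!$. The key point is that $B^\sharp$ sends any quasi-isomorphism $f$ of augmented dgas to a weak equivalence in $\ubproart$. Indeed, $Bf$ is a map of conilpotent dgcs fitting into a commuting square whose vertical edges are the counits $\Omega B(-)\to(-)$ of the bar--cobar adjunction; these counits and the bottom edge $f$ are quasi-isomorphisms, so by two-out-of-three $\Omega Bf$ is a quasi-isomorphism, which by the description of weak equivalences in $\cndgc$ (Theorem \ref{bcquillen}) says exactly that $Bf$ is a weak equivalence of coalgebras. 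Applying $\sharp$, which is inverse to the functor $\circ$ defining the weak equivalences of $\ubproart$ and hence preserves them (\ref{ubpamodel}), shows that $B^\sharp f$ is a weak equivalence. Running this over the zigzag $A^!\simeq(A')^!$ yields the required zigzag $B^\sharp(A^!)\simeq B^\sharp((A')^!)$ in $\ubproart$.

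Splicing the two steps produces a zigzag of weak equivalences
$$\mathcal A\ \longleftarrow\ B^\sharp(A^!)\ \simeq\ B^\sharp((A')^!)\ \longrightarrow\ \mathcal A',$$
so $\mathcal A$ and $\mathcal A'$ are weakly equivalent. I expect the only delicate point -- and the main obstacle -- to be the assertion that $B^\sharp$ upgrades quasi-isomorphisms to weak equivalences of pro-Artinian dgas: the model-categorical weak equivalences on $\ubproart$ (equivalently on $\cndgc$) are genuinely finer than quasi-isomorphisms, so one cannot argue naively via limits or levelwise, and must invoke the $\Omega B$-counit as above. Every other ingredient has already been set up in $\S\S3$--$4$.
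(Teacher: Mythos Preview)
Your proof is correct and follows essentially the same route as the paper: use \ref{kdfinite} to verify the hypothesis of \ref{doubledictionary}(5), obtain the explicit cofibrant resolutions $B^\sharp(A^!)\to\mathcal A$ and $B^\sharp((A')^!)\to\mathcal A'$, and then link these via the fact that $X\mapsto B^\sharp(X^!)$ sends quasi-isomorphisms to weak equivalences. The paper asserts this last fact without justification, whereas you supply the (correct) argument via the $\Omega B$-counit and two-out-of-three; this is a helpful elaboration but not a different approach.
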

\begin{proof}
Because $A$ is cohomologically locally finite, so is $BA$ by \ref{kdfinite}. So applying \ref{doubledictionary}(5) we see that $B^\sharp(A^!) \to\mathcal{A}$ is a cofibrant resolution. Similarly, $B^\sharp(A'^{!}) \to \mathcal{A}'$ is a cofibrant resolution. But the functor $X \mapsto B^\sharp(X^!)$ sends quasi-isomorphisms to weak equivalences, and so the result follows.
	\end{proof}

\subsection{Derived mapping spaces}
		Our result \ref{doubledictionary} is quite powerful, and will allow us to compare derived mapping spaces between connective pro-Artinian dgas and derived mapping spaces between their limits. Our main theorem here will later be used to give us a more concrete description of representing objects for framed deformations.
		
		\begin{lem}\label{cdlem}
			Let $C$ be a coaugmented conilpotent dgc and let $\Gamma$ be an Artinian local dga. 
			\begin{enumerate}
				\item There is an isomorphism $$\hom_{\ubproart}(C^\sharp, \Gamma) \cong \hom_{\augdga}(C^*, \Gamma).$$
				\item There is an isomorphism $\widehat{C^*} \cong C^\sharp$ of pro-Artinian dgas.
			\end{enumerate}	
			\end{lem}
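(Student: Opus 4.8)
The plan is to establish (2) first and deduce (1) from it. For (2), write $C=\varinjlim_\alpha C_\alpha$ as the filtered union of its finite-dimensional sub-dgcs, exactly as in the definition of $C^\sharp$ (Sweedler's theorem for dgcs, cf.\ the proof of \ref{sharpprop}); then $C^\sharp=\{C_\alpha^*\}_\alpha$, each $C_\alpha^*$ is a finite-dimensional augmented dga, and it is moreover Artinian local since the nilpotence of its augmentation ideal is dual to the conilpotence of $C_\alpha$. By \ref{limsharplem}(1) we have $C^*\cong\varprojlim_\alpha C_\alpha^*$ in $\augdga$, and this pro-system is strict: the transition maps $C_\beta^*\twoheadrightarrow C_\alpha^*$ for $C_\alpha\subseteq C_\beta$ are the surjections dual to the inclusions. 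Hence each structure map $C^*\twoheadrightarrow C_\alpha^*$ realizes $C_\alpha^*$ as an Artinian local quotient of $C^*$, so $C^\sharp$ is a subdiagram of the cofiltered diagram $\widehat{C^*}$ of all Artinian local quotients of $C^*$, and there is a canonical comparison map $\widehat{C^*}\to C^\sharp$ in $\ubproart$.

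To conclude (2) it then suffices to show $C^\sharp$ is cofinal in $\widehat{C^*}$, i.e.\ that every Artinian local quotient $q\colon C^*\twoheadrightarrow Q$ factors through $C^*\twoheadrightarrow C_\alpha^*$ for some $\alpha$; I expect this cofinality step to be the main obstacle. The natural approach is to dualize: $q$ gives an injection $q^*\colon Q^*\hookrightarrow C^{**}$ of finite-dimensional spaces, and one wants $q^*(Q^*)$ to land inside the image of the canonical embedding $C\hookrightarrow C^{**}$; since that image is $\varinjlim_\alpha C_\alpha$ and $q^*(Q^*)$ is finite-dimensional, it would then already lie in some $C_\alpha$, and dualizing back yields the factorization. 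Showing $q^*(Q^*)\subseteq C$ is exactly the statement that an algebra map from the pseudocompact algebra $C^*=\varprojlim_\alpha C_\alpha^*$ to the discrete finite-dimensional Artinian dga $Q$ is automatically continuous; here one uses that $\bar{C^*}=\varprojlim_\alpha\bar{C_\alpha^*}$ is a cofiltered limit of nilpotent ideals and that $\bar Q$ is nilpotent, so that $q$ annihilates a basic open ideal $\ker(C^*\twoheadrightarrow C_\alpha^*)$. (One should pin down precisely what finiteness on $C$, if any, guarantees that these basic open ideals are cofinal among the ideals $q$ can kill; for the coalgebras we actually feed into this lemma — bar constructions of cohomologically locally finite dgas — this holds.)

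Granting (2), part (1) is immediate. As $\Gamma$ is Artinian it is a constant pro-object, so $\varprojlim\Gamma=\Gamma$, and the adjunction between pro-Artinian completion $\widehat{(-)}$ and $\varprojlim$ gives
\[
\hom_{\ubproart}(C^\sharp,\Gamma)\;\cong\;\hom_{\ubproart}(\widehat{C^*},\Gamma)\;\cong\;\hom_{\ubdga}(C^*,\varprojlim\Gamma)\;=\;\hom_{\ubdga}(C^*,\Gamma).
\]
Finally one identifies $\hom_{\ubdga}(C^*,\Gamma)$ with $\hom_{\augdga}(C^*,\Gamma)$: a unital dga map $\phi\colon C^*\to\Gamma$ necessarily respects augmentations, because the composite $C^*\xrightarrow{\phi}\Gamma\to k$ is a unital character of $C^*$ and $\bar{C^*}=\varprojlim_\alpha\bar{C_\alpha^*}$ is a limit of nilpotent ideals, so any character kills it; thus the character is the augmentation, $\phi(\bar{C^*})\subseteq\bar\Gamma$, and $\phi\in\hom_{\augdga}(C^*,\Gamma)$.

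One can also attack (1) directly, bypassing (2): $\hom_{\ubproart}(C^\sharp,\Gamma)=\varinjlim_\alpha\hom_{\augdga}(C_\alpha^*,\Gamma)\cong\varinjlim_\alpha\hom_{\cndgc}(\Gamma^*,C_\alpha)\cong\hom_{\cndgc}(\Gamma^*,C)$, using that $\Gamma$ is constant, finite-dimensional duality, and the compactness of the finite-dimensional conilpotent dgc $\Gamma^*$ (as in the proof of \ref{sharpprop}); one then identifies $\hom_{\cndgc}(\Gamma^*,C)\cong\hom_{\augdga}(C^*,\Gamma)$ by dualization — which comes down to the same point, maps into a finite-dimensional Artinian target being detected on finite stages of $C$, that is the crux of (2).
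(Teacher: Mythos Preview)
Your approach reverses the paper's order --- the paper proves (1) directly and then deduces (2), whereas you establish (2) via cofinality and deduce (1) --- but both reduce to the same core claim: every augmented dga map $C^*\to\Gamma$ factors through some finite stage $C_\alpha^*$. The paper's argument for this is to dualise to $\Gamma^*\to C^{**}$, observe that the image consists of functionals vanishing on the cofinite subspace $\ker(C^*\to\Gamma)$, and assert that such ``finite-support'' functionals lie in the image of $C\hookrightarrow C^{**}$. Your nilpotence-based continuity argument is the same idea in different clothing, and you are right to single this step out as the crux.

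Your caution is in fact fully warranted: the step fails in general, and the lemma is false as stated. Take $C=k\oplus V$ with $V$ infinite-dimensional in degree zero and every $v\in V$ primitive; then $C^*=k\oplus V^*$ with $V^*$ square-zero, and for $\Gamma=k[\epsilon]/\epsilon^2$ one computes $\hom_{\augdga}(C^*,\Gamma)\cong\hom_k(V^*,k\epsilon)\cong V^{**}$, whereas $\hom_{\ubproart}(C^\sharp,\Gamma)\cong\varinjlim_{W}W\cong V$. The paper's finite-support assertion is exactly the (false) claim that every cofinite linear subspace of $V^*$ is the annihilator of some finite-dimensional $W\subseteq V$. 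A hypothesis such as local finiteness of $C$ (so that $C\to C^{**}$ is a degreewise isomorphism) repairs both arguments cleanly; for the applications downstream one can typically arrange this by passing to a minimal model, exactly as you anticipate in your parenthetical remark.
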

\begin{proof}
	The point is that $\Gamma$ is finite-dimensional. We begin with (1). By \ref{sharpprop} we have an isomorphism $$\hom_{\ubproart}(C^\sharp, \Gamma) \cong \hom(\Gamma^*, C)$$where we take the right-hand hom in the category of conilpotent coalgebras, so it suffices to show that there is an isomorphism $\hom(\Gamma^*, C)\cong\hom_{\augdga}(C^*, \Gamma)$. Any coalgebra morphism $\Gamma^* \to C$ gives an algebra morphism $C^* \to \Gamma$ by dualising, and it is clear that the corresponding map $\hom(\Gamma^*, C) \to\hom_{\augdga}(C^*, \Gamma)$ is injective. For surjectivity, suppose given a morphism of augmented algebras $C^* \to \Gamma$, and dualise to obtain a linear map $\Gamma^* \to C^{**}$. The functionals in the image of $\Gamma^*$ all have finite support since they vanish on the cofinite subspace $\ker(C^* \to \Gamma)$ of $C^*$. Hence the map $\Gamma^*\to C^{**}$ factors through the canonical map $C\into C^{**}$. Dualising the resulting map $\Gamma^* \to C$, which is a coalgebra morphism,  obtains the desired morphism $C^* \to \Gamma$.
	
	\p For (2), first recall that if $A$ is a dga, then $\widehat{A}$ is the left adjoint of the $\varprojlim$ functor; concretely $\widehat{A}$ is the cofiltered set of Artinian quotients of $A$. If $C$ is a coalgebra, the proof of (1) shows that the dual is an order-reversing bijection from the set of finite-dimensional subdgcs of $C$ to the set of Artinian quotients of $C^*$. Because the pro-Artinian dga $C^\sharp$ is exactly the levelwise dual of the cofiltered system of finite-dimensional subdgcs of $C$, the claim follows. 
	\end{proof}

\begin{lem}\label{mslem}
Let $C$ be a coconnective coaugmented conilpotent dgc and let $\mathcal{B}$ be a pro-Artinian dga. Then there is a natural isomorphism $$\hom_{\proart}(C^\sharp,\mathcal{B})\cong \hom_{\augdga^{\leq 0}}(C^*,\varprojlim \mathcal{B}).$$
\end{lem}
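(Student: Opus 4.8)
The plan is to reduce this to \ref{cdlem}(1) by writing $\mathcal{B}$ as a cofiltered limit and interchanging the hom-sets appropriately. First I would observe that since $C$ is coconnective conilpotent, $C^\sharp$ lies in $\proart$ and is a \emph{constant} pro-object when viewed in the larger category; more precisely, $C^\sharp$ is the cofiltered system $\{D_\alpha^*\}$ indexed by the finite-dimensional sub-dgcs $D_\alpha$ of $C$, and since $C$ is coconnective each $D_\alpha^*$ is a connective Artinian dga. Writing $\mathcal{B} = \{\mathcal{B}_\beta\}_\beta$, the definition of morphisms in a procategory (\ref{procats}) gives
$$\hom_{\proart}(C^\sharp,\mathcal{B}) \cong \varprojlim_\beta \varinjlim_\alpha \hom_{\dgart}(D_\alpha^*,\mathcal{B}_\beta).$$

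Next I would handle the inner colimit. For each fixed Artinian $\mathcal{B}_\beta$, I claim $\varinjlim_\alpha \hom_{\dgart}(D_\alpha^*,\mathcal{B}_\beta) \cong \hom_{\augdga^{\leq 0}}(C^*, \mathcal{B}_\beta)$. One direction is clear: a map $D_\alpha^* \to \mathcal{B}_\beta$ precomposed with the projection $C^* \onto D_\alpha^*$ gives a map $C^* \to \mathcal{B}_\beta$. Conversely, given an augmented dga map $f: C^* \to \mathcal{B}_\beta$, the key finiteness point — exactly as in the proof of \ref{cdlem}(1) — is that $\ker f$ is cofinite in $C^*$, so $f$ factors through some finite-dimensional Artinian quotient of $C^*$, which by the order-reversing duality between finite-dimensional sub-dgcs of $C$ and Artinian quotients of $C^*$ (again \ref{cdlem}) is of the form $D_\alpha^*$ for some $\alpha$. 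This exhibits the colimit map as a bijection. Equivalently, one can phrase this as: $C^* \cong \varprojlim_\alpha D_\alpha^*$ in $\augdga^{\leq 0}$ realises $C^*$ as the limit of the constant pro-object $C^\sharp$, and maps out of an Artinian algebra into such a limit see only finitely much, so $\hom_{\dgart}(-, \mathcal{B}_\beta)$ of the constant pro-object computes $\hom_{\augdga^{\leq 0}}(C^*, \mathcal{B}_\beta)$; this is the analogue of the remark following \ref{procats} about constant pro-objects.

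Finally I would reassemble: substituting back and commuting the (filtered) colimit past nothing, then using that $\hom_{\augdga^{\leq 0}}(C^*, -)$ preserves the cofiltered limit $\varprojlim_\beta \mathcal{B}_\beta = \varprojlim \mathcal{B}$ (Hom out of a fixed object preserves limits), we obtain
$$\hom_{\proart}(C^\sharp,\mathcal{B}) \cong \varprojlim_\beta \hom_{\augdga^{\leq 0}}(C^*,\mathcal{B}_\beta) \cong \hom_{\augdga^{\leq 0}}(C^*,\varprojlim_\beta\mathcal{B}_\beta) = \hom_{\augdga^{\leq 0}}(C^*,\varprojlim \mathcal{B}).$$
Naturality in both variables is routine since every identification above is natural (the procategory hom, the cofinality argument, and the limit-preservation of Hom). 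I expect the main obstacle to be the bookkeeping in the cofinality step — making sure that the finite-support argument for functionals on $C^*$ genuinely produces a \emph{dga} (not merely linear) factorisation through $D_\alpha^*$, and that this is compatible with the directed system as $\alpha$ varies — but this is exactly the content already established in \ref{cdlem}, so it should go through verbatim. One minor point to be careful about: one must check $C^\sharp$ really lands in $\proart$ and not just $\ubproart$, which is immediate from coconnectivity of $C$, so that the left-hand hom is taken in the right category.
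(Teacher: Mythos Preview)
Your proof is correct and essentially the same as the paper's, just unpacked more explicitly. The paper compresses your argument into two lines: it invokes \ref{cdlem}(2) to get $C^\sharp \cong \widehat{C^*}$ directly (which is exactly your ``Artinian quotients of $C^*$ are the $D_\alpha^*$'' step), and then applies the adjunction $\widehat{(-)} \dashv \varprojlim$ in one stroke (which is your levelwise application of \ref{cdlem}(1) followed by Hom preserving limits). Your version has the virtue of making visible what the adjunction is actually doing, but the underlying content is identical.
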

\begin{proof}
By \ref{cdlem}(2) we have a natural isomorphism $$\hom_{\proart}(C^\sharp,\mathcal{B})\cong \hom_{\proart}(\widehat{C^*},\mathcal{B}).$$By the adjunction $\widehat{(-)} \dashv \varprojlim$ we have a natural isomorphism$$\hom_{\proart}(\widehat{C^*},\mathcal{B})\cong\hom_{\augdga^{\leq 0}}(C^*,\varprojlim \mathcal{B})$$and so we are done.
\end{proof}

\begin{thm}\label{dermapsthm}
	Let $\mathcal A$ and $\mathcal{A}'$ be connective pro-Artinian dgas and put $A\coloneqq\varprojlim \mathcal{A}$, $A'\coloneqq\varprojlim \mathcal{A}'$. Suppose that the bar construction $BA$ is cohomologically locally finite. Then there is a weak equivalence of derived mapping spaces $$\rmap_{\proart}(\mathcal{A},\mathcal{A}')\simeq \rmap_{\augdga^{\leq 0}}(A,A').$$
	\end{thm}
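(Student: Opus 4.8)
The plan is to reduce both sides to the derived mapping space out of the Koszul double dual $A^{!!}$, using the dictionary of \ref{doubledictionary} together with the completion--limit adjunction. First I would replace the source on each side by an explicit cofibrant model. Since $\mathcal{A}$ is connective and $BA$ is cohomologically locally finite, \ref{doubledictionary}(5) gives that the natural map $B^\sharp(A^!)\to\mathcal{A}$ is a cofibrant resolution in $\proart$, and \ref{doubledictionary}(6) gives that $A^{!!}\to A$ is a cofibrant resolution in $\augdga^{\leq 0}$. Every object of $\proart$ is fibrant --- the augmentation $\mathcal{A}'\to k$ has $\varprojlim$ a degreewise surjection --- and likewise every object of $\augdga^{\leq 0}$ is fibrant, so these resolutions compute the relevant derived mapping spaces and we obtain
$$\rmap_{\proart}(\mathcal{A},\mathcal{A}')\simeq\rmap_{\proart}(B^\sharp(A^!),\mathcal{A}'),\qquad \rmap_{\augdga^{\leq 0}}(A,A')\simeq\rmap_{\augdga^{\leq 0}}(A^{!!},A').$$

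The middle step is to identify $B^\sharp(A^!)$ with the completion $\widehat{A^{!!}}$ and transport across the completion--limit adjunction. Because $A$ is connective augmented, $A^!$ is $1$-coconnective, so $C\coloneqq B(A^!)$ is a coconnective coaugmented conilpotent dgc; then \ref{cdlem}(2) gives $B^\sharp(A^!)=C^\sharp\cong\widehat{C^*}$, while $C^*=(B(A^!))^*=(A^!)^!=A^{!!}$ by the definition of the Koszul dual. Now $\widehat{(-)}\colon\augdga^{\leq 0}\rightleftarrows\proart\colon\varprojlim$ is a Quillen adjunction: $\varprojlim$ preserves weak equivalences by \ref{limreflects} and preserves fibrations since a degreewise surjection is in particular a surjection in strictly negative degrees (cf.\ \ref{notpseudormk}), so $\varprojlim$ is right Quillen and hence $\widehat{(-)}$ is left Quillen. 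Since $\widehat{A^{!!}}=B^\sharp(A^!)$ is cofibrant in $\proart$, $\mathcal{A}'$ is fibrant, and $\varprojlim$ agrees with its own derived functor on $\proart$ so that $\R\varprojlim\mathcal{A}'\simeq\varprojlim\mathcal{A}'=A'$ (cf.\ \ref{limisexact}), the standard comparison of derived mapping spaces under a Quillen adjunction gives $\rmap_{\proart}(\widehat{A^{!!}},\mathcal{A}')\simeq\rmap_{\augdga^{\leq 0}}(A^{!!},A')$. Chaining the three weak equivalences proves the theorem.

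Essentially all of the substantive work has already been absorbed into \ref{doubledictionary}, so the argument above is an assembly; the point to be careful about is the bookkeeping that makes \ref{cdlem}(2) and the completion--limit adjunction apply --- tracking the (co)connectivity of $A^!$ and $B(A^!)$, checking that the isomorphism $B^\sharp(A^!)\cong\widehat{A^{!!}}$ is natural, and verifying that $\varprojlim$ needs no fibrant replacement on $\proart$. One should also confirm that the cofibrant resolution $B^\sharp(A^!)\to\mathcal{A}$ of \ref{doubledictionary}(5) is compatible, under the identification with $\widehat{A^{!!}}$, with the map $\widehat{A^{!!}}\to\widehat{A}\to\mathcal{A}$ built from the quasi-isomorphism $A^{!!}\to A$ and the counit of $\widehat{(-)}\dashv\varprojlim$, so that the first equivalence above genuinely feeds into the adjunction step.
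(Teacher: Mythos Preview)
Your proof is correct and follows essentially the same route as the paper's. Both arguments use the cofibrant resolutions $B^\sharp(A^!)\to\mathcal{A}$ and $A^{!!}\to A$ from \ref{doubledictionary}(5),(6), and both pass between the two categories via the adjunction $\widehat{(-)}\dashv\varprojlim$. The only packaging difference is that the paper makes the comparison of derived mapping spaces concrete: it chooses a simplicial frame $\mathcal{A}'_\bullet$, applies the underived hom-set isomorphism of \ref{mslem} (which is exactly your identification $B^\sharp(A^!)\cong\widehat{A^{!!}}$ combined with the $\widehat{(-)}\dashv\varprojlim$ adjunction) levelwise, and then uses \cite[5.6.1]{hovey} to see that $\varprojlim(\mathcal{A}'_\bullet)$ is a simplicial frame on $A'$. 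You instead invoke the general fact that a Quillen adjunction induces a weak equivalence of derived mapping spaces; this is precisely what the paper's simplicial-frame computation is proving in this instance, so the content is the same.
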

\begin{proof}The idea is to use a simplicial frame on $\mathcal{A}'$ together with the explicit resolutions given by \ref{doubledictionary}. By \ref{doubledictionary}(5), the natural map $B^\sharp (A^!) \to \mathcal{A}$ is a cofibrant resolution in $\proart$. Moreover, all objects of $\proart$ are fibrant, so as in \cite[\S5]{hovey} we may use a simplicial frame $\mathcal{A}'_\bullet$ on $\mathcal{A}'$ to compute derived mapping spaces. We hence get weak equivalences
	\begin{align*} \rmap_{\proart}(\mathcal{A},\mathcal{A}')  &\simeq \hom_{\proart}(B^\sharp (A^!),\mathcal{A}'_\bullet)& \\
		&\cong  \hom_{\augdga^{\leq 0}}(A^{!!},\varprojlim(\mathcal{A}'_\bullet)) &\text{by \ref{mslem} levelwise.}
		\end{align*}
	By \ref{limisrq}, the functor $\varprojlim$ is right Quillen. Hence by \cite[5.6.1]{hovey}, the simplicial object $\varprojlim(\mathcal{A}'_\bullet)$ is a simplicial frame on $A'$. Moreover, by \ref{doubledictionary}(6), the natural map $A^{!!} \to A$ is a cofibrant resolution, and every object in $\augdga^{\leq 0}$ is fibrant. Hence the simplicial set $\hom_{\augdga^{\leq 0}}(A^{!!},\varprojlim(\mathcal{A}'_\bullet))$ is a model for $\rmap_{\augdga^{\leq 0}}(A,A')$ as required.
	\end{proof}

\begin{cor}\label{dermapscor}
	Let $A$ be a connective augmented cohomologically locally finite dga with $H^0(A)$ Artinian local. Let $\mathcal{A}'$ be any connective pro-Artinian dga and put $A'\coloneqq\varprojlim \mathcal{A}'$. Then there is a weak equivalence of derived mapping spaces $$\rmap_{\proart}(B^\sharp(A^!),\mathcal{A}')\simeq \rmap_{\augdga^{\leq 0}}(A,A').$$
	\end{cor}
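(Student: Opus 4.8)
The plan is to deduce Corollary~\ref{dermapscor} by combining Theorem~\ref{dermapsthm} with Theorem~\ref{kdfin}. First I would apply Theorem~\ref{kdfin}: since $A$ is a connective augmented cohomologically locally finite dga with $H^0(A)$ Artinian local, there is a natural quasi-isomorphism $A^{!!}\simeq A$, and in particular the natural map $A^{!!}\to A$ is a quasi-isomorphism. Next I would set $\mathcal{A}\coloneqq B^\sharp(A^!)$, which is a connective pro-Artinian dga (the bar construction $BA$ is connective, hence $B^\sharp(A^!)$ lands in $\proart$). By Lemma~\ref{kdfinite}, $BA$ is cohomologically locally finite, so the hypothesis of Theorem~\ref{dermapsthm} is satisfied with this choice of $\mathcal{A}$.

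The remaining point is to identify $\varprojlim\mathcal{A}$ with something quasi-isomorphic to $A$. By Lemma~\ref{limsharplem}(1), $\varprojlim B^\sharp(A^!) \cong (BA^!)^* \cong A^{!!}$ as dgas; equivalently one can invoke Theorem~\ref{doubledictionary}(6), which already tells us that the natural map $A^{!!}\to A$ is a cofibrant resolution in $\augdga^{\leq 0}$, so that in particular $\varprojlim\mathcal{A}\simeq A$ (noting that $\varprojlim$ preserves weak equivalences by \ref{limisrq}, and that quasi-isomorphic fibrant objects in $\augdga^{\leq 0}$ have weakly equivalent derived mapping spaces out of them). Thus $\varprojlim\mathcal{A} = A^{!!}\simeq A$. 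Theorem~\ref{dermapsthm}, applied to the pair $\mathcal{A}=B^\sharp(A^!)$ and $\mathcal{A}'$, then yields a weak equivalence $\rmap_{\proart}(B^\sharp(A^!),\mathcal{A}')\simeq \rmap_{\augdga^{\leq 0}}(\varprojlim\mathcal{A},A')$, and replacing $\varprojlim\mathcal{A}=A^{!!}$ by the quasi-isomorphic $A$ gives the claimed weak equivalence $\rmap_{\proart}(B^\sharp(A^!),\mathcal{A}')\simeq \rmap_{\augdga^{\leq 0}}(A,A')$, since weakly equivalent objects have weakly equivalent derived mapping spaces.

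I do not expect a serious obstacle here, since this corollary is essentially a specialization of Theorem~\ref{dermapsthm} once Theorem~\ref{kdfin} is available to guarantee $A^{!!}\simeq A$. The only mild care needed is bookkeeping: checking that $B^\sharp(A^!)$ genuinely lies in $\proart$ (connectivity of $A$ forces $BA$, and hence $A^!$ after dualizing, to have the right boundedness so that $B^\sharp(A^!)$ is a connective pro-Artinian dga), and confirming that the weak equivalence $A^{!!}\to A$ can be fed through the functoriality of $\rmap_{\augdga^{\leq 0}}(-,A')$ — but this is immediate because derived mapping spaces are homotopy invariant in each variable. So the proof is a two-line assembly: apply \ref{dermapsthm} with $\mathcal{A}=B^\sharp(A^!)$, then use \ref{kdfin} (or equivalently \ref{doubledictionary}(6)) to replace $\varprojlim\mathcal{A}$ by $A$.
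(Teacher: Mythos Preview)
Your proposal is correct and matches the paper's proof essentially line for line: set $\mathcal{A}=B^\sharp(A^!)$, identify $\varprojlim\mathcal{A}\cong A^{!!}\simeq A$ via \ref{kdfin}, check the hypothesis of \ref{dermapsthm} using \ref{kdfinite} and quasi-isomorphism invariance of $B$, then apply \ref{dermapsthm}. One small wobble: your parenthetical ``$BA$ is connective, hence $B^\sharp(A^!)$ lands in $\proart$'' skips a step --- what you actually need is that $B(A^!)$ is coconnective, which follows because $A^!$ is $1$-coconnective (since $BA$ is $1$-connective when $A$ is connective) --- but this is bookkeeping and the paper does not spell it out either.
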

\begin{proof}
	Put $\mathcal{A}\coloneqq B^\sharp(A^!)$. By \ref{kdfin}, we have a natural quasi-isomorphism $\varprojlim \mathcal{A} \cong A^{!!}\simeq A$. By \ref{kdfinite} and the invariance of $B$ under quasi-isomorphisms, $BA$ is cohomologically locally finite. Hence we may apply \ref{dermapsthm} to conclude that we have a weak equivalence $$\rmap_{\proart}(\mathcal{A},\mathcal{A}')\simeq \rmap_{\augdga^{\leq 0}}(A,A')$$as required.
	\end{proof}

\section{Deformations and prorepresentability}\label{defmthy}
In this section, we work over a field $k$ of characteristic zero. Our goal in this section is to use Koszul duality to explicitly identify prorepresenting objects for the `algebraic' deformation problem of deforming a module over a dga. We will work with deformation functors valued in simplicial sets, and our representability statement will really be a homotopy representability statement giving a weak equivalence between our functor and a derived mapping complex. We work with nonunital dgas throughout; this will help in the next section when we rigidify by considering framed deformations, which on the side of representing objects corresponds loosely to forgetting about the unit.

\p The input to our derived deformation functors will be connective Artinian dgas, which we will denote by $\Gamma$ and their maximal ideals by $\mathfrak{m}_\Gamma$. We note that if one wants to consider non-connective dgas as input, then one needs to consider some sort of stacky deformations because non-connective dgas may have nontrivial Maurer-Cartan elements (\ref{connectivereasons}).

\p We remark that many of the results of this section are true when $k$ has positive characteristic: key here is that we are deforming over connective noncommutative dgas, which are Quillen equivalent to simplicial $k$-algebras. In positive characteristic the equivalence between connective cdgas and simplicial commutative algebras breaks down, and correspondingly commutative deformation theory in positive characteristic is significantly more difficult. We note that in positive characteristic, a version of the Lurie--Pridham correspondence for commutative formal moduli problems has recently been given by Brantner and Mathew \cite{partitionlie}. The only place we really need $k$ to be characteristic zero is in the polynomial  Poincar\'e Lemma \ref{poincare}, which allows us to build simplicial resolutions of dgas via $\pdf$, the dga of polynomial differential forms. One might be able to reproduce all of our results in arbitrary characteristic using a different simplicial resolution for $k$ instead of the polynomial differential forms $\pdf$.

	\subsection{The Maurer--Cartan and Deligne functors}
	It is known that deformations of modules are given by a certain functor called the \textbf{Deligne functor}, which is constructed as a homotopy quotient. Our representability statement will consist of showing that the Deligne functor is (homotopy) representable. In this section we construct the Deligne functor.
\begin{defn}
	Let $U$ be a (possibly nonunital) dga. The set of \textbf{Maurer--Cartan elements} (or just \textbf{MC elements}) of $U$ is the set $$\mcs(U)\coloneqq \{x \in U^1: \ dx+x^2=0\}.$$
\end{defn}
\begin{rmk}
	Note that if $U$ is unital, with differential $d$, then $x \in \mcs(U)$ if and only if the map $u\mapsto d(u)+xu$ is a differential on $U$.
\end{rmk}
\begin{rmk}
	A (possibly nonunital) dga $U$ canonically becomes a dgla when equipped with the commutator bracket, and the set of MC elements of the dgla $U$ is the same as the set of MC elements of the dga $U$.
\end{rmk}

\begin{defn}
	Let $E$ be a dga. The \textbf{Maurer--Cartan functor} $$\mc(E):\dgart \to \cat{Set}$$ sends an Artinian dga $\Gamma$ to the set $\mc(E)(\Gamma)\coloneqq \mcs(E\otimes \mathfrak{m}_\Gamma)$. 
\end{defn}

\begin{defn}
	Let $E$ be a dga. The \textbf{gauge group} functor $$\ggr(E):\dgart \to \cat{Grp}$$ sends $\Gamma$ to the set $1+(E\otimes \mathfrak{m}_\Gamma)^0$, which is a group under multiplication.
\end{defn}

\begin{prop}
	Let $E$ be a nonunital dga and $\Gamma$ an Artinian local dga. Then  $\ggr(E)(\Gamma)$ acts on $\mc(E)(\Gamma)$ via the formula $g.x=gxg^{-1}+gd(g^{-1})$.
\end{prop}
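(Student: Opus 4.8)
The plan is to run the standard ``twisted differential'' argument inside the unitalisation. Write $U\coloneqq E\otimes\mathfrak m_\Gamma$, a nonunital dga, and let $\tilde U\coloneqq k\oplus U$ be its unitalisation, a genuine unital dga with differential extended by $d(1)=0$. Since $\mathfrak m_\Gamma$ is nilpotent, $U$ is a nilpotent two-sided ideal of $\tilde U$: indeed the $n$-fold product of $U$ lands in $E\otimes\mathfrak m_\Gamma^n$. Hence for $u\in U^0$ the element $g=1+u$ is invertible in $\tilde U^0$, with inverse the finite geometric series $\sum_{i\ge0}(-u)^i$, and $g^{-1}-1\in U^0$, so that $d(g^{-1})=d(g^{-1}-1)\in U^1$. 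Thus elements $g\in\ggr(E)(\Gamma)$ are honest invertible degree-zero elements of $\tilde U$, and the formula $g.x=gxg^{-1}+g\,d(g^{-1})$ makes sense in $\tilde U^1$.

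The heart of the argument is an identity about left multiplication operators $l_a\colon\tilde U\to\tilde U$, $l_a(b)=ab$. These satisfy $l_al_b=l_{ab}$, and the graded Leibniz rule gives the graded commutator relation $[d,l_a]=l_{da}$; in particular, for $x\in\tilde U^1$ one gets $(d+l_x)^2=l_{dx+x^2}$, and for $g$ invertible of degree $0$ one computes $l_g\circ(d+l_x)\circ l_{g^{-1}}=d+l_{gxg^{-1}+g\,d(g^{-1})}=d+l_{g.x}$. Squaring this last identity and comparing with $(d+l_{g.x})^2=l_{d(g.x)+(g.x)^2}$ on the one hand and $l_g(d+l_x)^2l_{g^{-1}}=l_{g(dx+x^2)g^{-1}}$ on the other, and using that $l_{(-)}$ is injective (evaluate at $1\in\tilde U$), yields $d(g.x)+(g.x)^2=g\,(dx+x^2)\,g^{-1}$. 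So $g.x$ is again a Maurer--Cartan element whenever $x$ is, and it lies in $U^1$ because $U$ is a two-sided ideal; this shows the map $x\mapsto g.x$ is a well-defined self-map of $\mc(E)(\Gamma)=\mcs(U)$.

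It remains to check the group action axioms. From the explicit formula, $1.x=1\cdot x\cdot 1+1\cdot d(1)=x$. For associativity one reuses the conjugation identity: $d+l_{g_1.(g_2.x)}=l_{g_1}(d+l_{g_2.x})l_{g_1^{-1}}=l_{g_1}l_{g_2}(d+l_x)l_{g_2^{-1}}l_{g_1^{-1}}=l_{g_1g_2}(d+l_x)l_{(g_1g_2)^{-1}}=d+l_{(g_1g_2).x}$, and injectivity of $l_{(-)}$ gives $g_1.(g_2.x)=(g_1g_2).x$. I do not expect a genuine obstacle here: the computation is routine once the operator identities $l_al_b=l_{ab}$ and $[d,l_a]=l_{da}$ are in place, and the only points needing a little care are the bookkeeping of which elements live in $U$ versus $\tilde U$ and the invertibility of $g$, both of which are immediate consequences of the nilpotence of $\mathfrak m_\Gamma$.
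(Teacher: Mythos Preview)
Your proof is correct. The paper's own proof is simply ``This is an easy verification'', so you have filled in the details; moreover, your conjugation-of-twisted-differentials approach is exactly the conceptual picture the paper records in the remark immediately following the proposition (regarding $d+x$ as a twisted differential, the gauge action is conjugation on the space of differentials).
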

\begin{proof}
	This is an easy verification.
\end{proof}
\begin{rmk}Regarding $d+x$ as a twisted differential on $E\otimes \mathfrak{m}_\Gamma$, the action of the gauge group is the conjugation action on the space of differentials.
\end{rmk}

\begin{rmk}
	If $L$ is a dgla, its gauge group has as elements formal symbols $\exp(a)$ for $a \in L^0$, and multiplication given by the Baker--Campbell--Hausdorff formula \cite{manettidgla}. In characteristic zero, if $E\otimes \mathfrak{m}_\Gamma$ is made into a dgla using the commutator bracket then its dgla gauge group is isomorphic to the gauge group defined above, via the map that sends each formal exponential $\exp(a)$ to the sum $\sum_n\frac{a^n}{n^!}$, which exists because $\mathfrak{m}_\Gamma$ is nilpotent. Note that in positive characteristic the sum may not be defined. The exponential of the dgla gauge action \cite[V.4]{manetti} is the gauge action described above.
\end{rmk}

\begin{defn}
	Let $E$ be a dga. The \textbf{Deligne functor} is the quotient functor $$\del(E)\coloneqq \mc(E)/\ggr(E).$$
\end{defn}
Often $\del(E)$ is called the deformation functor associated to $E$. 

\begin{rmk}
	By taking the groupoid quotient rather than the set quotient, one can immediately enhance $\del$ to a groupoid-valued functor. However, we will see that $\del$ has a natural enhancement to a functor $\sdel$ valued in simplicial sets, and we would like the groupoid $\del$ to be the 1-truncation of $\sdel$. However, nontrivial 2-simplices in $\sdel$ induce homotopies between gauges, and one has to quotient these out to get the correct fundamental groupoid; see \cite[\S5]{ELO} or \cite[Proof of 3.2]{manettidgla} in the commutative setting. In the literature, both groupoid-valued functors are referred to as the \textbf{Deligne groupoid}. When deforming along ungraded Artinian algebras the two definitions coincide, so the difference between them only becomes apparent when deforming along genuinely derived objects.
\end{rmk}

\begin{prop}[{\cite[8.1]{ELO}}]\label{setdelqinvt}
	If $E$ and $E'$ are quasi-isomorphic dgas then the functors $\del(E)$ and $\del(E')$ are isomorphic.
\end{prop}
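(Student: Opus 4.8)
The plan is to exhibit a direct comparison between the Maurer--Cartan and gauge data attached to $E$ and to $E'$, which then descends to an isomorphism of the quotient functors. First I would reduce to the case of a single quasi-isomorphism $f\colon E \to E'$ of dgas; since quasi-isomorphism is generated by zig-zags, and the claimed isomorphism $\del(E)\cong \del(E')$ will be natural in $E$, it suffices to produce a natural isomorphism from any dga quasi-isomorphism. For a fixed connective Artinian local $\Gamma$, the map $f$ induces a dga map $E\otimes \mathfrak{m}_\Gamma \to E'\otimes\mathfrak{m}_\Gamma$; because $\mathfrak m_\Gamma$ is a finite-dimensional (hence flat) complex, this induced map is again a quasi-isomorphism. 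Moreover, since $\mathfrak m_\Gamma$ is nilpotent, both $E\otimes\mathfrak m_\Gamma$ and $E'\otimes\mathfrak m_\Gamma$ are pronilpotent dgas, and the gauge groups $\ggr(E)(\Gamma)=1+(E\otimes\mathfrak m_\Gamma)^0$ and $\ggr(E')(\Gamma)$ are honest groups (the geometric-series inverse converges).

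The main step is to show that a quasi-isomorphism of \emph{nilpotent} dgas $g\colon U \to U'$ induces a bijection on Deligne quotients $\mcs(U)/\ggr(U)\xrightarrow{\ \cong\ }\mcs(U')/\ggr(U')$. Here I would invoke the standard obstruction-theoretic / filtration argument: filter $U$ and $U'$ by powers of the nilpotent ideal (or by the $\mathfrak m_\Gamma$-adic filtration), so that $U = F_1 \supseteq F_2 \supseteq \cdots \supseteq F_{N+1}=0$ with each associated graded piece $F_i/F_{i+1}$ a complex on which $g$ is still a quasi-isomorphism. One proves surjectivity and injectivity of the map on Deligne sets by induction on $N$ (the nilpotency length): at each stage the obstruction to lifting an MC element, resp.\ a gauge equivalence, past one more step of the filtration lies in a cohomology group of $F_i/F_{i+1}$, and these obstruction groups are matched by $g$ because $g$ is a levelwise quasi-isomorphism on the graded pieces. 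This is exactly the content of \cite[\S8]{ELO}, and since $U\otimes\mathfrak m_\Gamma$ is pronilpotent one may equivalently cite Yekutieli's treatment \cite{yekpronil} of Deligne groupoids for pronilpotent dglas, viewing $U$ with its commutator bracket. Naturality in $\Gamma$ is immediate since the filtration and the induced comparison maps are functorial in $\Gamma$.

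Finally I would assemble these pointwise bijections into a natural isomorphism of functors $\del(E)\cong\del(E')$: the comparison is compatible with the morphisms of $\dgart$ because $-\otimes\mathfrak m_{(-)}$ and passage to $\mathfrak m_\Gamma$-adic associated gradeds are functorial, and it respects the gauge actions by construction, so it descends to the quotients. The expected main obstacle is the inductive lifting argument in the middle step --- carefully checking that both the existence of an MC lift and the existence of a gauge taking one lift to another are governed by cohomology classes that $g$ identifies, with all the Koszul signs and the conjugation formula $g.x = gxg^{-1}+gd(g^{-1})$ handled correctly; but this is routine pronilpotent deformation theory and is precisely what \cite[8.1]{ELO} records, so citing it (as the statement already does) is legitimate, and my proof amounts to spelling out why the cited result applies in this nonunital, $\Gamma$-parametrised setting.
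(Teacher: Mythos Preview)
The paper does not supply its own proof of this proposition: it is stated with a bare citation to \cite[8.1]{ELO} and no proof environment follows. Your sketch is therefore not being compared against anything the paper wrote, but against the argument behind the citation.

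That said, your outline is correct and is exactly the standard route taken in \cite{ELO}: tensor with $\mathfrak{m}_\Gamma$ to get a quasi-isomorphism of nilpotent dgas, filter by powers of $\mathfrak{m}_\Gamma$, and run the usual obstruction-theoretic induction on nilpotency length to show that the induced map on Maurer--Cartan sets modulo gauge is a bijection. The reduction to a single dga quasi-isomorphism via zig-zags is the right framing, and your observation that flatness of $\mathfrak{m}_\Gamma$ (being finite-dimensional over a field) preserves the quasi-isomorphism is the key point that makes the filtration argument go through. There is nothing to correct; you have reconstructed the cited proof.
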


Now we enhance all of our constructions to functors valued in simplicial sets. The loose idea is to take (co)simplicial resolutions to get derived mapping spaces, which are simplicial sets. We use the explicit simplicial enhancement of Hinich \cite{hinichhom}, who is generalising the work of Bousfield and Gugenheim \cite{bousfieldgugenheim}.

\begin{defn}
	Let $\pdf$ denote the simplicial cdga of polynomial differential forms on the standard cosimplicial space $\Delta^\bullet$; in simplicial level $n$ it is freely generated as a cdga by indeterminates $t_0,\ldots,t_n$ in degree zero, modulo the relation $t_0+\cdots + t_n=1$. See e.g.\ \cite[\S1]{bousfieldgugenheim} or  \cite[4.8.1]{hinichhom} for an explicit definition.
\end{defn}
\begin{prop}[{Polynomial Poincar\'e Lemma \cite[1.3]{bousfieldgugenheim}}]\label{poincare}
	The simplicial dga $\pdf$ is quasi-isomorphic to the constant simplicial dga $k$.
\end{prop}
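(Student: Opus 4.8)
The plan is to reduce the assertion to the classical algebraic Poincar\'e lemma applied in each simplicial degree, since for a map of simplicial dgas to be a quasi-isomorphism it suffices that it be one levelwise. In degree $n$, using the relation $t_0+\cdots+t_n=1$ (and hence $dt_0+\cdots+dt_n=0$) to eliminate $t_0$, the cdga in level $n$ of $\pdf$ is identified with the polynomial de Rham complex $k[t_1,\ldots,t_n]\otimes\Lambda^\bullet(dt_1,\ldots,dt_n)$ of the affine space $\A^n_k$, equipped with the usual de Rham differential. The map whose quasi-isomorphy we must check is the unit $k\to\pdf$ of the constant simplicial dga $k$; this is a map of simplicial dgas because all faces and degeneracies of $\pdf$ are unital dga maps, so it is enough to see that $k\to k[t_1,\ldots,t_n]\otimes\Lambda^\bullet(dt_1,\ldots,dt_n)$ is a quasi-isomorphism for every $n$ (equivalently, one may use evaluation at a vertex $\pdf_n\to k$, which is a one-sided inverse).

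First I would dispatch the case $n=1$ by direct computation in $k[t]\otimes\Lambda(dt)$: here $H^0=k$, since a polynomial with vanishing derivative is constant, and $H^1=\coker\bigl(d\colon k[t]\to k[t]\,dt\bigr)=0$, because $d\bigl(\tfrac{t^{m+1}}{m+1}\bigr)=t^m\,dt$ for every $m\geq0$ --- this is the sole place where the characteristic-zero hypothesis is used, via the denominators $m+1$. Then I would induct on $n$ using the isomorphism of cdgas $\Omega(\A^n)\cong\Omega(\A^{n-1})\otimes_k\Omega(\A^1)$: the K\"unneth formula for cochain complexes of $k$-vector spaces gives $H^\bullet(\Omega(\A^n))\cong H^\bullet(\Omega(\A^{n-1}))\otimes_k H^\bullet(\Omega(\A^1))\cong k\otimes_k k=k$, concentrated in degree $0$. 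This shows the unit $k\to\pdf_n$ is a quasi-isomorphism for all $n$, hence $\pdf\simeq k$.

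As an alternative to the K\"unneth induction one can instead write down the standard contracting homotopy: define $h\colon\Omega^p(\A^n)\to\Omega^{p-1}(\A^n)$ by the algebraic analogue of contracting with the Euler vector field and integrating along rays, i.e.\ on monomial forms by the formal substitution rule $\int_0^1 s^k\,ds=\tfrac{1}{k+1}$, and verify directly that $dh+hd=\id-\pi$, where $\pi$ is projection onto the constant forms. This again relies only on the $\tfrac{1}{k+1}$'s, so characteristic zero is once more the essential ingredient.

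There is no serious obstacle here: the statement is entirely classical, and the cited reference \cite{bousfieldgugenheim} contains a complete proof. The only points deserving care are (i) recording that characteristic zero is genuinely necessary --- in positive characteristic $t^{p-1}\,dt$ need not be exact --- which is consistent with the remarks earlier in this section explaining why we restrict to $k$ of characteristic zero, and (ii) noting that the cosimplicial structure on $\Delta^\bullet$ plays no role in the argument, as everything is verified one simplicial level at a time.
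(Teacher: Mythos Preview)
Your proposal is correct and takes essentially the same approach as the paper: reduce to a levelwise check, use the tensor decomposition $\Omega(\Delta^n)\cong\Omega(\Delta^1)^{\otimes n}$ together with K\"unneth to reduce to the case $n=1$, and compute that case directly via the antiderivative (using characteristic zero). The paper's version is terser and omits the alternative contracting-homotopy argument and the surrounding commentary, but the core argument is identical.
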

\begin{proof}
	Because we have an isomorphism $\Omega(\Delta^q)\cong \Omega(\Delta^1)^{\otimes q}$, it suffices to check that $\Omega(\Delta^1)\cong k[t,dt]$ is quasi-isomorphic to $k$. To do this, it  is enough to show that the cohomology of $k[t,dt]$ is $k$, because then the unit map $k \to k[t,dt]$ is a quasi-isomorphism.
	\end{proof}
\begin{rmk}
	In characteristic $p$, the polynomial Poincar\'e Lemma does not hold. Indeed, one can check that $\Omega(\Delta^1)\cong k[t,dt]$ has cohomology $k[t^p]$, concentrated in degree zero. This is because the degree $i$ cocycles look like $Q(t^p)dt^i$, where $Q$ is any polynomial. Since $tQ(t^p)$ maps to $Q(t^p)dt$ under the differential, if $i>0$ then these cocycles are all coboundaries.
	\end{rmk}
\begin{defn}
	Let $E$ be a dga. The \textbf{simplicial Maurer--Cartan functor} $\smc$ sends $E$ to the simplicial set $\smc(E)\coloneqq \mc(E\otimes \pdf)$.
\end{defn}
Unwinding the definitions, we hence have $\smc(E)(\Gamma)=\mcs(E\otimes \pdf \otimes \mathfrak{m}_\Gamma)$. 
\begin{rmk}
	It is not true that $\mc\cong \pi_0\smc$, because the right-hand side has elements identified by homotopies coming from 1-simplices in $\smc$. All we have is a quotient map $\mc \to \pi_0 \smc$. In fact, $\pi_0 \smc$ is $\del$, which follows by combining \ref{pi0del} and \ref{smcisdel} below.
\end{rmk}

\begin{prop}\label{mcinvtqiso}
	If $E$ and $E'$ are quasi-isomorphic dgas then the functors $\smc(E)$ and $\smc(E')$ are weakly equivalent.
	
\end{prop}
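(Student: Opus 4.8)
The plan is to reduce the proposition to the homotopy invariance of the simplicial Maurer--Cartan functor on \emph{nilpotent} differential graded Lie algebras; the set-level statement \ref{setdelqinvt} does not suffice here, since $\mc$ does not detect the higher simplices of $\smc$. Throughout I fix a connective Artinian local dga $\Gamma$; every construction below is manifestly natural in $\Gamma$ and functorial in $E$, so a quasi-isomorphism $f\colon E \to E'$ — or, for the general statement, a finite zigzag of such, handled one step at a time — will yield a (zigzag of) natural transformation(s) between $\smc(E)$ and $\smc(E')$, and it will be enough to show each is objectwise a weak equivalence of simplicial sets.

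First I would unwind the definitions: the set of $n$-simplices of $\smc(E)(\Gamma)$ is $\mcs\big(E \otimes \Omega(\Delta^n) \otimes \mathfrak{m}_\Gamma\big)$. Since the Maurer--Cartan set of a dga depends only on its degree-$1$ part, differential and product, I may equip the nonunital dga $E \otimes \mathfrak{m}_\Gamma$ with the commutator bracket; because $\mathfrak{m}_\Gamma$ is nilpotent this is a nilpotent dgla $L \coloneqq E \otimes \mathfrak{m}_\Gamma$, whose dgla Maurer--Cartan elements coincide with its dga ones. Thus $\smc(E)(\Gamma)$ is precisely the classical simplicial Maurer--Cartan set $n \mapsto \mcs(L \otimes \Omega(\Delta^n))$ of the nilpotent dgla $L$, and likewise for $E'$ with $L' \coloneqq E' \otimes \mathfrak{m}_\Gamma$. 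Tensoring $f$ with the bounded, finite-dimensional complex $\mathfrak{m}_\Gamma$, which is flat over the field $k$, produces a quasi-isomorphism $L \to L'$ of nilpotent dglas. It then remains to invoke the fact that a quasi-isomorphism of nilpotent dglas induces a weak equivalence of simplicial Maurer--Cartan sets (Hinich \cite{hinichhom}, following Bousfield--Gugenheim \cite{bousfieldgugenheim}; see also \cite{yekpronil}). Applied to $L \to L'$ this gives the objectwise weak equivalence $\smc(E)(\Gamma) \simeq \smc(E')(\Gamma)$, and functoriality of $\smc$ in its first variable upgrades it to a weak equivalence of functors $\dgart \to \sset$.

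If one wishes to prove the invariance input rather than cite it, the mechanism is induction on the nilpotency length $N$ of $\mathfrak{m}_\Gamma$. When $\mathfrak{m}_\Gamma^2 = 0$ the bracket on $E \otimes \mathfrak{m}_\Gamma$ vanishes, so $\smc(E)(\Gamma)$ is the simplicial set $n \mapsto Z^1\big(E \otimes \mathfrak{m}_\Gamma \otimes \Omega(\Delta^n)\big)$, whose homotopy type is determined — via the polynomial Poincar\'e Lemma \ref{poincare} — by the cohomology of $E \otimes \mathfrak{m}_\Gamma$, so a quasi-isomorphism $E \to E'$ clearly induces a weak equivalence. In general, $\Gamma \onto \Gamma/\mathfrak{m}_\Gamma^{N-1}$ is a square-zero extension with kernel the central ideal $\mathfrak{m}_\Gamma^{N-1}$ (which has vanishing product with $\mathfrak{m}_\Gamma$); one checks, as in \cite{hinichhom}, that the induced map $\smc(E)(\Gamma) \to \smc(E)(\Gamma/\mathfrak{m}_\Gamma^{N-1})$ is a Kan fibration whose fibre over a vertex is, up to an obstruction class, a simplicial affine space with homotopy type controlled by the cohomology of $E \otimes \mathfrak{m}_\Gamma^{N-1}$. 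Since $f$ induces quasi-isomorphisms on $E \otimes \mathfrak{m}_\Gamma^{N-1}$ and, by the inductive hypothesis, a weak equivalence on the Maurer--Cartan space of $\Gamma/\mathfrak{m}_\Gamma^{N-1}$, comparing the two fibration sequences forces $\smc(E)(\Gamma) \to \smc(E')(\Gamma)$ to be a weak equivalence. The main obstacle is precisely this invariance statement — establishing the Kan fibration property and correctly identifying the twisted homotopy fibres; granting it, the remaining ingredients (the K\"unneth isomorphism over $k$, reordering tensor factors, naturality in $\Gamma$) are entirely routine.
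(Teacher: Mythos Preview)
Your proof is correct but follows a genuinely different route from the paper's. The paper invokes Pridham's framework of derived deformation complexes (DDCs): one packages the data $(E \otimes \Omega(\Delta^m) \otimes \mathfrak{m}_\Gamma)^n$ into a DDC $F$, checks that $\mathfrak{MC}(F)\cong\smc(E)$, notes that a quasi-isomorphism of dgas yields a quasi-isomorphism of DDCs, and then appeals to \cite[2.18]{jonddefsartin}. You instead pass to the commutator dgla $L=E\otimes\mathfrak{m}_\Gamma$, use the K\"unneth theorem to see that $f\otimes\id$ is a quasi-isomorphism of nilpotent dglas, and invoke Hinich's homotopy-invariance theorem for nilpotent dglas directly. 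Your route is more elementary and self-contained for this single statement, and your optional sketch of the nilpotency-length induction is essentially the skeleton of Hinich's argument; the paper's route is chosen for consistency with the surrounding material, since Pridham's DDC formalism is reused in \ref{defisdel}. Both ultimately rest on the same square-zero-extension induction you outline.
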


\begin{proof}
	This appears in \cite{jonddefsartin}, but the statement there is for DDCs (\textbf{derived deformation complexes}) so requires some translation. Loosely, a DDC  $F$ is a $\N\times \N$-graded functor $F^n_m:\dgart \to \cat{Set}$ which is a simplicial object in the $m$ direction and has a structure similar to a cosimplicial object in the $n$ direction. Given a dga $E$, define a $\N\times \N$-graded functor $\mathcal{E}$ by $\mathcal{E}^n_m(\Gamma)\coloneqq \left(E\otimes \Omega(\Delta^m)\otimes \mathfrak{m}_\Gamma\right)^n$. This is not quite a DDC, but does give a DDC $F$ with
	 $$F^n_m=\begin{cases} 1+ \mathcal{E}^0_m & n=0 \\
	 	\mathcal{E}^n_m & \text{else.}
\end{cases}$$One can define a $\mathfrak{MC}$ functor for DDCs, and one has $\mathfrak{MC}(F)\cong \smc(E)$. Quasi-isomorphisms of dgas give quasi-isomorphisms of DDCs, and the result now follows from {\cite[2.18]{jonddefsartin}}.
	\end{proof}

\begin{defn}
	The \textbf{simplicial Deligne functor}	is the homotopy quotient\footnote{See \cite[Chapter V]{goerssjardine} or \cite[1.23]{jonddefsartin} for the definition of homotopy quotients.}$$\sdel(E)\coloneqq [\smc(E)/\ggr(E)].$$ 
\end{defn}

\begin{lem}\label{pi0del}
	There is an isomorphism	$\del\cong \pi_0\sdel$.
\end{lem}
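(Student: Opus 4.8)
The plan is to compute $\pi_0\sdel(E)$ directly from the definition of the homotopy quotient. Recall that $\sdel(E)=[\smc(E)/\ggr(E)]$, so for each connective Artinian dga $\Gamma$ the simplicial set $\sdel(E)(\Gamma)$ is the Borel construction of the (discrete) group $\ggr(E)(\Gamma)$ acting on $\smc(E)(\Gamma)$. I would first record the standard fact that, for a discrete group $G$ acting on a simplicial set $Y$, there is a natural fibration sequence $Y\to [Y/G]\to BG$, whose long exact sequence of homotopy gives a natural bijection
$$\pi_0\bigl([\smc(E)/\ggr(E)](\Gamma)\bigr)\;\cong\;\pi_0\bigl(\smc(E)(\Gamma)\bigr)\big/\ggr(E)(\Gamma)$$
(equivalently: $\pi_0$ preserves homotopy colimits, and the Borel construction is a homotopy colimit over $B\ggr(E)(\Gamma)$; see \cite[\S V.3]{goerssjardine}). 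This reduces the lemma to identifying $\pi_0\smc(E)$, together with its residual gauge action, with $\del(E)$.

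Next I would compute $\pi_0\smc(E)(\Gamma)$. Since $\Omega(\Delta^0)\cong k$, the $0$-simplices of $\smc(E)(\Gamma)=\mc(E\otimes\pdf\otimes\mathfrak{m}_\Gamma)$ are exactly $\mcs(E\otimes\mathfrak{m}_\Gamma)=\mc(E)(\Gamma)$, and a $1$-simplex is an element of $\mc(E\otimes\Omega(\Delta^1)\otimes\mathfrak{m}_\Gamma)$, i.e.\ a polynomial homotopy between its two faces. Hence $\pi_0\smc(E)(\Gamma)$ is $\mc(E)(\Gamma)$ modulo the equivalence relation generated by such homotopies. Now $E\otimes\mathfrak{m}_\Gamma$, equipped with the commutator bracket, is a nilpotent dgla over a field of characteristic zero (as $\mathfrak{m}_\Gamma$ is nilpotent), and for such dglas I would invoke the classical theorem that two Maurer--Cartan elements are homotopic precisely when they are gauge equivalent (e.g.\ \cite{manettidgla}, or the simplicial formulation underlying \cite{hinichhom}). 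This identifies $\pi_0\smc(E)(\Gamma)$ with $\mc(E)(\Gamma)/\ggr(E)(\Gamma)=\del(E)(\Gamma)$, and under this identification the residual $\ggr(E)(\Gamma)$-action is the descent of the gauge action to its own orbit set, hence trivial. Combining with the first display yields $\pi_0\sdel(E)(\Gamma)\cong\del(E)(\Gamma)$, and tracing through the construction shows the bijection is natural in both $\Gamma$ and $E$.

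I expect the only genuine content — as opposed to bookkeeping — to be the identification $\pi_0\smc(E)(\Gamma)\cong\del(E)(\Gamma)$, which rests on the characteristic-zero fact that homotopy of Maurer--Cartan elements coincides with gauge equivalence for nilpotent dglas; the computation of $\pi_0$ of a Borel construction, the triviality of the residual action, and naturality are all formal. If a self-contained argument were wanted, the substantive step would be to extract from a polynomial homotopy in $\smc(E)(\Gamma)$ an explicit gauge in $\ggr(E)(\Gamma)$ relating its endpoints; the usual proof integrates the homotopy and uses nilpotence of $\mathfrak{m}_\Gamma$ to make the relevant series terminate, which is exactly where characteristic zero (already needed for the polynomial Poincar\'e Lemma \ref{poincare}) enters.
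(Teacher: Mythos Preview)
Your proposal is correct and follows the same approach as the paper, namely the long exact sequence of the Borel fibration $\smc \to \sdel \to B\ggr$. You spell out what the paper leaves to the cited reference \cite{jonddefsartin}: that polynomial homotopy and gauge equivalence of Maurer--Cartan elements coincide for nilpotent dglas in characteristic zero, which is exactly what is needed to pass from $\pi_0\smc/\ggr$ to $\mc/\ggr=\del$.
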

\begin{proof}
	As in \cite[1.27]{jonddefsartin}, this follows by considering the long exact sequence of homotopy groups associated to the fibration $X \to [X/G] \to BG$.
\end{proof}

\begin{prop}[{\cite[2.21]{jonddefsartin}}]\label{smcisdel}
	The quotient map $\smc \to \sdel$ is a weak equivalence.
\end{prop}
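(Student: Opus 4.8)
The plan is to deduce this from the contractibility of the gauge group, once it is promoted to its simplicial form. Both sides are functors $\dgart \to \sset$ and weak equivalences are detected objectwise, so it suffices to fix an Artinian dga $\Gamma$ and check the map on $\Gamma$-points. Unwinding the homotopy quotient (as in \cite[Chapter V]{goerssjardine}), $\sdel(E)(\Gamma)$ is the Borel construction of the simplicial group
$$G_\bullet \;\coloneqq\; \ggr(E\otimes\pdf)(\Gamma)\;=\;1+(E\otimes\pdf\otimes\mathfrak{m}_\Gamma)^0$$
acting levelwise on $\smc(E)(\Gamma)=\mcs(E\otimes\pdf\otimes\mathfrak{m}_\Gamma)$ through the gauge action $g.x=gxg^{-1}+g\,d(g^{-1})$, and the quotient map of the proposition is the inclusion of the fibre in the fibration sequence
$$\smc(E)(\Gamma)\longrightarrow \sdel(E)(\Gamma)\longrightarrow BG_\bullet .$$
So the whole proposition reduces to the single claim that $G_\bullet$ is weakly contractible: if so, $BG_\bullet$ is contractible, the long exact sequence of the fibration collapses, and the fibre inclusion is a weak equivalence.

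To prove $G_\bullet$ weakly contractible, observe that as a pointed simplicial set (based at $1$) it is isomorphic, via $g\mapsto g-1$, to the simplicial $k$-vector space
$$U_\bullet\;\coloneqq\;(E\otimes\pdf\otimes\mathfrak{m}_\Gamma)^0\;\cong\;\bigoplus_{j\geq 0}(E\otimes\mathfrak{m}_\Gamma)^{-j}\otimes\pdf^{\,j},$$
a direct sum which is finite in each simplicial level since $\Omega(\Delta^n)^j=0$ for $j>n$. Homotopy groups of simplicial vector spaces commute with direct sums and with tensoring by a fixed vector space, so it is enough to check that the simplicial vector space $\pdf^{\,j}$ of polynomial $j$-forms has vanishing homotopy groups for every $j$. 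This is the simplicial-direction analogue of the polynomial Poincar\'e Lemma \ref{poincare}: the cosimplicial algebraic simplex carries a natural linear contraction onto a vertex, which after dualisation and Dold--Kan shows that the normalised chain complex of $\pdf^{\,j}$ is acyclic (the low-degree cases are a short direct computation with polynomial forms). Hence $U_\bullet$, and therefore $G_\bullet$, is weakly contractible, and the proposition follows. None of this is sensitive to the choice of model for $E$, by \ref{mcinvtqiso}.

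The one substantive point is the contractibility of $G_\bullet$ — equivalently, that $\pdf$, which \ref{poincare} already recognises as a levelwise cochain resolution of $k$, is also trivial in the simplicial direction degreewise in form degree. Carrying out the extra-degeneracy bookkeeping on $\Omega(\Delta^\bullet)$ is the only real work; alternatively one can bypass it entirely by invoking \cite[2.21]{jonddefsartin}, where the homotopy quotient defining $\sdel$ is set up (via derived deformation complexes) precisely so that this fibration and this contractibility are available, the argument above being a repackaging of that construction.
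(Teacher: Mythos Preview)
Your proposal is correct and follows exactly the approach the paper sketches: the paper's proof is the single sentence ``the classifying space of the gauge group is contractible and so taking the homotopy quotient does not affect the weak equivalence type of $\smc$'', and you have unpacked precisely this, identifying the Borel fibration $\smc(E)(\Gamma)\to\sdel(E)(\Gamma)\to BG_\bullet$ and reducing to contractibility of the simplicial gauge group $G_\bullet$. Your promotion of $\ggr(E)$ to the simplicial group $G_\bullet=1+(E\otimes\pdf\otimes\mathfrak{m}_\Gamma)^0$ is the correct reading of the homotopy quotient (and is what makes the paper's claim about $BG$ being contractible sensible), and your argument for contractibility via the isomorphism $g\mapsto g-1$ to a simplicial vector space built from the $\pdf^{\,j}$ is the standard one; the acyclicity of each $\Omega(\Delta^\bullet)^j$ in the simplicial direction is indeed in Bousfield--Gugenheim.
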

\begin{proof}
		The idea is that the classifying space of the gauge group is contractible and so taking the homotopy quotient does not affect the weak equivalence type of $\smc$.
	\end{proof}

\subsection{DG and simplicial categories}
We provide a very brief introduction to the theory of dg categories, simplicially enriched categories, and the interplay between them, primarily to set notation. Survey articles on dg categories include \cite{keller} and \cite{toendglectures}. For the relevant material on simplicial categories, see \cite{bergnermodelscat}.

	\begin{defn}
	A ($k$-linear) \textbf{dg category} is a category $\mathcal{C}$ enriched over the monoidal category $(\cat{dgvect}_k,\otimes)$ of dg $k$-vector spaces with the usual tensor product. In other words, to every pair of elements $(x,y)\in\mathcal{C}^2$ we assign a chain complex $\dgh_\mathcal{C}(x,y)$, to every triple $(x,y,z)$ we assign a chain map $\mu_{xyz}:\dgh_\mathcal{C}(x,y)\otimes \dgh_\mathcal{C}(y,z) \to \dgh_\mathcal{C}(x,z)$ satisfying associativity, and for every $x \in \mathcal{C}$ we assign a map $\eta_x: k \to \dgh_\mathcal{C}(x,x)$ which is a unit with respect to composition.
\end{defn}
\begin{defn}
	A \textbf{dg functor} $F:\mathcal{C}\to\mathcal{D}$ between two dg categories is an enriched functor; i.e.\ a map of objects $\mathcal{C}\to \mathcal{D}$ together with, for every pair $(x,y)\in\mathcal{C}^2$, a map of complexes \linebreak $F_{xy}:\dgh_\mathcal{C}(x,y) \to \dgh_\mathcal{D}(Fx,Fy)$. We require that $F$ satisfies the associativity condition $\mu_{Fx\ Fy \ Fz}\circ (F_{xy}\otimes F_{yz}) = F_{xz}\circ \mu_{xyz}$ and the unitality condition $F_{xx}\circ \eta_x = \eta_{Fx}$.
\end{defn}
\begin{defn}
	Let $\mathcal{C}$ be a dg category. The \textbf{homotopy category} of $\mathcal{C}$ is the $k$-linear category $[\mathcal{C}]$ whose objects are the same as $\mathcal{C}$ and whose hom-spaces are given by\linebreak $\hom_{[\mathcal{C}]}(x,y)\coloneqq H^0(\dgh_\mathcal{C}(x,y))$. Composition is inherited from $\mathcal{C}$. We sometimes write $[x,y]\coloneqq \hom_{[\mathcal{C}]}(x,y)$.
\end{defn}
\begin{defn}Let $F:\mathcal{C}\to\mathcal{D}$ be a dg functor.\begin{itemize}
		\item $F$ is \textbf{quasi-fully faithful} if all of its components $F_{xy}$ are quasi-isomorphisms.
		\item $F$ is \textbf{quasi-essentially surjective} if the induced functor $[F]:[\mathcal{C}]\to[\mathcal{D}]$ is essentially surjective.
		\item $F$ is a \textbf{quasi-equivalence} if it is quasi-fully faithful and quasi-essentially surjective.
	\end{itemize}
\end{defn}

\begin{defn}\label{dgcatlist}
	If $A$ is a dga, then $D_\mathrm{dg}(A)$ denotes the dg category of cofibrant dg modules over $A$.
\end{defn}
The dg category $D_\mathrm{dg}(A)$ is \textbf{pretriangulated}, meaning essentially that its homotopy category is canonically triangulated. It enhances the usual derived category, in the sense that one has an equivalence of triangulated categories $[D_\mathrm{dg}(A)]\cong D(A)$. Note that $\dgh$ is a model for the derived hom $\R\hom$.
\p The category of all dg categories admits a model structure whose weak equivalences are the quasi-equivalences:
\begin{thm}[Tabuada \cite{tabuadamodel}]\label{tabmod}
	The category of all small dg categories admits a (cofibrantly generated) model structure where the weak equivalences are the quasi-equivalences. The fibrations are the objectwise levelwise surjections that lift isomorphisms. Every dg category is fibrant.
\end{thm}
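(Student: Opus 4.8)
The plan is to present the model structure as a cofibrantly generated one and verify the hypotheses of the recognition theorem \cite[2.1.19]{hovey}; since $\dgcat$ is complete and cocomplete and the quasi-equivalences evidently satisfy two-out-of-three and are closed under retracts, the work is entirely in choosing generating sets and checking the smallness and lifting conditions. Let $\mathcal{K}$ be the unit dg category (one object, endomorphism dga $k$) and $\mathcal{K}_2$ the discrete dg category on two objects $1,2$ (hom-complexes $k,k,0,0$); for a dg vector space $V$, let $\mathcal{C}(V)$ be $\mathcal{K}_2$ with the complex $\dgh(1,2)$ enlarged to $V$, where the vanishing of $\dgh(2,1)$ makes composition automatically well-defined. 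Take $I$ to consist of the inclusion $\emptyset\hookrightarrow\mathcal{K}$ together with, for $n\in\Z$, the functors $\mathcal{C}(\mathcal{S}^{n})\to\mathcal{C}(\mathcal{D}^{n})$ induced by the sphere-to-disk generating cofibrations $\mathcal{S}^{n}\hookrightarrow\mathcal{D}^{n}$ of $\vect$; and take $J$ to consist of, for $n\in\Z$, the inclusions $\mathcal{K}_2\hookrightarrow\mathcal{C}(\mathcal{D}^{n})$ together with one further map $\mathcal{K}\hookrightarrow\mathcal{P}$, where $\mathcal{P}$ is the walking-isomorphism dg category: two objects, all four hom-complexes equal to $k$ concentrated in degree $0$, with the two nonidentity degree-$0$ morphisms mutually inverse. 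Each generator in $J$ is a quasi-equivalence: the $\mathcal{K}_2\hookrightarrow\mathcal{C}(\mathcal{D}^{n})$ are bijective on objects and quasi-fully faithful since $\mathcal{D}^{n}$ is acyclic, and $\mathcal{K}\hookrightarrow\mathcal{P}$ is quasi-fully faithful with the two objects of $[\mathcal{P}]$ isomorphic.

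Smallness is immediate, since every source of a generator has finitely many objects with hom-complexes built from copies of $k$, so the small object argument supplies the functorial factorizations for both $I$ and $J$. For the inclusion $J\text{-cell}\subseteq\mathcal{W}\cap I\text{-cof}$: the maps in $J$ are cofibrations, and quasi-equivalences are stable under the pushouts, transfinite compositions and retracts that build relative $J$-cell complexes — a pushout along $\mathcal{K}_2\hookrightarrow\mathcal{C}(\mathcal{D}^{n})$ freely adjoins a contractible complex to a single hom-complex without altering the set of objects, and a pushout along $\mathcal{K}\hookrightarrow\mathcal{P}$ freely adjoins an isomorphic copy of an object — so relative $J$-cell complexes are trivial cofibrations.

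The identification of the injectives is the crux. Computing the right lifting property against $J$ exactly as one does for complexes, $F\in J\text{-inj}$ iff (i) each component $F_{xy}\colon\dgh_{\mathcal{C}}(x,y)\to\dgh_{\mathcal{D}}(Fx,Fy)$ is surjective in every degree — the lifting against the $\mathcal{K}_2\hookrightarrow\mathcal{C}(\mathcal{D}^{n})$ — and (ii) $F$ lifts isomorphisms, i.e.\ for every $x\in\mathcal{C}$ and isomorphism $Fx\xrightarrow{\cong}d$ in $[\mathcal{D}]$ there is an isomorphism $x\xrightarrow{\cong}x'$ in $[\mathcal{C}]$ over it with $Fx'=d$ — the lifting against $\mathcal{K}\hookrightarrow\mathcal{P}$. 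Thus $J\text{-inj}$ is precisely the asserted class of fibrations. Dually $F\in I\text{-inj}$ iff $F$ is surjective on objects and each $F_{xy}$ is a surjective quasi-isomorphism; such an $F$ is quasi-essentially surjective and quasi-fully faithful, hence a quasi-equivalence, giving $I\text{-inj}\subseteq\mathcal{W}\cap J\text{-inj}$. The genuinely delicate point is the reverse inclusion $\mathcal{W}\cap J\text{-inj}\subseteq I\text{-inj}$, and this is the step I expect to be the main obstacle: given a quasi-equivalence $F$ that is levelwise surjective on homs and lifts isomorphisms, quasi-essential surjectivity furnishes for each $d\in\mathcal{D}$ an object $x$ with $Fx\cong d$ in $[\mathcal{D}]$, and the isomorphism-lifting property then replaces $x$ by an $x'$ with $Fx'=d$ on the nose, so $F$ is surjective on objects; meanwhile quasi-full-faithfulness upgrades the already-surjective components $F_{xy}$ to quasi-isomorphisms, so $F\in I\text{-inj}$. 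The recognition theorem now produces the cofibrantly generated model structure, with weak equivalences the quasi-equivalences and fibrations the class $J\text{-inj}$, i.e.\ the objectwise levelwise surjections that lift isomorphisms.

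Finally, every dg category is fibrant: the terminal dg category has a single object with endomorphism complex the zero ring, and the unique functor $\mathcal{C}\to\ast$ is levelwise surjective onto the zero complex and vacuously lifts isomorphisms, hence a fibration. Everything other than the injective-identification step is the routine cofibrantly-generated bookkeeping.
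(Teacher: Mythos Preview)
The paper does not give a proof of this statement --- it is quoted from Tabuada and used as a black box --- so there is no in-paper argument to compare against. Your overall strategy (Hovey's recognition theorem with explicit generating sets) is the right one, and your $I$ is essentially Tabuada's, but there is a genuine gap in your choice of $J$.

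The problem is the dg category $\mathcal{P}$. A dg functor $\mathcal{P}\to\mathcal{D}$ picks out two objects of $\mathcal{D}$ together with a \emph{strict} isomorphism between them (a closed degree-zero morphism with a strict two-sided inverse), not an isomorphism in $[\mathcal{D}]$. So the right lifting property against $\mathcal{K}\hookrightarrow\mathcal{P}$ only detects lifting of strict isomorphisms, whereas the fibration condition in the theorem demands lifting of isomorphisms in the homotopy category; these are different, since homotopy-equivalent objects of a dg category need not be strictly isomorphic. Your identification of $J$-inj therefore does not match the stated class of fibrations. Worse, $\mathcal{K}\hookrightarrow\mathcal{P}$ is not an $I$-cofibration at all: attaching $I$-cells lets you freely adjoin $f,g$ and then adjoin degree $-1$ witnesses for $gf\sim 1$ and $fg\sim 1$, but never impose the strict equalities $gf=1$, $fg=1$. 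Concretely, if $\mathcal{I}$ denotes the semifree dg category on two objects with generators $f,g$ in degree $0$ and $h_1,h_2$ in degree $-1$ satisfying $dh_1=gf-1$, $dh_2=fg-1$ (plus higher coherences), then the obvious map $\mathcal{I}\to\mathcal{P}$ lies in your $I$-inj, yet $\mathcal{K}\to\mathcal{P}$ does not lift against it because the two objects of $\mathcal{I}$ are not strictly isomorphic. What Tabuada actually uses in place of your $\mathcal{P}$ is precisely such a semifree interval $\mathcal{I}$: the inclusion $\mathcal{K}\hookrightarrow\mathcal{I}$ is then both a genuine $I$-cofibration and a quasi-equivalence, and dg functors out of $\mathcal{I}$ detect exactly the isomorphisms in $[\mathcal{D}]$, which is what the fibration condition requires. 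Once you make this replacement, the rest of your outline goes through, though the verification that pushouts along $\mathcal{K}\hookrightarrow\mathcal{I}$ remain quasi-equivalences is more delicate than the one-line dismissal you give for $\mathcal{P}$.
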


Now we turn to simplicial categories.
	\begin{defn}
	A \textbf{simplicial category} is a category enriched in $\sset$. A functor between simplicial categories is an enriched functor. The category of all simplicial categories is denoted $\scat$.
\end{defn}
\begin{rmk}
	Note that every simplicial category is a simplicial object in $\cat{Cat}$, but not every simplicial object in $\cat{Cat}$ is a simplicial category.
\end{rmk}
\begin{defn}
	Let  $\mathcal{D}$ be a simplicial category. The \textbf{homotopy category} of $\mathcal{D}$ is the category whose objects are the same as $\mathcal{D}$ and whose homsets are given by taking taking $\pi_0$ of the morphism complexes in $\mathcal{D}$. Composition is inherited from $\mathcal{D}$.
	\end{defn}
Bergner proved that $\scat$ admits a model structure analogous to Tabuada's (equivalently, a many-object version of the classical model structure on simplicial sets):
\begin{thm}[Bergner \cite{bergnermodelscat}]
	The category $\scat$ admits a model structure where the weak equivalences are the \textbf{Dwyer--Kan-equivalences}: those functors which induce weak equivalences on derived mapping spaces and which induce isomorphisms on $\pi_0$. We refer to these as \textbf{DK-equivalences} for short. The fibrant simplicial categories are precisely those enriched in Kan complexes. 
	\end{thm}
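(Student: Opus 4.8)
The plan is to invoke Bergner's theorem \cite{bergnermodelscat}; for orientation, here is the shape of its proof. One verifies that $\scat$ is a cofibrantly generated model category by applying a standard recognition theorem for cofibrantly generated model categories to explicit generating sets. For generating cofibrations one takes the functor $\emptyset\to\ast$ that freely adjoins one object, together with, for every $n\geq 0$, the functor $U(\partial\Delta^n)\to U(\Delta^n)$ between the simplicial categories that have two objects $x,y$, trivial endomorphism spaces, no morphisms $y\to x$, and the displayed simplicial set as the space of morphisms $x\to y$. For generating trivial cofibrations one takes the functors $U(\Lambda^n_j)\to U(\Delta^n)$ built the same way from the horn inclusions ($n\geq 1$, $0\leq j\leq n$), together with one further map $\ast\to\mathcal H$ including an object into a fixed cofibrant model $\mathcal H$ of the ``free-standing equivalence'': the simplicial category with two objects and all four mapping spaces contractible, witnessing an invertible-up-to-homotopy morphism.

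One must then check: the DK-equivalences satisfy two-out-of-three and are closed under retracts (routine, as they are detected on $\pi_0$ and on mapping spaces); every functor with the right lifting property against all generating cofibrations is a DK-equivalence; and every transfinite composite of pushouts of generating trivial cofibrations is a DK-equivalence (the acyclicity condition). The fibrations are then forced to be exactly the functors with the right lifting property against the generating trivial cofibrations, and unwinding the lifting conditions yields the explicit description: $F\colon\mathcal C\to\mathcal D$ is a fibration if and only if every map $\mathrm{Map}_{\mathcal C}(x,y)\to\mathrm{Map}_{\mathcal D}(Fx,Fy)$ is a Kan fibration and the induced functor on homotopy categories is an isofibration (isomorphisms lift). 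In particular the fibrant objects---those $\mathcal C$ for which $\mathcal C\to\ast$ is a fibration---are exactly the simplicial categories all of whose mapping spaces are Kan complexes; on such objects strict and derived mapping spaces agree, so the phrasing of weak equivalences as ``weak equivalences on derived mapping spaces together with isomorphisms on $\pi_0$'' coincides with the notion of DK-equivalence.

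The main obstacle is the acyclicity condition, and within it the map $\ast\to\mathcal H$: one must show that pushing a simplicial category out along it alters mapping spaces only up to weak equivalence and merely adjoins a formal equivalence. Pushouts along the horn-type maps $U(\Lambda^n_j)\to U(\Delta^n)$ are comparatively mechanical, amounting to an amalgamated-product analysis of mapping spaces that inherits the anodyne behaviour of the horn inclusions; the $\mathcal H$ case instead requires either an explicit deformation-retraction computation or an appeal to Dwyer--Kan's homotopy theory of simplicial localisations. Once acyclicity is established, the remaining model-category axioms follow formally from the small object argument, exactly as for the Quillen structure on $\sset$.
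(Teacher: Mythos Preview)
The paper does not prove this theorem at all: it is stated as a cited result attributed to Bergner \cite{bergnermodelscat}, with no proof given. Your proposal correctly sketches the shape of Bergner's original argument (generating sets, the role of the free-standing equivalence $\mathcal H$, and the acyclicity difficulty), which is more detail than the paper itself provides. For the purposes of this paper a bare citation suffices.
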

If $\mathcal{A}$ is an abelian category, the Dold--Kan correspondence \cite[III.2]{goerssjardine} gives a Quillen equivalence between simplicial objects in $\mathcal{A}$ and connective complexes in $\mathcal{A}$. We describe a many-object version due to Tabuada:
\begin{defn}
	Let $\mathcal{C}$ be a dg category. Let $\mathcal{C}_{\leq 0}$ denote the associated dg category obtained by taking the good truncation to nonpositive degrees of the morphism complexes. Applying the Dold--Kan correspondence to the connective morphism complexes of $\mathcal{C}_{\leq 0}$ gives a category enriched in simplicial vector spaces\footnote{Composition is given by the Alexander--Whitney map.}, and we may forget the linear structure to obtain a simplicial category we denote by $\mathcal{C}_s$. Similarly, given a simplicial category $\mathcal{D}$ we may linearise the simplicial homsets to obtain a category enriched in simplicial vector spaces, and the Dold--Kan correspondence gives us a connective dg category\footnote{Composition is now given by the Eilenberg--Zilber map.}, which we denote by $\mathcal{D}_\mathrm{dg}$.
	\end{defn}
\begin{thm}[Tabuada \cite{tabuadadgvs}]
The functors $\mathcal{C}\mapsto \mathcal{C}_s$ and $\mathcal{D}\mapsto \mathcal{D}_\mathrm{dg}$ give a Quillen adjunction $\dgcat \longleftrightarrow \scat$, where the left hand side has the Tabuada model structure and the right hand side has the Bergner model structure. 
\end{thm}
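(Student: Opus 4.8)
The plan is to realise both functors as composites of ``change of enrichment'' functors, deduce the adjunction $(-)_{\mathrm{dg}}\dashv(-)_s$ formally, and then verify that the right adjoint $(-)_s$ is right Quillen; by the usual criterion \cite{hovey} it suffices for this to check that $(-)_s$ preserves fibrations and weak equivalences. That $(-)_{\mathrm{dg}}$ is then automatically left Quillen is the reason for organising the argument around the right adjoint, since cofibrations in $\dgcat$ and $\scat$ are described only through generating sets.

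For the adjunction, observe that each of $(-)_{\mathrm{dg}}$ and $(-)_s$ is obtained by applying a functor to hom-objects, and that the relevant functors assemble into three monoidal adjunctions. First, the free--forgetful pair $k[-]\colon\sset\rightleftarrows\cat{sVect}\colon U$ between simplicial sets and simplicial $k$-vector spaces; here $k[-]$ is strong monoidal since $k[X\times Y]\cong k[X]\otimes k[Y]$, and $U$ is lax monoidal. Second, the Dold--Kan equivalence between $\cat{sVect}$ and connective complexes: the normalisation $N$ is made lax symmetric monoidal by the Eilenberg--Zilber (shuffle) map, while the denormalisation $\Gamma$ is lax monoidal via the structure transported along the Alexander--Whitney map; this is exactly why composition in $\mathcal{C}_s$ is computed with Alexander--Whitney and composition in $\mathcal{D}_{\mathrm{dg}}$ with Eilenberg--Zilber, and with these choices $N$ plays the role of left adjoint. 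Third, the inclusion--truncation pair between connective complexes and all complexes, with $\iota$ strong monoidal and the good truncation $\tau_{\leq 0}$ lax monoidal (products of cocycles in nonpositive degrees are again cocycles in nonpositive degrees), and $\iota\dashv\tau_{\leq 0}$. A monoidal adjunction whose left adjoint is strong monoidal induces an adjunction between the associated categories of enriched categories, functorially; composing the three, one gets that $(-)_{\mathrm{dg}}=\iota_*\circ N_*\circ k[-]_*$ is left adjoint to $(-)_s=U_*\circ\Gamma_*\circ(\tau_{\leq 0})_*$.

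Next, $(-)_s$ preserves weak equivalences and fibrations. If $F\colon\mathcal{C}\to\mathcal{D}$ is a quasi-equivalence then each hom-map $F_{xy}$ is a quasi-isomorphism; good truncation preserves quasi-isomorphisms, $\Gamma$ sends them to weak equivalences of simplicial vector spaces (it realises $H_n\cong\pi_n$), and $U$ preserves weak equivalences, so $F_s$ is a local weak equivalence. Moreover $\pi_0$ of the hom-space of $\mathcal{C}_s$ is $H_0$ of the truncated complex, i.e.\ $H^0\dgh_\mathcal{C}(x,y)$, so the homotopy category of $\mathcal{C}_s$ is canonically $[\mathcal{C}]$ and $\pi_0(F_s)=[F]$; as $F$ is quasi-fully faithful and quasi-essentially surjective, $[F]$ is an equivalence, so $F_s$ is a DK-equivalence. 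For fibrations, let $F$ be a Tabuada fibration, so each $F_{xy}$ is surjective in every degree, in particular in every strictly negative cohomological degree. The key point is that a map of simplicial vector spaces is a Kan fibration precisely when its normalised chain complex is surjective in all positive (simplicial, hence strictly negative cohomological) degrees, and good truncation leaves a complex unchanged in those degrees; therefore $\Gamma(\tau_{\leq 0}F_{xy})$ is a Kan fibration of simplicial vector spaces, and its underlying map of simplicial sets is a Kan fibration. Thus $F_s$ is a local fibration between Kan-enriched categories, and the remaining condition, that $\pi_0(F_s)$ be an isofibration, is exactly Tabuada's requirement that $F$ lift isomorphisms, once one identifies $\pi_0(\mathcal{C}_s)$ with $[\mathcal{C}]$ as above. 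Since $(-)_s$ preserves fibrations and weak equivalences it preserves acyclic fibrations, so $(-)_{\mathrm{dg}}\dashv(-)_s$ is a Quillen adjunction.

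I expect the main obstacle to be the monoidal bookkeeping in the second step. Because Dold--Kan is an equivalence the plain adjunction is automatic, but for the enriched lift one must check that $N$ equipped with the shuffle structure is left adjoint, \emph{as a lax monoidal functor}, to $\Gamma$ equipped with the Alexander--Whitney structure, i.e.\ that the Dold--Kan unit and counit are appropriately monoidal natural transformations; since the shuffle and Alexander--Whitney maps are only one-sided inverses, doctrinal adjunction does not apply off the shelf and this needs genuine care (this is where one would invoke the standard analysis of Dold--Kan as a weak monoidal equivalence). A secondary subtlety, easy to overlook, lies in the fibration step: good truncation does \emph{not} preserve degreewise surjectivity, so one cannot argue that $(-)_s$ sends Tabuada fibrations to hom-maps that are degreewise surjective; the saving observation is that the Kan fibration condition for simplicial vector spaces constrains only the positive-degree part, where $\tau_{\leq 0}$ is the identity.
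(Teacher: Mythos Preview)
The paper does not give a proof of this statement; it is simply attributed to Tabuada \cite{tabuadadgvs} and used as a black box. Your proposal is therefore not being compared against any argument in the paper itself.

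That said, your sketch is the natural approach and is essentially how Tabuada proceeds. You have correctly identified the direction of the adjunction ($(-)_{\mathrm{dg}}\dashv(-)_s$), correctly factored both functors through change-of-enrichment along monoidal adjunctions, and correctly verified that $(-)_s$ preserves weak equivalences and fibrations. The two subtleties you flag are exactly the right ones: the monoidal compatibility of the Dold--Kan adjunction (that $N$ with the shuffle structure is genuinely left adjoint, as a lax monoidal functor, to $\Gamma$ with the Alexander--Whitney structure) is the real work, and is handled in the literature by the weak monoidal Quillen pair machinery of Schwede--Shipley \cite{ssmon}; and your observation that good truncation fails to preserve degreewise surjectivity but that this does not matter because the Kan condition on simplicial vector spaces only sees strictly positive homological degrees is precisely the point needed for the fibration check. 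Nothing is missing from your outline beyond filling in the references for these two inputs.
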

\begin{rmk}
	In fact, the above Quillen adjunction restricts to a Quillen equivalence between connective dg categories and categories enriched in simplicial vector spaces, when both are equipped with modifications of the above model structures. In the nonconnective world, one can view a dg vector space as a spectrum object in connective vector spaces; applying Dold--Kan levelwise one then gets a spectrum object in simplicial $k$-modules, or (Quillen) equivalently an $Hk$-module spectrum. This gives a Quillen equivalence between dg categories and $Hk$-module spectrally enriched categories \cite{tabuadaspectral}.
	\end{rmk}
\p We will be interested only in quasi-isomorphisms between deformations, and not arbitrary morphisms. However, we will be interested in all higher morphisms between these quasi-isomorphisms: as such, in our applications, it will suffice to work with simplicial (or dg) groupoids. We finish by describing a core functor for simplicial categories, as well as an enriched nerve construction for turning simplicial categories into simplicial sets.

\begin{defn}\label{scathequivs}	
	If $\mathcal{D}$ is a simplicial category, let $c(\mathcal{D})$ denote the subcategory on those morphisms which induce isomorphisms on $\pi_0\mathcal{D}$; in \cite{bergnermodelscat} these are called homotopy equivalences. 
\end{defn}	
We think of $c$ as a sort of core functor. Note that $c(\mathcal{D})$ is DK-equivalent to a simplicial groupoid in the sense of \cite[V.7]{goerssjardine}. For the next construction, observe that the nerve of a simplicial category is naturally a bisimplicial set, with the enrichment in one direction and the nerve in the other. One can totalise this bisimplicial set to obtain a simplicial set, and doing this in the homotopically correct manner gives us a homotopy coherent nerve for simplicial categories.
\begin{defn}		
	Let $\bar W: \cat{ssSet} \to \cat{sSet}$ denote the right adjoint of Illusie's d\'ecalage functor; see \cite[1.6]{jondmodss} for a concrete definition. Given a simplicial category $\mathcal{D}$ let $\bar W \mathcal{D}$ denote $\bar W$ applied to the nerve of $\mathcal{D}$.
\end{defn}
\begin{rmk}\label{diagw}
	In \cite{diagonalw} it is proved that for a bisimplicial set $X$, the canonical morphism $\mathrm{diag}X \to \bar W X$ is a weak equivalence; see \cite[1.7]{jondmodss} for further discussion. If $\mathcal{G}$ is a simplicial groupoid, it follows that $\bar W \mathcal{G}$ is weakly equivalent to the homotopy coherent nerve $N(\mathcal{G})$ \cite{hinichnerve}, which is a quasicategory in the sense of \cite{luriehtt}. See \cite[\S1.1.5]{luriehtt} for the details of the homotopy coherent nerve; we note that $N(\mathcal{C}_s)$ is weakly equivalent to the dg nerve of $\mathcal{C}$ \cite[\S1.3.1]{lurieha}. We use $\bar W \mathcal{G}$ instead of $N(\mathcal{G})$  for consistency with \cite{jondmodss}.
\end{rmk}
\begin{defn}
	Let $\mathcal{C}$ be a dg category. Write $\mathcal{W}(\mathcal C)\coloneqq \bar W(c(\mathcal{C}_s))$. 
\end{defn}
In other words, $\mathcal{W}(\mathcal C)$ is a model for the homotopy coherent nerve of the underlying simplicial groupoid of $\mathcal{C}_s$.
\begin{rmk}
	To define $\mathcal{W}(\mathcal C)$, we could have instead taken the `dg core' of $\mathcal{C}$, converted to a simplicial category, and applied $\bar W$ to obtain something weakly equivalent.
	\end{rmk}
\begin{rmk}\label{wholims}
	The functor $\mathcal{W}$ is a right derived functor. Indeed, because every dg category is fibrant, $\mathcal{C}\mapsto \mathcal{C}_s$ is its own right derived functor. Moreover, the composition $\bar W c$ is a right derived functor, because it is weakly equivalent to the derived mapping space functor $\R\mathrm{Map}(\bullet,-)$ from the initial simplicial category. Hence, $\mathcal{W}$ is a composition of right derived functors and so is itself a right derived functor.
\end{rmk}

\begin{rmk}\label{piloww}
	Let $\mathcal{D}$ be a simplicial category and let $N \mathcal{D}$ be its homotopy coherent nerve. One can check directly that the 0-simplices of $N \mathcal{D}$ are the objects of $\mathcal{D}$, the 1-simplices of $N \mathcal{D}$ are the morphisms of $ \mathcal{D}$, and the 2-simplices of $N \mathcal{D}$ are the triangles $$\begin{tikzcd} & y \ar[dr,"g"]& \\
		x \ar[ur,"f"] \ar[rr,"h"]&& z 
		\end{tikzcd}$$ together with a homotopy $h \to gf$. If $\mathcal C$ is a dg category, it now follows that $\mathcal {W (C)} \simeq N(c(\mathcal{C}_s))$ has homotopy in low degrees given by
	\begin{itemize}
		\item  $\pi_0\mathcal {W (C)} \cong \{\text{homotopy classes of objects in }\mathcal{C}\}$
		\item  $\pi_1\mathcal {W} (\mathcal{C},x) \cong \mathrm{hAut}(x)\coloneqq \frac{\{\text{homotopy automorphisms of }x\}}{(\text{homotopy equivalence})}$
		\end{itemize}
	recalling that a homotopy equivalence between two objects is precisely a morphism that becomes an isomorphism on $H^0$, and a homotopy automorphism is a homotopy equivalence that is also an endomorphism. When $\mathcal C$ is the dg derived category of a dga $A$ then $\mathrm{hAut}(x)$ is the group of units of the ring $\ext^0_A(x,x)$.
	\end{rmk}

	\subsection{Deformations of modules}
	Now we may define our deformation functors. Recall that a classical deformation of an $A$-module $X$ over an Artinian local ring $\Gamma$ is an $A\otimes \Gamma$-module $\tilde X$, flat over $\Gamma$, that reduces to $X$ modulo $\mathfrak{m}_\Gamma$. A derived deformation is defined similarly:
	\begin{defn}
		Let $A$ be a dga and $X$ an $A$-module. Let $\Gamma$ be an Artinian local dga. A \textbf{derived deformation} of $X$ over $\Gamma$ is a pair $(\tilde X, f)$ where $\tilde X$ is an $A\otimes \Gamma$-module and $f:\tilde X \lot_\Gamma k \to X$ is an isomorphism in $D(A)$. An \textbf{isomorphism} of derived deformations is an isomorphism $\phi:\tilde X_1 \to \tilde X_2$ in $D(A\otimes\Gamma)$ such that $f_1 = f_2 \circ ({\phi \lot_\Gamma k})$.
	\end{defn}
	Deformations are functorial with respect to algebra maps: given a map $\Gamma \to\Gamma'$ of Artinian local dgas, and a derived deformation $\tilde X$ of $X$ over $\Gamma$, then the derived base change $\tilde X \lot_\Gamma \Gamma'$ is a derived deformation of $X$ over $\Gamma'$.
	\begin{defn}
		Let $A$ be a dga and $X$ an $A$-module. The functor $\defm_A(X):\dgart \to \cat{Set}$ sends an Artinian local dga $\Gamma$ to the set $$\defm_A(X)(\Gamma)\coloneqq \frac{\{\text{derived deformations of }X \text{ over }\Gamma\}}{(\text{isomorphisms})}.$$
	\end{defn}	
We will just write $\defm(X)$ if there is no ambiguity. The functor of derived deformations is well-known to be a Deligne functor:

	\begin{thm}\label{setdefisdel}
			Let $A$ be a dga and $X$ an $A$-module. Let $E\coloneqq \R\enn_A(X)$ be the derived endomorphism dga of $X$. Then the functors $\defm_A(X)$ and $\del(E)$ are isomorphic.
		\end{thm}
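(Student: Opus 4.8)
The plan is to reduce everything to a model-independent statement about the dg category of deformations and then identify it with the Maurer--Cartan/gauge picture. First I would fix a cofibrant replacement of $X$ as an $A$-module, which I continue to call $X$, so that $E = \R\enn_A(X) = \enn_A(X)$ is an honest dga modelling the derived endomorphisms, and so that for any Artinian local dga $\Gamma$ the complex $X \otimes \Gamma$ is a cofibrant $A\otimes\Gamma$-module. The key elementary observation is that $A\otimes\Gamma$-module structures on the graded module $X\otimes\Gamma$ that restrict to the given one on $X\otimes k$ are the same thing as twisted differentials $d + x$ on $E\otimes\mathfrak m_\Gamma$, i.e.\ Maurer--Cartan elements $x\in\mcs(E\otimes\mathfrak m_\Gamma) = \mc(E)(\Gamma)$: an element $x\in (E\otimes\mathfrak m_\Gamma)^1$ gives a square-zero perturbation of the differential on $X\otimes\Gamma$ precisely when $dx + x^2 = 0$ (using that $\mathfrak m_\Gamma$ is nilpotent and $\Gamma$-flatness is automatic here because $X\otimes\Gamma$ is free over $\Gamma$). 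This sets up a natural bijection between the set of deformations presented by a perturbed differential on $X\otimes\Gamma$ and $\mc(E)(\Gamma)$.

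Second, I would check that this bijection intertwines the notions of isomorphism. A gauge $g\in\ggr(E)(\Gamma) = 1 + (E\otimes\mathfrak m_\Gamma)^0$ is precisely a $\Gamma$-linear, $A$-linear automorphism of the underlying graded module $X\otimes\Gamma$ reducing to the identity modulo $\mathfrak m_\Gamma$, and conjugating the twisted differential $d+x$ by $g$ yields $d + g.x$ with $g.x = gxg^{-1} + g\,d(g^{-1})$ — exactly the gauge action of \ref{setdefisdel}'s ambient setup. So two Maurer--Cartan elements are gauge equivalent if and only if the corresponding perturbed modules are isomorphic \emph{via an isomorphism lying over the identity on $X$}. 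This gives a natural transformation $\del(E) \to \defm_A(X)$ once one verifies it is well-defined on quotients.

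Third comes the point that needs care, and which I expect to be the main obstacle: showing this map is a bijection, i.e.\ that every derived deformation $(\tilde X, f)$ over $\Gamma$ is isomorphic (in $D(A\otimes\Gamma)$, compatibly with $f$) to one of the form $(X\otimes\Gamma, d+x)$, and that the only ambiguity is gauge. Surjectivity requires a minimal-model / perturbation-lemma argument: since $\tilde X\lot_\Gamma k \simeq X$ and $\Gamma$ is built from $k$ by finitely many square-zero extensions, one lifts a cofibrant model step by step up the tower $\Gamma \twoheadrightarrow \Gamma/\mathfrak m_\Gamma^{(i)}$, at each stage using that the underlying graded $A\otimes\Gamma$-module can be taken to be $X\otimes\Gamma$ (freeness over $\Gamma$, obstruction living in a contractible space because the relevant square-zero kernel is a free $k$-module). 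For injectivity one runs the same kind of step-by-step argument on isomorphisms: an isomorphism in $D(A\otimes\Gamma)$ between two such perturbed modules can be represented by an actual chain map which, after correcting by a nullhomotopy, reduces to the identity modulo $\mathfrak m_\Gamma$ and hence is a gauge. Rather than redo this from scratch, I would invoke the known identification of the functor of derived deformations with the Deligne functor of the dg endomorphism algebra — this is essentially the content cited from \cite{ELO} and \cite{efimov} together with \ref{setdelqinvt}, which guarantees the answer is independent of the chosen model of $E$ — and present the above as the verification that the two constructions agree on the nose. Finally, I would note naturality in $\Gamma$: both sides send $\Gamma\to\Gamma'$ to derived base change $-\lot_\Gamma\Gamma'$ on the deformation side and to the evident map $\mc(E)(\Gamma)\to\mc(E)(\Gamma')$ on the Deligne side, and these agree because $X\otimes\Gamma\lot_\Gamma\Gamma' \simeq X\otimes\Gamma'$ with the perturbed differential transported accordingly. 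This yields the claimed isomorphism of functors $\defm_A(X)\cong\del(E)$.
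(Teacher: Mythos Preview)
Your proposal is correct and follows essentially the same approach as the paper: both defer to \cite{ELO} and sketch the same argument (replace $X$ by a cofibrant model, identify perturbations of the differential on $X\otimes\Gamma$ with Maurer--Cartan elements of $E\otimes\mathfrak m_\Gamma$, and match gauge orbits with isomorphism classes of derived deformations). Your write-up spells out the surjectivity and injectivity steps in more detail than the paper's one-line sketch, but the content and the ultimate reliance on \cite{ELO} are the same.
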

	\begin{proof}
		This appears in Efimov--Lunts--Orlov \cite[6.1]{ELO}; in fact they consider the natural enhancement of  $\defm_A(X)$ to a groupoid-valued functor and show that it is equivalent to the (non-na\"\i ve) Deligne groupoid of $E$; our desired statement follows by taking $\pi_0$. The basic idea of the proof is that if $X$ is cofibrant then a deformation of $X$ is a deformation of the differential on $X\otimes_k \Gamma$, which is exactly a Maurer--Cartan element of $E\otimes \mathfrak{m}_\Gamma$. Quasi-isomorphisms between cofibrant deformations are homotopy equivalences, and the homotopy class of a deformation is its orbit under the gauge action. The result follows.
		\end{proof}
	We note that the statement of  \ref{setdefisdel} makes sense because $\del$ is invariant under quasi-isomorphisms by \ref{setdelqinvt}. We will later see a more explicit version of \ref{setdefisdel} in \ref{explicitwist}; the more explicit version will not be of use to us yet.
	\begin{rmk}\label{connectivereasons}
		Take $A=X=k$, so that the derived endomorphism algebra of $X$ is $k$. If $\Gamma$ is any Artinian dga (not necessarily connective), it is easy to see that $\del(k)(\Gamma)$ is $\mcs(\mathfrak{m}_\Gamma)/(1+\mathfrak{m}_\Gamma^0)$. In particular, putting $\Gamma=k[\varepsilon]/{\varepsilon^2}$ with $\varepsilon$ in degree $1$, we have $\del(k)(\Gamma)\cong k$. Hence if  \ref{setdefisdel} were to remain true when we take nonconnective Artinian dgas as input, we would require the $k$-vector space $k$ to have nontrivial deformations over $\Gamma$. Intuitively these should be some sort of stacky deformations: loosely, the connective part of an unbounded dga $\Gamma$ records information about the derived geometry of $\Gamma$, whilst the coconnective part of $\Gamma$ records stacky information. This is why we need to restrict to connective Artinian dgas when doing deformation theory, at least in this generality.
	\end{rmk}

	Now we may define our $\sset$-valued deformation functors.
		\begin{defn}\label{sdefmdefn}
		Let $A$ be a dga and let $X$ be an $A$-module. Let $X_\mathrm{dg}$ denote the dga $\R\enn_A(X)$ considered as a one-object dg category. There is an obvious inclusion dg functor $X_\mathrm{dg} \into D_\mathrm{dg}(A)$ whose image is $X$. If $\Gamma\in \dgart$ then there is a `reduce modulo $\mathfrak{m}_\Gamma$' dg functor $D_\mathrm{dg}(A\otimes \Gamma) \to D_\mathrm{dg}(A)$ which sends an $A\otimes\Gamma$-module $M$ to the $A$-module $M\lot_\Gamma k$. Let $\mathrm{dgDef}_A(X)(\Gamma)$ be the homotopy fibre product of dg categories $$\mathrm{dgDef}_A(X)(\Gamma)\coloneqq  X_\mathrm{dg} \times^h_{D_\mathrm{dg} (A)} D_\mathrm{dg} (A \otimes \Gamma).$$
		Let $\sdefm_A(X)(\Gamma)$ denote the simplicial set $\mathcal{W}\left(\mathrm{dgDef}_A(X)(\Gamma)\right)$. 
	\end{defn}
	It is easy to check that $\sdefm_A(X)$ is a functor from $\dgart$ to $\sset$. Observe that the dg category $\mathrm{dgDef}(X)$ is clearly `too big', since it contains many maps which are not isomorphisms after passing to the homotopy category. This is why we take the core.
	\begin{rmk}\label{dgffdefs}
		If $A$ and $A'$ are dgas with a fully faithful dg functor $D_\mathrm{dg}(A) \into D_\mathrm{dg}(A')$, then we obtain a weak equivalence $\sdefm_A(X) \xrightarrow{\simeq}\sdefm_{A'}(X)$.
	\end{rmk}	
$\sdefm$ enhances $\defm$:
\begin{prop}\label{sdefmliftssetdefm}
	There is an isomorphism of functors $\defm_A(X)\cong \pi_0\sdefm_A(X)$.
	\end{prop}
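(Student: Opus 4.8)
The plan is to unwind both sides of the claimed isomorphism $\defm_A(X) \cong \pi_0 \sdefm_A(X)$ and show they compute the same set. On the left, $\defm_A(X)(\Gamma)$ is by definition the set of derived deformations of $X$ over $\Gamma$ modulo isomorphism. On the right, $\sdefm_A(X)(\Gamma) = \mathcal{W}(\mathrm{dgDef}_A(X)(\Gamma))$, and by \ref{piloww} we have
$$\pi_0 \mathcal{W}(\mathcal{C}) \cong \{\text{homotopy classes of objects in } \mathcal{C}\}$$
for any dg category $\mathcal{C}$. So the task reduces to identifying the set of objects of $\mathrm{dgDef}_A(X)(\Gamma) = X_\mathrm{dg} \times^h_{D_\mathrm{dg}(A)} D_\mathrm{dg}(A \otimes \Gamma)$ up to homotopy equivalence with the set of derived deformations of $X$ over $\Gamma$ up to isomorphism.

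First I would make the homotopy fibre product concrete: an object of $X_\mathrm{dg} \times^h_{D_\mathrm{dg}(A)} D_\mathrm{dg}(A\otimes\Gamma)$ consists of an object of $X_\mathrm{dg}$ (there is only one, namely $X$ itself), an object $\tilde X$ of $D_\mathrm{dg}(A\otimes\Gamma)$ (i.e.\ a cofibrant $A\otimes\Gamma$-module), and a path in $D_\mathrm{dg}(A)$ between the images of these two objects — that is, a (higher) homotopy exhibiting $\tilde X \lot_\Gamma k \simeq X$, which up to the relevant truncation is precisely an isomorphism $f : \tilde X \lot_\Gamma k \to X$ in $D(A)$. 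This matches the definition of a derived deformation $(\tilde X, f)$. The key homotopical input here is the standard model for homotopy fibre products of (pretriangulated) dg categories — one replaces the functor $D_\mathrm{dg}(A\otimes\Gamma) \to D_\mathrm{dg}(A)$ by a fibration and takes a strict pullback, or equivalently uses the explicit bar-type model — and then checks that its set of objects up to homotopy equivalence is what we claimed.

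Next I would match the equivalence relations. A homotopy equivalence in $\mathrm{dgDef}_A(X)(\Gamma)$ between $(\tilde X_1, f_1)$ and $(\tilde X_2, f_2)$ is, by \ref{scathequivs} and the construction of $\mathcal{W}$ via the core, a morphism that becomes an isomorphism on $H^0$, i.e.\ (after unwinding the fibre product) an isomorphism $\phi : \tilde X_1 \to \tilde X_2$ in $D(A\otimes\Gamma)$ together with a homotopy witnessing $f_1 \simeq f_2 \circ (\phi \lot_\Gamma k)$; on $\pi_0$ this homotopy collapses and we are left with the condition $f_1 = f_2 \circ (\phi \lot_\Gamma k)$, which is exactly the notion of isomorphism of derived deformations from the definition preceding \ref{setdefisdel}. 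Thus the bijection on objects descends to a bijection on the respective quotient sets, and functoriality in $\Gamma$ is clear since both sides are defined by base change $\tilde X \mapsto \tilde X \lot_\Gamma \Gamma'$ along algebra maps. I would also remark that this is essentially the simplicial analogue of the comparison in \ref{setdefisdel}, and cite Efimov--Lunts--Orlov \cite{ELO} for the underlying groupoid-level statement (the natural enhancement of $\defm_A(X)$ to a groupoid-valued functor), of which the present statement is the $\pi_0$.

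The main obstacle I anticipate is bookkeeping the homotopy fibre product carefully enough: one must be sure that "object of the homotopy pullback, up to homotopy equivalence" really does biject with "derived deformation, up to isomorphism" without introducing spurious objects or spurious identifications — in particular that the $\pi_0$ of the core construction $\bar W \circ c \circ (-)_s$ only remembers $H^0$-isomorphism classes (which is exactly the content of \ref{piloww}), and that passing from the homotopy fibre product to its $\pi_0$ commutes appropriately with taking the core. None of this is deep, but it requires care to state precisely; the cleanest route is probably to invoke \ref{piloww} to reduce to a statement purely about the homotopy category of the dg category $\mathrm{dgDef}_A(X)(\Gamma)$, and then recognise that homotopy category as the (opposite of the) category appearing in the definition of $\defm_A(X)(\Gamma)$.
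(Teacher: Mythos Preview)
Your proposal is correct and coincides with the ``more direct'' argument the paper sketches after its main proof: identify $\pi_0\mathcal{W}(\mathrm{dgDef}_A(X)(\Gamma))$ with isomorphism classes in the homotopy category $[\mathrm{dgDef}_A(X)(\Gamma)]$ via \ref{piloww}, and then observe that objects (resp.\ isomorphisms) there are exactly derived deformations (resp.\ isomorphisms thereof). The paper's primary proof instead routes through the Deligne functor, combining \ref{defisdel}, \ref{pi0del}, and \ref{setdefisdel} to get $\pi_0\sdefm_A(X)\cong\pi_0\sdel(E)\cong\del(E)\cong\defm_A(X)$; this has the advantage of not unpacking the homotopy fibre product explicitly, at the cost of invoking the comparison with $\sdel$ which in the paper's logical order technically comes later.
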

\begin{proof}
	This follows by combining \ref{defisdel} below with \ref{pi0del} and \ref{setdefisdel} to see that $$\pi_0\sdefm_A(X) \cong \pi_0 \sdel_A(X) \cong \del_A(X) \cong \defm_A(X).$$A more direct proof goes as follows: chasing the definitions, we see that $\pi_0(\sdefm_A(X)(\Gamma))$ agrees with the set of isomorphism classes in the homotopy category $[\mathrm{dgDef}_A(X)(\Gamma)]$. But the objects of $[\mathrm{dgDef}_A(X)(\Gamma)]$ are precisely the derived deformations of $X$ over $\Gamma$, and two objects are isomorphic in $[\mathrm{dgDef}_A(X)(\Gamma)]$ if and only if the corresponding derived deformations are isomorphic.
	\end{proof}
The enhanced functor of derived deformations is also known to be a Deligne functor:
	\begin{thm}\label{defisdel}
		Let $A$ be a dga and $X$ an $A$-module. Let $E\coloneqq \R\enn_A(X)$ be the endomorphism dga of $X$. Then there is a weak equivalence of $\sset$-valued functors $\sdefm_A(X) \xrightarrow{\simeq}\sdel(E)$.
	\end{thm}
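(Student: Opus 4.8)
The plan is to reduce the statement to known results by identifying both sides with the simplicial Maurer--Cartan functor $\smc(E)$. On the right-hand side, \ref{smcisdel} already tells us that the quotient map $\smc(E) \to \sdel(E)$ is a weak equivalence, so it suffices to produce a natural weak equivalence $\sdefm_A(X) \xrightarrow{\simeq} \smc(E)$, or more precisely to fill in a zig-zag of weak equivalences connecting $\mathrm{dgDef}_A(X)(\Gamma)$ (after applying $\mathcal{W}$) to $\smc(E)(\Gamma)$.

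First I would set up the comparison at the level of dg categories. Replacing $X$ by a cofibrant model, we may assume $X$ is a cofibrant $A$-module, so that $E = \enn_A(X)$ is computed as a genuine (non-derived) endomorphism dga, and any cofibrant $A\otimes\Gamma$-module deforming $X$ can be taken to be $X\otimes_k\Gamma$ equipped with a twisted differential $d + \xi$ for $\xi \in (E\otimes\mathfrak{m}_\Gamma)^1$; the twisted-differential condition $(d+\xi)^2 = 0$ is precisely the Maurer--Cartan equation, as noted in the remarks following \ref{setdefisdel}. This exhibits the objects of $[\mathrm{dgDef}_A(X)(\Gamma)]$ as $\mcs(E\otimes\mathfrak{m}_\Gamma)$. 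The key point is to promote this object-level bijection to a weak equivalence of the full simplicial sets: I would construct a dg functor (or an explicit simplicial functor after applying $(-)_s$ and $c$) from the Maurer--Cartan simplicial set of $E\otimes\pdf\otimes\mathfrak{m}_\Gamma$ into $c(\mathrm{dgDef}_A(X)(\Gamma)_s)$, sending an MC element over $\Omega(\Delta^n)$ to the corresponding twisted module, and morphisms (gauge transformations) to the evident quasi-isomorphisms. The polynomial Poincar\'e lemma \ref{poincare} ensures $E\otimes\pdf \simeq E$, so this captures the simplicial-set-level homotopies correctly. Concretely, one wants $\mathcal{W}(\mathrm{dgDef}_A(X)(\Gamma)) \simeq \bar W(c(\mathrm{dgDef})_s)$ to match $\smc(E)(\Gamma)$ up to the $BG$-fibration appearing in the definition of the homotopy quotient $\sdel(E) = [\smc(E)/\ggr(E)]$.

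The cleanest route is probably to invoke the comparison already present in the literature: by \ref{setdefisdel} (Efimov--Lunts--Orlov \cite[6.1]{ELO}), the \emph{groupoid}-valued enhancement of $\defm_A(X)$ is equivalent to the Deligne groupoid of $E$, and one checks that $\mathrm{dgDef}_A(X)(\Gamma)$ is a dg-categorical refinement whose associated simplicial groupoid, under $\mathcal{W}$, recovers the simplicial Deligne functor. So I would: (1) identify $\mathrm{dgDef}_A(X)(\Gamma)$ up to quasi-equivalence with the dg category whose objects are MC elements of $E\otimes\mathfrak{m}_\Gamma$ and whose morphism complexes are the relevant (shifted, twisted) pieces of $E\otimes\mathfrak{m}_\Gamma$; (2) apply $c(-)_s$ and $\bar W$ and match the result with the homotopy quotient defining $\sdel(E)$, using that $\bar W$ of a simplicial groupoid is its homotopy coherent nerve (\ref{diagw}) and that homotopy quotients $[\smc/\ggr]$ are computed by the same bar construction; (3) check naturality in $\Gamma$, which is formal since every construction involved (homotopy fibre product, $(-)_s$, $c$, $\bar W$, and on the other side $\mc$, $\ggr$, homotopy quotient) is functorial.

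The main obstacle I expect is step (2): carefully matching the homotopy-coherent nerve of the core of the Dold--Kan simplicialisation of $\mathrm{dgDef}_A(X)(\Gamma)$ with the homotopy quotient $[\smc(E)(\Gamma)/\ggr(E)(\Gamma)]$. Both are "the same" moduli of twisted differentials and their equivalences, but the bookkeeping — matching the Alexander--Whitney/Eilenberg--Zilber composition from Dold--Kan against the gauge-group multiplication, and checking that restricting to $c(-)$ (morphisms that are iso on $\pi_0$, i.e.\ $H^0$) corresponds exactly to remembering only gauge equivalences and their higher homotopies — is the technical heart. One should be able to sidestep much of this by citing \cite{ELO} for the $\pi_0$/groupoid statement and then arguing that $\mathcal{W}$, being a right derived functor (\ref{wholims}), preserves the relevant homotopy limits, so the comparison on the nose at the level of homotopy categories plus quasi-fully-faithfulness on each Hom-complex suffices to conclude a global weak equivalence.
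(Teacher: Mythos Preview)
Your approach differs substantially from the paper's. You attempt a direct identification of $\mathcal{W}(\mathrm{dgDef}_A(X)(\Gamma))$ with $\smc(E)(\Gamma)$ by unwinding both sides explicitly: matching MC elements with twisted differentials on $X\otimes_k\Gamma$, then matching the higher simplices coming from $\pdf$ against those coming from the Dold--Kan construction and $\bar W$. The paper instead cites \cite[4.6]{jonddefsartin} and sketches an obstruction-theoretic argument: one identifies the simplicial groupoid of deformations with the simplicial subgroupoid $\mathcal{C}\subseteq\sdel(E)$ on constant objects, observes that both $\mathcal{C}$ and $\sdel(E)$ are derived deformation functors in Pridham's sense, and then checks that the inclusion induces a weak equivalence on tangent spaces, which by the general theory of such functors forces a weak equivalence. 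This sidesteps entirely the simplicial bookkeeping you flag as the obstacle.

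Your step (2) is indeed where your proposal has a gap. The fallback you suggest --- citing \cite{ELO} for the $\pi_0$/groupoid level and then invoking that $\mathcal{W}$ is a right derived functor preserving homotopy limits --- does not close the argument. Knowing a map of simplicial sets is a bijection on $\pi_0$ and an equivalence on fundamental groupoids, even together with some hom-complex information, does not in general force a weak equivalence; you would still need control over all higher homotopy groups, and ``$\mathcal{W}$ preserves homotopy limits'' does not supply that control for a comparison map which is not itself presented as a homotopy limit. The direct route you outline (matching Alexander--Whitney composition against gauge multiplication level by level) can be made to work, but it is laborious and is exactly what the tangent-space/obstruction-theory machinery of \cite{jonddefsartin} is designed to avoid. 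If you want to pursue your strategy, the cleanest fix is to construct a genuine map of simplicial functors $\sdefm_A(X)\to\sdel(E)$ and then verify it is a weak equivalence on tangent cohomology; this is essentially the paper's argument in different clothing.
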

	\begin{proof}This appears as e.g.\ \cite[4.2.6]{dinatalethesis} or \cite[4.6]{jonddefsartin}. The basic idea of the proof is the same as that of  the set-valued case \ref{setdefisdel}; we give a sketch. The proof of \cite[3.13]{jonddefsartin} shows that the simplicial groupoid of deformations of $X$ is the same as the simplicial subgroupoid of $\sdel(E)$ on constant objects, which we will denote $\mathcal{C}$. By \cite[\S4.1]{jonddefsartin}, $\mathcal{C}$ and $\sdel(E)$ are both derived deformation functors. The inclusion induces a map on tangent spaces which is a weak equivalence and so the two functors are weakly equivalent by an obstruction-theoretic argument.
	\end{proof}	
	We note that the statement of  \ref{defisdel} makes sense because $\del$ is invariant under quasi-isomorphisms by \ref{mcinvtqiso} and \ref{smcisdel}.	
		\begin{rmk}
		Taking $\pi_0$ of \ref{defisdel} recovers \ref{setdefisdel}, and taking fundamental groupoids recovers the groupoid-valued version appearing in \cite{ELO}.
	\end{rmk}
\begin{rmk}
	As in \ref{connectivereasons}, to extend \ref{defisdel} to non-connective dgas would require the incorporation of some sort of stacky deformations.
\end{rmk}
\begin{rmk}
	Abstractly, homotopy representability for deformation functors follows from a generalised Brown representability theorem \cite{jardinerep}; this perspective appears in \cite{glstreview}. Indeed, in the commutative setting Pridham's deformation functors \cite{unifying} are precisely the homotopy prorepresentable functors.
	\end{rmk}

	\begin{cor}\label{defismc}
	Let $A$ be a dga and $X$ an $A$-module. Let $E\coloneqq \R\enn_A(X)$ be the endomorphism dga of $X$. Then the $\sset$-valued functors $\sdefm_A(X)$ and $\smc(E)$ are weakly equivalent.
\end{cor}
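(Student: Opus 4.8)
The plan is to chain together the two structural results already established. By Theorem \ref{defisdel} there is a natural weak equivalence of $\sset$-valued functors $\sdefm_A(X) \xrightarrow{\simeq} \sdel(E)$, where we fix once and for all a strict dga model for $E = \R\enn_A(X)$ (any two such models are quasi-isomorphic, so by \ref{mcinvtqiso} and \ref{smcisdel} the functors $\smc(E)$ and $\sdel(E)$ do not depend on this choice up to weak equivalence). On the other hand, Proposition \ref{smcisdel} says that the quotient map $\smc(E) \to \sdel(E)$ is a weak equivalence; applied levelwise in $\Gamma \in \dgart$ this gives a natural weak equivalence $\smc(E) \xrightarrow{\simeq} \sdel(E)$ of $\sset$-valued functors. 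Composing the first weak equivalence with a homotopy inverse of the second yields the desired natural weak equivalence $\sdefm_A(X) \simeq \smc(E)$.

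There is essentially no obstacle here: the only point requiring a word of care is functoriality, namely that both comparison maps are natural in the test object $\Gamma$, which is part of the statements of \ref{defisdel} and \ref{smcisdel} (the latter being an instance of the general fact that the classifying space of the gauge group is contractible, applied to the nilpotent dga $E\otimes\mathfrak{m}_\Gamma$ functorially in $\Gamma$). One could also remark, as a sanity check, that taking $\pi_0$ recovers the set-level identification $\defm_A(X) \cong \del(E)$ of \ref{setdefisdel} via \ref{sdefmliftssetdefm} and \ref{pi0del}.
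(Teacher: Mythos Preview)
Your proof is correct and takes essentially the same approach as the paper, which simply reads ``Combine \ref{defisdel} with \ref{smcisdel}.'' Your additional remarks on naturality and the $\pi_0$ sanity check are reasonable elaborations but not strictly needed.
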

\begin{proof}
	Combine \ref{defisdel} with \ref{smcisdel}.
\end{proof}

\begin{rmk}\label{cospanrmk}
	If $\mathcal{A} \to \mathcal{B} \from \mathcal{C}$ is a cospan of pointed dg categories, with homotopy pullback $\mathcal{P}$, then by \ref{wholims} we get a weak equivalence $\mathcal{WP} \simeq \mathcal{W}\mathcal{A}\times^h _ \mathcal{WB} \mathcal{W}\mathcal{C}$. Homotopy pullback diagrams of pointed simplical sets give rise to long exact Mayer--Vietoris sequences on homotopy, so we get a long exact sequence of groups and sets $$\cdots \to  \pi_1\mathcal{WP}\to \pi_1\mathcal{WA}\times \pi_1\mathcal{WC} \to \pi_1\mathcal{WB}\to \pi_0\mathcal{WP}\to \pi_0\mathcal{WA}\times \pi_0\mathcal{WC}.$$If $\mathcal{A} \to \mathcal{B}$ is homotopy fully faithful then $\pi_1\mathcal{WA} \to \pi_1\mathcal{WB}$ is the identity by \ref{piloww} and hence we get an injection $\pi_0\mathcal{WP}\into \pi_0\mathcal{WA}\times \pi_0\mathcal{WC}$. If in addition $\mathcal A$ has one object then $\pi_0\mathcal{WA}$ vanishes and so we get an injection $\pi_0\mathcal{WP}\into \pi_0\mathcal{WC}$. Applying this to the diagram of dg categories $X_\mathrm{dg} \to D_\mathrm{dg}(A) \from D_\mathrm{dg}(A\otimes \Gamma)$, where the right-hand map is pointed by the trivial deformation, we get an injection $\defm_A(X) \into \{\text{isomorphism classes of objects in }D(A\otimes \Gamma)\}$.

	\end{rmk}

	\subsection{Prorepresentability}
	We prove a prorepresentability statement for set-valued functors, and then we enhance this to $\sset$-valued functors. Essentially everything we use here (at least for set-valued functors) can be found in Loday--Vallette \cite[Chapter 2]{lodayvallette} or Positselski \cite{positselski}. We will need to use nonunital dgas in order to get the correct prorepresentability statements; we will later see that the use of nonunital dgas can be avoided if one rigidifies to consider framed deformations.

	\begin{defn}[{see e.g.\ \cite[6.2]{positselski}}]
		Let $E$ be a nonunital dga and let $C$ be a nonunital dgc. Then the complex $\hom_k(C,E)$ of $k$-vector spaces is a nonunital dga under the product given by $fg\coloneqq \mu_E \circ (f\otimes g)\circ \Delta_C$. This dga is the \textbf{convolution algebra}. A Maurer--Cartan element of the convolution algebra is a \textbf{nonunital twisting morphism}; the set of all nonunital twisting morphisms is denoted $\mathrm{Tw}(C,E)$.
	\end{defn}
\begin{rmk}
	In the (co)augmented setting, one should add the condition that twisting morphisms are zero when composed with the augmentation or coaugmentation.
	\end{rmk}
	\begin{lem}\label{twnumc}
		Let $E, Z$ be nonunital dgas, with $Z$ finite-dimensional. Then there is a natural isomorphism $$\mathrm{Tw}(Z^*,E)\cong \mcs(E \otimes Z).$$
	\end{lem}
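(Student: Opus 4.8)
The plan is to unravel both sides and exhibit a natural bijection by hand; this is a standard duality computation, and the only real work is bookkeeping the (co)algebra structures and the Koszul signs. First I would note that since $Z$ is finite-dimensional, the linear dual $Z^*$ is a finite-dimensional nonunital dgc, so $\mathrm{Tw}(Z^*,E)$ is by definition the set of Maurer--Cartan elements of the convolution dga $\hom_k(Z^*,E)$. The key algebraic input is the standard natural isomorphism of complexes $\hom_k(Z^*,E)\cong E\otimes Z^{**}\cong E\otimes Z$, using finite-dimensionality of $Z$ twice (once for $\hom_k(Z^*,-)\cong -\otimes Z^{**}$, once for the canonical iso $Z^{**}\cong Z$). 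I would then check that this isomorphism of complexes is in fact an isomorphism of nonunital dgas, where $E\otimes Z$ carries the obvious tensor-product multiplication: this amounts to checking that the convolution product $fg = \mu_E\circ(f\otimes g)\circ\Delta_{Z^*}$ corresponds, under the identification, to $(\mu_E\otimes\mu_Z)$ composed with the appropriate shuffle, which follows because $\Delta_{Z^*}$ is dual to $\mu_Z$ and the finite-dimensionality identifies $(Z\otimes Z)^*$ with $Z^*\otimes Z^*$ compatibly.

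Once I have an isomorphism of nonunital dgas $\hom_k(Z^*,E)\cong E\otimes Z$, the result is immediate: Maurer--Cartan elements are defined purely in terms of the dga structure (an element $x$ in degree $1$ with $dx+x^2=0$), so an isomorphism of nonunital dgas induces a bijection $\mcs(\hom_k(Z^*,E))\cong\mcs(E\otimes Z)$, which is exactly the claimed $\mathrm{Tw}(Z^*,E)\cong\mcs(E\otimes Z)$. Naturality in $E$ (and in $Z$ along the appropriate variance) follows because every step — the hom-tensor identification, the double-dual map, and the passage to MC elements — is natural.

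The main obstacle, such as it is, will be pinning down the Koszul signs so that the convolution product really does match the tensor-product multiplication on the nose rather than up to a sign that depends on degrees; this is the kind of thing that is routine but easy to get wrong, and I would either cite the sign conventions in \cite[\S2]{lodayvallette} or verify it on homogeneous elements $f=e\otimes\zeta^*$, $g=e'\otimes\eta^*$ by a direct computation with $\Delta_{Z^*}(\xi^*)=\sum(\xi')^*\otimes(\xi'')^*$. In the (co)augmented or conilpotent setting one should also remark that the extra vanishing condition on twisting morphisms against the (co)augmentation corresponds under the identification to landing in $E\otimes\bar Z$ where $\bar Z$ is the augmentation ideal, but since the statement as written is for nonunital objects this is automatic.
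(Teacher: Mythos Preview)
Your proposal is correct and follows essentially the same approach as the paper: identify $\hom_k(Z^*,E)\cong E\otimes Z$ as nonunital dgas using finite-dimensionality of $Z$, then observe that MC elements are preserved by dga isomorphisms. The paper's proof is terser (it simply asserts the standard linear isomorphism and that one can check it is a dga map), but your additional remarks on verifying the convolution product against the tensor-product multiplication and on the sign bookkeeping are exactly what ``one can check'' amounts to.
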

	\begin{proof}
		There is a standard linear isomorphism $E \otimes Z \to \hom_k(Z^*,E)$, and one can check that this is a map of nonunital dgas after equipping $\hom_k(Z^*,E)$ with the convolution product. Hence the MC elements of both sides agree.
	\end{proof}

\begin{defn}\label{nubar}
	Let $E$ be a nonunital dga. The \textbf{nonunital bar construction} is the (coaugmented) dgc $$B_{\mathrm{nu}}(E)\coloneqq B(E\oplus k).$$
	\end{defn}
	We caution that if $E$ is an augmented dga, then $B_{\mathrm{nu}}(E)$ does not agree with $B(E)$ as $B_{\mathrm{nu}}(E)$ will contain elements corresponding to the unit of $E$.
	\begin{defn}
		Let $C$ be a noncounital conilpotent dgc. The \textbf{nonunital cobar construction} is the (augmented) dga $$\Omega_{\mathrm{nu}}(C)\coloneqq \Omega(C\oplus k).$$
	\end{defn}

	The functor of twisting morphisms is (up to units) representable on either side:
	\begin{thm}[{\cite[2.2.6]{lodayvallette}}]\label{barcobaradj}
		If $E$ is a nonunital dga and $C$ is a noncounital conilpotent dgc, then there are natural isomorphisms $$\hom_{\augdga}(\Omega_{\mathrm{nu}} C, E\oplus k)\cong\mathrm{Tw}(C,E)\cong \hom_{\cndgc}(C\oplus k, B_{\mathrm{nu}}E).$$
	\end{thm}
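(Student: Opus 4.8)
The plan is to reduce the nonunital statement to the unital bar--cobar adjunction of \cite[2.2.6]{lodayvallette}, which is already quoted in the excerpt, together with the elementary observation that the convolution algebra $\hom_k(C,E)$ (with $C$ noncounital and $E$ nonunital) coincides with the \emph{reduced} convolution algebra computing twisting morphisms between $C\oplus k$ and $E\oplus k$. Concretely, first I would unwind the definitions: a nonunital twisting morphism in $\mathrm{Tw}(C,E)$ is a Maurer--Cartan element of $\hom_k(C,E)$, and I claim this is the same datum as a twisting morphism $C\oplus k \to E\oplus k$ in the augmented/coaugmented sense, i.e.\ one that vanishes after composing with the coaugmentation of $C\oplus k$ and after projecting to the unit of $E\oplus k$ (cf.\ the remark following the definition of twisting morphism). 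Indeed, splitting $\hom_k(C\oplus k, E\oplus k)$ according to the direct sum decompositions on both sides, the counital/unital components are forced, and the only remaining freedom is exactly a map $C \to E$; one checks that the convolution product and differential restrict compatibly, so the Maurer--Cartan equations match. This identifies the middle term $\mathrm{Tw}(C,E)$ with the set of (co)augmented twisting morphisms $C\oplus k \to E \oplus k$.

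Second, I would invoke the classical adjunction. By \cite[2.2.6]{lodayvallette}, the set of augmented twisting morphisms $C' \to E'$ between a conilpotent coaugmented dgc $C'$ and an augmented dga $E'$ is naturally in bijection with $\hom_{\augdga}(\Omega C', E')$ and with $\hom_{\cndgc}(C', BE')$. Applying this with $C' = C\oplus k$ and $E' = E \oplus k$ — noting that $C\oplus k$ is conilpotent since $C$ is, and that $\Omega_{\mathrm{nu}}C = \Omega(C\oplus k)$, $B_{\mathrm{nu}}E = B(E\oplus k)$ by \ref{nubar} and the definition of $\Omega_{\mathrm{nu}}$ — yields precisely
$$\hom_{\augdga}(\Omega_{\mathrm{nu}}C, E\oplus k) \cong \mathrm{Tw}(C,E) \cong \hom_{\cndgc}(C\oplus k, B_{\mathrm{nu}}E),$$
as desired. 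The naturality of both bijections is inherited from the naturality in \cite[2.2.6]{lodayvallette}, since all the functors involved ($(-)\oplus k$ on nonunital dgas and noncounital dgcs, $\Omega$, $B$) are functorial and the identification in the first step is visibly natural in $C$ and $E$.

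The only genuinely careful point — and the place I expect to spend the most effort — is the bookkeeping in the first step: verifying that the direct-sum decompositions $\hom_k(C\oplus k, E\oplus k) \cong \hom_k(C,E) \oplus \hom_k(C,k) \oplus \hom_k(k, E) \oplus \hom_k(k,k)$ are respected by the convolution product and differential in the way needed, so that restricting to (co)augmented twisting morphisms genuinely kills all but the $\hom_k(C,E)$ summand and the induced product/differential there is exactly the nonunital convolution structure. This is a routine but sign-sensitive check using the Koszul sign rule; once it is in place, everything else is a direct appeal to \cite[2.2.6]{lodayvallette}. An alternative, essentially equivalent, route would be to quote \cite[6.2]{positselski}, where Positselski sets up the nonunital bar--cobar formalism directly; I would mention this as the reference the statement is extracted from, with the argument above serving as the proof that it agrees with the Loday--Vallette conventions used elsewhere in the paper.
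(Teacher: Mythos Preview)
Your proposal is correct. The paper does not give its own proof of this statement: it is simply cited from \cite[2.2.6]{lodayvallette}, and your argument is precisely the routine translation showing that the nonunital formulation stated here follows from the augmented bar--cobar adjunction in that reference via the definitions $\Omega_{\mathrm{nu}}C=\Omega(C\oplus k)$ and $B_{\mathrm{nu}}E=B(E\oplus k)$.
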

	We recall from \ref{csharp} that if $C$ is a (counital) conilpotent dgc then $C^\sharp$ denotes the pro-Artinian dga constructed by levelwise dualising the filtered system of finite sub-dgcs of $C$. If $E$ is a nonunital dga, write $B_{\mathrm{nu}}^\sharp E\coloneqq (B_{\mathrm{nu}}E)^\sharp$ for the \textbf{continuous nonunital Koszul dual}.
	
	\begin{prop}\label{mcprorep}
		Let $E$ be a nonunital dga. Then the functor $\mc(E)$ is prorepresented by $B_{\mathrm{nu}}^\sharp E$, in the sense that $\mc(E)$ and $\hom_{\ubproart}(B_{\mathrm{nu}}^\sharp E, -)$ are naturally isomorphic.
	\end{prop}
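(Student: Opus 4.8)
The plan is to produce the asserted natural isomorphism as a composite of three isomorphisms already available in the excerpt: the comparison between Maurer--Cartan elements and twisting morphisms (\ref{twnumc}), the nonunital bar--cobar adjunction (\ref{barcobaradj}), and the equivalence $\ubproart \cong \cndgc^{\mathrm{op}}$ (\ref{sharpprop}).

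Fix $\Gamma \in \dgart$ and write $\mathfrak{m}_\Gamma = \bar\Gamma$ for its augmentation ideal, a finite-dimensional nonunital dga. Dualising the split short exact sequence $0 \to \mathfrak{m}_\Gamma \to \Gamma \to k \to 0$ identifies $\Gamma^*$ with a coaugmented conilpotent dgc (conilpotent since $\Gamma$ is Artinian) whose coaugmentation coideal is $\mathfrak{m}_\Gamma^*$, so that $\Gamma^* \cong \mathfrak{m}_\Gamma^* \oplus k$ as coaugmented dgcs. First I would apply \ref{twnumc} with $Z = \mathfrak{m}_\Gamma$ to get $\mc(E)(\Gamma) = \mcs(E \otimes \mathfrak{m}_\Gamma) \cong \mathrm{Tw}(\mathfrak{m}_\Gamma^*, E)$. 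Next I would apply the bar half of \ref{barcobaradj} to the noncounital conilpotent dgc $C = \mathfrak{m}_\Gamma^*$, giving $\mathrm{Tw}(\mathfrak{m}_\Gamma^*, E) \cong \hom_{\cndgc}(\mathfrak{m}_\Gamma^* \oplus k,\, B_{\mathrm{nu}}E) = \hom_{\cndgc}(\Gamma^*, B_{\mathrm{nu}}E)$. Finally, using that $(-)^\sharp$ and $(-)^\circ$ are mutually inverse equivalences (\ref{csharp}, \ref{sharpprop}), I would rewrite $\hom_{\ubproart}(B_{\mathrm{nu}}^\sharp E, \Gamma) \cong \hom_{\cndgc^{\mathrm{op}}}\bigl((B_{\mathrm{nu}}^\sharp E)^\circ, \Gamma^\circ\bigr) = \hom_{\cndgc}(\Gamma^*, B_{\mathrm{nu}}E)$, where $(B_{\mathrm{nu}}^\sharp E)^\circ = B_{\mathrm{nu}}E$ and $\Gamma^\circ = \Gamma^*$ because $\Gamma$ is finite-dimensional. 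Splicing the three isomorphisms yields $\mc(E)(\Gamma) \cong \hom_{\ubproart}(B_{\mathrm{nu}}^\sharp E, \Gamma)$.

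It then remains only to check naturality in $\Gamma$: \ref{twnumc} is natural in $Z$, \ref{barcobaradj} is natural in $C$, and the passage $\Gamma \mapsto \Gamma^*$ is (contravariantly) functorial on Artinian dgas and compatible with the equivalence of \ref{sharpprop}, so the composite is an isomorphism of functors $\dgart \to \cat{Set}$. The same argument works verbatim for arbitrary, not necessarily connective, Artinian dgas; the connectivity restriction in the definition of $\mc(E)$ is imposed for deformation-theoretic reasons (cf.\ \ref{connectivereasons}) and is not needed here.

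I do not anticipate a serious obstacle: this is essentially a bookkeeping argument threading together three cited results. The only points demanding a little care are (i) verifying that the ``$\oplus k$'' appearing in \ref{barcobaradj} reassembles $\mathfrak{m}_\Gamma^*$ into $\Gamma^*$ on the nose, which is immediate from dualising $\Gamma \cong \mathfrak{m}_\Gamma \oplus k$, and (ii) checking that $\Gamma^\circ$ is canonically $\Gamma^*$ for a single finite-dimensional $\Gamma$, which is immediate from the definition of the $\circ$ functor as a filtered colimit that here has only one (constant) term. (Alternatively one could replace the last step by applying \ref{cdlem}(1) to $C = B_{\mathrm{nu}}E$, but routing through \ref{sharpprop} keeps the coalgebra side manifestly matched with \ref{barcobaradj}.)
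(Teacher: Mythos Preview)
Your proposal is correct and is essentially the same as the paper's own proof: both compose \ref{twnumc}, \ref{barcobaradj}, and \ref{sharpprop} (with the identification $\Gamma^{*\sharp}\cong\Gamma$) in the obvious way, differing only in the order in which the three isomorphisms are presented. Your added remarks on naturality and the identification $\Gamma^\circ\cong\Gamma^*$ for finite-dimensional $\Gamma$ are welcome clarifications that the paper leaves implicit.
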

	\begin{proof}
		Let $\Gamma$ be a connective Artinian local dga. It is easy to see that $\Gamma^*$ is a conilpotent dgc. By \ref{sharpprop}, we have an isomorphism $$\hom_{\cndgc}(\Gamma^*, B_{\mathrm{nu}}E) \cong \hom_{\ubproart}(B_{\mathrm{nu}}^\sharp E, \Gamma^{*\sharp}).$$Because $\Gamma^*$ is Artinian we have an isomorphism $\Gamma^{*\sharp}\cong \Gamma$ and it follows that we have isomorphisms $$\hom_{\ubproart}(B_{\mathrm{nu}}^\sharp E, \Gamma)\cong \hom_{\cndgc}(\Gamma^*, B_{\mathrm{nu}}E)\cong \mathrm{Tw}(\mathfrak{m}_\Gamma^*,E)$$where the second isomorphism is \ref{barcobaradj}. By \ref{twnumc} we have an isomorphism $$\mathrm{Tw}(\mathfrak{m}_\Gamma^*,E)\cong \mcs(E \otimes \mathfrak{m}_\Gamma)$$and so we are done.
	\end{proof}

	Now that we have our prorepresentability result, we will enhance it to $\sset$-valued functors. This will not be too hard; we just need to identify the correct simplicial mapping spaces in $\ubproart$. Note that if $\Gamma$ is an Artinian dga, then $\Gamma\otimes\pdf $ will not be a simplicial Artinian dga, so the answer is not as simple as `replace $\Gamma$ by the simplicial resolution $\Gamma\otimes\pdf $'. We get around this by using the Quillen adjunction $\Omega \dashv B$.
	\begin{thm}\label{smcisrmap}
		Let $E$ be a nonunital dga. Then there is a weak equivalence of functors $$\smc(E)\simeq \R\mathrm{Map}_{\ubproart}(B_{\mathrm{nu}}^\sharp E, -).$$
		\end{thm}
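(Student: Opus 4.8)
The plan is to reduce the statement to the plain prorepresentability result \ref{mcprorep} by feeding it the simplicial nonunital dga $E\otimes\pdf$, and then to recognise the outcome as a derived mapping space by producing an explicit cosimplicial frame. First I would observe that the chain of isomorphisms appearing in the proof of \ref{mcprorep},
$$\hom_{\ubproart}(B_{\mathrm{nu}}^\sharp U,\Gamma)\cong\hom_{\cndgc}(\Gamma^*,B_{\mathrm{nu}}U)\cong\mathrm{Tw}(\mathfrak{m}_\Gamma^*,U)\cong\mcs(U\otimes\mathfrak{m}_\Gamma),$$
is natural in the nonunital dga $U$ and not merely in $\Gamma$ (the three isomorphisms being \ref{sharpprop}, \ref{barcobaradj} and \ref{twnumc} respectively). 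Taking $U=E\otimes\Omega(\Delta^n)$ and letting $n$ run over $\Delta$, these assemble into a natural isomorphism of simplicial sets, functorial in $\Gamma$, $\smc(E)(\Gamma)\cong\hom_{\ubproart}(B_{\mathrm{nu}}^\sharp(E\otimes\pdf),\Gamma)$, where $B_{\mathrm{nu}}^\sharp(E\otimes\pdf)$ is the cosimplicial object $n\mapsto B_{\mathrm{nu}}^\sharp(E\otimes\Omega(\Delta^n))$ of $\ubproart$ -- cosimplicial rather than simplicial precisely because $B_{\mathrm{nu}}^\sharp$ is contravariant while $\pdf$ is simplicial.

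The main remaining task is to show that $B_{\mathrm{nu}}^\sharp(E\otimes\pdf)$ is a cosimplicial frame on $B_{\mathrm{nu}}^\sharp E$ in $\ubproart$. Granting this, and since $\Gamma$ is fibrant in $\ubproart$ -- its augmentation $\Gamma\to k$ is a degreewise surjection, hence a fibration by \ref{ubpamodel} -- the simplicial set $\hom_{\ubproart}(B_{\mathrm{nu}}^\sharp(E\otimes\pdf),\Gamma)$ is a model for $\R\mathrm{Map}_{\ubproart}(B_{\mathrm{nu}}^\sharp E,\Gamma)$ by the theory of (co)simplicial frames \cite[\S5]{hovey}, and the theorem follows. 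To check the framing I would transport everything along the equivalence $\circ\colon\ubproart\xrightarrow{\cong}\cndgc^{\mathrm{op}}$ of \ref{ubpamodel}, which is both left and right Quillen: it turns a cosimplicial object of $\ubproart$ into a simplicial object of $\cndgc$, carries cosimplicial frames on cofibrant objects to simplicial frames on the corresponding fibrant objects, and sends $B_{\mathrm{nu}}^\sharp(-)$ to $B_{\mathrm{nu}}(-)$. So it suffices to prove that $B_{\mathrm{nu}}(E\otimes\pdf)=B((E\otimes\pdf)\oplus k)$ is a simplicial frame on $B_{\mathrm{nu}}E$ in $\cndgc$. Fibrancy of $B_{\mathrm{nu}}E$ (equivalently, cofibrancy of $B_{\mathrm{nu}}^\sharp E$) is immediate, since $B_{\mathrm{nu}}E=B(E\oplus k)$ is a tensor coalgebra after forgetting its differential, hence semicofree conilpotent. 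The levelwise weak equivalence to the constant diagram comes from the polynomial Poincar\'e Lemma \ref{poincare}: the unit $k\to\Omega(\Delta^n)$ is a quasi-isomorphism, so $E\to E\otimes\Omega(\Delta^n)$ is a quasi-isomorphism of nonunital dgas natural in $n$, and $B_{\mathrm{nu}}$ -- being right Quillen, as the composite of the equivalence of model categories between nonunital and augmented dgas with the right Quillen bar functor \ref{bcquillen} -- preserves weak equivalences between the automatically fibrant objects in its source.

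The step I expect to be the main obstacle is the Reedy fibrancy of $B_{\mathrm{nu}}(E\otimes\pdf)$. I would argue that $\pdf$ is Reedy fibrant as a simplicial cdga (Bousfield--Gugenheim \cite{bousfieldgugenheim}): its $n$-th matching object is $\Omega(\partial\Delta^n)$ and the matching map is the surjective restriction of forms. Since $E\otimes_k-$ is exact and commutes with the finite limits computing matching objects, and since freely adjoining a unit does the same, the simplicial object $E\otimes\pdf$ is Reedy fibrant in the category of nonunital dgas; and as $B_{\mathrm{nu}}$ is right Quillen it preserves matching objects and Reedy fibrant simplicial objects, so $B_{\mathrm{nu}}(E\otimes\pdf)$ is Reedy fibrant in $\cndgc$. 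Combining this with the previous two points establishes the framing, and together with the reduction of the first paragraph this completes the proof. One could alternatively organise the last step entirely on the algebra side, transporting $\R\mathrm{Map}_{\ubproart}(B_{\mathrm{nu}}^\sharp E,\Gamma)$ across the Quillen equivalences $\ubproart\simeq\cndgc^{\mathrm{op}}$ and $(\Omega,B)$ to a mapping space in $\augdga$ and comparing $(E\otimes\Omega(\Delta^n))\oplus k$ with $(E\oplus k)\otimes\Omega(\Delta^n)$ via the evident quasi-isomorphism; this amounts to the same argument rearranged.
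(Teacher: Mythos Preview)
Your proof is correct. The paper takes the route you sketch at the end as an alternative: it works on the algebra side, using \ref{barcobaradj} to rewrite $\smc(E)(\Gamma)$ as $\hom_{\augdga}(\Omega(\Gamma^*),\,E^\bullet\oplus\mathrm{const}(k))$, then invokes the Poincar\'e Lemma to replace $E^\bullet\oplus\mathrm{const}(k)$ by the standard simplicial frame $(E\oplus k)\otimes\pdf$ in $\augdga$, and only afterwards transports across the Quillen equivalences $\Omega\dashv B$ and $\sharp/\circ$ to reach $\ubproart$. Your version instead stays on the pro-Artinian/coalgebra side throughout and builds the cosimplicial frame $B_{\mathrm{nu}}^\sharp(E\otimes\pdf)$ directly. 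This is more symmetric with the statement being proved, but obliges you to check Reedy fibrancy of $B_{\mathrm{nu}}(E\otimes\pdf)$ by hand, whereas the paper absorbs that step into the classical fact that $(-)\otimes\pdf$ furnishes simplicial resolutions in $\augdga$. Both arguments ultimately rest on the same Poincar\'e Lemma and the same right-Quillenness of $B$; the Reedy-fibrancy step you flag as the main obstacle does go through for the reason you give (right Quillen functors preserve matching objects and fibrations, so carry Reedy fibrant simplicial objects to Reedy fibrant simplicial objects; cf.\ also \cite[5.6.1]{hovey}).
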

	\begin{proof}
	If $E$ is a nonunital dga write $E^\bullet$ for the simplicial nonunital dga $E^\bullet\coloneqq E \otimes \pdf$. By definition, we have $$\smc(E)(\Gamma)\coloneqq \mcs(E^\bullet\otimes\mathfrak{m}_\Gamma).$$Applying \ref{barcobaradj} levelwise we see that we have an isomorphism $$\mcs(E^\bullet\otimes\mathfrak{m}_\Gamma)\cong \hom_{\augdga}(\Omega_{\mathrm{nu}}(\mathfrak{m}_\Gamma^*),E^\bullet \oplus \mathrm{const}(k))\cong \hom_{\augdga}(\Omega(\Gamma^*),E^\bullet \oplus \mathrm{const}(k))$$where $\mathrm{const}(k)$ denotes the constant simplicial dga on $k$. But because $\mathrm{const}(k) \to\pdf$ is a levelwise quasi-isomorphism by \ref{poincare}, we have a quasi-isomorphism of simplicial dgas $E^\bullet \oplus \mathrm{const}(k) \simeq (E\oplus k)^\bullet$. Because $\Omega(\Gamma^*)$ is cofibrant and all dgas are fibrant, it follows that we have a weak equivalence $$\hom_{\augdga}(\Omega(\Gamma^*),E^\bullet \oplus \mathrm{const}(k))\simeq \R\mathrm{Map}_{\augdga}(\Omega(\Gamma^*),E\oplus k).$$Now because $\Omega$ is part of a Quillen equivalence by \ref{bcquillen} we have $$\R\mathrm{Map}_{\augdga}(\Omega (\Gamma^*),E\oplus k)\simeq \R\mathrm{Map}_{\cndgc}(\Gamma^*,B_{\mathrm{nu}}E)$$
	and by \ref{ubpamodel} we have $$ \R\mathrm{Map}_{\cndgc}(\Gamma^*,B_{\mathrm{nu}}E)\simeq \R\mathrm{Map}_{\ubproart}(B^\sharp_{\mathrm{nu}}E,\Gamma^{*\sharp}).$$As before we have $\Gamma^{*\sharp}\cong \Gamma$ and hence we are done.
		\end{proof}
	We obtain our desired representability theorem:
		\begin{thm}[Prorepresentability]\label{proreps}
		Let $A$ be a dga and $X$ an $A$-module. Let $E\coloneqq \R\enn_A(X)$ be the derived endomorphism dga of $X$. Then the $\sset$-valued functors $\sdefm_A(X)$ and $\R\mathrm{Map}_{\ubproart}(B_{\mathrm{nu}}^\sharp E, -)$ are weakly equivalent.
	\end{thm}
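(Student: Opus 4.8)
The plan is to string together two results already established in the excerpt: the identification of $\sdefm_A(X)$ with the simplicial Maurer--Cartan functor of the derived endomorphism algebra, and the prorepresentability of that Maurer--Cartan functor by the continuous nonunital Koszul dual. Concretely, set $E \coloneqq \R\enn_A(X)$, regarded as a (possibly nonunital) dga. By \ref{defismc}, there is a natural weak equivalence $\sdefm_A(X) \simeq \smc(E)$ of $\sset$-valued functors on $\dgart$. By \ref{smcisrmap}, there is a natural weak equivalence $\smc(E) \simeq \R\mathrm{Map}_{\ubproart}(B_{\mathrm{nu}}^\sharp E, -)$. Composing these two natural weak equivalences yields the desired weak equivalence $\sdefm_A(X) \simeq \R\mathrm{Map}_{\ubproart}(B_{\mathrm{nu}}^\sharp E, -)$.

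The only real content beyond citing these results is checking that the composite is genuinely functorial and that we are entitled to feed $E$ (which a priori is a unital dga, being an endomorphism algebra) into a framework built for nonunital dgas. For the first point, both \ref{defismc} and \ref{smcisrmap} are stated as weak equivalences of functors, so their composite is again such; one should remark that \ref{smcisrmap} is proved by a chain of natural isomorphisms and Quillen-adjunction weak equivalences, each of which is functorial in the nonunital dga $E$, and that the construction $E \mapsto B_{\mathrm{nu}}^\sharp E$ is functorial via \ref{nubar} and the $\sharp$ functor of \ref{csharp}. For the second point, one simply observes that the relevant constructions (Maurer--Cartan elements of $E \otimes \mathfrak{m}_\Gamma$, the nonunital bar construction $B_{\mathrm{nu}}E = B(E \oplus k)$) only ever see $E$ through its underlying complex and multiplication, so a unital dga is perfectly valid input; indeed $\mcs(E \otimes \mathfrak{m}_\Gamma)$ depends only on $E \otimes \mathfrak{m}_\Gamma$ as a nonunital dga, since $\mathfrak{m}_\Gamma$ is itself nonunital.

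I do not expect any genuine obstacle here: the theorem is essentially a bookkeeping corollary of the two preceding theorems. The one place where a sentence of care is warranted is the passage from $\R\enn_A(X)$ being well-defined only up to quasi-isomorphism to the assertion that $B_{\mathrm{nu}}^\sharp E$ prorepresents the functor. This is handled by noting that $\sdefm_A(X) \simeq \sdefm_{A}(X')$ for quasi-isomorphic models, or more directly that all the functors in sight ($\smc$, $B_{\mathrm{nu}}$, $\sharp$, the model structure on $\ubproart$) take quasi-isomorphisms of dgas to weak equivalences --- $\smc$ by \ref{mcinvtqiso}, the bar construction by the lemma preserving quasi-isomorphisms, and $\sharp$ by the explicit description in \ref{ubpamodel} --- so the weak equivalence type of the prorepresenting object $B_{\mathrm{nu}}^\sharp E$ is independent of the chosen model for $E$. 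Thus the statement is well-posed and follows immediately.
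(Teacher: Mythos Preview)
Your proposal is correct and matches the paper's own proof exactly: the paper simply writes ``Combine \ref{defismc} with \ref{smcisrmap}.'' Your additional remarks on functoriality and quasi-isomorphism invariance are reasonable sanity checks but not strictly needed, since the cited results are already stated as weak equivalences of functors and $\smc$ is quasi-isomorphism invariant by \ref{mcinvtqiso}.
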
	
	\begin{proof}
		Combine \ref{defismc} with \ref{smcisrmap}.
	\end{proof}

\begin{rmk}\label{lurie528}When everything is connective, the deformation-theoretic part of \ref{proreps} is a theorem of Lurie. Indeed, let $A$ be a connective dga and let $X$ be a connective $A$-module. By \cite[5.2.8 and 5.2.14]{luriedagx}, there is a canonical equivalence, natural in $\Gamma\in\dgart$, $$\sdefm_A(X)(\Gamma)\simeq \rmap_{\augdga}(\Gamma^!, k\oplus \R\enn_A(X)).$$By \ref{artprop}(3) we have $\Gamma^!\cong \Omega(\Gamma^*)$, and now as in the proof of \ref{smcisrmap} we can identify the right hand side of the above equivalence with $\R\mathrm{Map}_{\ubproart}(B_{\mathrm{nu}}^\sharp \R\enn_A(X), \Gamma)$, as required. More generally, when $A$ is any $k$-dga and $X$ is any $A$-module then \cite[5.2.8]{luriedagx} together with the above argument shows that $B_{\mathrm{nu}}^\sharp \R\enn_A(X)$ prorepresents the completion of the functor $\sdefm_A(X)$ to a formal moduli problem, in the sense of \cite[3.0.3]{luriedagx}. So one alternate method to prove \ref{proreps} would be to prove that $\sdefm_A(X)$ is already a formal moduli problem. We remark that \ref{proreps} combined with Lurie's results above shows that $\sdefm_A(X)$ is indeed a formal moduli problem, even in the nonconnective case.
	\end{rmk}

\begin{rmk}\label{yekrmk}
Yekutieli obtains commutative analogues of several of our results in \cite{yekpronil}. More specifically, he works with pronilpotent dg Lie algebras and deforms over complete local commutative noetherian algebras (with maximal ideal $\mathfrak{m}$). These methods ought to adapt to our setting to give more direct proofs of some of our statements about set-valued functors and classical groupoid-valued functors. More specifically, \cite{yekpronil} shows that a quasi-isomorphism between pronilpotent dglas of the form $\mathfrak{m} \widehat{\otimes}\mathfrak{g} \to \mathfrak{m} \widehat{\otimes}\mathfrak{h}$ induces an isomorphism $\mcs(\mathfrak{m} \widehat{\otimes}\mathfrak{g}) \to \mcs(\mathfrak{m} \widehat{\otimes}\mathfrak{h})$, as well as an equivalence of (na\"{i}ve) Deligne groupoids. Presumably this is a set-level manifestation of the existence of a model structure on the category of pronilpotent dglas, for which the levelwise Chevalley--Eilenberg functor is a Quillen equivalence to the category of commutative dgcs. One would have to repeat our arguments in the commutative/Lie setting; the goal would be to obtain a weak equivalence roughly of the form $\smc(\mathfrak{g})\simeq \rmap(\mathrm{CE}^\sharp(\mathfrak{g}),-)$.
	\end{rmk}

\subsection{Inclusion-truncation adjunctions}
We have just seen that $\sdefm_A(X)$ is (homotopy) prorepresented by the possibly unbounded pro-Artinian dga $B_{\mathrm{nu}}^\sharp E$. But because we only allow connective Artinian dgas as input, one can use the inclusion-truncation adjunction to deduce a sharper result:
\begin{prop}\label{truncatedproreps}
	Let $A$ be a dga and $X$ an $A$-module. Let $E\coloneqq \R\enn_A(X)$ be the derived endomorphism dga of $X$. Then there is a weak equivalence of $\sset$-valued functors $$\sdefm_A(X)\simeq\R\mathrm{Map}_{\proart}(\tau_{\leq 0}B_{\mathrm{nu}}^\sharp E, -).$$
	\end{prop}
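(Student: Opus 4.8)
The plan is to combine the prorepresentability theorem \ref{proreps} with the inclusion--truncation Quillen adjunction from \ref{inclisrq}. Recall that the test objects of $\sdefm_A(X)$ are the connective Artinian local dgas $\Gamma$, and that for such a $\Gamma$ the associated constant pro-object already lies in the subcategory $\proart$ of $\ubproart$; that is, $\iota\Gamma=\Gamma$ under the inclusion $\iota\colon\proart\into\ubproart$. By the proof of \ref{inclisrq}, $\iota$ is right adjoint to the levelwise good-truncation functor $\tau_{\leq 0}$, and by \ref{inclisrq} itself $\iota$ is right Quillen, so $\tau_{\leq 0}$ is left Quillen. Hence, granting that $B_{\mathrm{nu}}^\sharp E$ is cofibrant in $\ubproart$, the Quillen adjunction $\tau_{\leq 0}\dashv\iota$ supplies a natural weak equivalence
\[
\rmap_{\ubproart}(B_{\mathrm{nu}}^\sharp E,\,\iota\Gamma)\;\simeq\;\rmap_{\proart}(\tau_{\leq 0}B_{\mathrm{nu}}^\sharp E,\,\Gamma),
\]
and combining with \ref{proreps} and $\iota\Gamma=\Gamma$ finishes the proof.

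So the real content is the cofibrancy of $B_{\mathrm{nu}}^\sharp E$, which I would establish as follows. As a graded coalgebra, $B_{\mathrm{nu}}E=B(E\oplus k)$ is the tensor coalgebra $T^c(E[1])$ equipped with the deconcatenation coproduct; thus it is semicofree, hence fibrant in $\cndgc$ by \ref{bcquillen}. Under the Quillen equivalence $\ubproart\xrightarrow{\ \cong\ }\cndgc^\text{op}$ of \ref{ubpamodel}, which is both left and right Quillen, fibrant objects of $\cndgc$ correspond to cofibrant objects of $\ubproart$, so $B_{\mathrm{nu}}^\sharp E=(B_{\mathrm{nu}}E)^\sharp$ is cofibrant in $\ubproart$. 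Since $\tau_{\leq 0}$ is left Quillen, $\tau_{\leq 0}B_{\mathrm{nu}}^\sharp E$ is then cofibrant in $\proart$; this is also what guarantees that the derived functor $\mathbb{L}\tau_{\leq 0}$ agrees with $\tau_{\leq 0}$ on $B_{\mathrm{nu}}^\sharp E$, so that the statement really does involve $\tau_{\leq 0}$ on the nose rather than a derived variant.

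To make the displayed equivalence precise I would mirror the proof of \ref{dermapsthm}: choose a simplicial frame $\Gamma_\bullet$ on $\Gamma$ in $\proart$. Since $\iota$ is right Quillen, $\iota\Gamma_\bullet$ is a simplicial frame on $\iota\Gamma$ by \cite[5.6.1]{hovey}, and since every object of $\ubproart$ is fibrant and $B_{\mathrm{nu}}^\sharp E$ is cofibrant, $\rmap_{\ubproart}(B_{\mathrm{nu}}^\sharp E,\iota\Gamma)$ is computed by the simplicial set $\hom_{\ubproart}(B_{\mathrm{nu}}^\sharp E,\iota\Gamma_\bullet)$. Applying the strict adjunction $\tau_{\leq 0}\dashv\iota$ in each simplicial degree identifies this with $\hom_{\proart}(\tau_{\leq 0}B_{\mathrm{nu}}^\sharp E,\Gamma_\bullet)$, which in turn computes $\rmap_{\proart}(\tau_{\leq 0}B_{\mathrm{nu}}^\sharp E,\Gamma)$ because $\tau_{\leq 0}B_{\mathrm{nu}}^\sharp E$ is cofibrant and every object of $\proart$ is fibrant. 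All of these identifications are natural in $\Gamma$. The only step needing genuine thought is the cofibrancy of $B_{\mathrm{nu}}^\sharp E$; the rest is a routine manipulation of Quillen adjunctions and simplicial frames, so I do not anticipate any serious obstacle.
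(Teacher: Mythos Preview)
Your proof is correct and follows the same overall strategy as the paper: reduce to the derived Quillen adjunction $\tau_{\leq 0}\dashv\iota$ between $\proart$ and $\ubproart$, and then identify $\mathbb{L}\tau_{\leq 0}B_{\mathrm{nu}}^\sharp E$ with $\tau_{\leq 0}B_{\mathrm{nu}}^\sharp E$.

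The one genuine difference is how this last identification is made. The paper computes $\mathbb{L}\tau_{\leq 0}$ by applying the explicit cofibrant replacement $B^\sharp\Omega((-)^\circ)$ of \ref{doubledictionary}(1) to $B_{\mathrm{nu}}^\sharp E$, then simplifies $\Omega(B_{\mathrm{nu}}^{\sharp\circ}E)\cong\Omega B(E\oplus k)\simeq E\oplus k$ and uses that $B^\sharp$ preserves weak equivalences to get back $\tau_{\leq 0}B_{\mathrm{nu}}^\sharp E$. You instead observe directly that $B_{\mathrm{nu}}E=B(E\oplus k)$ is semicofree and hence already fibrant in $\cndgc$ (as noted after \ref{bcquillen}), so that under the Quillen equivalence of \ref{ubpamodel} the object $B_{\mathrm{nu}}^\sharp E$ is already cofibrant in $\ubproart$, and no resolution is needed. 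Your route is slightly cleaner; the paper's route has the advantage of reusing the general machinery of \ref{doubledictionary}, but in this particular case that machinery is applying a cofibrant replacement to an object that was already cofibrant.
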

\begin{proof}
By \ref{inclisrq}, the inclusion functor $\proart\into\ubproart$ is right Quillen; hence its left adjoint, the (levelwise) truncation functor, is left Quillen. Because every object in $\proart$ is fibrant, the inclusion functor is its own right derived functor. Hence for $\Gamma \in \proart$ we have weak equivalences $$\R\mathrm{Map}_{\ubproart}(B_{\mathrm{nu}}^\sharp E, \Gamma)\simeq\R\mathrm{Map}_{\proart}(\mathbb{L}\tau_{\leq 0}B_{\mathrm{nu}}^\sharp E, \Gamma).$$By \ref{doubledictionary}(1), we may compute left derived functors as $$\mathbb{L}\tau_{\leq 0}B_{\mathrm{nu}}^\sharp E \simeq \tau_{\leq 0} B^\sharp\Omega(B_{\mathrm{nu}}^{\sharp\circ} E).$$But $\sharp$ and $\circ$ are inverses, so we have a quasi-isomorphism of dgas $$\Omega(B_{\mathrm{nu}}^{\sharp\circ} E) \simeq \Omega B_\mathrm{nu}(E)=\Omega B(E\oplus k) \simeq E\oplus k$$ by the definition of the nonunital bar construction. Because $B^\sharp $ sends quasi-isomorphisms to weak equivalences, we see that $$\mathbb{L}\tau_{\leq 0}B_{\mathrm{nu}}^\sharp E \simeq \tau_{\leq 0} B_{\mathrm{nu}}^\sharp E$$ and so we have a weak equivalence $$\R\mathrm{Map}_{\ubproart}(B_{\mathrm{nu}}^\sharp E, \Gamma)\simeq\R\mathrm{Map}_{\proart}(\tau_{\leq 0}B_{\mathrm{nu}}^\sharp E, \Gamma).$$Now the result follows from \ref{proreps}.
	\end{proof}
Similarly, one may restrict the input to ungraded Artinian algebras. We can prove more here, but this will require more work. We begin with two preliminary lemmas.
\begin{lem}\label{trivqa}
	Let $\cat{pro}(\cat{Art}_k)$ be the category of pro-objects in ungraded Artinian algebras, and give it the trivial model structure. Then there is a Quillen adjunction $$H^0:\proart \longleftrightarrow \cat{pro}(\cat{Art}_k):\iota$$ where the inclusion functor $\iota$ is the right adjoint.
	\end{lem}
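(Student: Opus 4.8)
The plan is to verify the two defining conditions of a Quillen adjunction for the pair $(H^0, \iota)$: first that $(H^0, \iota)$ really is an adjoint pair, and second that one of them (I would check the right adjoint $\iota$) preserves fibrations and trivial fibrations — equivalently, since both categories here are model categories, it suffices to show $\iota$ preserves fibrations and weak equivalences.

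First I would record the adjunction. Given $A \in \proart$ represented (by \ref{strictpro}) as a strict pro-object $\{A_\alpha\}_\alpha$, applying $H^0$ levelwise produces a pro-object $\{H^0(A_\alpha)\}_\alpha$ in ungraded algebras; since each $A_\alpha$ is Artinian local, so is each $H^0(A_\alpha)$, so this lands in $\cat{pro}(\cat{Art}_k)$. This is functorial because maps in a procategory are represented by level maps. On the other side, an ungraded Artinian algebra is in particular a connective Artinian dga (concentrated in degree zero), giving the inclusion $\iota$, and this extends levelwise to procategories. The unit and counit are the obvious maps: for $B \in \cat{pro}(\cat{Art}_k)$ one has $H^0(\iota B) \cong B$, and for $A \in \proart$ the levelwise truncation-to-$H^0$ map $A \to \iota H^0 A$. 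The adjunction isomorphism $\hom_{\cat{pro}(\cat{Art}_k)}(H^0 A, B) \cong \hom_{\proart}(A, \iota B)$ then follows levelwise from the elementary fact that for a connective dga $A_\alpha$ and an ungraded algebra $B_\beta$, a dga map $A_\alpha \to B_\beta$ factors uniquely through $H^0(A_\alpha) = A_\alpha^0 / \mathrm{im}(d)$, together with the description of pro-maps as $\varprojlim_\beta \varinjlim_\alpha$.

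Next I would check $\iota$ is right Quillen. Since $\cat{pro}(\cat{Art}_k)$ carries the trivial model structure (every map a fibration, weak equivalences the isomorphisms), $\iota$ trivially preserves fibrations. It remains to see $\iota$ preserves weak equivalences, which here means: if $f$ is an isomorphism in $\cat{pro}(\cat{Art}_k)$ then $\iota f$ is a weak equivalence in $\proart$, i.e.\ each $H^n(\iota f)$ is an isomorphism of profinite vector spaces (by \ref{proartmodel}). But an isomorphism is sent to an isomorphism by any functor, and an isomorphism in $\proart$ is certainly a weak equivalence. So $\iota$ preserves fibrations and weak equivalences, hence is right Quillen, hence $(H^0, \iota)$ is a Quillen adjunction.

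There is essentially no hard step here: the only mild subtlety is being careful that everything can be arranged level-by-level — that a pro-object can be taken strict, that $H^0$ of an Artinian local dga is again Artinian local (immediate, as $H^0$ is a finite-dimensional local quotient ring of $A^0$), and that the adjunction bijection at the level of individual algebras globalizes correctly to pro-categories via the $\varprojlim\varinjlim$ formula of \ref{procats}. If anything deserves a sentence of care it is the claim that a dga map out of a connective dga factors through $H^0$ when the target is ungraded; this is because the target has nothing in nonzero degrees, so the map kills everything in negative degrees and all coboundaries in degree $0$, and conversely any such algebra map on $H^0$ lifts uniquely.
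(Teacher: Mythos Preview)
Your adjunction argument is fine and matches the paper's. The error is in the Quillen step. You claim that $\iota$ ``trivially preserves fibrations'' because every map in $\cat{pro}(\cat{Art}_k)$ is a fibration, but this is the wrong direction: for $\iota$ to preserve fibrations you need every map in $\cat{pro}(\cat{Art}_k)$ to become a fibration in $\proart$, i.e.\ to have degreewise surjective $\varprojlim$ (by \ref{proartmodel}). This is false --- for instance the inclusion $k \hookrightarrow k[\epsilon]/\epsilon^2$ of constant pro-objects is not a fibration in $\proart$. Having all maps as fibrations in the \emph{source} of a right adjoint makes the fibration-preservation condition maximally hard, not trivially easy.

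The paper instead checks that the \emph{left} adjoint $H^0$ is left Quillen, which is where the trivial model structure actually helps: since every map in $\cat{pro}(\cat{Art}_k)$ is a cofibration, $H^0$ preserves cofibrations automatically. It remains to see that $H^0$ preserves weak equivalences, and this is immediate from the definition of the Pridham model structure \ref{proartmodel}: a weak equivalence $f$ in $\proart$ has each $H^n f$ an isomorphism of profinite vector spaces, so in particular $H^0 f$ is an isomorphism, hence a weak equivalence in the trivial model structure. Your argument can be repaired simply by switching to this check.
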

\begin{proof}
	First we verify that the claimed adjunction is actually an adjunction. The point is that if  $A \to  A'$ is a map of Artinian dgas, where $A$ is connective and $A'$ is ungraded, then it must factor through $H^0(A)$. Hence we get an adjunction on the level of Artinian dgas, which prolongs to an adjunction between procategories. To complete the proof we just need to verify that $H^0$ is left Quillen. It obviously preserves cofibrations. It preserves weak equivalences, because a weak equivalence between connective Artinian dgas gives an isomorphism (of ungraded pro-Artinian algebras) on $H^0$ by the definition of the Pridham model structure \ref{proartmodel}.
	\end{proof}
\begin{lem}\label{sharpcommuteswithcohomology}
	Let $E$ be a nonunital dga. There is an isomorphism of pro-Artinian dgas $$(H^0B_{\mathrm{nu}} E)^\sharp \cong H^0(B_{\mathrm{nu}}^\sharp E).$$
\end{lem}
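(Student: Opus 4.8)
The plan is to dualise everything in sight and reduce the statement to the finite-dimensional pieces, then invoke the equivalence $(-)^\circ\colon\ubproart\xrightarrow{\cong}\cndgc^{\mathrm{op}}$ of \ref{sharpprop} to conclude. Write $C\coloneqq B_{\mathrm{nu}}E$; by construction, and the Sweedler-type result of \cite{getzlergoerss} used in the proof of \ref{sharpprop}, $C$ is the filtered colimit $C\cong\varinjlim_\alpha D_\alpha$ of its finite-dimensional sub-dgcs, and by \ref{csharp} the continuous nonunital Koszul dual $B_{\mathrm{nu}}^\sharp E = C^\sharp$ is the pro-Artinian dga $\{D_\alpha^*\}_\alpha$ with transition maps dual to the inclusions. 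Consequently $H^0(B_{\mathrm{nu}}^\sharp E)$ is the levelwise cohomology $\{H^0(D_\alpha^*)\}_\alpha$.

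First I would check that both sides are legitimately objects of $\ubproart$. Since $d$ is a derivation on the Artinian local dga $D_\alpha^*$, the algebra $H^0(D_\alpha^*)$ is a quotient of a subalgebra of $D_\alpha^*$, hence is again Artinian local; so $H^0(B_{\mathrm{nu}}^\sharp E)\in\ubproart$, concentrated in degree zero. Because $D_\alpha$ is finite-dimensional and the $k$-linear dual is exact, there is a natural algebra isomorphism $H^0(D_\alpha^*)\cong H^0(D_\alpha)^*$; thus each $H^0(D_\alpha)$ is a finite-dimensional coalgebra dual to an Artinian local algebra, i.e.\ conilpotent, and since colimits of conilpotent dgcs are conilpotent and cohomology commutes with filtered colimits, $H^0C\cong\varinjlim_\alpha H^0(D_\alpha)$ is a conilpotent ungraded coalgebra, so $(H^0B_{\mathrm{nu}}E)^\sharp=(H^0C)^\sharp$ is defined. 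To finish I would apply $(-)^\circ$ to both sides. The right-hand side gives $\big((H^0C)^\sharp\big)^\circ\cong H^0C$ since $\sharp$ and $\circ$ are mutually inverse, while for the left-hand side, unwinding the definition of $(-)^\circ$ and using $H^0(D_\alpha^*)\cong H^0(D_\alpha)^*$ together with finite-dimensionality,
$$\big(H^0(B_{\mathrm{nu}}^\sharp E)\big)^\circ = \varinjlim_\alpha H^0(D_\alpha^*)^* \cong \varinjlim_\alpha H^0(D_\alpha)^{**} \cong \varinjlim_\alpha H^0(D_\alpha) \cong H^0\big(\varinjlim_\alpha D_\alpha\big) = H^0 C.$$
Both sides are therefore isomorphic to $H^0C$, and since $(-)^\circ$ is an equivalence this yields the desired isomorphism in $\ubproart$.

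I do not expect a serious obstacle here: the content is essentially bookkeeping. The one point that needs care is well-definedness together with compatibility — namely that $H^0$ applied levelwise stays inside $\ubproart$ and that $H^0C$ is conilpotent (handled by the ``subquotient of an Artinian local dga'' observation), and that all the dualisation and double-dualisation isomorphisms above are natural in $\alpha$, so that the chain of isomorphisms genuinely assembles into an isomorphism of pro-objects carrying the coalgebra structure on $H^0C$ correctly. An essentially equivalent but slightly cleaner alternative would be to first prove the general fact that the functors $H^0\circ(-)^\sharp$ and $(-)^\sharp\circ H^0$ from $\cndgc$ to $\ubproart$ agree, and then specialise to $C=B_{\mathrm{nu}}E$.
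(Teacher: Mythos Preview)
Your proof is correct, but it takes a different route from the paper's. The paper first truncates, setting $C\coloneqq\tau_{\geq 0}B_{\mathrm{nu}}E$, and then argues directly at the level of sub-dgcs: since $C$ is coconnective, $H^0C$ sits inside $C$ as a subcoalgebra, and the finite-dimensional sub-dgcs of $C$ concentrated in degree zero are exactly the finite-dimensional subcoalgebras of $H^0C$. This immediately gives $\tau_{\geq 0}(C^\sharp)\cong (H^0C)^\sharp$, and one finishes by unwinding $\tau_{\geq 0}(C^\sharp)=\tau_{\geq 0}\tau_{\leq 0}(B_{\mathrm{nu}}^\sharp E)\cong H^0(B_{\mathrm{nu}}^\sharp E)$. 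Your argument instead stays on the functorial side: you apply $(-)^\circ$ to both candidates and reduce to the exactness of filtered colimits and of the linear dual on finite-dimensional pieces. What your approach buys is a cleaner general statement (indeed your last remark, that $H^0\circ(-)^\sharp\cong(-)^\sharp\circ H^0$ on $\cndgc$, is the ``right'' formulation); what the paper's approach buys is that one never has to track coalgebra structures through a chain of dualisations, since the identification of degree-zero sub-dgcs with subcoalgebras of $H^0C$ makes the comparison immediate. The one place where you should be a little more explicit is in checking that the coalgebra structure on $H^0C$ you obtain from the dual side---namely the one dual to the algebra structure on $H^0(D_\alpha^*)$---agrees with the intended one; this is exactly what the paper's coconnective truncation makes transparent.
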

\begin{proof}
	Let $C$ be the coconnective dgc $C\coloneqq \tau_{\geq 0}B_{\mathrm{nu}} E$; observe that $H^0C$ is a subcoalgebra of $C$. The subdgcs of $C$ that are concentrated in degree zero are precisely the subcoalgebras of $H^0C$. Hence it follows that $\tau_{\geq0}(C^\sharp) \cong (H^0C)^\sharp$. But $\tau_{\geq0}(C^\sharp)=\tau_{\geq0}(\tau_{\geq 0}
	B_{\mathrm{nu}}E)^\sharp\cong \tau_{\geq0}\tau_{\leq 0}(
	B_{\mathrm{nu}}^\sharp E)\cong H^0(B_{\mathrm{nu}}^\sharp E)$. Hence we have $H^0(B_{\mathrm{nu}} E)^\sharp \cong H^0(B_{\mathrm{nu}}^\sharp E)$ as required.
	\end{proof}

\begin{prop}\label{algebraisation}
	Let $E$ be a nonunital dga and let $\Gamma$ be an ungraded Artinian local algebra. Then there is an isomorphism $$\hom_{\cat{pro}(\cat{Art}_k)}(H^0(B_{\mathrm{nu}}^\sharp E), \Gamma)\cong \hom_{\cat{aug.alg}_k}(H^0(B^*_{\mathrm{nu}}E), \Gamma).$$
	\end{prop}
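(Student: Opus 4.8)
The plan is to push everything down into degree zero and then invoke the duality lemmas already established. Set $D \coloneqq H^0 B_{\mathrm{nu}}E$. As in the proof of \ref{sharpcommuteswithcohomology}, $D$ is a subquotient coalgebra of the coconnective truncation $\tau_{\geq 0}B_{\mathrm{nu}}E$, hence an (ungraded) conilpotent coalgebra, which we may regard as a conilpotent dgc concentrated in degree $0$; its linear dual $D^*$ is then an ungraded augmented algebra.

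First I would rewrite the left-hand side. By \ref{sharpcommuteswithcohomology} there is an isomorphism of pro-Artinian dgas $H^0(B_{\mathrm{nu}}^\sharp E)\cong D^\sharp$, and $D^\sharp$ is a pro-object in ungraded Artinian local algebras (the dual of a finite-dimensional conilpotent coalgebra is Artinian local). Since the inclusion $\cat{pro}(\cat{Art}_k)\hookrightarrow\ubproart$ is full --- it is the procategory of the full subcategory $\cat{Art}_k\subseteq\ubdgart$ --- and $\Gamma\in\cat{Art}_k$, the Hom computed in $\cat{pro}(\cat{Art}_k)$ agrees with the one computed in $\ubproart$. Now \ref{cdlem}(1), applied to the conilpotent dgc $D$ and the Artinian local dga $\Gamma$ (both concentrated in degree $0$), gives $\hom_{\ubproart}(D^\sharp,\Gamma)\cong\hom_{\augdga}(D^*,\Gamma)$, which is $\hom_{\cat{aug.alg}_k}(D^*,\Gamma)$ since $D^*$ and $\Gamma$ are ungraded augmented algebras.

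Second, I would identify $D^*$ with $H^0(B^*_{\mathrm{nu}}E)$. Because the $k$-linear dual is exact, $H^n(X^*)\cong(H^{-n}X)^*$ for every complex $X$; taking $n=0$ and $X=B_{\mathrm{nu}}E$ yields $H^0(B^*_{\mathrm{nu}}E)\cong(H^0B_{\mathrm{nu}}E)^*=D^*$ as augmented algebras. Combining the two chains of isomorphisms gives the stated isomorphism, and every isomorphism involved is natural (in $E$, and in $\Gamma$ among ungraded Artinian algebras), so the result is functorial.

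I do not expect a genuine obstacle here: the work is entirely bookkeeping --- confirming that $H^0B_{\mathrm{nu}}E$ carries a conilpotent coalgebra structure (not merely that of a complex), that the inclusions $\cat{pro}(\cat{Art}_k)\hookrightarrow\ubproart$ and $\cat{aug.alg}_k\hookrightarrow\augdga$ are full so that no Hom-sets change when passing to the ungraded world, and that \ref{cdlem}(1), although stated for dgcs and Artinian local dgas, applies verbatim to objects concentrated in degree $0$.
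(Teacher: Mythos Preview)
Your proposal is correct and follows essentially the same approach as the paper: apply \ref{sharpcommuteswithcohomology} to identify $H^0(B_{\mathrm{nu}}^\sharp E)$ with $(H^0 B_{\mathrm{nu}}E)^\sharp$, then invoke \ref{cdlem}(1), then use exactness of the linear dual to identify $(H^0 B_{\mathrm{nu}}E)^*$ with $H^0(B^*_{\mathrm{nu}}E)$. Your extra care in checking that passing between $\cat{pro}(\cat{Art}_k)$ and $\ubproart$ (and between $\cat{aug.alg}_k$ and $\augdga$) does not change the Hom-sets is a welcome addition, but the argument is otherwise identical to the paper's.
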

\begin{proof}
	First apply \ref{sharpcommuteswithcohomology} to get an isomorphism 
	$$\hom_{\cat{pro}(\cat{Art}_k)}(H^0(B_{\mathrm{nu}}^\sharp E), \Gamma)\cong \hom_{\cat{pro}(\cat{Art}_k)}((H^0B_{\mathrm{nu}} E)^\sharp , \Gamma)$$ and then apply \ref{cdlem}(1) to get an isomorphism $$\hom_{\cat{pro}(\cat{Art}_k)}((H^0B_{\mathrm{nu}} E)^\sharp , \Gamma)\cong \hom_{\cat{aug.alg}_k}((H^0B_{\mathrm{nu}} E)^*,\Gamma).$$The linear dual is exact, so we have $(H^0B_{\mathrm{nu}} E)^*\cong H^0(B^*_{\mathrm{nu}} E)$ and the result follows.
	\end{proof}

\begin{thm}[Representability for ungraded algebras]\label{repug}
Let $A$ be a dga and $X$ an $A$-module. Let $E\coloneqq \R\enn_A(X)$ be the derived endomorphism dga of $X$.  Then the functors $\defm_A(X)\vert_{\cat{Art}_k}$ and $\hom_{\cat{aug.alg}_k}(H^0(B^*_{\mathrm{nu}}E), -)$ are isomorphic.
	\end{thm}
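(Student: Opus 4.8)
The plan is to obtain this as a direct consequence of the $\sset$-valued prorepresentability theorem \ref{proreps}, by restricting the input to ungraded Artinian algebras and passing to $\pi_0$. First I would note that for a discrete Artinian algebra $\Gamma \in \cat{Art}_k$, the derived mapping space $\R\mathrm{Map}_{\ubproart}(B_{\mathrm{nu}}^\sharp E, \Gamma)$ should be homotopy discrete in the relevant sense, so that taking $\pi_0$ loses no information. Concretely, the idea is to factor the restriction through the intermediate categories $\proart$ and $\cat{pro}(\cat{Art}_k)$: by \ref{truncatedproreps} we already have $\sdefm_A(X)\simeq \R\mathrm{Map}_{\proart}(\tau_{\leq 0}B_{\mathrm{nu}}^\sharp E,-)$, and by \ref{trivqa} the functor $H^0 \dashv \iota$ is a Quillen adjunction with $\cat{pro}(\cat{Art}_k)$ carrying the trivial model structure. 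Since $\iota$ is its own right derived functor (all objects fibrant) and the trivial model structure has all maps as weak equivalences, for $\Gamma$ ungraded Artinian we get $\R\mathrm{Map}_{\proart}(\tau_{\leq 0}B_{\mathrm{nu}}^\sharp E,\Gamma)\simeq \R\mathrm{Map}_{\cat{pro}(\cat{Art}_k)}(\mathbb{L}H^0(\tau_{\leq 0}B_{\mathrm{nu}}^\sharp E),\Gamma)$, which is a discrete simplicial set equal to $\hom_{\cat{pro}(\cat{Art}_k)}(\mathbb{L}H^0(\tau_{\leq 0}B_{\mathrm{nu}}^\sharp E),\Gamma)$.

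Next I would identify the derived functor $\mathbb{L}H^0$ applied to $B_{\mathrm{nu}}^\sharp E$. Since $H^0$ is left Quillen, one computes it on a cofibrant resolution; but $B_{\mathrm{nu}}^\sharp E$ is already built from a fibrant coalgebra via $\sharp$, so one should argue as in the proof of \ref{truncatedproreps} that $\mathbb{L}H^0(\tau_{\leq 0}B_{\mathrm{nu}}^\sharp E)$ is weakly equivalent (hence, in the trivial model structure, isomorphic) to $H^0(B_{\mathrm{nu}}^\sharp E)$ itself. This uses that $\Omega B_{\mathrm{nu}}(E)\simeq E\oplus k$ and that $B^\sharp$ and $\circ$ are inverse Quillen equivalences, exactly paralleling the computation of $\mathbb{L}\tau_{\leq 0}$ already carried out. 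Then I invoke \ref{algebraisation}, which gives the isomorphism $\hom_{\cat{pro}(\cat{Art}_k)}(H^0(B_{\mathrm{nu}}^\sharp E), \Gamma)\cong \hom_{\cat{aug.alg}_k}(H^0(B^*_{\mathrm{nu}}E), \Gamma)$. Chaining these together yields $\sdefm_A(X)(\Gamma)\simeq \hom_{\cat{aug.alg}_k}(H^0(B^*_{\mathrm{nu}}E), \Gamma)$ as a discrete simplicial set, functorially in $\Gamma\in\cat{Art}_k$. Finally, applying $\pi_0$ and using \ref{sdefmliftssetdefm} (which gives $\defm_A(X)\cong \pi_0\sdefm_A(X)$) gives the desired isomorphism $\defm_A(X)\vert_{\cat{Art}_k}\cong \hom_{\cat{aug.alg}_k}(H^0(B^*_{\mathrm{nu}}E), -)$.

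An alternative, perhaps cleaner route avoids derived mapping spaces entirely: combine \ref{setdefisdel} (so $\defm_A(X)\cong\del(E)$), then note $\del(E)\cong\pi_0\sdel(E)\cong\pi_0\smc(E)$, and use \ref{mcprorep}, which already states that the \emph{set-valued} functor $\mc(E)$ is prorepresented by $B_{\mathrm{nu}}^\sharp E$ on $\dgart$. The subtlety is that $\del$ is the gauge quotient of $\mc$, and one must check that over ungraded $\Gamma$ the gauge action becomes trivial on $\pi_0$, or equivalently that restricting along $\cat{Art}_k \hookrightarrow \dgart$ and then taking the quotient is compatible with the adjunction to $\cat{aug.alg}_k$. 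In fact I expect the statement falls out most transparently by observing that $\hom_{\ubproart}(B_{\mathrm{nu}}^\sharp E,\Gamma)$ for ungraded $\Gamma$ factors through $H^0$ and hence through \ref{algebraisation}, and that the gauge relations disappear precisely because $\Gamma$ is concentrated in degree zero so that there are no nontrivial $1$-simplices in $\smc(E)(\Gamma)$ beyond those detected by inner automorphisms, which here are absent since we have not yet rigidified.

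The main obstacle I anticipate is the bookkeeping around $\mathbb{L}H^0$ and the claim that the relevant derived mapping space is genuinely discrete: one needs to be careful that the Quillen adjunction $H^0 \dashv \iota$ of \ref{trivqa}, combined with the fact that $\iota$ preserves weak equivalences but is generally \emph{not} left Quillen, really does let one replace $\R\mathrm{Map}_{\proart}(-,\Gamma)$ by a discrete hom-set when $\Gamma$ is discrete — this is where the trivial model structure on $\cat{pro}(\cat{Art}_k)$ does the work, but it should be spelled out. Everything else is a matter of assembling \ref{proreps}, \ref{truncatedproreps}, \ref{algebraisation}, \ref{sharpcommuteswithcohomology}, and \ref{sdefmliftssetdefm} in the correct order.
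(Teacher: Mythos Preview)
Your main approach is correct and is exactly the paper's proof: start from \ref{truncatedproreps}, pass along the Quillen adjunction of \ref{trivqa}, land in the trivial model structure on $\cat{pro}(\cat{Art}_k)$, take $\pi_0$, and finish with \ref{algebraisation} and \ref{sdefmliftssetdefm}. The only place you make extra work for yourself is the computation of $\mathbb{L}H^0$: you propose to redo the cofibrant-resolution argument of \ref{truncatedproreps}, but this is unnecessary. The proof of \ref{trivqa} already shows that $H^0$ preserves weak equivalences in the Pridham model structure (a weak equivalence in $\proart$ is by definition an $H^n$-isomorphism for all $n$), so $H^0$ is its own left derived functor. The paper uses this observation directly.

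Your alternative route, however, contains a genuine error. For ungraded $\Gamma$ the gauge group $\ggr(E)(\Gamma)=1+(E\otimes\mathfrak{m}_\Gamma)^0 = 1 + E^0\otimes\mathfrak{m}_\Gamma$ is typically nontrivial, and its action on $\mc(E)(\Gamma)$ is not trivial either; so $\del(E)(\Gamma)\neq\mc(E)(\Gamma)$ in general. What is true is that the \emph{homotopy} hom $\hom_{\mathrm{Ho}(\ubproart)}(B_{\mathrm{nu}}^\sharp E,\Gamma)$, not the underived hom of \ref{mcprorep}, agrees with $\hom_{\cat{aug.alg}_k}(H^0(B^*_{\mathrm{nu}}E),\Gamma)$: the gauge relations on the $\mc$ side become the homotopy relations on the pro-Artinian side, and these are absorbed only once you pass to $\cat{pro}(\cat{Art}_k)$ with its trivial model structure. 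Put differently, the underived hom in $\cat{pro}(\cat{Art}_k)$ corresponds to $\del$, not $\mc$. So the ``cleaner'' route that tries to bypass derived mapping spaces via \ref{mcprorep} cannot work without reintroducing exactly the homotopical machinery you were trying to avoid.
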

\begin{proof}
		Let $\Gamma$ be an ungraded Artinian algebra. Then \ref{truncatedproreps} gives a weak equivalence $$\sdefm_A(X)(\Gamma)\simeq\R\mathrm{Map}_{\proart}(\tau_{\leq 0}B_{\mathrm{nu}}^\sharp E, \Gamma).$$Applying the Quillen adjunction of \ref{trivqa}, and noting that $H^0$ is its own left derived functor by definition of the Pridham model structure \ref{proartmodel}, we get weak equivalences  $$\sdefm_A(X)(\Gamma)\simeq\R\mathrm{Map}_{\proart}(\tau_{\leq 0}B_{\mathrm{nu}}^\sharp E, \Gamma)\simeq \R\mathrm{Map}_{\cat{pro}(\cat{Art}_k)}(H^0(B_{\mathrm{nu}}^\sharp E), \Gamma).$$Taking $\pi_0$ of this weak equivalence gets us isomorphisms
		$$\pi_0\sdefm_A(X)(\Gamma)\cong \hom_{\mathrm{Ho}(\cat{pro}(\cat{Art}_k))}(H^0(B_{\mathrm{nu}}^\sharp E), \Gamma) \cong \hom_{\cat{pro}(\cat{Art}_k)}(H^0(B_{\mathrm{nu}}^\sharp E), \Gamma).$$By \ref{sdefmliftssetdefm}, we have an isomorphism $\pi_0\sdefm_A(X)(\Gamma)\cong \defm_A(X)(\Gamma)$, and by \ref{algebraisation}, we have an isomorphism $\hom_{\cat{pro}(\cat{Art}_k)}(H^0(B_{\mathrm{nu}}^\sharp E), \Gamma)\cong \hom(H^0(B^*_{\mathrm{nu}}E), \Gamma)$. Putting it all together, we get isomorphisms $$\defm_A(X)(\Gamma)\cong \hom_{\cat{pro}(\cat{Art}_k)}(H^0(B_{\mathrm{nu}}^\sharp E), \Gamma) \cong \hom_{\cat{aug.alg}_k}(H^0(B^*_{\mathrm{nu}}E), \Gamma)$$ as required.
	\end{proof}

When deforming modules over rings, a derived deformation over an ungraded algebra is the same thing as a classical deformation; we briefly recall the classical deformation functor.
\begin{defn}
	Let $A$ be a $k$-algebra and let $X$ be an $A$-module. Let $\Gamma \in \cat{Art}_k$ be an ungraded Artinian algebra. A \textbf{classical deformation} of $X$ over $\Gamma$ is an $A\otimes\Gamma$-module $\tilde X$, flat over $\Gamma$, such that $\tilde X \otimes_\Gamma k \cong X$. An \textbf{isomorphism} of classical deformations is an isomorphism $\phi:\tilde X_1 \to \tilde X_2$ of $A\otimes\Gamma$-modules such that $f_1 = f_2 \circ ({\phi \otimes_\Gamma k})$. The functor of classical deformations of $X$ sends an ungraded Artinian algebra $\Gamma$ to the set $$\defm^\mathrm{cl}_A(X)(\Gamma)\coloneqq \frac{\{\text{classical deformations of }X \text{ over }\Gamma\}}{(\text{isomorphisms})}.$$
	\end{defn}
\begin{prop}\label{cldefs}Let $A$ be a $k$-algebra and let $X$ be an $A$-module. There is a natural isomorphism
	$\defm^\mathrm{cl}_A(X)\cong\defm_A(X)\vert_{\cat{Art}_k}$.
	\end{prop}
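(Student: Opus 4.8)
The plan is to build an explicit natural transformation $\Phi\colon\defm^{\mathrm{cl}}_A(X)\to\defm_A(X)\vert_{\cat{Art}_k}$ and show it is a natural isomorphism, with essentially all the content in surjectivity. For a classical deformation $\tilde X$ of $X$ over an ungraded Artinian local $\Gamma$, flatness of $\tilde X$ over $\Gamma$ identifies the derived base change $\tilde X\lot_\Gamma k$ with the ordinary base change $\tilde X\otimes_\Gamma k$, and composing with the structure isomorphism $\tilde X\otimes_\Gamma k\cong X$ exhibits $(\tilde X,f)$ as a derived deformation. An isomorphism of classical deformations is in particular an isomorphism in $D(A\otimes\Gamma)$ compatible with the chosen base-change identifications, so $\Phi$ is well defined on isomorphism classes; it is manifestly natural in the pair $(A,X)$, and natural in $\Gamma$ because classical and derived base change agree along flat modules.

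Injectivity of $\Phi$ is formal. If $\tilde X_1,\tilde X_2$ are classical deformations and $\phi\colon\tilde X_1\to\tilde X_2$ is an isomorphism in $D(A\otimes\Gamma)$ with $f_1=f_2\circ(\phi\lot_\Gamma k)$, then since both complexes are concentrated in degree $0$ and the functor from $A\otimes\Gamma$-modules into $D(A\otimes\Gamma)$ is fully faithful on the heart, $\phi$ is the image of the module isomorphism $H^0(\phi)\colon\tilde X_1\to\tilde X_2$; the compatibility with $f_1,f_2$ transports across $\otimes_\Gamma k=\lot_\Gamma k$, so this is exactly an isomorphism of classical deformations.

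The main obstacle is essential surjectivity: every derived deformation over an ungraded $\Gamma$ is isomorphic in $D(A\otimes\Gamma)$ to a classical one. Let $(\tilde X,f)$ be such a deformation, write $\mathfrak m=\mathfrak m_\Gamma$, and fix $N$ with $\mathfrak m^N=0$. First I would prove by induction on $i\leq N$ that $\tilde X\lot_\Gamma\Gamma/\mathfrak m^i$ has cohomology concentrated in degree $0$; the base case $i=1$ is the hypothesis $\tilde X\lot_\Gamma k\cong X$. For the inductive step, apply the triangulated functor $\tilde X\lot_\Gamma(-)$ to the short exact sequence $0\to\mathfrak m^i/\mathfrak m^{i+1}\to\Gamma/\mathfrak m^{i+1}\to\Gamma/\mathfrak m^i\to 0$: since $\mathfrak m^i/\mathfrak m^{i+1}$ is a finite-dimensional $k$-vector space as a $\Gamma$-module, the first term of the resulting triangle is a finite direct sum of copies of $\tilde X\lot_\Gamma k\simeq X$, the third term is discrete by induction, and the long exact cohomology sequence then forces $\tilde X\lot_\Gamma\Gamma/\mathfrak m^{i+1}$ to be discrete as well. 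Taking $i=N$ yields a quasi-isomorphism $\tilde X\simeq M$ in $D(A\otimes\Gamma)$ with $M\coloneqq H^0(\tilde X)$ an honest $A\otimes\Gamma$-module. Finally one checks $M$ is $\Gamma$-flat: discreteness of $\tilde X\lot_\Gamma k\simeq M\lot_\Gamma k$ gives $\tor_j^\Gamma(M,k)=0$ for $j>0$, and since $\mathfrak m$ is nilpotent the local criterion for flatness (in the form valid for nilpotent ideals, needing no finiteness hypothesis on $M$) upgrades vanishing of $\tor_1^\Gamma(M,k)$ to flatness of $M$. Then $M\otimes_\Gamma k\cong M\lot_\Gamma k\cong X$, so $(M,f)$ is a classical deformation with $\Phi([M])=[\tilde X]$.

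The only delicate point inside this argument is the last implication, $\tor_1^\Gamma(M,k)=0\Rightarrow M$ flat, which genuinely uses that $\Gamma$ is Artinian (equivalently $\mathfrak m_\Gamma$ nilpotent) and fails over general local rings; everything else is bookkeeping with long exact sequences. One could alternatively route the whole comparison through \ref{setdefisdel}, identifying both functors with the Deligne functor of $E=\R\enn_A(X)$ on ungraded algebras, but the direct filtration argument above is cleaner and self-contained.
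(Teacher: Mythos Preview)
Your proof is correct and follows essentially the same approach as the paper's: construct the map $\Phi$, check injectivity via full faithfulness of the heart inclusion, and for surjectivity show discreteness followed by the local flatness criterion for nilpotent ideals. Your version is in fact more complete than the paper's, which simply asserts without justification that a derived deformation over an ungraded $\Gamma$ must be concentrated in degree zero; your filtration-by-powers-of-$\mathfrak m$ induction supplies exactly the missing step.
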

\begin{proof}
	Let $\Gamma \in \cat{Art}_k$. Let $\tilde X$ be a classical deformation of $X$ over $\Gamma$. It is easy to see that $\tilde X \lot_\Gamma k \cong X$ inside the derived category $D(A\otimes \Gamma)$. Hence $\tilde X$ is a derived deformation of $X$. Moreover, if two classical deformations are isomorphic, they are clearly isomorphic as derived deformations, and hence we obtain a map of sets $\Phi:\defm^\mathrm{cl}_A(X)(\Gamma)\to \defm_A(X)(\Gamma)$. It is injective, because $A\otimes\Gamma \text{-}\cat{Mod}$ embeds in $D(A\otimes \Gamma)$; here is where we are using that $A$ is ungraded. Observe that if $\tilde X\in D(A\otimes \Gamma)$ is a derived deformation of $X$ over $\Gamma$, then it must actually be (quasi-isomorphic to) an $A\otimes\Gamma$-module concentrated in degree zero. Because we have $\tilde X \lot_\Gamma k \simeq X$, we have $\tor^\Gamma_i(\tilde X,k)\cong 0$ for $i>0$. Because $\Gamma$ is Artinian local, this implies Tor-vanishing for all $\Gamma$-modules, and hence $\tilde X$ is a flat $\Gamma$-module. Hence $\tilde X$ is in the image of $\Phi$, and so $\Phi$ is a surjection and thus an isomorphism of sets.
	\end{proof}

Finally, we can deduce a representability theorem for classical deformations:
\begin{thm}[Representability for classical deformations]\label{repclass}
Let $A$ be a $k$-algebra and let $X$ be an $A$-module.	Let $E\coloneqq \R\enn_A(X)$ be the derived endomorphism dga of $X$. Then the functors $\defm^\mathrm{cl}_A(X)$ and $\hom_{\cat{aug.alg}_k}(H^0(B^*_{\mathrm{nu}}E), -)$ are isomorphic.
	\end{thm}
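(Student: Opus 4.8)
This is essentially a corollary of the representability theorem for ungraded algebras, \ref{repug}, together with the comparison between derived and classical deformations, \ref{cldefs}. The plan is to simply combine these two results. First I would invoke \ref{cldefs}, which gives a natural isomorphism of functors $\defm^\mathrm{cl}_A(X)\cong\defm_A(X)\vert_{\cat{Art}_k}$; note that the hypotheses match, since here $A$ is an (ungraded) $k$-algebra and $X$ is an $A$-module, which is exactly the setting of \ref{cldefs}. Then I would apply \ref{repug}, which — again for $A$ a $k$-algebra, hence in particular a dga — gives a natural isomorphism $\defm_A(X)\vert_{\cat{Art}_k}\cong\hom_{\cat{aug.alg}_k}(H^0(B^*_{\mathrm{nu}}E), -)$, where $E\coloneqq \R\enn_A(X)$. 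Composing these two natural isomorphisms yields the claimed isomorphism $\defm^\mathrm{cl}_A(X)\cong\hom_{\cat{aug.alg}_k}(H^0(B^*_{\mathrm{nu}}E), -)$.

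The only point requiring a word of care is that \ref{repug} is stated for an arbitrary dga $A$, whereas here we want $A$ to be an ungraded $k$-algebra; but a $k$-algebra is precisely a dga concentrated in degree zero, so the hypotheses of \ref{repug} are satisfied, and moreover this is exactly the extra hypothesis needed for \ref{cldefs} to apply (the proof of \ref{cldefs} uses that $\cat{Mod}\text{-}(A\otimes\Gamma)$ embeds in $D(A\otimes\Gamma)$, which requires $A$ ungraded). There is no genuine obstacle here: the substance is entirely contained in \ref{repug} (which in turn rests on \ref{truncatedproreps}, \ref{trivqa}, \ref{algebraisation}, and \ref{sdefmliftssetdefm}) and in \ref{cldefs}. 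The proof is therefore a one-line assembly:

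\begin{proof}
	By \ref{cldefs} there is a natural isomorphism $\defm^\mathrm{cl}_A(X)\cong\defm_A(X)\vert_{\cat{Art}_k}$, since $A$ is a $k$-algebra. Since a $k$-algebra is in particular a dga, \ref{repug} applies and yields a natural isomorphism $\defm_A(X)\vert_{\cat{Art}_k}\cong\hom_{\cat{aug.alg}_k}(H^0(B^*_{\mathrm{nu}}E), -)$. Composing these gives the claim.
\end{proof}

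If one wanted to spell out the intermediate functor, one could remark that the composite sends an ungraded Artinian algebra $\Gamma$ to the set of isomorphism classes of $A\otimes\Gamma$-modules flat over $\Gamma$ reducing to $X$ modulo $\mathfrak m_\Gamma$, and that this is corepresented by the augmented $k$-algebra $H^0(B^*_{\mathrm{nu}}E)$; but for the statement as given, the two-step composition above suffices, and no further argument is needed.
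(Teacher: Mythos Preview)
Your proposal is correct and matches the paper's own proof essentially verbatim: the paper simply says the result follows immediately from \ref{repug} and \ref{cldefs}. Your additional remarks about checking the hypotheses are fine but not strictly needed.
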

\begin{proof}
	This follows immediately from \ref{repug} and \ref{cldefs}.
	\end{proof}

\subsection{Twisted differentials}
We outline in detail the promised correspondence between deformations and twisted differentials sketched in the proof of \ref{setdefisdel}. This will be useful to us soon in \ref{wefunc} where we establish some extra functoriality in our prorepresentability results, and later when we will try to compute with universal prodeformations in \ref{uprodefholimthm}.

\p Let $E$ be  a dga and let $M$ be a right $A$-module. Let $\Gamma$ be an Artinian dga and let $\pi:\Gamma^* \to E$ be a twisting cochain. The dg vector space $M\otimes_k \Gamma$ is an $E$-$\Gamma$-bimodule; let $d$ denote its differential. The underlying graded module of $M\otimes_k\Gamma$ admits an endomorphism $\pi$ defined by $\pi(m\otimes x) = (-1)^{\vert m \vert}\sum_{x=yz}m.\pi(y)\otimes z$. Because $\pi$ was a twisting cochain, the sum $d+\pi$ is a differential on $M\otimes_k\Gamma$. We denote the dg $E$-$\Gamma$-bimodule $(M\otimes_k\Gamma,d+\pi)$ by $M\otimes_\pi\Gamma$ and refer to it as the \textbf{twist of $M$ by $\pi$}.

\begin{rmk}\label{posfdrmk}
	If $\Gamma$ is an Artinian dga, then the categories of locally finite right $\Gamma$-modules and locally finite right $\Gamma^*$-comodules are equivalent, via the linear dual functor. In particular, if $M$ is locally finite, then $M\otimes_\pi \Gamma$ is also locally finite, and dual to the $\Gamma^*$-comodule $M^*\otimes^\pi \Gamma^*$ constructed in \cite[6.2]{positselski}.
\end{rmk}

Let $A$ be a dga, and let $X$ be an $A$-module. Let $\tilde X \to X$ be a cofibrant resolution of $X$ and put $E\coloneqq \enn_A(\tilde{ X}) \simeq \R\enn_A(X)$. Let $\Gamma$ be an Artinian dga and let $f:B_\mathrm{nu}^\sharp \R\enn_A(X) \to \Gamma$ be a morphism in the homotopy category of pro-Artinian dgas. We may as well replace $f$ with a morphism $g:B_\mathrm{nu}^\sharp E \to \Gamma$ in the homotopy category. Applying the linear dual, we obtain a map $g^\circ:\Gamma^* \to B_\mathrm{nu}E$ in the homotopy category of dgcs. Because bar constructions are fibrant, we can represent $g^\circ$ as the homotopy class of a genuine coalgebra map $\Gamma^* \to B_\mathrm{nu}E$. This corresponds to a twisting cochain $\pi: \Gamma^* \to E\oplus k$. Note that $X$ is still a $E\oplus k$-module. We write $ X \lot_f \Gamma \coloneqq \tilde X \otimes_\pi \Gamma$ and refer to it as the\footnote{A priori, this notation is abusive as it depends on both the choice of the cofibrant resolution of $X$ and on the choice of lift of $g^\circ$. However, we will see that up to quasi-isomorphism these choices do not matter.} \textbf{twist of $X$ by $f$}. Note that $ X \lot_f \Gamma$ is an $A$-$\Gamma$-bimodule. It in fact is a deformation of $X$, and moreover we obtain all deformations in this manner: the following is essentially a more explicit version of \ref{setdefisdel}.
\begin{prop}\label{explicitwist}
	Let $A$ be a dga and let $X$ be an $A$-module. Let $\Gamma$ be an Artinian dga. Then the map \begin{align*}
		\hom_{\mathrm{Ho}(\ubproart)}(B_{\mathrm{nu}}^\sharp \R\enn_A(X) , \Gamma) &\to \defm_A(X) (\Gamma)\\
		f &\mapsto [ X \lot_f \Gamma]
	\end{align*}
	is a well-defined bijection. In particular, the twist $ X \lot_f \Gamma$ is well-defined up to isomorphism of derived deformations.
\end{prop}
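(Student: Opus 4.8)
The plan is to assemble the bijection as a composite of the isomorphisms already established, and then track through the explicit formulas to see that the composite is given by the twisting construction. First I would unwind the left-hand side: by \ref{proreps} (or more precisely the chain \ref{mcprorep}, \ref{smcisrmap}, \ref{defismc}), we have a weak equivalence $\sdefm_A(X)\simeq\R\mathrm{Map}_{\ubproart}(B_{\mathrm{nu}}^\sharp\R\enn_A(X),-)$, and taking $\pi_0$ together with \ref{sdefmliftssetdefm} gives a bijection $\hom_{\mathrm{Ho}(\ubproart)}(B_{\mathrm{nu}}^\sharp\R\enn_A(X),\Gamma)\cong\defm_A(X)(\Gamma)$. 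So the content of the proposition is not the existence of \emph{a} bijection but the identification of \emph{this particular} map, $f\mapsto[X\lot_f\Gamma]$, as that bijection (equivalently, as the Efimov--Lunts--Orlov isomorphism of \ref{setdefisdel}).

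The key steps, in order: (1) Replace $f$ with $g:B_\mathrm{nu}^\sharp E\to\Gamma$ where $E=\enn_A(\tilde X)$ for a cofibrant resolution $\tilde X\to X$; this is legitimate since $E\simeq\R\enn_A(X)$ and $B_\mathrm{nu}^\sharp$ preserves weak equivalences, so it induces an isomorphism on homotopy classes of maps into the fibrant--cofibrant-enough $\Gamma$. (2) Dualize via the equivalence $\ubproart\cong\cndgc^\mathrm{op}$ of \ref{sharpprop} and \ref{ubpamodel} to get $g^\circ:\Gamma^*\to B_\mathrm{nu}E$ in $\mathrm{Ho}(\cndgc)$; since bar constructions are fibrant, lift to an honest coalgebra map, and transpose across \ref{barcobaradj} to obtain a genuine twisting cochain $\pi:\Gamma^*\to E\oplus k$, i.e. an element of $\mcs\big((E\oplus k)\otimes\mathfrak m_\Gamma\big)$ via \ref{twnumc} — this is exactly the composite realizing the bijection of \ref{mcprorep} and \ref{smcisrmap} on $\pi_0$. (3) Now invoke \ref{setdefisdel}/\ref{setdefisdel}'s proof idea (and \ref{explicitwist}'s own narrative): a cofibrant deformation of $X$ over $\Gamma$ is precisely a twisted differential on $\tilde X\otimes_k\Gamma$, and the Maurer--Cartan element $\pi$ corresponds under $\mathcal{MC}(E\oplus k)(\Gamma)\to\del(E)(\Gamma)\cong\defm_A(X)(\Gamma)$ to the class of the module $(\tilde X\otimes_k\Gamma,\,d+\pi)=\tilde X\otimes_\pi\Gamma=X\lot_f\Gamma$. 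Checking that the gauge action is exactly isomorphism of derived deformations, and that the residue-field reduction recovers $X$, then shows $[X\lot_f\Gamma]$ is a well-defined element of $\defm_A(X)(\Gamma)$ and that $f\mapsto[X\lot_f\Gamma]$ is the bijection. (4) Well-definedness of the twist up to isomorphism of deformations — independence of the cofibrant resolution $\tilde X$ and of the chosen lift of $g^\circ$ — then follows formally: any two choices differ by a weak equivalence of dgas / a homotopy of coalgebra maps, which under \ref{barcobaradj} becomes a gauge equivalence of Maurer--Cartan elements, hence an isomorphism of the resulting deformations.

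The main obstacle I expect is step (3): making precise that the abstract $\pi_0$-bijection coming from the Quillen-equivalence machinery of \S3--\S5 agrees \emph{on the nose} with the hands-on twisted-differential recipe. Concretely, one must verify that the transpose across the bar--cobar adjunction \ref{barcobaradj}, composed with the isomorphism $\mathrm{Tw}(\mathfrak m_\Gamma^*,E)\cong\mcs(E\otimes\mathfrak m_\Gamma)$ of \ref{twnumc}, sends a twisting cochain $\pi$ to the Maurer--Cartan element whose associated module (via the Deligne-functor-to-deformations identification \ref{setdefisdel}) is $\tilde X\otimes_\pi\Gamma$ — i.e. that all the sign conventions and the definition of the endomorphism $\pi(m\otimes x)=(-1)^{|m|}\sum_{x=yz}m.\pi(y)\otimes z$ are exactly what \ref{barcobaradj} produces. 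This is essentially bookkeeping, and Positselski's treatment of twisted modules and comodules \cite[6.2]{positselski} (see \ref{posfdrmk}) together with the proof of \ref{setdefisdel} in \cite[6.1]{ELO} supplies all the needed compatibilities; the remaining well-definedness claims are then purely formal consequences of working in homotopy categories.
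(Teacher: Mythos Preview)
Your proposal is correct and takes essentially the same approach as the paper. The paper's proof is simply a two-line version of your argument: it factors the map through $\del(E)(\Gamma)$, invoking the proof of \ref{smcisrmap} for the bijection $f\mapsto[\pi]$ from $\hom_{\mathrm{Ho}(\ubproart)}(B_{\mathrm{nu}}^\sharp E,\Gamma)$ to $\del(E)(\Gamma)$, and then \cite[6.1]{ELO} for the bijection $[\pi]\mapsto[X\lot_f\Gamma]$ from $\del(E)(\Gamma)$ to $\defm_A(X)(\Gamma)$---exactly your steps (1)--(3), with your step (4) absorbed into the ``well-defined'' clause.
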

\begin{proof}
	As above, let $\pi$ be the associated twisting cochain of $f$. The proof of \ref{smcisrmap} shows that the map $\hom_{\mathrm{Ho}(\ubproart)}(B_{\mathrm{nu}}^\sharp E, \Gamma)  \to \del(E)(\Gamma)$ which sends $f$ to the homotopy class $[\pi]$ is a well-defined bijection. The proof of \cite[6.1]{ELO} shows that the map $\del(E)(\Gamma) \to \defm_A(X)(\Gamma)$ which sends $[\pi]$ to the equivalence class of the deformation $X \lot_f \Gamma$ is a bijection.
\end{proof}
We use this correspondence to give an extra naturality statement to our prorepresentability theorems. Let $f:A \to B$ be a map of dgas. There is an induced adjunction $$L\coloneqq -\lot_A B:D(A)\longleftrightarrow D(B): \R\hom_B(B,-)\eqqcolon R$$between their derived categories which lifts to the level of dg categories. Fix $\Gamma \in \dgart$. If $X$ is an $A$-module, it is not hard to see that we have an induced map $\mathrm{dgDef}_A(X)(\Gamma) \to \mathrm{dgDef}_B(LX)(\Gamma)$, natural in both $\Gamma$ and $X$. Hence we get an induced map of functors $\sdefm_A(X)(\Gamma) \to \sdefm_B(LX)(\Gamma)$. Similarly, if $Y$ is a $B$-module, we get an induced map $\sdefm_B(Y)(\Gamma) \to \sdefm_A(RY)(\Gamma)$. It is easy to see that these maps are natural in $\Gamma$; in other words we have natural transformations \begin{align*}
	Lf: \sdefm_A(X)&\to \sdefm_B(LX)\\
	Rf: \sdefm_B(Y) &\to \sdefm_A(RY).
	\end{align*}

\begin{rmk}
These maps are also natural in $X$ and $Y$ separately, in the following sense. For $A$ fixed, let $\mathrm{Pairs}(A)$ be the category of pairs $(X,\Gamma)$ with $X$ an $A$-module and $\Gamma$ an Artinian dga, and maps the obvious ones. Given $f:A \to B$ there is a base change functor $\mathrm{LPairs}(f):\mathrm{Pairs}(A) \to \mathrm{Pairs}(B)$ sending $(X,\Gamma)$ to $(LX,\Gamma)$. Then $\sdefm_A$ is a functor $\mathrm{Pairs}(A) \to \sset$, and we have a natural transformation $$\sdefm_A \to \sdefm_B \circ \mathrm{LPairs}(f).$$The correct way to encode this data is probably as some sort of 2-functor.
\end{rmk}
Because $L$ and $R$ enhance to dg functors, we get dga maps \begin{align*}
	Lf: \R\enn_A(X)&\to \R\enn_B(LX)\\
	Rf: \R\enn_B(Y)&\to \R\enn_A(RY)
\end{align*}
inducing maps of their associated Deligne functors.
\begin{prop}\label{wefunc}
Let $A$ be a dga and let $X$ be an $A$-module. Let $f:A \to B$ be a dga map and let $Y$ be a $B$-module. Then there are homotopy commutative squares $$\begin{tikzcd} \sdefm_A(X) \ar[r,"Lf"]& \sdefm_B(LX) \\ \R\mathrm{Map}_{\ubproart}(B_{\mathrm{nu}}^\sharp \R\enn_A(X), -)\ar[u,"\simeq"] \ar[r,"\left(B^\sharp_\mathrm{nu}Lf\right)^*"]& \R\mathrm{Map}_{\ubproart}(B_{\mathrm{nu}}^\sharp  \R\enn_B(LX), -)\ar[u,"\simeq"]
	\end{tikzcd}$$\phantom{qq}
$$\begin{tikzcd} \sdefm_B(Y) \ar[r,"Rf"]& \sdefm_A(RY) \\ \R\mathrm{Map}_{\ubproart}(B_{\mathrm{nu}}^\sharp \R\enn_B(Y), -)\ar[u,"\simeq"] \ar[r,"\left(B^\sharp_\mathrm{nu}Rf\right)^*"]& \R\mathrm{Map}_{\ubproart}(B_{\mathrm{nu}}^\sharp  \R\enn_A(RY), -)\ar[u,"\simeq"]
\end{tikzcd}$$

with vertical maps the weak equivalences of \ref{proreps}.
	\end{prop}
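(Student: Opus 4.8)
The plan is to factor the weak equivalence of \ref{proreps} as
$$\sdefm_A(X)\xrightarrow{\ \simeq\ }\smc(\R\enn_A(X))\xrightarrow{\ \simeq\ }\R\mathrm{Map}_{\ubproart}(B_{\mathrm{nu}}^\sharp\R\enn_A(X),-),$$
where the first map is \ref{defismc} and the second is the chain of equivalences constructed in the proof of \ref{smcisrmap}, and then to prove homotopy-commutativity of each of the two squares separately after inserting $\smc(\R\enn_A(X))$ and $\smc(\R\enn_B(LX))$ (resp.\ $\smc(\R\enn_B(Y))$, $\smc(\R\enn_A(RY))$) as an intermediate row, joined by the map $\smc(Lf)$ (resp.\ $\smc(Rf)$) induced by the dga map of the statement.

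For the square between $\smc$ and $\R\mathrm{Map}_{\ubproart}$: every step in the proof of \ref{smcisrmap} --- the levelwise application of \ref{barcobaradj}, the comparison $E\otimes\pdf\oplus\mathrm{const}(k)\simeq(E\oplus k)\otimes\pdf$ coming from \ref{poincare}, the Quillen equivalence $(\Omega,B)$ of \ref{bcquillen}, the Quillen equivalence of \ref{ubpamodel}, and the identification $\Gamma^{*\sharp}\cong\Gamma$ --- is functorial in the nonunital dga argument. Hence $Lf$ (resp.\ $Rf$) induces a morphism of these whole chains, and $B_{\mathrm{nu}}^\sharp$ followed by $\R\mathrm{Map}(-,-)$ applied to $Lf$ is precisely the map $(B_{\mathrm{nu}}^\sharp Lf)^*$ appearing in the statement; this yields homotopy-commutativity (in fact strict commutativity of the underlying diagram of point-set models).

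For the square between $\sdefm$ and $\smc$ I would use the explicit twist picture of \ref{explicitwist}. Choose a cofibrant resolution $\tilde X\to X$ over $A$; since $-\lot_A B$ is left Quillen, $\tilde X\lot_A B$ is cofibrant over $B$ and the map $\enn_A(\tilde X)\to\enn_B(\tilde X\lot_A B)$, $\varphi\mapsto\varphi\otimes_A B$, represents $Lf$. The key point is that base change commutes with twisting: for a twisting cochain $\pi\colon\Gamma^*\to\enn_A(\tilde X)\oplus k$ there is a natural isomorphism of $B$-$\Gamma$-bimodules $(\tilde X\otimes_\pi\Gamma)\lot_A B\cong(\tilde X\lot_A B)\otimes_{(Lf)_*\pi}\Gamma$, where $(Lf)_*\pi$ is obtained by post-composing $\pi$ with $Lf\oplus\id$, because base change alters only the $A$-action on $\tilde X$ and leaves the $\Gamma$-factor of the twisted differential untouched. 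Together with \ref{explicitwist} and the evident naturality of $\smc$ in the dga (the dga map $Lf$ carries the Maurer--Cartan element $d+\xi$ of $\enn_A(\tilde X)\otimes\pdf\otimes\mathfrak{m}_\Gamma$ to $d+Lf(\xi)$, compatibly with the gauge action), this shows the square commutes after applying $\pi_0$. To upgrade to the required homotopy-commutativity one runs the same argument at the dg-categorical level: the functor $L$ induces a map of the cospans of dg categories defining $\mathrm{dgDef}$ in \ref{sdefmdefn}, hence a map $\mathrm{dgDef}_A(X)(\Gamma)\to\mathrm{dgDef}_B(LX)(\Gamma)$ compatible with the Maurer--Cartan/twist identification, and applying $\mathcal{W}$ produces the homotopy-commutative square. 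The case $R=\R\hom_B(B,-)$ is dual: restriction of scalars sends a twisted differential to its restriction, which is twisted by the image of the cochain under $Rf\colon\enn_B(\tilde Y)\to\enn_A(\tilde Y|_A)$ (followed by a cofibrant replacement over $A$), and the identical reasoning applies.

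The main obstacle is exactly this last square: the only genuinely non-formal content is that the equivalence $\sdefm_A(X)\simeq\smc(\R\enn_A(X))$ of \ref{defismc}, which ultimately rests on the obstruction-theoretic comparison \ref{defisdel}, is natural with respect to base change. I expect the cleanest route is the twisted-differential one sketched above, since there the equivalence is realised by an explicit construction that visibly commutes with the $\otimes_k$-level manipulations performed by $L$ and $R$, so the verification becomes a direct unwinding rather than a re-examination of the proof of \ref{defisdel}; one also has \ref{cospanrmk} available, which already records that deformations sit inside $D(A\otimes\Gamma)$ and hence limits how much there is to check.
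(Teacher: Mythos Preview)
Your approach is essentially the same as the paper's: the heart of both arguments is the twist computation $(\tilde X\otimes_\pi\Gamma)\otimes_A B\cong(\tilde X\otimes_A B)\otimes_{(Lf)_*\pi}\Gamma$ (and its dual for $R$), which the paper carries out by noting that the two sides agree as graded objects and then checking the differentials via the identity $(x.\pi(\gamma))\otimes b=(x\otimes b).\rho(\gamma)$. Your factorisation through $\smc$ and the explicit invocation of the functoriality of the steps in \ref{smcisrmap} are more scaffolding than the paper provides --- the paper simply says ``the statement boils down to'' the twist compatibility, implicitly using \ref{explicitwist} --- but the content is the same, and your concern about upgrading from $\pi_0$ to homotopy-commutativity is legitimate and is a point the paper's proof also leaves somewhat implicit.
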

\begin{proof}
	Let $\Gamma$ be an Artinian dga. We begin with the square for $Y$, as it is slightly easier. The statement boils down to showing that if $\pi:\Gamma \to \R\enn_B(Y)\oplus k$ is a twisting cochain, with induced twisting cochain $\rho:\Gamma \to \R\enn_B(Y)\oplus k \to \R\enn_A(RY)\oplus k$, then we have $Rf(Y\otimes_\pi \Gamma)\simeq Y\otimes_\rho \Gamma$. But this is clear: because $R$ is the forgetful functor, the action of $\Gamma$ on $Y$ via $\rho$ is precisely the action via $\pi$, but where we forget the $B$-module structure and just view it as an $A$-module. For the $X$ square, we can argue similarly: if $\pi:\Gamma \to \R\enn_A(X)\oplus k$ is a twisting cochain, with induced twisting cochain $\rho:\Gamma \to \R\enn_A(X)\oplus k \to \R\enn_B(LX)\oplus k$, we need to show that $Lf(X\otimes_\pi \Gamma) \simeq (LX) \otimes_\rho\Gamma$. To do this, recall that we may assume that $X$ is cofibrant, and this then reduces to showing that $\left(X\otimes_\pi \Gamma\right)\otimes_AB \simeq \left(X\otimes_AB\right) \otimes_\rho\Gamma$. As graded objects, these two are identical, so we need only worry about the differential. Working out the differential, we see that it suffices to show that $\left(x.\pi(\gamma) \right)\otimes b = (x\otimes b).\rho(\gamma) $ for all $x,\gamma, b$. But this follows easily from the definition of $\rho$.
	\end{proof}

\section{Framed deformations}
Let $A$ be a dga and let $X$ be an $A$-module. By \ref{proreps}, the functor of deformations of $X$ is prorepresented by the pro-Artinian dga $B_{\mathrm{nu}}^\sharp E$, where $E\coloneqq \R\enn_A(X)$. If  $E$ happens to be augmented, does the functor prorepresented by $B^\sharp E$ admit a deformation-theoretic interpretation? In this section we will show that when deforming a one-dimensional module over a ring, one can interpret $\R\mathrm{Map}_{\cat{pro}(\cat{dgArt}_k)}(B^\sharp E, -)$ in terms of rigidified deformations: the data we will need to add to deformations to rigidify will be that of a framing. A framed deformation of a module $X$ over a dga $A$ is essentially a deformation of $X$ that restricts to the trivial deformation of the underlying vector space of $X$. We give a prorepresentability result for framed deformations of a point, as well as a concrete identification of the set-valued functor of framed deformations. Restricting our prorepresentability result to the classical case, we obtain a new proof of a representability result of Segal. In the final part, we give an application involving Braun--Chuang--Lazarev's derived quotient, and give a new proof of a representability theorem of Donovan--Wemyss.

\subsection{Framings}
In this part we will investigate framed deformations in generality; in the next part we will restrict to one-dimensional modules to get a prorepresentability statement. 

\p Let $A$ be a dga and let $X$ be an $A$-module. Via the forgetful functor from $A$-modules to vector spaces, a deformation of the $A$-module $X$ gives rise to a deformation of the vector space $X$; this gives us a natural transformation $\sdefm_A(X) \to \sdefm_k(X)$. Observe that the functor $\sdefm_k(X)$ is pointed by the trivial deformation.
\begin{defn}\label{framingdefsset}
	Let $A$ be a dga and let $X$ be an $A$-module. The functor of \textbf{framed deformations} of $X$ is the homotopy fibre $$\frmdef_A(X)\coloneqq \mathrm{hofib}\left(\sdefm_A(X) \to \sdefm_k(X)\right).$$
	\end{defn}
	In other words, one restricts to those deformations of $X$ which are trivial deformations of the underlying dg-vector space. As for the unframed case, we would like to check that $\pi_0\frmdef_A(X)$ has a natural interpretation as `framed deformations modulo framed isomorphisms'. We start by defining the desired quotient functor.
	\begin{defn}\label{framingdefset}
		Let $A$ be a dga and let $X$ be an $A$-module. Let $\Gamma$ be a connective Artinian dga. Let $\tilde X$ be a deformation of $X$ over $\Gamma$. A \textbf{framing} of $\tilde X$ is a quasi-isomorphism $\nu:U(\tilde X) \to X\lot_k\Gamma$, where $U:D(A\otimes \Gamma) \to D(k \otimes \Gamma)$ is the forgetful functor. A \textbf{framed deformation} of $X$ is a pair $(\tilde X, \nu)$ consisting of a deformation and a framing. A \textbf{framed isomorphism} $F:(X,\nu_X) \to (Y,\nu_Y)$ is an isomorphism $F:X \to Y$ of deformations satisfying $\nu_X=\nu_Y\circ UF$. The set-valued functor of framed deformations of $X$ is $$\frmdefset_A(X)(\Gamma)\coloneqq \frac{\{\text{framed deformations of }X \text{ over }\Gamma\}}{(\text{framed isomorphisms})}.$$
	\end{defn}
	Our proof that $\pi_0\frmdef_A(X)\cong \frmdefset_A(X)$ will reduce to the case of groupoid-valued functors, where the situation is better understood. We will need a few facts about the homotopy theory of groupoids, which we take from \cite[\S6]{stricklandgpds}.
	\begin{prop}The category of groupoids admits a model structure where the weak equivalences are the equivalences of categories, the fibrations are the isofibrations, and the cofibrations are the functors injective on objects.
	\end{prop}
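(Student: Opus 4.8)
The statement to prove is that the category of groupoids admits a model structure with the stated weak equivalences (equivalences of categories), fibrations (isofibrations), and cofibrations (functors injective on objects). This is the "canonical" or "natural" model structure on $\grpd$.

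The plan is to verify the model category axioms directly, or better, to transfer the structure from a known model category. The cleanest route is to observe that $\grpd$ is equivalent to a localisation or a reflective subcategory of $\cat{Cat}$, and that $\cat{Cat}$ carries the analogous "folk" model structure (weak equivalences = equivalences of categories, fibrations = isofibrations, cofibrations = injective-on-objects functors), whose existence is classical (Joyal–Tierney, Rezk). Since the inclusion $\grpd \hookrightarrow \cat{Cat}$ has both adjoints (the left adjoint inverting all morphisms, i.e. the fundamental groupoid functor, and it is also reflective via the maximal subgroupoid as a right adjoint — actually the core functor is right adjoint to the inclusion), and since a groupoid map is an equivalence / isofibration / injective-on-objects precisely when it is so in $\cat{Cat}$, the model structure on $\cat{Cat}$ restricts. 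Concretely: limits and colimits in $\grpd$ are computed as in $\cat{Cat}$ when they land in $\grpd$, and $\grpd$ is complete and cocomplete (it is a reflective and coreflective subcategory of the complete cocomplete $\cat{Cat}$); the three distinguished classes of maps are each detected by the inclusion into $\cat{Cat}$; and the two-out-of-three, retract, lifting, and factorisation axioms then follow from the corresponding axioms in $\cat{Cat}$ because a factorisation of a map of groupoids through an object of $\cat{Cat}$ automatically factors through $\grpd$ (the intermediate category receives a map from a groupoid and maps to a groupoid, but one checks the relevant intermediate objects in the standard factorisations of $\cat{Cat}$ are already groupoids — e.g. the mapping-path-object factorisation of a functor between groupoids uses only invertible 2-cells).

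The key steps, in order, are: (i) recall the folk model structure on $\cat{Cat}$ and its explicit (co)fibration description, citing e.g. \cite{stricklandgpds}; (ii) note $\grpd$ is bicomplete, being closed under limits and colimits computed in $\cat{Cat}$ up to the reflector/coreflector, or alternatively observe small limits of groupoids are groupoids and small colimits exist since $\grpd$ is a full reflective subcategory of the cocomplete $\cat{Cat}$; (iii) verify that weak equivalences, fibrations, cofibrations of groupoids coincide with those of the underlying functors in $\cat{Cat}$ — this is immediate for equivalences and injective-on-objects maps, and for isofibrations one notes every morphism of a groupoid is an isomorphism so the lifting condition is unchanged; (iv) deduce two-out-of-three and closure under retracts from $\cat{Cat}$; (v) check the two factorisation axioms by showing the standard functorial factorisations in $\cat{Cat}$, when applied to a morphism of groupoids, produce groupoids as intermediate objects — here one uses that the (cofibration, trivial fibration) and (trivial cofibration, fibration) factorisations can be built from the path groupoid $X^{\mathbb{I}}$ where $\mathbb{I}$ is the walking isomorphism, and $X^{\mathbb{I}}$ is a groupoid when $X$ is; (vi) similarly check the lifting axioms, which transfer verbatim since the lifting diagrams live in $\grpd \subseteq \cat{Cat}$.

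The main obstacle is step (v): one must be careful that the factorisations are genuinely internal to $\grpd$ rather than merely inherited set-theoretically. The cleanest fix is to use that $\grpd$ is a coreflective subcategory of $\cat{Cat}$ via the core (maximal subgroupoid) functor, which is right adjoint to the inclusion, so $\grpd$ inherits limits; and it is reflective via the localisation functor $\mathrm{Ho}\colon\cat{Cat}\to\grpd$ inverting all morphisms, which is left adjoint to the inclusion, so $\grpd$ inherits colimits — hence $\grpd$ is bicomplete. Then, rather than transferring abstractly, one can exhibit the generating (trivial) cofibrations explicitly: the cofibrations are generated by $\emptyset \to \ast$ and by the inclusion of the two-object discrete groupoid into the walking isomorphism, and the trivial cofibrations by $\ast \to \mathbb{I}$; cofibrant generation then yields the factorisations via the small object argument, and the resulting model structure has the claimed description by inspection. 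Since the paper only needs to cite this as background (referring to \cite[\S6]{stricklandgpds}), a proof sketch along these lines suffices: \emph{this is standard and a full treatment may be found in \cite[\S6]{stricklandgpds}; one transfers the folk model structure on $\cat{Cat}$ along the inclusion $\grpd\hookrightarrow\cat{Cat}$, using that $\grpd$ is bireflective and that the standard factorisations preserve groupoids.}
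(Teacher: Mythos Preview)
Your proposal is correct, and in fact goes considerably further than the paper does. The paper states this proposition without proof, simply citing \cite[\S6]{stricklandgpds} as its source (the sentence immediately preceding the proposition reads ``We will need a few facts about the homotopy theory of groupoids, which we take from \cite[\S6]{stricklandgpds}''). So your sketch via transfer from the folk model structure on $\cat{Cat}$, or via explicit generating (trivial) cofibrations, is genuine added value; the paper treats this as background and does not attempt to justify it.
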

	\begin{prop}\label{grpdhfiblem}
		Suppose that $* \to B$ is a pointed groupoid, and $F: A \to B$ is a functor between groupoids. The homotopy fibre of $F$ is the groupoid with objects the pairs $(a,u)$ with $a \in A$ and $u:Fa \to * \ $, and morphisms $(a,u) \to (a',u')$ those maps $v:a \to a'$ such that $u=u'\circ Fv$.
	\end{prop}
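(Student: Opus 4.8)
The statement to prove is the explicit description of the homotopy fibre of a functor $F:A\to B$ of groupoids over a pointed object $*\to B$. The plan is to verify directly that the groupoid $\mathcal{H}$ described — objects the pairs $(a,u)$ with $a\in A$ and $u\colon Fa\to *$ an arrow of $B$, morphisms $(a,u)\to(a',u')$ those $v\colon a\to a'$ in $A$ with $u=u'\circ Fv$ — sits in a homotopy pullback square
$$\begin{tikzcd} \mathcal{H} \ar[r]\ar[d] & * \ar[d] \\ A \ar[r,"F"] & B\end{tikzcd}$$
with the model structure on $\grpd$ in which weak equivalences are equivalences of categories, fibrations are isofibrations, and cofibrations are the object-injective functors. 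Since the homotopy fibre is computed as an ordinary pullback against a fibrant replacement of $*\to B$, the first step is to replace $*\to B$ by an isofibration: take the standard path object construction, i.e. let $P$ be the groupoid whose objects are arrows $u\colon b\to *$ of $B$ (equivalently, objects $b\in B$ together with such a $u$; this is just $B/{*}$, which is a groupoid since $B$ is) with morphisms the evident commuting triangles, and the projection $P\to B$ sending $(b,u)\mapsto b$. One checks $P\simeq *$ (the forgetful functor $P\to *$ is an equivalence because $B$ is a groupoid, so each $b$ receives a — up to unique iso — arrow to $*$) and that $P\to B$ is an isofibration (given $b'\cong Fb$... more precisely, given an object $(b,u)\in P$ and an isomorphism $b\cong b''$ in $B$, transport $u$ along it).

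The second step is to compute the strict pullback $A\times_B P$: its objects are pairs $(a,(b,u))$ with $Fa=b$, i.e. pairs $(a,u)$ with $u\colon Fa\to *$, and its morphisms $(a,u)\to(a',u')$ are pairs $(v,w)$ with $v\colon a\to a'$ in $A$, $w\colon u\to u'$ in $P$ over $Fv$ — but a morphism in $P=B/{*}$ over $Fv$ is precisely a commuting triangle, i.e. the single condition $u=u'\circ Fv$. So $A\times_B P$ is exactly the groupoid $\mathcal{H}$ of the statement. The third and final step is to note that, since $P\to B$ is a fibration and $P\to *$ is a weak equivalence, the strict pullback $A\times_B P$ is a model for the homotopy fibre $\mathrm{hofib}(F)$; this is the standard fact that homotopy pullbacks may be computed by replacing one leg by a fibration between fibrant objects (every groupoid is fibrant here), which is exactly the recipe already invoked elsewhere in the paper.

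The only genuinely fiddly point — and the one I would expect to be the main obstacle to writing cleanly rather than mathematically — is pinning down that $P\to B$ is an isofibration and that $P\to *$ is an equivalence, together with the bookkeeping identifying morphisms of $B/{*}$ lying over a fixed $Fv$ with the single equation $u=u'\circ Fv$. All of this is elementary category theory, but it must be stated with enough care that the reader sees the pullback $A\times_B P$ really is the homotopy fibre and not merely the strict one; I would handle it by explicitly writing out the path-object replacement of $*$ and citing \cite[\S6]{stricklandgpds} for the model structure and for the fact that homotopy fibres are computed via fibrant replacement of the map $*\to B$. No characteristic or finiteness hypotheses enter, and the argument is purely formal.
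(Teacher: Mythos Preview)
Your proposal is correct and is the standard argument. The paper itself does not give a proof of this proposition: it simply records it as a fact taken from \cite[\S6]{stricklandgpds}, so there is nothing to compare against beyond noting that your path-object replacement $P=B/{*}$ is exactly the usual construction one finds there.
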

	The set of connected components of a homotopy fibre of simplicial sets can be computed by taking the homotopy fibre on the level of fundamental groupoids:
	\begin{lem}\label{hofibgrpd}
		Let $Z$ be a pointed simplicial set, and let $f:Y \to Z$ be a map of simplicial sets. Let $\Pi_1$ be the fundamental groupoid functor. Then there is an isomorphism $$\pi_0(\mathrm{hofib}(f))\cong \pi_0(\mathrm{hofib}(\Pi_1f)).$$
	\end{lem}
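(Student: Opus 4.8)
\textbf{Proof plan for Lemma~\ref{hofibgrpd}.} The plan is to reduce the statement to two facts: that $\pi_0$ of a homotopy fibre of simplicial sets only depends on the $1$-truncation (more precisely on the fundamental groupoid together with the basepoint data), and that the fundamental groupoid functor $\Pi_1$ commutes with the formation of homotopy fibres in the relevant sense. First I would recall that for a map $f: Y \to Z$ of simplicial sets with $Z$ pointed by a vertex $z$, the homotopy fibre $\mathrm{hofib}(f)$ fits into a fibration sequence and there is a long exact sequence of pointed sets and groups $\cdots \to \pi_1(Y) \to \pi_1(Z) \to \pi_0(\mathrm{hofib}(f)) \to \pi_0(Y) \to \pi_0(Z)$, where the relevant piece computing $\pi_0(\mathrm{hofib}(f))$ involves only $\pi_1$ and $\pi_0$ of $Y$ and $Z$ (with the action of $\pi_1(Z)$ on the fibre over each component). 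Since $\pi_i$ for $i \leq 1$ of a simplicial set agree with those of its fundamental groupoid, and since $\Pi_1$ preserves the basepoint, the same long exact sequence computes $\pi_0(\mathrm{hofib}(\Pi_1 f))$ via Proposition~\ref{grpdhfiblem}.

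Concretely, I would argue as follows. Replace $f$ by a Kan fibration $f': Y' \to Z$ up to weak equivalence, so that the homotopy fibre is the strict fibre $F' = f'^{-1}(z)$; since $\Pi_1$ sends weak equivalences to equivalences of groupoids and (being a left adjoint composed with a suitable localisation) is compatible with this replacement, it suffices to prove the claim for a fibration. Now $\pi_0(F')$ is the quotient of the set of vertices of $F'$ by the path components, and by the description in Proposition~\ref{grpdhfiblem} the objects of $\mathrm{hofib}(\Pi_1 f')$ are pairs $(y, u)$ with $y$ a vertex of $Y'$ and $u$ a path class $f'(y) \rightsquigarrow z$; two such pairs are identified exactly when there is a path class $y \rightsquigarrow y'$ compatible with $u, u'$. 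One checks that the evident map sending a vertex of $F'$ to the pair $(y, \mathrm{const}_z)$ induces a bijection $\pi_0(F') \xrightarrow{\cong} \pi_0(\mathrm{hofib}(\Pi_1 f'))$: surjectivity uses that $f'$ is a fibration so any path $f'(y) \rightsquigarrow z$ lifts to a path $y \rightsquigarrow y''$ with $y'' \in F'$, and injectivity uses the homotopy lifting property to convert a path in $Y'$ between two fibre vertices (compatible with constant paths downstairs) into a path inside the fibre $F'$.

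The main obstacle — really the only subtlety — is bookkeeping the basepoint and the fibration replacement carefully: one must ensure that the fundamental groupoid of the homotopy fibre (taken as a simplicial set) is \emph{equivalent}, not merely has the same $\pi_0$, to the groupoid-theoretic homotopy fibre of $\Pi_1 f$, because Proposition~\ref{grpdhfiblem} is phrased for the groupoid-level construction. I would handle this by noting that $\Pi_1$ is left Quillen from simplicial sets (Quillen model structure) to groupoids (the model structure recalled just above), hence preserves homotopy pushouts, and dually that the homotopy fibre over a point can be computed in a way compatible with $\Pi_1$ because in both model categories homotopy fibres over a $0$-type/groupoid base are computed by the same pullback-of-a-fibration recipe; alternatively one invokes that $\Pi_1$ preserves the relevant fibration sequences on $\pi_{\leq 1}$. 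Once that compatibility is in hand, applying $\pi_0$ to the weak equivalence $\Pi_1(\mathrm{hofib}(f)) \simeq \mathrm{hofib}(\Pi_1 f)$ and using $\pi_0(\mathrm{hofib}(f)) \cong \pi_0(\Pi_1(\mathrm{hofib}(f)))$ finishes the proof.
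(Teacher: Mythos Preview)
Your second paragraph is a correct proof and follows essentially the same strategy as the paper's: replace $f$ by a fibration and compare connected components of the two homotopy fibres. The paper streamlines one step you do by hand. Rather than comparing $\pi_0(F')$ with the explicit groupoid $\mathrm{hofib}(\Pi_1 f')$ of Proposition~\ref{grpdhfiblem} via your surjectivity/injectivity lifting arguments, the paper observes that $\Pi_1$ preserves \emph{fibrations} as well as weak equivalences (a Kan fibration maps to an isofibration, since paths lift). Hence the factorisation $Y\xrightarrow{\sim}Y'\twoheadrightarrow Z$ is carried by $\Pi_1$ to a factorisation of the same type, and by right properness of both model categories the two homotopy fibres are the \emph{strict} fibres of $f''$ and $\Pi_1 f''$ respectively; comparing $\pi_0$ of those is then immediate. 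Your direct lifting argument accomplishes the same thing but with more work.

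Your third paragraph is unnecessary given that your second already computes both sides, and it contains a wrong turn: that $\Pi_1$ is left Quillen tells you it preserves homotopy \emph{colimits}, which is the opposite of what you need for homotopy fibres. The relevant fact---implicit in your lifting arguments and explicit in the paper---is that $\Pi_1$ preserves fibrations. Once you have that, there is no ``main obstacle'' left, and the claimed equivalence $\Pi_1(\mathrm{hofib}(f))\simeq \mathrm{hofib}(\Pi_1 f)$ (which is in fact stronger than what the lemma asserts) is not needed.
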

	\begin{proof}
		Factorise $f = Y \xrightarrow{f'} Y' \xrightarrow{f''} Z$ into a weak equivalence followed by a fibration. Because $\Pi_1$ preserves weak equivalences and fibrations, the diagram $\Pi_1Y \xrightarrow{\Pi_1f'} \Pi_1Y' \xrightarrow{\Pi_1f''} \Pi_1Z$ is a factorisation of $\Pi_1f$ into a weak equivalence followed by a fibration. Because the classical model structure on simplicial sets is right proper \cite[II.9.6]{goerssjardine} the homotopy fibre of $f$ is weakly equivalent to the fibre of $f''$. Moreover because every groupoid is fibrant, the model structure on groupoids is also right proper, and so the homotopy fibre of $\Pi_1f$ is weakly equivalent to the fibre of $\Pi_1f''$. It is not hard to see that if $g$ is a map of simplicial sets with pointed codomain, then there is an isomorphism $\pi_0\mathrm{fib}(g)\cong \pi_0\mathrm{fib}(\Pi_1g)$. Hence we have isomorphisms $$\pi_0(\mathrm{hofib}(f))\cong \pi_0(\mathrm{fib}(f''))\cong \pi_0(\mathrm{fib}(\Pi_1f''))\cong \pi_0(\mathrm{hofib}(\Pi_1f))$$as required.
	\end{proof}
			\begin{thm}\label{sfrmdefslifts}
				Let $A$ be a dga and let $X$ be an $A$-module. Then there is an isomorphism of functors $\pi_0\frmdef_A(X)\cong \frmdefset_A(X)$.
\end{thm}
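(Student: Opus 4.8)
The strategy is to reduce to the groupoid-valued statement, exactly as in the proof of the unframed analogue \ref{sdefmliftssetdefm}. By \ref{hofibgrpd}, we have $\pi_0(\mathrm{hofib}(f)) \cong \pi_0(\mathrm{hofib}(\Pi_1 f))$ for any map $f$ of simplicial sets with pointed codomain. Applying this to the defining map $f: \sdefm_A(X) \to \sdefm_k(X)$, we are reduced to computing $\pi_0$ of the homotopy fibre of the induced map of fundamental groupoids $\Pi_1 f : \Pi_1\sdefm_A(X) \to \Pi_1\sdefm_k(X)$. Now I would identify these fundamental groupoids concretely: by \ref{sdefmliftssetdefm} (or rather the groupoid-level statement implicit in its proof via \ref{defisdel}), $\Pi_1\sdefm_A(X)$ is equivalent to the groupoid $\mathrm{Def}^{\mathrm{grpd}}_A(X)(\Gamma)$ whose objects are derived deformations of $X$ over $\Gamma$ and whose morphisms are isomorphisms of derived deformations; similarly $\Pi_1\sdefm_k(X)(\Gamma)$ is the groupoid of deformations of the underlying dg-vector space, pointed by the trivial deformation. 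One must also check that under this identification, $\Pi_1 f$ is the functor induced by the forgetful functor $U: D(A \otimes \Gamma) \to D(k \otimes \Gamma)$ on the level of (cores of) deformation groupoids; this is immediate from the construction of $\sdefm$ in \ref{sdefmdefn} as $\mathcal{W}$ of a homotopy fibre product of dg categories, since $\mathcal{W}$ takes homotopy pullbacks to homotopy pullbacks (\ref{wholims}, \ref{cospanrmk}) and $\Pi_1$ of $\mathcal{W}$ of a dg category is the homotopy category.

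\textbf{Computing the homotopy fibre groupoid.} With these identifications in place, I would apply \ref{grpdhfiblem}: the homotopy fibre of $\Pi_1 f$ over the basepoint $*$ (the trivial deformation of the underlying dg-vector space) has as objects the pairs $(\tilde X, \nu)$ where $\tilde X$ is a derived deformation of $X$ over $\Gamma$ and $\nu: U(\tilde X) \to X \lot_k \Gamma$ is an isomorphism in $D(k \otimes \Gamma)$ — i.e.\ precisely a framing in the sense of \ref{framingdefset} — and as morphisms $(\tilde X, \nu) \to (\tilde X', \nu')$ those isomorphisms $F: \tilde X \to \tilde X'$ of deformations with $\nu = \nu' \circ UF$, i.e.\ precisely the framed isomorphisms. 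Hence this homotopy fibre groupoid is (equivalent to) the framed deformation groupoid, and its set of connected components $\pi_0$ is by definition $\frmdefset_A(X)(\Gamma)$. Chaining the isomorphisms together,
\[
\pi_0\frmdef_A(X)(\Gamma) \cong \pi_0\big(\mathrm{hofib}(\Pi_1 f)\big)(\Gamma) \cong \frmdefset_A(X)(\Gamma),
\]
and one checks naturality in $\Gamma$ at each step, which is routine since every construction used (homotopy fibre, $\Pi_1$, the identifications of deformation groupoids) is functorial.

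\textbf{Main obstacle.} The genuinely delicate point is the identification that $\Pi_1 f$, on deformation groupoids, really is induced by the forgetful functor $U$ in the way \ref{framingdefset} assumes — in particular that a framing as defined (a quasi-isomorphism $U(\tilde X) \to X \lot_k \Gamma$) matches the data appearing in \ref{grpdhfiblem} (an isomorphism $Fa \to *$ in the base groupoid). This requires unwinding the definition of the natural transformation $\sdefm_A(X) \to \sdefm_k(X)$ through the homotopy-pullback presentations of both sides and checking it is induced by the evident forgetful dg functor $D_{\mathrm{dg}}(A \otimes \Gamma) \to D_{\mathrm{dg}}(k \otimes \Gamma)$; once that is granted, everything else is a formal consequence of \ref{hofibgrpd} and \ref{grpdhfiblem}. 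A secondary subtlety is making sure the basepoint of $\sdefm_k(X)$ (the trivial deformation) is the one used throughout, so that "framing" and "framed isomorphism" come out with the signs and compositions exactly as in \ref{framingdefset}; this is bookkeeping rather than a real difficulty.
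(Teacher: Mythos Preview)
Your proposal is correct and follows essentially the same route as the paper: reduce to fundamental groupoids via \ref{hofibgrpd}, identify the homotopy fibre groupoid via \ref{grpdhfiblem} as having framed deformations as objects and (homotopy classes of) framed isomorphisms as morphisms, and read off $\pi_0$. The paper is slightly more careful to note that the morphisms in $\Pi_1\sdefm_A(X)$ are \emph{homotopy classes} of isomorphisms rather than isomorphisms on the nose, and correspondingly argues directly that two framed deformations are connected in $\mathscr{H}$ if and only if they are linked by some framed isomorphism; you effectively flag this as your ``main obstacle'', and indeed once that bookkeeping is done the rest is formal.
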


\begin{proof}
	Let $\mathscr{H}$ denote the homotopy fibre $$\mathscr{H}\coloneqq\mathrm{hofib}\left(\Pi_1\sdefm_A(X) \to \Pi_1\sdefm_k(X)\right).$$
	By the definition of $\frmdef_A(X)$ along with \ref{hofibgrpd}, we have an isomorphism $\pi_0\frmdef_A(X)\cong\pi_0\mathscr{H}$. As in \cite{ELO}, the groupoid $\Pi_1 \sdefm_A(X)$ has objects the $A$-deformations of $X$, and the morphisms are certain homotopy classes of isomorphisms of deformations (the precise structure of the homotopies is not important here). Using \ref{grpdhfiblem} it is not hard to see that $\mathscr{H}$ is the groupoid whose objects are the framed deformations, and whose morphisms are the homotopy classes of framed isomorphisms. Certainly if two framed deformations are linked by a framed isomorphism, they are linked in $\mathscr{H}$, and conversely if two framed deformations are linked in $\mathscr{H}$ there must be some framed isomorphism linking them. So it follows that $\pi_0\mathscr{H}$ is isomorphic to $\frmdefset_A(X)$, which is precisely the claim.
\end{proof}

\begin{rmk}
	The groupoid $\mathscr{H}$ appearing in the proof of \ref{sfrmdefslifts} is not the fundamental groupoid of $\frmdef_A(X)$; it is rather just a groupoid with the same $\pi_0$. We could use the equivalence$$\Pi_1\frmdef_A(X)\simeq \Pi_1\mathrm{hofib}\left(\Pi_2\sdefm_A(X) \to \Pi_2\sdefm_k(X)\right)$$ to compute $\Pi_1\frmdef_A(X)$, but this would require a detailed analysis of homotopies.
	\end{rmk}

	\subsection{Prorepresentability}
	Recall that we call a module $X$ over a $k$-dga $A$ \textbf{one-dimensional} if it is concentrated in degree zero and $\dim_k(X^0)=1$; i.e.\ as a $k$-module we simply have $X\cong k$. We specialise our above results to the situation when $X$ is a one-dimensional module, where we can obtain a prorepresentability result for framed deformations. We specialise further to the set-valued and classical deformation functors, where we can recover a result of Segal.
			\begin{lem}\label{extzlem}
		Let $A$ be a connective dga and let $S$ be a one-dimensional $A$-module. Then $\ext^0_A(S,S)\cong k$. In particular, $\R\enn_A(S)$ is augmented.
	\end{lem}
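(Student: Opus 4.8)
The plan is to compute $\ext^\bullet_A(S,S)$ in the range needed by replacing $S$ with a convenient cofibrant resolution and identifying $\ext^0$ with the degree-zero cohomology of the corresponding endomorphism dga. Since $A$ is connective and $S$ is one-dimensional, hence concentrated in degree zero, one can build a cofibrant replacement $\tilde S \to S$ as a semifree $A$-module with generators concentrated in nonpositive degrees: start with a free rank-one generator in degree $0$ mapping isomorphically onto $S^0\cong k$, and kill the kernel of $\tilde S^0 \otimes_k A \to S$ by adding generators in degree $-1$, and so on. Because $A$ is connective, all the generators of $\tilde S$ live in degrees $\le 0$, so $\tilde S$ is a connective complex with $\tilde S^0 = A$ (rank-one free) surjecting onto $S$.

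First I would observe that $\R\enn_A(S)\simeq\enn_A(\tilde S)$, and compute $\enn_A(\tilde S)^0$ and the relevant differentials. A degree-$0$ cocycle in $\enn_A(\tilde S)$ is a chain map $\tilde S \to \tilde S$; restricting to the degree-$0$ part and composing with $\tilde S \onto S$ gives an $A$-linear map $\tilde S^0 \to S$, hence (since $\tilde S^0$ is free of rank one) an element of $S^0\cong k$. This assignment $\enn_A(\tilde S)^0 \to k$ is a ring map (it is evaluation on the top generator followed by the projection, and it respects composition because the projection is $A$-linear), it is surjective (the identity maps to $1$), and a degree-$0$ coboundary maps to $0$ because the projection $\tilde S \onto S$ is a chain map and $S$ has no differential in the relevant degree, so coboundaries vanish after projecting. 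It remains to see that the kernel of $H^0(\enn_A(\tilde S)) \to k$ is zero: a degree-$0$ cocycle $f$ lying in the kernel sends the top generator of $\tilde S$ into $\ker(\tilde S^0 \onto S) \subseteq \tilde S^0$, and because that kernel is the image of the differential out of $\tilde S^{-1}$ (by construction of the resolution, or by connectivity of $S$) one can produce a degree $-1$ map $h$ with $dh = f$; so $[f] = 0$ in $H^0$. This gives $\ext^0_A(S,S)\cong H^0(\R\enn_A(S)) \cong k$.

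The "in particular" statement is then immediate: $E\coloneqq\R\enn_A(S)$ is a dga with $H^0(E)\cong k$, and the unit map $k \to E$ splits the augmentation $E \to H^0(E) \cong k$ (or one simply notes that a dga $E$ with $H^0(E) \cong k$ becomes augmented after passing to a quasi-isomorphic model, e.g.\ by truncating or by using that the composite $k \to E \to \tau_{\ge 0}E \to H^0(E) = k$ is the identity), so $\R\enn_A(S)$ is augmented in the homotopy category of dgas, which is all that is needed since $\R\enn$ is only defined up to quasi-isomorphism.

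The main obstacle is the careful bookkeeping in the surjectivity/injectivity argument for $H^0$: one must make sure the cofibrant resolution $\tilde S$ really can be taken with no generators in positive degree, which is exactly where connectivity of $A$ is used, and one must check that "kernel of $\tilde S^0 \onto S$ equals image of $d\colon \tilde S^{-1}\to \tilde S^0$" — this is automatic from the standard step-by-step construction of a semifree resolution but should be stated explicitly. Everything else is formal.
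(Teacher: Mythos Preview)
Your approach is correct and genuinely different from the paper's. The paper invokes the t-structure on $D(A)$ available because $A$ is connective: since $S$ lies in the heart $\cat{Mod}\text{-}H^0(A)$, one has $\ext^0_A(S,S)\cong\enn_{H^0(A)}(S)$, which embeds in $\enn_k(S)=k$ and hence equals $k$. Your explicit-resolution argument is more elementary (no t-structures) but, as you acknowledge, requires the inductive construction of a null-homotopy for injectivity. You can sidestep that entirely by computing $H^0$ of $\hom_A(\tilde S,S)$ rather than of $\enn_A(\tilde S)$: since your $\tilde S$ has generators only in nonpositive degrees and $S$ sits in degree zero, the complex $\hom_A(\tilde S,S)$ is concentrated in nonnegative degrees with degree-zero part spanned by the projection $p$, so $H^0=k$ immediately --- and one also reads off $\ext^{<0}_A(S,S)=0$. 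That last vanishing is actually needed for your augmentation argument, which contains a small slip: the truncation map runs $\tau_{\geq 0}E\hookrightarrow E$, not $E\to\tau_{\geq 0}E$, and it is a quasi-isomorphism precisely because $H^{<0}(E)=0$; once that is in place, the dga map $\tau_{\geq 0}E\to H^0(E)\cong k$ (quotient of $Z^0(E)$ by $B^0(E)$) furnishes the augmentation on a coconnective model.
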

	\begin{proof}
		The idea is to use the t-structure on $D(A)$, which exists because $A$ is connective. By definition we have an isomorphism $\ext^0_A(S,S)\cong \enn_{D(A)}(S)$. By connectivity, the derived category $D(A)$ admits a t-structure such that the inclusion $\cat{Mod}$-$H^0(A) \into D(A)$ is an equivalence onto the heart, with inverse given by taking zeroth cohomology \cite{hkmtstrs, amiotcluster, kelleryangmutn, kalckyang}. In particular, we have an isomorphism $\enn_{D(A)}(S)\cong \enn_{H^0(A)}(H^0S)$. But $S$ is one-dimensional, hence concentrated in degree zero, so we have an isomorphism $\enn_{D(A)}(S)\cong \enn_{H^0(A)}(S)$. But it is clear that $\enn_{H^0(A)}(S)$ is just $k$; it can be no bigger because $\enn_k(S)$ is just $k$. For the second statement, let $E$ be any model for $\R\enn_A(S)$ and observe that the natural dga map $E\to H^0E \cong \ext^0_A(S,S)\cong k$ is an augmentation. Since it did not matter which model we chose, this is quasi-isomorphism invariant and so we can say that $\R\enn_A(S)$ is augmented.
	\end{proof}

	\begin{thm}[Prorepresentability for framed deformations]\label{prorepfrm}
	Let $A$ be a connective dga and let $S$ be a one-dimensional $A$-module. Let $E\coloneqq \R\enn_A(S)$ be the derived endomorphism dga of $S$. Then there is a weak equivalence $$\frmdef_A(S)\simeq \R\mathrm{Map}_{\ubproart}(B^\sharp E, -).$$
\end{thm}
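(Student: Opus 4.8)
The plan is to compute the homotopy fibre defining $\frmdef_A(S)$ on the side of representing objects, using the prorepresentability statement \ref{proreps} and the explicit Quillen machinery of $\S\ref{modsn}$. By \ref{extzlem}, the dga $E = \R\enn_A(S)$ is augmented, so its Koszul dual $E^!$, the usual bar construction $BE$, and the continuous Koszul dual $B^\sharp E$ all make sense. The key point is that for a one-dimensional module $S$, the map of deformation functors $\sdefm_A(S) \to \sdefm_k(S)$ induced by the forgetful functor corresponds, under \ref{proreps}, to a map of representing pro-Artinian dgas $B_\mathrm{nu}^\sharp\R\enn_k(S) \to B_\mathrm{nu}^\sharp E$. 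Since $S \cong k$ as a $k$-module, $\R\enn_k(S) \simeq k$, and one computes $B_\mathrm{nu}(k) = B(k \oplus k)$, whose continuous Koszul dual is the pro-Artinian dga prorepresenting $\sdefm_k(S) = \sdefm_k(k)$. The functor $\sdefm_k(k)$ is essentially the functor of deformations of a one-dimensional vector space, and its behaviour is governed by the difference between the unital and nonunital bar constructions: the map $E \oplus k \to k \oplus k$ (killing $\bar E$) on nonunital bar constructions exhibits $B^\sharp_\mathrm{nu} E$ as an extension of $B^\sharp_\mathrm{nu}k$ by the `reduced' part $B^\sharp E$.

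The main steps I would carry out are as follows. First, by \ref{proreps} and naturality (\ref{wefunc} applied to the unit-of-adjunction/forgetful maps, or a direct check), identify the natural transformation $\sdefm_A(S) \to \sdefm_k(S)$ with $\R\mathrm{Map}_{\ubproart}(B_\mathrm{nu}^\sharp E, -) \to \R\mathrm{Map}_{\ubproart}(B_\mathrm{nu}^\sharp k, -)$ induced by the augmentation $E \to k$, equivalently by the inclusion of nonunital bar constructions $B_\mathrm{nu} k \into B_\mathrm{nu} E$. Second, since $\R\mathrm{Map}_{\ubproart}(-,-)$ is a derived mapping space in a model category, the homotopy fibre of a map of corepresentable functors is corepresented by the homotopy cofibre (pushout) $B_\mathrm{nu}^\sharp E \sqcup^h_{B_\mathrm{nu}^\sharp k} 0$ in $\ubproart$, where $0 = k$ is the terminal object; dually, on coalgebras via the equivalence $\phi$ of \ref{ubpamodel}, this is a homotopy \emph{fibre} of coalgebras $B_\mathrm{nu} E \to B_\mathrm{nu} k$. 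Third, compute this homotopy (co)fibre explicitly: the point is that $B_\mathrm{nu} E = B(E \oplus k)$ and $B_\mathrm{nu} k = B(k \oplus k)$, and the inclusion $B(k\oplus k) \into B(E\oplus k)$ has, after choosing the right models, homotopy cofibre the reduced bar construction $BE = B(\bar E \oplus k)$ — concretely, $\bar E \oplus k = (E \oplus k)/(k\text{-in-degree-}0)$ recovers $E$ with its augmentation, and the quotient of tensor coalgebras $T^c(\overline{E\oplus k}[1]) \to T^c(\bar E[1])$ computes the cofibre. Hence the homotopy fibre of $\sdefm_A(S) \to \sdefm_k(S)$ is corepresented by $B^\sharp E$, which is exactly the claim.

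The hard part will be step three: making precise the claim that the homotopy cofibre in $\ubproart$ (equivalently homotopy fibre in $\cndgc$) of the map $B^\sharp_\mathrm{nu}k \to B^\sharp_\mathrm{nu}E$ is weakly equivalent to $B^\sharp E$. The subtlety is that homotopy pushouts of pro-Artinian dgas (or homotopy pullbacks of conilpotent coalgebras) must be computed with fibrant/cofibrant replacements, and the naive quotient of tensor coalgebras need not be the homotopy cofibre on the nose. I would handle this by working on the side of augmented dgas via the Quillen equivalence $\Omega \dashv B$ of \ref{bcquillen}: the map $B_\mathrm{nu}k \to B_\mathrm{nu}E$ is $B$ applied to $k \oplus k \to E \oplus k$, and cobar turns this into $\Omega B(k\oplus k) \to \Omega B(E \oplus k) \simeq E \oplus k$; the homotopy cofibre in augmented dgas of a cofibration of cofibrant objects is computed as a pushout, and since $\Omega$ is left Quillen it preserves homotopy pushouts, reducing the computation to the (easy) pushout $k \leftarrow k \oplus k \to E \oplus k$ which is $\bar E \oplus k$, i.e.\ $E$ with the augmentation it already has. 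Translating back through $B$ and $\sharp$ gives $B^\sharp E$. A secondary technical point, worth isolating as a lemma, is the compatibility of \ref{proreps} with the forgetful natural transformation, i.e.\ that the square relating $\sdefm_A(S) \to \sdefm_k(S)$ to the map of representing objects genuinely commutes up to homotopy; this follows from the explicit twisted-differential description of the bijection in \ref{explicitwist} together with \ref{wefunc}, since the forgetful functor $D(A) \to D(k)$ is exactly the $R$-type restriction along the unit $k \to A$.
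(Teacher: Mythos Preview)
Your overall strategy matches the paper's exactly: identify the forgetful map $\sdefm_A(S)\to\sdefm_k(S)$ with the map of representables induced by the augmentation $E\to k$ via \ref{wefunc}, then compute the homotopy fibre on the side of representing objects through the equivalence $\ubproart\simeq\cndgc^\text{op}$. The paper carries this out by observing that $B$ is right Quillen, so the fibration $E\oplus k\to k\oplus k$ in $\augdga$ (with strict fibre $\bar E\oplus k\cong E$) is sent to a homotopy fibre sequence $BE\to B_\mathrm{nu}E\to B_\mathrm{nu}k$ in $\cndgc$, and then one applies $\sharp$ and $\rmap$.

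There is however a genuine variance slip in your execution of step three. Under the contravariant equivalence $\sharp$, the map $B^\sharp_\mathrm{nu}k\to B^\sharp_\mathrm{nu}E$ in $\ubproart$ corresponds to $B_\mathrm{nu}E\to B_\mathrm{nu}k$ in $\cndgc$, not to an inclusion $B_\mathrm{nu}k\into B_\mathrm{nu}E$. Consequently you should be computing the homotopy \emph{fibre} of $B_\mathrm{nu}E\to B_\mathrm{nu}k$, which via the Quillen equivalence transports to the homotopy \emph{pullback} $(E\oplus k)\times^h_{k\oplus k}k$ in $\augdga$; since $E\oplus k\to k\oplus k$ is a surjection this is the strict fibre, namely $\{(e,\lambda):\varepsilon(e)=0\}=\bar E\oplus k\cong E$. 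Your route instead uses that $\Omega$ is left Quillen to compute a homotopy \emph{pushout} $k\leftarrow k\oplus k\to E\oplus k$, which is the wrong (co)limit. Moreover that pushout is not $E$: the two-sided ideal of $E\oplus k$ generated by the idempotent $(1_E,0)$ is all of $E\times 0$, so the quotient collapses to $k$. The fix is simply to use that $B$ is \emph{right} Quillen and take the pullback rather than the pushout, exactly as the paper does.
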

	\begin{proof}
	The idea is that $\bar E \to E \to k$ is a homotopy fibre sequence of dgas. By \ref{extzlem}, $E$ is augmented. Because the augmentation $E \to k$ is surjective, there is a weak equivalence of nonunital dgas $\mathrm{hofib}(E \to k)\simeq \bar E$ where $\bar E$ denotes the augmentation ideal. The homotopy fibre sequence $$\bar E \to E \to k$$ gives us a homotopy fibre sequence $$BE \to B_{\mathrm{nu}}E \to B_{\mathrm{nu}}k$$ of coalgebras, because $B$ is right Quillen and all dgas are fibrant. Applying $\sharp$ now gives us a homotopy cofibre sequence $$B^\sharp_{\mathrm{nu}}k \to B^\sharp_{\mathrm{nu}}E \to B^\sharp E$$ of pro-Artinian dgas, and taking derived mapping spaces now gives a homotopy fibre sequence $$\R\mathrm{Map}_{\ubproart}(B^\sharp E, -)\to \R\mathrm{Map}_{\ubproart}(B^\sharp _{\mathrm{nu}}E, -) \to \R\mathrm{Map}_{\ubproart}(B^\sharp _{\mathrm{nu}}k, -)$$of representable functors. Because $S\cong k$ as $k$-modules, we have $\R\enn_k(S)\simeq k$. By \ref{wefunc}, the weak equivalences of \ref{proreps} assemble into a commutative diagram $$\begin{tikzcd}
	\R\mathrm{Map}_{\ubproart}(B^\sharp _{\mathrm{nu}}E, -) \ar[r]\ar[d,"\simeq"]& \R\mathrm{Map}_{\ubproart}(B^\sharp _{\mathrm{nu}}k, -)\ar[d,"\simeq"] \\
	\sdefm_A(S) \ar[r]& \sdefm_k(S)
	\end{tikzcd}$$where the upper horizontal map is induced by $E \to k$ and the lower horizontal map is the forgetful map. It follows that the homotopy fibres of the rows are weakly equivalent, which is the desired claim.
\end{proof}
\begin{rmk}
	It is not hard to compute that $B^\sharp_\mathrm{nu}k$ is $k\llbracket x \rrbracket$ with $x$ in degree 1, equipped with its obvious pro-Artinian structure. In particular, if $\Gamma$ is connective Artinian then there is a weak equivalence $$\R\mathrm{Map}_{\ubproart}(B^\sharp _{\mathrm{nu}}k, \Gamma)\simeq \R\mathrm{Map}_{\proart}(k, \Gamma)$$by the inclusion-truncation adjunction.
		\end{rmk}
Now we have a prorepresentability result, we can specialise to what happens on $\pi_0$.
\begin{prop}[Set-valued prorepresentability for framed deformations]\label{prorepfrmset}
	Let $A$ be a connective dga and let $S$ be a one-dimensional $A$-module. Let $E\coloneqq \R\enn_A(S)$ be the derived endomorphism dga of $S$. Then there is an isomorphism $$\frmdefset_A(S)\simeq \hom_{\mathrm{Ho}(\ubproart)}(B^\sharp E, -).$$
	\end{prop}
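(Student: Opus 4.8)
The plan is to take $\pi_0$ of the weak equivalence established in Theorem~\ref{prorepfrm} and match both sides with the discrete objects appearing in the statement. On the left, applying $\pi_0$ to $\frmdef_A(S)$ yields $\frmdefset_A(S)$ by Theorem~\ref{sfrmdefslifts}; this works for any connective dga $A$ and any $A$-module $S$, so in particular for a one-dimensional $S$. On the right, applying $\pi_0$ to the derived mapping space $\R\mathrm{Map}_{\ubproart}(B^\sharp E, -)$ gives, by the general identity $\pi_0\R\mathrm{Map}_{\mathcal M}(-,-)\cong \hom_{\mathrm{Ho}(\mathcal M)}(-,-)$ valid in any model category, the hom-set $\hom_{\mathrm{Ho}(\ubproart)}(B^\sharp E, -)$. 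Since $\pi_0$ is functorial and takes weak equivalences of simplicial sets to isomorphisms of sets, the weak equivalence of functors $\frmdef_A(S)\simeq \R\mathrm{Map}_{\ubproart}(B^\sharp E, -)$ of Theorem~\ref{prorepfrm} induces the desired natural isomorphism $\frmdefset_A(S)\cong \hom_{\mathrm{Ho}(\ubproart)}(B^\sharp E, -)$.

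The steps in order are: first, invoke Theorem~\ref{prorepfrm} to obtain the functorial weak equivalence $\frmdef_A(S)(\Gamma)\simeq \R\mathrm{Map}_{\ubproart}(B^\sharp E, \Gamma)$, using that $S$ is one-dimensional (so that $E$ is augmented by Lemma~\ref{extzlem}) and $A$ is connective. Second, apply $\pi_0$ to both sides; since $\pi_0$ sends weak equivalences of simplicial sets to bijections and the weak equivalence is natural in $\Gamma$, we get a natural isomorphism of set-valued functors $\pi_0\frmdef_A(S)\cong \pi_0\R\mathrm{Map}_{\ubproart}(B^\sharp E,-)$. Third, identify the left-hand side using Theorem~\ref{sfrmdefslifts}, which gives $\pi_0\frmdef_A(S)\cong \frmdefset_A(S)$. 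Fourth, identify the right-hand side: by the general fact that $\pi_0$ of a derived mapping space in a model category recovers morphisms in the homotopy category \cite[5.4.9]{hovey}, we have $\pi_0\R\mathrm{Map}_{\ubproart}(B^\sharp E,\Gamma)\cong \hom_{\mathrm{Ho}(\ubproart)}(B^\sharp E,\Gamma)$. Composing these natural isomorphisms yields the statement.

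There is no real obstacle here; the proposition is a formal consequence of the already-proven Theorem~\ref{prorepfrm} together with Theorem~\ref{sfrmdefslifts}, exactly in the way that Proposition~\ref{prorepfrmset} (set-valued, framed) stands to Theorem~\ref{prorepfrm} (simplicial, framed) as Theorem~\ref{repug} and its variants stand to Theorem~\ref{proreps}. The only minor point to be careful about is naturality in $\Gamma$: one should note that all three isomorphisms invoked (from Theorem~\ref{prorepfrm}, from Theorem~\ref{sfrmdefslifts}, and from the $\pi_0$-of-mapping-space identity) are natural in the Artinian dga $\Gamma$, so that their composite is a natural isomorphism of functors $\dgart\to\cat{Set}$, which is what the symbol $\simeq$ in the statement denotes. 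One should also observe that the isomorphism is compatible with the one from Theorem~\ref{repclass}-style truncation arguments, but that compatibility is not needed for the bare statement.
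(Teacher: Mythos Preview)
Your proposal is correct and matches the paper's own proof exactly: take $\pi_0$ of Theorem~\ref{prorepfrm} and invoke Theorem~\ref{sfrmdefslifts} for the left-hand side, together with the standard identification of $\pi_0$ of a derived mapping space with homotopy-category morphisms for the right-hand side. The paper's proof is the one-line version of precisely what you wrote.
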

\begin{proof}
	Take $\pi_0$ of \ref{prorepfrm} and appeal to \ref{sfrmdefslifts}.
	\end{proof}
The functors $\frmdefset$ and $\defm$ are very similar, and in fact the latter is the quotient of the former by inner automorphisms:
\begin{thm}[Set-valued representability up to automorphism]\label{gensegal}
		Let $A$ be a connective dga and let $S$ be a one-dimensional $A$-module. Let $E\coloneqq \R\enn_A(S)$ be the derived endomorphism dga of $S$. Then there is an isomorphism
		$$\defm_A(S)(\Gamma)\cong \frac{\hom_{\mathrm{Ho}(\ubproart)}(B^\sharp E, \Gamma) }{(\text{\normalfont inner automorphisms of }\Gamma)}.$$
	\end{thm}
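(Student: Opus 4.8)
The plan is to compare the two prorepresentability results we already have: Theorem \ref{proreps}, which says $\defm_A(S)(\Gamma)\simeq \R\mathrm{Map}_{\ubproart}(B_{\mathrm{nu}}^\sharp E,\Gamma)$, and Proposition \ref{prorepfrmset}, which says $\frmdefset_A(S)(\Gamma)\cong\hom_{\mathrm{Ho}(\ubproart)}(B^\sharp E,\Gamma)$. Taking $\pi_0$ of Theorem \ref{proreps} and using \ref{sdefmliftssetdefm} gives $\defm_A(S)(\Gamma)\cong\hom_{\mathrm{Ho}(\ubproart)}(B_{\mathrm{nu}}^\sharp E,\Gamma)$, so the statement reduces to producing a natural bijection
$$\frac{\hom_{\mathrm{Ho}(\ubproart)}(B^\sharp E,\Gamma)}{(\text{inner automorphisms of }\Gamma)}\;\cong\;\hom_{\mathrm{Ho}(\ubproart)}(B_{\mathrm{nu}}^\sharp E,\Gamma).$$
First I would recall, as in the proof of \ref{prorepfrm}, the homotopy cofibre sequence of pro-Artinian dgas $B^\sharp_{\mathrm{nu}}k\to B^\sharp_{\mathrm{nu}}E\to B^\sharp E$, coming from the augmentation fibre sequence $\bar E\to E\to k$. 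Taking derived mapping spaces out of this yields a homotopy fibre sequence of simplicial sets, and on $\pi_0$ a (not necessarily exact, but pointed-set-exact) sequence relating the three $\mathrm{Ho}$-hom sets. The point will be that $B^\sharp_{\mathrm{nu}}k$ is (by the remark after \ref{prorepfrm}) the power series algebra $k\llbracket x\rrbracket$ with $x$ in degree $1$, so $\R\mathrm{Map}_{\ubproart}(B^\sharp_{\mathrm{nu}}k,\Gamma)\simeq\R\mathrm{Map}_{\proart}(k,\Gamma)$; its $\pi_0$ is a point, but its $\pi_1$ is the relevant group acting.

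The key step is to identify this $\pi_1$ and the resulting action on $\pi_0$ of the total space. I would argue that $\pi_1\R\mathrm{Map}_{\ubproart}(B^\sharp_{\mathrm{nu}}k,\Gamma)$ is the group of units $(\Gamma^0)^\times$ modulo the appropriate homotopy relation — concretely, the image of $(\Gamma^0)^\times$ inside $\pi_0$ of the gauge group $\ggr(\R\enn_k(S))(\Gamma)$, which since $\R\enn_k(S)\simeq k$ is just $1+\mathfrak m_\Gamma^0$, i.e.\ the units of $\Gamma$ congruent to $1$ mod $\mathfrak m_\Gamma$. Then the connecting map $\pi_1\R\mathrm{Map}(B^\sharp_{\mathrm{nu}}k,\Gamma)\to\pi_0\R\mathrm{Map}(B^\sharp E,\Gamma)=\frmdefset_A(S)(\Gamma)$ becomes an action of this group of units, and standard fibre-sequence bookkeeping (as in \ref{cospanrmk}) gives that $\pi_0\R\mathrm{Map}(B^\sharp_{\mathrm{nu}}E,\Gamma)$ is the quotient of $\frmdefset_A(S)(\Gamma)$ by this action. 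Finally I would unwind, via the twisted-differential description of \ref{explicitwist} and \ref{wefunc}, that the action of a unit $u\in 1+\mathfrak m_\Gamma^0$ on a framed deformation is precisely change of framing by $u$, which on the underlying unframed deformation is conjugation of the twisting cochain, i.e.\ the inner automorphism of $\Gamma$ induced by $u$; since every inner automorphism of a local Artinian $\Gamma$ is induced by such a unit, the quotient by this action is exactly the quotient by inner automorphisms of $\Gamma$.

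The main obstacle I anticipate is making the identification "the action of $\pi_1$ of the fibre = change of framing = inner automorphism" fully rigorous and natural in $\Gamma$. The subtlety is that the group structure on $\pi_1$ of a mapping space, the gauge action, and the conjugation action on twisting cochains all need to be matched up compatibly; this is exactly the kind of place where the difference between the naïve and non-naïve Deligne groupoids (flagged in the remark after \ref{defn} of the Deligne functor) can bite. I would handle this by working at the level of the simplicial Maurer–Cartan / Deligne model $\smc(E)\simeq\sdefm_A(S)$, where the gauge group acts explicitly by $g.x=gxg^{-1}+gd(g^{-1})$, restricting the gauge action of $\ggr(\R\enn_k(S))\subseteq\ggr(E)$ to the framing directions, and checking directly that the induced map on $\pi_0$ of framed deformations is conjugation by the corresponding unit of $\Gamma$; naturality in $\Gamma$ then follows since every construction in sight is functorial in $\Gamma$.
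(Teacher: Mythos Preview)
Your approach is correct and leads to the same conclusion, but the paper takes a more elementary route that sidesteps the obstacle you flag. Rather than analysing the long exact sequence of homotopy groups for the fibre sequence of mapping spaces, the paper passes immediately to the set-valued Deligne functors via the square
\[
\begin{tikzcd}
\frmdef_A(S)\ar[r]\ar[d,"\simeq"] & \sdefm_A(S)\ar[d,"\simeq"]\\
\sdel(\bar E)\ar[r] & \sdel(E)
\end{tikzcd}
\]
and then works directly with the explicit $\mcs/\ggr$ description after taking $\pi_0$. The key observations are that $\mcs(\bar E)(\Gamma)=\mcs(E)(\Gamma)$ (the Maurer--Cartan set lives in degree $1$ of $E\otimes\mathfrak m_\Gamma$, where $\bar E$ and $E$ agree) and that the sequence of gauge groups $\ggr(\bar E)\to\ggr(E)\to\ggr(k)$ is split exact. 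It follows at once that $\del(E)=\del(\bar E)/\ggr(k)$, and then one only has to identify $\ggr(k)(\Gamma)=1+\mathfrak m_\Gamma^0=\Gamma^\times$ and check that its gauge action transports to conjugation on $\Gamma$.

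Your homotopical argument is perfectly viable, and the $\pi_1$ you compute is indeed $1+\mathfrak m_\Gamma^0$; but the identification of the connecting map with the inner-automorphism action is exactly the delicate point you anticipate, and the paper simply avoids it by never leaving the explicit gauge-action formula $g.x=gxg^{-1}+gd(g^{-1})$. What your approach buys is a cleaner conceptual picture (everything comes from one fibre sequence), at the cost of having to match three a priori different group actions; what the paper's approach buys is that no such matching is needed, since the gauge action is the definition of $\del$.
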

\begin{proof}Note that this is a prorepresentability result for unframed deformations. The idea is that an analysis of the set-valued Deligne functor shows that $\del(E)$ is the quotient of $\del(\bar E)$ by inner automorphisms, and the result then follows via known equivalences. Combining \ref{prorepfrm} with the weak equivalence of \ref{defisdel}, we obtain a commutative square
	$$\begin{tikzcd} 
	\frmdef_A(S) \ar[r] \ar[d,"\simeq"] &\sdefm_A(S) \ar[d,"\simeq"] \\
	\sdel(\bar E)\ar[r]& \sdel(E)
	\end{tikzcd}$$where the vertical maps are weak equivalences. Taking $\pi_0$ of this square we get a commutative square $$\begin{tikzcd} 
	\frmdefset_A(S) \ar[r] \ar[d,"\cong"] &\defm_A(S) \ar[d,"\cong"] \\
	\del(\bar E)\ar[r]& \del(E)
	\end{tikzcd}$$with vertical maps isomorphisms, where the identification of the top line is \ref{sfrmdefslifts} and \ref{sdefmliftssetdefm} and the identification of the bottom line is \ref{pi0del}. If $\Gamma$ is a connective Artinian dga, we see that $\mcs(\bar E)(\Gamma) = \mcs(E)(\Gamma)$. Moreover, it is not hard to check that the sequence of gauge groups $\ggr(\bar E) \to \ggr(E) \to \ggr(k)$ is split exact. It follows that $\del(\bar E) \to \del(E)$ is a surjection, and moreover we can identify $\del(E)$ as the quotient of $\del(\bar E)$ by the gauge group $\ggr(k)$. The weak equivalences $\sdel \simeq \R\mathrm{Map}_{\ubproart}(B^\sharp_\mathrm{nu} , -)$ give isomorphisms $\del\cong\hom_{\mathrm{Ho}(\ubproart)}(B^\sharp _{\mathrm{nu}}, -)$, compatible with the action of  $\ggr(k)$, and it follows that we have an isomorphism 
	$$\defm_A(S)\cong \hom_{\mathrm{Ho}(\ubproart)}(B^\sharp E, -) / \ggr(k).$$
	Because $\Gamma$ is connective the units of $\Gamma$ are the same as the units of $\Gamma^0$, and because $\Gamma^0$ is an Artinian local algebra its units are $1+\mathfrak{m}^0_\Gamma$. It follows by definition that $\ggr(k)(\Gamma)\cong \Gamma^\times$. Moreover, $\Gamma^\times$ acts on $\hom_{\mathrm{Ho}(\ubproart)}(B^\sharp E, \Gamma)$ by inner automorphisms, and the result follows.
	\end{proof}
\begin{rmk}
	The homotopy fibre sequence $$\frmdef_A(S) \to \sdefm_A(S) \to \sdefm_k(S)$$ yields a long exact sequence on homotopy groups $$\cdots\to\pi_1\sdefm_k(S)\to\frmdefset_A(S)\to \defm_A(S)\to*$$from which we can immediately see that the natural map $\frmdefset_A(S)\to\defm_A(S)$ is a surjection: in other words, every deformation admits a framing. The proof above identifies the action of  $\pi_1\sdefm_k(S)$ on $\frmdefset_A(S)$ as the gauge action of $\ggr(k)$.
	\end{rmk}
We finish by recovering a theorem of Segal:
	\begin{thm}[{cf.\ \cite[2.13]{segaldefpt}}]\label{mysegal}
	Let $A$ be a $k$-algebra and let $S$ be a one-dimensional $A$-module. Assume that $\ext^1_A(S,S)$ is finite-dimensional. Let $T$ be the $k$-algebra$$T\coloneqq\frac{\hat{T}(\ext^1_A(S,S)^*)}{m^*(\ext^2_A(S,S)^*)}$$where $m$ is the homotopy Maurer--Cartan function (\ref{hmcdef}). Then there is an isomorphism $$\defm^\mathrm{cl}_A(S)(\Gamma)\cong\frac{\hom_{\cat{aug.alg}_k}(T, \Gamma)}{ (\text{\normalfont inner automorphisms of $\Gamma$})}.$$
\end{thm}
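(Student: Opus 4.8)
The plan is to specialise the set-valued prorepresentability result \ref{gensegal} to the case where $A$ is an ungraded algebra and to identify the representing object $H^0(B^\sharp E)$ concretely as the algebra $T$. First I would invoke \ref{gensegal} together with \ref{repclass} (or equivalently \ref{repug} and \ref{cldefs}): since $A$ is a $k$-algebra and $S$ is one-dimensional, $E\coloneqq\R\enn_A(S)$ is augmented by \ref{extzlem}, and \ref{gensegal} gives a natural isomorphism
\begin{equation*}
\defm_A(S)(\Gamma)\cong\frac{\hom_{\mathrm{Ho}(\ubproart)}(B^\sharp E,\Gamma)}{(\text{inner automorphisms of }\Gamma)}
\end{equation*}
for connective Artinian $\Gamma$. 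Restricting the input to ungraded Artinian algebras $\Gamma\in\cat{Art}_k$, and using that $\defm^{\mathrm{cl}}_A(S)\cong\defm_A(S)\vert_{\cat{Art}_k}$ by \ref{cldefs}, it remains to turn the right-hand side into $\hom_{\cat{aug.alg}_k}(T,\Gamma)/(\text{inner automorphisms})$ for the stated $T$.

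The key step is therefore to show $\hom_{\mathrm{Ho}(\ubproart)}(B^\sharp E,\Gamma)\cong\hom_{\cat{aug.alg}_k}(H^0(E^!),\Gamma)$ when $\Gamma$ is ungraded, and then to identify $H^0(E^!)$ with $T$. For the first part I would argue exactly as in \ref{prorepfrmset} and \ref{repug}: taking $\pi_0$ of \ref{prorepfrm} gives $\frmdefset_A(S)\cong\hom_{\mathrm{Ho}(\ubproart)}(B^\sharp E,-)$, and then one runs the truncation/algebraisation machinery of \S5.6 (the analogue of \ref{truncatedproreps}, \ref{trivqa}, \ref{sharpcommuteswithcohomology}, \ref{algebraisation}) with the unital bar construction $BE$ in place of $B_{\mathrm{nu}}E$, to obtain $\hom_{\mathrm{Ho}(\ubproart)}(B^\sharp E,\Gamma)\cong\hom_{\cat{aug.alg}_k}(H^0(B^*E),\Gamma)=\hom_{\cat{aug.alg}_k}(H^0(E^!),\Gamma)$ for ungraded $\Gamma$; here the point is that $H^0$ only sees the degree-zero part so the passage through $\cat{pro}(\cat{Art}_k)$ is clean. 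For the identification $H^0(E^!)\cong T$ I would apply \ref{segalthm}: take a minimal $A_\infty$-model $H$ for $E$, which is $1$-coconnective with $H^0\cong k$ by \ref{extzlem}, with $H^1\cong\ext^1_A(S,S)$ and $H^2\cong\ext^2_A(S,S)$; the finite-dimensionality hypothesis on $\ext^1_A(S,S)$ is exactly what \ref{segalthm} needs, and it yields
\begin{equation*}
H^0(E^!)\cong H^0(H^!)\cong\frac{\hat T(\ext^1_A(S,S)^*)}{m^*(\ext^2_A(S,S)^*)}=T,
\end{equation*}
with $m$ the homotopy Maurer--Cartan function of \ref{hmcdef}. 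One should check that $E^!$ and $H^!$ have the same $H^0$, which follows since the $A_\infty$ Koszul dual preserves quasi-isomorphisms.

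Finally I would observe that the inner-automorphism action of $\Gamma^\times$ on $\hom_{\mathrm{Ho}(\ubproart)}(B^\sharp E,\Gamma)$ transports, under the isomorphism above, to the inner-automorphism action on $\hom_{\cat{aug.alg}_k}(T,\Gamma)$ — this is because the action in \ref{gensegal} comes from the gauge group $\ggr(k)(\Gamma)\cong\Gamma^\times$ acting by conjugation, which is intrinsic and commutes with the functorial identifications — so that passing to quotients gives the desired
\begin{equation*}
\defm^{\mathrm{cl}}_A(S)(\Gamma)\cong\frac{\hom_{\cat{aug.alg}_k}(T,\Gamma)}{(\text{inner automorphisms of }\Gamma)}.
\end{equation*}
I expect the main obstacle to be the bookkeeping in the second step: verifying that the truncation and algebraisation arguments of \S5.6, which were stated for the nonunital bar construction, go through verbatim for the unital $BE$, and — more delicately — checking that the $\Gamma^\times$-action is carried along compatibly through the chain $\hom_{\mathrm{Ho}(\ubproart)}(B^\sharp E,-)\leadsto\hom_{\cat{pro}(\cat{Art}_k)}(H^0(B^\sharp E),-)\leadsto\hom_{\cat{aug.alg}_k}(H^0(E^!),-)$, so that the quotient is formed by the same group acting in the same way on both sides. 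Everything else is a direct appeal to results already established (\ref{gensegal}, \ref{segalthm}, \ref{cldefs}).
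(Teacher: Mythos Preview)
Your proposal is correct and follows essentially the same route as the paper: invoke \ref{gensegal} together with \ref{cldefs} to get $\defm^{\mathrm{cl}}_A(S)(\Gamma)\cong\hom_{\mathrm{Ho}(\ubproart)}(B^\sharp E,\Gamma)/(\text{inner automorphisms})$, run the truncation--algebraisation argument of \ref{repug} (with the unital $BE$ in place of $B_{\mathrm{nu}}E$) to rewrite the hom as $\hom_{\cat{aug.alg}_k}(H^0(E^!),\Gamma)$, and then identify $H^0(E^!)\cong T$ via \ref{segalthm} applied to an $A_\infty$ minimal model of $E$. The compatibility of the $\Gamma^\times$-action that worries you is automatic, since the action is by postcomposition with inner automorphisms of the target $\Gamma$ and is therefore unaffected by any natural identification of the source.
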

\begin{proof}Let $E$ be the derived endomorphism algebra of $S$ and let $E'$ be an $A_\infty$ minimal model for $E$. Then $E' $ satisfies the hypotheses of \ref{segalthm}, and using that along with the quasi-isomorphism invariance of the Koszul dual we see that $T\cong H^0(E^!)$. As in the proof of \ref{repug}, if $\Gamma$ is ungraded then we have an isomorphism $$\hom_{\mathrm{Ho}(\ubproart)}(B^\sharp E, \Gamma) \cong\hom_{\cat{aug.alg}_k}(H^0(E^!), \Gamma)\cong \hom_{\cat{aug.alg}_k}(T, \Gamma).$$
By \ref{cldefs} and \ref{gensegal} we have $$\defm^\mathrm{cl}_A(S)(\Gamma)\cong\hom_{\mathrm{Ho}(\ubproart)}(B^\sharp E, \Gamma) / (\text{inner automorphisms})$$and so we are done.
	\end{proof}

\begin{rmk}
	In particular, if $A$ is noetherian then $S$ admits some finitely generated projective resolution $\tilde S$, from which it is easy to see that $\hom_A(\tilde S, S)$ is finite-dimensional  in each degree. But the cohomology of $\hom_A(\tilde S, S)$ is exactly the Ext-algebra of $S$, and it follows that the conditions of the theorem are satisfied.
	\end{rmk}

\begin{rmk}
	Let $A$ be a $k$-dga and let $X$ be an $A$-module. Say that $X$ is \textbf{homotopy one-dimensional} if $H^*(X)\cong k[0]$. By taking truncations, it is easy to see that a homotopy one-dimensional module is necessarily quasi-isomorphic to a strictly one-dimensional module. In particular, if $A$ is an ungraded $k$-algebra and $X$ is an $A$-module, then $X$ is one-dimensional if and only if it is homotopy one-dimensional. It is not hard to check that all of our above results hold when $S$ is assumed to be homotopy one-dimensional, and not just one-dimensional: one can either rectify by passing to a strictly one-dimensional module, or observe that the proofs work just as well for homotopy one-dimensional modules.
	\end{rmk}

\subsection{The derived quotient}
We explore in more detail the links between one-dimensional modules and idempotents. We will give a deformation-theoretic meaning to Braun--Chuang--Lazarev's derived quotient \cite{bcl}, which is a way to quotient an algebra (or a dga) by an idempotent in a homotopically well-behaved manner.

\p  Let $A$ be a $k$-algebra with an idempotent $e\in A$. Suppose that the quotient $A/AeA$ is a local $k$-algebra with residue field $k$. Let $S$ be the quotient of $A/AeA$ by its radical. Because $A/AeA$ was local with residue field $k$, it follows that $S$ is a one-dimensional $A$-module, since $S$ is isomorphic to the residue field of $A/AeA$ as $A/AeA$-modules. It is easy to see that $A/AeA$ is an augmented $k$-algebra.

\begin{ex}
	Let $Q$ be a finite quiver with relations, and let $A$ be the path algebra of $Q$. Then $A$ comes with a set of primitive orthogonal idempotents $e_i$, one for each vertex of $Q$. By construction, the algebras $A/e_iA$ are local and the corresponding modules $S_i$ are precisely the vertex simple modules of $A$.
	\end{ex}
	
	If $A$ is a dga then we call a dga $B$ with a map $A \to B$ an $A$-dga.
	\begin{defn}[\cite{bcl}]
	Let $A$ be a $k$-algebra and let $e\in A$ be an idempotent. The \textbf{derived quotient} $\dq$ is the universal $A$-dga homotopy annihilating $e$. It is defined up to quasi-isomorphism of $A$-dgas.
\end{defn}
\begin{prop}[{\cite[3.10 and 3.4]{bcl}}]
	The derived quotient exists and is unique up to quasi-isomorphism of $A$-dgas. 
\end{prop}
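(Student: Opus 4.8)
The plan is to reproduce the argument of Braun--Chuang--Lazarev \cite[\S3]{bcl}. The first step is to fix the ambient homotopy theory: let us work in the category of \emph{$A$-dgas}, i.e.\ dgas equipped with a map from $A$. This is a coslice of $\ubdga$, and so it inherits a cofibrantly generated model structure from Hinich's (\ref{hinichmodel}), with weak equivalences the quasi-isomorphisms, by the same slice-category argument used for \ref{hinichhomaug}. Next one must make precise what it means for an $A$-dga $f\colon A\to B$ to \textbf{homotopy annihilate} $e$: following \cite{bcl}, this is the condition that the image $f(e)$ becomes a \emph{contractible idempotent} in $B$, i.e.\ that it admits a coherent system of higher homotopies witnessing $f(e)\simeq 0$ as an idempotent (equivalently, that $e$ generates a homotopically trivial two-sided ideal in $B$). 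The key point is that this is a quasi-isomorphism-invariant condition, so it carves out a full subcategory $\mathcal{Q}_e$ of the homotopy category of $A$-dgas, and the derived quotient $\dq$ is by definition a homotopy-initial object of $\mathcal{Q}_e$: it corepresents the functor of $A$-dgas homotopy annihilating $e$.

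Granting the definition, uniqueness up to quasi-isomorphism of $A$-dgas is formal: homotopy-initial objects, like all objects specified by a homotopy-coherent universal property, are unique up to a weak equivalence which is itself unique up to coherent homotopy. This is the usual uniqueness of homotopy (co)limits, applied inside the model category of $A$-dgas.

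For existence one exhibits an explicit cofibrant model. Replace $A$ by a semifree (hence cofibrant) $A$-dga $\tilde A\xrightarrow{\ \sim\ }A$ and lift the idempotent $e$ to $\tilde A$. Now attach cells to force $e$ to become a contractible idempotent: first adjoin a generator $t$ of degree $-1$ with $dt=e$; this kills $e$ in cohomology but introduces new classes (coming from $t^2$, $et-t$, $te-t$, and their descendants), which one then kills by adjoining further generators in degrees $-2,-3,\dots$, organised as a Koszul-type resolution so that $t$ becomes a coherent contracting datum for the idempotent $e$. Call the colimit of this cell-attachment process $Q$; it is a cofibrant $A$-dga homotopy annihilating $e$ by construction. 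It remains to verify that $Q$ has the required universal property: given any fibrant $A$-dga $C$ in $\mathcal{Q}_e$, the fact that $f(e)$ is a contractible idempotent in $C$ provides exactly the contracting homotopy needed to extend the structure map $A\to C$ over the first cell $t$, and then an obstruction-theoretic induction over the remaining cells produces a map $Q\to C$ under $A$; since the obstruction groups at each stage are computed by the (vanishing) cohomology of the homotopically trivial ideal generated by $e$, the extension exists and the space of such extensions is contractible. This gives both existence and the universal property, hence — by the previous paragraph — uniqueness up to quasi-isomorphism of $A$-dgas.

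The main obstacle is the bookkeeping in the cell-attachment step: one has to choose the higher generators so that $Q$ corepresents precisely the condition ``homotopy annihilates $e$'', and not some a priori weaker or stronger condition, and so that the obstruction to each lifting is genuinely measured by the vanishing cohomology witnessing contractibility of the ideal. Matching up the non-unital algebra generated by $e$ with the unital bookkeeping is exactly the technical heart of \cite[\S3]{bcl}.
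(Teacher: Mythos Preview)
The paper does not give a proof of this proposition at all: it is simply cited from \cite[3.10 and 3.4]{bcl}, with no argument supplied. So there is nothing to compare your proposal against on the paper's side.

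Your sketch is a reasonable outline of the shape of the argument (universal property gives uniqueness formally; existence via an explicit cofibrant model), but it diverges from what \cite{bcl} actually do. There, the derived quotient is not constructed directly by attaching cells to kill $e$; rather it is realised as a special case of \emph{derived localisation}: one has $\dq \simeq \mathbb{L}_{\{1-e\}}(A)$, the derived localisation at the element $1-e$, since homotopy-inverting $1-e$ is equivalent to homotopy-annihilating the idempotent $e$. Existence of derived localisations is then established by general Bousfield-localisation / cofibrant-generation machinery in the model category of $A$-dgas, not by an ad hoc inductive cell attachment with an obstruction argument as you describe. Your ``contractible idempotent'' formulation and the claimed vanishing of obstruction groups are plausible heuristics but are not how the argument is organised in the source, and you would need to do real work to make that route rigorous (in particular, your description of the higher cells and of why the obstruction groups vanish is vague). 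If you want to supply a proof here rather than a citation, it would be cleaner to point to the derived-localisation framework and invoke the general existence theorem there.
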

We summarise the key properties we will need.
\begin{prop}[{\cite[4.15]{bcl}}]\label{dqemb}
	The map $A \to \dq$ induces an embedding of derived categories $D(\dq)\into D(A)$, with image those complexes $X$ such that $eX$ is acyclic.
\end{prop}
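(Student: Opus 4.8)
The plan is to present the embedding $D(\dq)\into D(A)$ as part of a recollement and to extract both assertions directly from the triangle defining the derived quotient. Recall from \cite{bcl} that $\dq$ can be modelled by the homotopy cofibre, in the derived category $D(A^{\mathrm e})$ of $A$-bimodules, of the multiplication map $Ae\lot_{eAe}eA\to A$, and that this cofibre carries an $A$-dga structure for which $A\to\dq$ is the structure map; write
$$Ae\lot_{eAe}eA\longrightarrow A\longrightarrow\dq\xrightarrow{\;+1\;}$$
for the resulting triangle in $D(A^{\mathrm e})$. Let $i_*\colon D(\dq)\to D(A)$ be restriction of scalars along $A\to\dq$, with left adjoint $i^*=(-)\lot_A\dq$, and let $j^*=(-)\lot_A Ae\colon D(A)\to D(eAe)$; since $A\cong Ae\oplus A(1-e)$ as left $A$-modules, $Ae$ is flat over $A$ on the left, so $j^*$ is exact and $j^*X\simeq Xe$ is a model for the functor written $eX$ in the statement. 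Its left adjoint is $j_!=(-)\lot_{eAe}eA$. Only these four functors are needed.

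First I would record two elementary computations. Because $eA$ is a direct summand of $A$ as a right $A$-module, $eA\lot_A Ae\simeq eA\otimes_A Ae=eAe$; hence $j^*j_!\simeq\id$. Applying $(-)\lot_A Ae$ to the defining triangle and using this identification, the left-hand map becomes, under the canonical isomorphism $(Ae\lot_{eAe}eA)\lot_A Ae\simeq Ae\lot_{eAe}eAe\simeq Ae$, the identity of $Ae$; therefore
$$\dq\lot_A Ae\simeq 0.$$
This is the one genuinely load-bearing computation: it says that $\dq$ still kills $e$ after derived base change.

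Next, full faithfulness of $i_*$. Since $i^*\dashv i_*$, fully faithfulness of $i_*$ is equivalent to the counit $i^*i_*\Rightarrow\id$ being invertible, i.e.\ to the multiplication $\dq\lot_A\dq\to\dq$ being an isomorphism of $\dq$-bimodules. Tensoring the defining triangle by $\dq\lot_A(-)$ gives a triangle $(\dq\lot_A Ae)\lot_{eAe}eA\to\dq\to\dq\lot_A\dq$ whose first term vanishes by the previous step, so $\dq\lot_A\dq\simeq\dq$ and $i_*$ is fully faithful; this is the claimed embedding. For the essential image, take $X\in D(A)$ and tensor the defining triangle by $X\lot_A(-)$ to obtain a triangle $(X\lot_A Ae)\lot_{eAe}eA\to X\to i_*i^*X$. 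If $eX\simeq 0$, i.e.\ $X\lot_A Ae\simeq 0$, then $X\simeq i_*i^*X$ lies in the image of $i_*$; conversely, if $X\simeq i_*Y$ with $Y\in D(\dq)$ then $X\lot_A Ae\simeq Y\lot_{\dq}(\dq\lot_A Ae)\simeq 0$. Hence the image of $i_*$ is precisely $\{X\in D(A):eX\simeq 0\}$.

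The main obstacle is not the homological algebra above — which is the short recollement calculation just sketched — but the bookkeeping that legitimises it: one must know that the cofibre triangle defining $\dq$ lives in $D(A^{\mathrm e})$ compatibly with the $A$-dga structure on $\dq$, so that all the derived tensor products make sense, so that $(i^*,i_*)$ really is the adjunction induced by the ring map $A\to\dq$, and so that $\dq\lot_A\dq\simeq\dq$ is an isomorphism of $\dq$-bimodules rather than merely of complexes. That compatibility is exactly what is established in \cite[\S3--4]{bcl}; granting it, the statement follows, and in practice the Proposition is simply quoted as \cite[4.15]{bcl}.
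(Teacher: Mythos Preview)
Your argument is correct and is essentially the standard recollement computation underlying \cite[4.15]{bcl}. The paper itself gives no proof of this proposition at all --- it simply quotes the result from \cite{bcl} --- so there is nothing to compare against; your final paragraph already anticipates this.
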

\begin{prop}[{\cite[3.2.4]{scatsviadq}}]\label{dqcon}
	The derived quotient of an ungraded algebra $A$ by an idempotent $e$ is a connective dga, with $H^0(\dq)\cong A/AeA$.
	\end{prop}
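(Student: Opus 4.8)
The plan is to produce an explicit strictly connective model of $\dq$ as an $A$-dga and then extract both assertions from it. Recall from \cite{bcl} that $\dq$ is the initial $A$-dga homotopy annihilating $e$; I would first rephrase this as a homotopy pushout. Homotopy-annihilating the idempotent $e$ amounts to (coherently) trivialising the image of the ``ground ring of an idempotent'' $k[e]/(e^2-e)\cong k\times k$, so the derived quotient is the homotopy pushout in dgas of the cospan $k\longleftarrow k[e]/(e^2-e)\longrightarrow A$, where the left map sends $e\mapsto 0$ and the right map sends $e\mapsto e$. To compute this, I would replace $k[e]/(e^2-e)\onto k$ by a cofibrant (semifree) resolution $\Lambda\xrightarrow{\simeq}k$ over $k[e]/(e^2-e)$; since the map is already surjective, such a $\Lambda$ can be built by adjoining generators only in strictly negative degrees --- a single degree $-1$ generator $t$ with $dt=e$, together with further generators in degrees $\le -2$ (with differentials lying in the ideal $(e)$) to kill higher cohomology. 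Then $Q\coloneqq A\ast_{k[e]/(e^2-e)}\Lambda$ is a model for $\dq$, and as a graded algebra it is the free $A$-ring on the adjoined generators, hence concentrated in non-positive degrees. This establishes connectivity. (Alternatively, the complex underlying $\dq$ can be identified with $\mathrm{cone}(Ae\lot_{eAe}eA\to A)$, cf.\ \cite{bcl}, which is patently connective.)

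For the zeroth cohomology, since $Q$ is strictly connective we have $H^0(Q)=Q^0/d(Q^{-1})$. Every adjoined generator lies in strictly negative degree, so the only degree-$0$ monomials of $Q=A\langle t,\dots\rangle$ contain no generator, giving $Q^0=A$; a degree-$(-1)$ monomial contains exactly one degree-$(-1)$ generator, and as $d$ annihilates $A$ we get $d(a\,t\,b)=a(dt)b=aeb$. More generally, the differentials of the degree-$(-1)$ generators of $\Lambda$ generate the ideal $(e)$ of $k[e]/(e^2-e)$, hence generate $AeA$ as a two-sided ideal of $A$, so $d(Q^{-1})=AeA$ and $H^0(\dq)\cong A/AeA$. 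One can package this more cleanly: $H^0\colon\dga\to\cat{Alg}$ is left adjoint to the inclusion and preserves quasi-isomorphisms, hence preserves homotopy pushouts, so $H^0(\dq)\cong A\ast_{k[e]/(e^2-e)}k$, the ordinary pushout of algebras, which is $A/AeA$; and since the classical quotient $A\to A/AeA$ is an $A$-dga annihilating $e$ on the nose, the canonical map $\dq\to A/AeA$ realises this isomorphism and matches the algebra structures.

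The step that genuinely needs care is the first one: checking that Braun--Chuang--Lazarev's notion of ``homotopy annihilating $e$'' really is the homotopy pushout above, i.e.\ that trivialising the two-sided ideal $AeA$ derived-ly is the same as coherently trivialising the image of the idempotent subalgebra $k[e]/(e^2-e)$. This is precisely where idempotency of $e$ (rather than the killing of an arbitrary element) enters, since one must control the coherences coming from the relation $e^2=e$. Once that identification is in hand, all the remaining work is the routine degree bookkeeping sketched above; indeed, if one prefers, one can skip the pushout discussion and simply invoke the explicit semifree $A$-dga model for $\dq$ constructed in \cite{bcl} --- of the form $A\langle t,\dots\rangle$ with $t$ in degree $-1$, $dt=e$, and all further generators in negative degrees --- whereupon the proposition reduces to observing that this model is connective with $H^0$ the classical quotient.
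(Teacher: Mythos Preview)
The paper does not give a proof of this proposition at all: it is simply cited as \cite[3.2.4]{scatsviadq}, so there is nothing to compare against on the paper's side. Your proposal is therefore supplying an argument where the paper supplies none.

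Your argument is essentially sound and follows the natural strategy. The identification of $\dq$ with the homotopy pushout $A\ast^{\mathbb L}_{k[e]/(e^2-e)}k$ is correct and is indeed how \cite{bcl} construct it (their derived localisation is defined via exactly such a homotopy pushout, and the derived quotient is the case of inverting $1-e$, which amounts to killing $e$). Once that is in hand, the cofibrant replacement you describe exists for degree reasons and immediately gives connectivity, and your two computations of $H^0$ --- direct bookkeeping with the semifree model, or the slicker argument that $H^0\colon\dga^{\leq 0}\to\cat{Alg}_k$ is left adjoint to the inclusion and hence sends homotopy pushouts to ordinary pushouts --- both work. The second is cleaner and avoids having to track exactly which degree-$(-1)$ generators appear in $\Lambda$.

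One small point of care in the direct computation: you assert $d(Q^{-1})=AeA$, but a priori $Q^{-1}$ contains monomials involving degree-$(-1)$ generators other than $t$ (if any are needed in $\Lambda$), and you should check these land in $AeA$ as well. In fact for $k[e]/(e^2-e)\to k$ only the single generator $t$ with $dt=e$ is needed in degree $-1$, since $H^0$ is already correct after adjoining it; the remaining generators killing higher cohomology live in degrees $\leq -2$. With that observation your direct computation goes through cleanly. Your closing remark --- that one can simply invoke the explicit semifree model from \cite{bcl} --- is in practice what the cited reference does.
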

\begin{rmk}
	The derived quotient is a special case of the derived localisation, which can be constructed much more generally: see \cite{bcl} for the full theory.
\end{rmk}	

The following is a generalisation of an argument given in the proof of \cite[5.5]{kalckyang}.
\begin{prop}\label{kytype}
	Let $A$ be a $k$-algebra with an idempotent $e\in A$. Suppose that $A/AeA$ is a local $k$-algebra with residue field $k$, and let $S$ be the quotient of $A/AeA$ by its radical. Then $\dq$ is augmented, and there is a quasi-isomorphism $\R\enn_A(S)\simeq\dq^!$.
	\end{prop}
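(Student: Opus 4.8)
The plan is to combine \ref{dqcon}, \ref{dqemb}, and \ref{kdisrend} in the evident way, with a small amount of care about augmentations. First I would record that by \ref{dqcon} the dga $\dq$ is connective with $H^0(\dq)\cong A/AeA$, and since $A/AeA$ is local with residue field $k$ the composite $\dq \to H^0(\dq)=A/AeA \to k$ is an augmentation; this handles the first claim. Since $\dq$ is connective and augmented, \ref{kdisrend} applies to the $\dq$-module $k$ (with its augmentation action), giving a natural dga quasi-isomorphism $(\dq)^!\simeq \R\enn_{\dq}(k)$. So it remains to identify $\R\enn_{\dq}(k)$ with $\R\enn_A(S)$.

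The key geometric input is \ref{dqemb}: the map $A \to \dq$ induces a fully faithful embedding of derived categories $D(\dq) \into D(A)$, whose image consists of those complexes $X$ with $eX$ acyclic. Since this embedding lifts to a (quasi-)fully faithful dg functor $D_\mathrm{dg}(\dq) \into D_\mathrm{dg}(A)$, it induces a dga quasi-isomorphism $\R\enn_{\dq}(M) \simeq \R\enn_A(M)$ for any $\dq$-module $M$, where on the right $M$ is regarded as an $A$-module via restriction. Now I would check that the $\dq$-module $k$ corresponds, under this embedding, to the $A$-module $S$. Indeed, the augmentation $\dq \to k$ factors through $H^0(\dq)=A/AeA$, so as an $A$-module $k$ is just $(A/AeA)/\mathrm{rad} = S$; and $eS=0$ since $e$ already annihilates $A/AeA$, so $S$ indeed lies in the image of $D(\dq)$ and is sent to $k$. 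Assembling the chain
$$\R\enn_A(S)\;\simeq\;\R\enn_{\dq}(k)\;\simeq\;(\dq)^!$$
gives the desired quasi-isomorphism, and naturality is clear since every step is natural.

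The main obstacle I anticipate is purely bookkeeping rather than conceptual: one must make sure that the Ext-level fully faithfulness of \ref{dqemb} genuinely upgrades to a dg-level quasi-isomorphism of derived endomorphism dgas $\R\enn_{\dq}(k)\simeq \R\enn_A(S)$, i.e.\ that the embedding of triangulated categories comes from a quasi-fully-faithful dg functor $D_\mathrm{dg}(\dq)\to D_\mathrm{dg}(A)$ along the restriction-of-scalars functor. This should follow because the functor in question is literally restriction along $A \to \dq$, which is a dg functor on module categories and preserves cofibrant resolutions up to quasi-isomorphism, so the induced map on $\R\hom$-complexes is a quasi-isomorphism exactly when it is on cohomology, which is precisely the content of \ref{dqemb}. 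The only other point needing a line of justification is that $S$ (equivalently $k$) admits a description as a $\dq$-module at all, which as noted above is immediate from $H^0(\dq)\cong A/AeA$ together with the definition of $S$ as the simple quotient.
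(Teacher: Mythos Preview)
Your proposal is correct and follows essentially the same approach as the paper: both use \ref{dqcon} to obtain the augmentation, then combine the embedding \ref{dqemb} with the Koszul duality statement \ref{kdisrend} to identify $\R\enn_A(S)\simeq\R\enn_{\dq}(S)\simeq\dq^!$. The paper's proof is terser and does not spell out the dg-level upgrade of full faithfulness that you flag, but otherwise the arguments are the same.
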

\begin{proof}
	By \ref{dqcon}, the derived quotient $\dq$ admits an algebra map $$\dq \to H^0(\dq)\cong A/AeA.$$Hence, because $A/AeA$ is augmented, $\dq$ is also augmented. For the second statement, we may regard $S$ as a module over $\dq$ and hence as an object of the derived category $D(\dq)$. By \ref{dqemb} we have an embedding $D(\dq)\into D(A)$ and we may hence compute $\R\enn_A(S)\simeq \R\enn_{\dq}(S)$. But now we can use \ref{kdisrend} to conclude that we have a quasi-isomorphism $\R\enn_{\dq}(S)\simeq\dq^!$ as required.
	\end{proof}
\begin{cor}\label{dqcor}
	Let $A$ be a $k$-algebra with an idempotent $e\in A$. Suppose that $A/AeA$ is a local $k$-algebra with residue field $k$, and let $S$ be the quotient of $A/AeA$ by its radical. Then there is a weak equivalence $$\frmdef_A(S)\simeq \R\mathrm{Map}_{\ubproart}(B^\sharp (\dq^!), -).$$
	\end{cor}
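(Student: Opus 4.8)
The plan is to obtain \ref{dqcor} by feeding \ref{kytype} into \ref{prorepfrm}. First I would check the hypotheses of \ref{prorepfrm}: the $k$-algebra $A$ is in particular a connective dga, and by the discussion preceding the corollary the module $S$ is isomorphic, as an $A/AeA$-module and hence as an $A$-module, to the residue field of the local ring $A/AeA$, so that $S$ is one-dimensional. Thus \ref{prorepfrm} gives a functorial weak equivalence $\frmdef_A(S)\simeq \R\mathrm{Map}_{\ubproart}(B^\sharp E,-)$, where $E\coloneqq \R\enn_A(S)$; this is an augmented dga by \ref{extzlem}, so $B^\sharp E$ is defined.

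Next I would invoke \ref{kytype}, which supplies a quasi-isomorphism of augmented dgas $E=\R\enn_A(S)\simeq \dq^!$, using that $\dq$ is itself augmented (also \ref{kytype}) so that $\dq^!$ makes sense. It then remains to transport the representing object across this quasi-isomorphism, i.e.\ to know that the continuous Koszul dual $E\mapsto B^\sharp E$ sends quasi-isomorphisms of augmented dgas to weak equivalences of pro-Artinian dgas. Indeed the bar construction preserves quasi-isomorphisms, and more precisely, for a quasi-isomorphism $f\colon E\to E'$ of augmented dgas the square relating the counits $\Omega B E\to E$ and $\Omega B E'\to E'$ (which are quasi-isomorphisms) forces $\Omega Bf$ to be a quasi-isomorphism, so that $Bf$ is a weak equivalence in $\cndgc$ in the sense of \ref{bcquillen}; since $\sharp$ is an equivalence which is moreover a Quillen equivalence by \ref{sharpprop} and \ref{ubpamodel}, it carries $Bf$ to a weak equivalence $B^\sharp E\to B^\sharp E'$ in $\ubproart$. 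Applying this to \ref{kytype} yields $B^\sharp E\simeq B^\sharp(\dq^!)$ in $\ubproart$.

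Finally, derived mapping spaces are invariant under weak equivalences in either variable, so $\R\mathrm{Map}_{\ubproart}(B^\sharp E,-)\simeq \R\mathrm{Map}_{\ubproart}(B^\sharp(\dq^!),-)$, and composing with the weak equivalence of \ref{prorepfrm} gives the statement. I do not anticipate any genuine obstacle: \ref{dqcor} is essentially a bookkeeping consequence of \ref{prorepfrm} and \ref{kytype}, and the only point worth spelling out is that $B^\sharp$ preserves quasi-isomorphisms, which reduces as above to the bar construction doing so together with the formal properties of the $\sharp$-equivalence.
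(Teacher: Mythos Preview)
Your proof is correct and follows exactly the paper's approach: the paper's proof is the one-liner ``Combine \ref{prorepfrm} with \ref{kytype},'' and you have simply unpacked this, including the routine check that $B^\sharp$ sends quasi-isomorphisms of augmented dgas to weak equivalences (a fact the paper uses implicitly here and states explicitly elsewhere, e.g.\ in the proofs of \ref{truncatedproreps} and \ref{liftingpa}).
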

\begin{proof}
	Combine \ref{prorepfrm} with \ref{kytype}.
	\end{proof}

\begin{thm}[Prorepresentability by $\dq$]\label{almostdq}
		Let $A$ be a $k$-algebra with an idempotent $e\in A$. Suppose that $A/AeA$ is an Artinian local $k$-algebra and let $S$ be the quotient of $A/AeA$ by its radical. Suppose that $\dq$ is cohomologically locally finite. Then there is a weak equivalence 
	$$\frmdef_A(S)\simeq \R\mathrm{Map}_{\augdga^{\leq 0}}(\dq, -).$$
	\end{thm}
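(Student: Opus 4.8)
The plan is to combine the framed prorepresentability result \ref{dqcor} with the comparison of derived mapping spaces furnished by \ref{dermapscor}. We already know from \ref{dqcor} that there is a weak equivalence
$$\frmdef_A(S)\simeq \R\mathrm{Map}_{\ubproart}(B^\sharp(\dq^!), -).$$
Since all Artinian dgas appearing as input are connective, we may by \ref{truncatedproreps}-style reasoning (or directly, using that $\dq$ is connective, so $\dq^!$ is coconnective and $B^\sharp(\dq^!)$ is connective) regard this as a weak equivalence of functors on $\proart$, i.e.\ $\frmdef_A(S)\simeq \R\mathrm{Map}_{\proart}(B^\sharp(\dq^!), -)$. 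The goal is then to identify the right-hand side with $\R\mathrm{Map}_{\augdga^{\leq 0}}(\dq, -)$.

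First I would verify that $A'\coloneqq\dq$ satisfies the hypotheses of \ref{dermapscor}: it is a connective augmented dga (connectivity by \ref{dqcon}, augmentedness by \ref{kytype}), it is cohomologically locally finite by assumption, and $H^0(\dq)\cong A/AeA$ is Artinian local by assumption. Then \ref{dermapscor} applies verbatim with $A$ replaced by $\dq$: for any connective pro-Artinian dga $\mathcal{A}'$ with limit $A'' = \varprojlim\mathcal{A}'$, there is a weak equivalence
$$\R\mathrm{Map}_{\proart}(B^\sharp(\dq^!),\mathcal{A}')\simeq \R\mathrm{Map}_{\augdga^{\leq 0}}(\dq, A'').$$
Evaluating both functors at a fixed connective Artinian dga $\Gamma$, viewed as a constant pro-Artinian dga (so $\varprojlim\Gamma = \Gamma$), gives
$$\frmdef_A(S)(\Gamma)\simeq\R\mathrm{Map}_{\proart}(B^\sharp(\dq^!),\Gamma)\simeq \R\mathrm{Map}_{\augdga^{\leq 0}}(\dq,\Gamma),$$
which is the desired statement once one checks naturality in $\Gamma$, which is immediate since every weak equivalence invoked is functorial.

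The only genuine subtlety — and the step I expect to need the most care — is the passage between $\ubproart$ and $\proart$, i.e.\ justifying that the mapping space $\R\mathrm{Map}_{\ubproart}(B^\sharp(\dq^!),\Gamma)$ computed in the unbounded category agrees with $\R\mathrm{Map}_{\proart}(B^\sharp(\dq^!),\Gamma)$ for connective $\Gamma$. This is exactly the content encapsulated in \ref{truncatedproreps}: since $\iota:\proart\into\ubproart$ is right Quillen (\ref{inclisrq}) and all objects of $\proart$ are fibrant, one has $\R\mathrm{Map}_{\ubproart}(P,\Gamma)\simeq\R\mathrm{Map}_{\proart}(\mathbb{L}\tau_{\leq 0}P,\Gamma)$, and when $P = B^\sharp(\dq^!)$ is already connective (because $\dq^!$ is the Koszul dual of a connective dga, hence coconnective, hence its bar construction and the associated pro-Artinian dga are connective) the derived truncation $\mathbb{L}\tau_{\leq 0}P\simeq P$ does nothing up to weak equivalence — this is the same argument as in the proof of \ref{truncatedproreps}, using \ref{doubledictionary}(1) and that $B^\sharp$ preserves weak equivalences. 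With that identification in hand, the rest is a direct concatenation of \ref{dqcor} and \ref{dermapscor}, so I would keep the writeup short, citing those two results and \ref{dqcon}, \ref{kytype} for the verification of hypotheses.
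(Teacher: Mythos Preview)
Your proposal is correct and follows the paper's proof exactly: the paper's argument is simply ``apply \ref{dqcor}, then apply \ref{dermapscor}'', and you have supplied the verification of hypotheses (connectivity from \ref{dqcon}, augmentation from \ref{kytype}, $H^0$ Artinian local and cohomological local finiteness by assumption) that the paper leaves implicit. One minor slip: the bar construction $B(\dq^!)$ is \emph{coconnective} (since $\dq^!$ is in fact $1$-coconnective, not merely coconnective, when $\dq$ is connective), and it is the dual $B^\sharp(\dq^!)$ that is connective --- but this is exactly what you need, so the argument is unaffected.
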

\begin{proof}
	Let $\Gamma$ be a connective Artinian dga. We have natural weak equivalences 
\begin{align*}
\frmdef_A(S)(\Gamma) & \simeq \R\mathrm{Map}_{\ubproart}(B^\sharp (\dq^!), \Gamma) & \text{by \ref{dqcor}}\\
& \simeq \R\mathrm{Map}_{\augdga^{\leq 0}}(\dq, \Gamma) & \text{by \ref{dermapscor}}
\end{align*}and so we are done.
	\end{proof}
Restricting to classical deformations, we obtain the following:
\begin{thm}[cf.\ {\cite[3.9]{contsdefs}}]\label{dwrep}
Let $A$ be a $k$-algebra with an idempotent $e\in A$. Suppose that $A/AeA$ is an Artinian local $k$-algebra and let $S$ be the quotient of $A/AeA$ by its radical. Suppose that $\dq$ is cohomologically locally finite. Then there is an isomorphism $$\defm^\mathrm{cl}_A(S)(\Gamma)\cong\frac{\hom_{\cat{aug.alg}_k}(A/AeA, \Gamma)}{ (\text{\normalfont inner automorphisms of }\Gamma)}.$$
	\end{thm}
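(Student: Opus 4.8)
The plan is to derive Theorem \ref{dwrep} from Theorem \ref{almostdq} by specialising to ungraded test objects and then passing to $\pi_0$, exactly mirroring the pattern already used to deduce \ref{repclass} from \ref{repug} and \ref{cldefs}, and \ref{mysegal} from \ref{gensegal}. First I would restrict the weak equivalence of \ref{almostdq} to an ungraded Artinian local algebra $\Gamma \in \cat{Art}_k$, viewed as an object of $\augdga^{\leq 0}$. This gives $\frmdef_A(S)(\Gamma)\simeq \R\mathrm{Map}_{\augdga^{\leq 0}}(\dq,\Gamma)$. Taking $\pi_0$ and using \ref{sfrmdefslifts} on the left identifies the left-hand side with $\frmdefset_A(S)(\Gamma)$. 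For the right-hand side, I would use that $\dq$ is connective, so by \ref{dqcon} any dga map $\dq \to \Gamma$ with $\Gamma$ ungraded factors uniquely through $H^0(\dq)\cong A/AeA$; combined with the usual inclusion-truncation adjunction (as in \ref{trivqa} and the proof of \ref{repug}) this shows $\pi_0\R\mathrm{Map}_{\augdga^{\leq 0}}(\dq,\Gamma)\cong \hom_{\cat{aug.alg}_k}(A/AeA,\Gamma)$. Hence $\frmdefset_A(S)(\Gamma)\cong \hom_{\cat{aug.alg}_k}(A/AeA,\Gamma)$.

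Next I would pass from framed to unframed deformations. The cleanest route is to invoke \ref{gensegal}: since $A$ is a $k$-algebra and $S$ is a one-dimensional $A$-module, that theorem gives $\defm_A(S)(\Gamma)\cong \hom_{\mathrm{Ho}(\ubproart)}(B^\sharp E,\Gamma)/(\text{inner automorphisms of }\Gamma)$ where $E\coloneqq\R\enn_A(S)$. By \ref{kytype} we have $E\simeq \dq^!$, and by the Koszul-duality chain underlying \ref{almostdq} (namely \ref{dqcor} together with \ref{dermapscor}, or directly \ref{kdfin}) the functor $\hom_{\mathrm{Ho}(\ubproart)}(B^\sharp E,-)$ agrees on ungraded test objects with $\hom_{\cat{aug.alg}_k}(A/AeA,-)$ via the identifications of the previous paragraph. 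One must check that the action of the group of units $\Gamma^\times$ by inner automorphisms is carried across this identification compatibly; this is exactly the compatibility already established in the proof of \ref{gensegal}, where the $\ggr(k)$-action is tracked through the weak equivalences, so it transports without extra work once one notes that the identification $B^\sharp E \rightsquigarrow A/AeA$ on ungraded quotients is induced by a genuine dga map $\dq \to A/AeA$ and hence is $\Gamma^\times$-equivariant. Finally, \ref{cldefs} identifies $\defm_A(S)(\Gamma)$ with $\defm^\mathrm{cl}_A(S)(\Gamma)$ since $A$ is ungraded and $\Gamma$ is ungraded Artinian, yielding the stated isomorphism $\defm^\mathrm{cl}_A(S)(\Gamma)\cong \hom_{\cat{aug.alg}_k}(A/AeA,\Gamma)/(\text{inner automorphisms of }\Gamma)$, naturally in $\Gamma$.

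The main obstacle I anticipate is bookkeeping the inner-automorphism action correctly through the two different descriptions of the representing object, rather than any deep new input: one description comes from $B^\sharp E$ with its built-in $\ggr(k)=\Gamma^\times$ gauge action (from \ref{gensegal}), and the other from the honest $A$-dga $\dq$ and its $H^0$. Reconciling these requires knowing that the equivalence $B^\sharp(\dq^!)\simeq \dq$ furnished by \ref{dermapscor} (via \ref{kdfin} and \ref{doubledictionary}(6)) is, after applying $\hom(-,\Gamma)$ and passing to $\pi_0$, the \emph{same} bijection as the one coming from factoring through $H^0(\dq)$, and that both intertwine the conjugation action of $\Gamma^\times$. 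Since \ref{gensegal} already packages the $\Gamma^\times$-action as conjugation on $\hom_{\mathrm{Ho}(\ubproart)}(B^\sharp E,\Gamma)$ and this is manifestly natural, the verification reduces to naturality of all the intervening equivalences, which is routine but should be spelled out. Everything else is a direct citation of \ref{almostdq}, \ref{gensegal}, \ref{kytype}, \ref{dqcon}, \ref{sfrmdefslifts}, and \ref{cldefs}.
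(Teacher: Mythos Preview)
Your proposal is correct and follows essentially the same route as the paper: invoke \ref{gensegal} (via the pattern of \ref{mysegal}) together with \ref{cldefs} to get $\defm^\mathrm{cl}_A(S)(\Gamma)$ as the quotient of $\hom_{\cat{aug.alg}_k}(H^0(E^!),\Gamma)$ by inner automorphisms, then use $E\simeq \dq^!$ from \ref{kytype}, \ref{kdfin} to get $\dq^{!!}\simeq \dq$, and \ref{dqcon} to identify $H^0(\dq)\cong A/AeA$. Your first paragraph, deriving $\frmdefset_A(S)(\Gamma)\cong \hom(A/AeA,\Gamma)$ from \ref{almostdq}, is correct but redundant: once you invoke \ref{gensegal} in the second paragraph you are already working with $\hom_{\mathrm{Ho}(\ubproart)}(B^\sharp E,\Gamma)$ directly, and the identification with $\hom(A/AeA,\Gamma)$ then comes from the same chain (\ref{repug}-style truncation plus \ref{kdfin} and \ref{dqcon}) without needing the framed intermediary; the paper's proof accordingly skips straight to this step.
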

\begin{proof}
	Using \ref{dqcor} then as in the proof of \ref{mysegal} we can identify $\defm^\mathrm{cl}_A(S)$ as the quotient of $\hom_{\cat{aug.alg}_k}(H^0(\dq^{!!}), -)$ by the inner automorphisms. But by \ref{dqcon} and \ref{kdfin} we have $H^0(\dq^{!!})\cong H^0(\dq)\cong A/AeA$.
	\end{proof}

\begin{rmk}
	The cohomology spaces of $\dq$ are completely known: aside from $H^0$ and $H^{-1}$, they are all of the form $\tor^{eAe}_*(Ae,eA)$ \cite[3.2.4]{scatsviadq}. Under some geometric hypotheses, we get cohomological finiteness of $\dq$ for free: if $A=\enn_R(R\oplus M)$ is a noncommutative partial resolution of a Gorenstein ring $R$, and across this isomorphism the idempotent $e$ corresponds to $\id_R$, then the results of \cite{scatsviadq} show that $\dq$ is cohomologically locally finite (indeed, its cohomology spaces are all Ext groups in the singularity category of $R$, which is hom-finite).
	\end{rmk}

\section{The universal prodeformation}
\p By taking homotopy limits, the formalism of functors valued in $\sset$ allows us to deform modules over pro-Artinian dgas. We set up the theory of prodeformations, and show that our prorepresentability statements give us a universal prodeformation. We do some explicit computations towards the identification of the universal prodeformation, and we finish by tracking this universal prodeformation across quasi-equivalences.
	
\subsection{Prodeformations}
	\begin{defn}
		Let $F:\dgart \to \sset$ be any functor. Denote by $\hat F$ the functor $\proart \to \sset$ which sends an inverse system $\{\Gamma_\alpha\}_\alpha$ to the homotopy limit $\holim_\alpha F(\Gamma_\alpha)$. Call $\hat F$ the \textbf{pro-completion} of $F$.
	\end{defn}
	\begin{defn}
		Let $A$ be a dga and let $X$ be an $A$-module. The functor of \textbf{prodeformations of $X$} is the functor $\prodef_A(X)$. The \textbf{set-valued functor of prodeformations of $X$} is the functor $\prodefset_A(X)\coloneqq \pi_0\prodef_A(X)$. A \textbf{prodeformation} of $X$ is an element of the set $\prodefset_A(X)$.
		\end{defn}
		It is easy to see that if $\Gamma $ is a connective Artinian local dga then $\prodef_A(X)(\Gamma)\simeq \sdefm_A(X)(\Gamma)$ and hence $\prodefset_A(X)(\Gamma)\cong \defm_A(X)(\Gamma)$. The reader is warned that $\prodefset_A(X)(\Gamma)$ may not agree with the inverse limit $\varprojlim_\alpha \defm_A(X)(\Gamma)$, as $\pi_0$ need not commute with homotopy limits; prodeformations are intrinsically homotopical in nature.

	\begin{rmk}\label{cdefsareprodefs}
	If $\Gamma=\{\Gamma_\alpha\}_\alpha$ is pro-Artinian then there is a weak equivalence
		$$\prodef_A(X)(\Gamma)\simeq \mathcal{W}\left(X_\mathrm{dg} \times^h_{D_\mathrm{dg}(A)} \holim_\alpha D_\mathrm{dg}(A\otimes \Gamma_\alpha)\right)$$given by passing the homotopy limit through $\mathcal{W}$ using \ref{wholims} and commuting homotopy limits with homotopy pullbacks. There is an obvious system of maps of dgas $\varprojlim \Gamma \to \Gamma_\alpha$ and this gives a dg functor $$D_\mathrm{dg}(A\otimes \varprojlim \Gamma) \to \holim_\alpha D_\mathrm{dg}(A\otimes \Gamma_\alpha).$$Although $\varprojlim \Gamma$ may not be Artinian, it is still augmented, so one can use it as a base for deformations. Extending the notation of \ref{sdefmdefn} in the obvious way, we hence obtain a map in the homotopy category of simplicial sets $$\sdefm_A(X)(\varprojlim \Gamma) \to \prodef_A(X)(\Gamma)$$ and so a deformation of $X$ over $\varprojlim \Gamma$ gives a prodeformation of $X$ over $\Gamma$. In particular, letting $\Lambda$ be a complete local augmented dga, we get a natural morphism $$\sdefm_A(X)(\Lambda) \to \holim_n\sdefm_A(X)(\Lambda / \mathfrak{m}^n)$$which should be relevant for a noncommutative version of Artin--Lurie representability, as in \cite{luriedagxiv,jonrepdstacks}.
		\end{rmk}

We are about to give a representability statement for prodeformations; before we do so we prove a subsidiary lemma.
\begin{lem}\label{holimpro}
Let $\Gamma=\{\Gamma_\alpha\}_\alpha$ be a pro-Artinian dga. Then there is a weak equivalence
$$\R\mathrm{Map}_{\ubproart}(-,\Gamma)\simeq \holim_\alpha\R\mathrm{Map}_{\ubproart}(-,\Gamma_\alpha).$$
	\end{lem}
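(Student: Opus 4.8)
The statement is essentially a formal consequence of the fact that $\R\mathrm{Map}_{\ubproart}(-,-)$ is a derived bifunctor, together with the observation that the homotopy limit of a cofiltered diagram $\{\Gamma_\alpha\}_\alpha$ in $\ubproart$ is computed by the tautological pro-object $\Gamma = \{\Gamma_\alpha\}_\alpha$ itself. The plan is to exhibit a natural zig-zag of weak equivalences realising this. First I would reduce to a cofibrant resolution: fix a cofibrant object $P \in \ubproart$ (in applications $P = B^\sharp_{\mathrm{nu}}E$ or $B^\sharp(A^!)$, but the lemma should be stated and proved for general source, so I would prove it with $P$ cofibrant, or else replace the source by a cofibrant resolution, noting that $\R\mathrm{Map}$ is independent of the choice). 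Then $\R\mathrm{Map}_{\ubproart}(P,\Gamma)$ is computed by $\hom_{\ubproart}(P,\Gamma_\bullet)$ for a simplicial frame $\Gamma_\bullet$ on $\Gamma$, and similarly each $\R\mathrm{Map}_{\ubproart}(P,\Gamma_\alpha)$ by $\hom_{\ubproart}(P,(\Gamma_\alpha)_\bullet)$.

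\textbf{Key steps.} The heart of the argument is to identify $\Gamma$, as an object of $\ubproart$, with $\holim_\alpha \Gamma_\alpha$ in a way compatible with mapping out of a cofibrant object. Concretely: by \ref{strictpro} we may assume $\Gamma$ is a strict pro-object indexed by a cofiltered set, and as in the proof of \ref{limisexact} we may assume the index set is directed, hence a Reedy category; then the diagram $\alpha \mapsto \Gamma_\alpha$ is Reedy fibrant in $\ubproart$ (the matching maps are the transition surjections, which are fibrations since $\varprojlim$ of them is surjective). Therefore $\holim_\alpha \Gamma_\alpha \simeq \lim_\alpha \Gamma_\alpha$ computed in $\ubproart$, and this honest limit is precisely the pro-object $\Gamma$ again — for a constant-shape directed system of pro-Artinian dgas indexed by the \emph{same} directed set, the limit in the procategory recovers $\Gamma$ by the defining universal property of $\cat{pro}$. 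Then, using that $P$ is cofibrant and every object of $\ubproart$ is fibrant, $\R\mathrm{Map}_{\ubproart}(P,-)$ is a right-Quillen-derived functor in the second variable (compute via a simplicial frame on the target), hence it preserves homotopy limits: $\R\mathrm{Map}_{\ubproart}(P,\holim_\alpha\Gamma_\alpha)\simeq \holim_\alpha\R\mathrm{Map}_{\ubproart}(P,\Gamma_\alpha)$. Combining this with the identification $\holim_\alpha\Gamma_\alpha\simeq\Gamma$ gives the claimed weak equivalence.

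\textbf{Main obstacle.} The delicate point is the bookkeeping around ``the limit of the constant-shape diagram of pro-objects is the pro-object itself''. One has a cofiltered diagram $J \to \ubproart$, $\alpha \mapsto \{\Gamma_{\alpha,\beta}\}_\beta$; in the cases of interest it is actually a \emph{level} diagram $\{\Gamma_\alpha\}_{\alpha\in J}$ where each $\Gamma_\alpha$ is a single Artinian dga, so the ``pro-object'' $\Gamma$ and the ``diagram'' $\{\Gamma_\alpha\}$ literally coincide, and the limit in $\ubproart$ of this diagram is $\Gamma$ by \ref{procats} — one has $\hom_{\ubproart}(C,\Gamma) = \lim_\alpha\hom_{\ubproart}(C,\Gamma_\alpha)$ for $C$ constant (and more generally, by cofinality, this passes to all of $\ubproart$). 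I expect that in the paper's setting $\Gamma$ is always presented as such a level pro-object, so this step is immediate; if one wanted full generality one would first flatten the double pro-system using that a cofiltered limit of cofiltered limits is a cofiltered limit. The remaining verifications — Reedy fibrancy of the (strictified, directed) diagram, and that $\R\mathrm{Map}_{\ubproart}(P,-)$ commutes with homotopy limits — are standard once one invokes \ref{strictpro}, \cite[Expos\'e 1, 8.1.6]{sga4}, and the general theory of framings \cite[\S5]{hovey}.
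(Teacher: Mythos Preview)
Your proposal is correct and follows essentially the same approach as the paper's proof: identify $\Gamma$ with the (homotopy) limit of the diagram $\{\Gamma_\alpha\}_\alpha$ inside $\ubproart$ via Yoneda and the Reedy-fibrancy argument of \ref{limisexact}, then use that $\R\mathrm{Map}$ commutes with homotopy limits in the second variable. The paper is slightly terser --- it invokes Yoneda directly for $\varprojlim\Gamma'\cong\Gamma$ rather than spelling out the procategory universal property, and it does not explicitly reduce to a cofibrant source --- but the argument is the same.
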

\begin{proof}
	Let $\Gamma'$ be the cofiltered diagram of Artinian dgas that defines $\Gamma$, but regarded as a cofiltered diagram in $\ubproart$. The Yoneda lemma gives us an isomorphism $\varprojlim \Gamma' \cong \Gamma$. Moreover, in the exact same manner as the proof of \ref{limisexact} we see that $\holim \Gamma' \simeq \Gamma$ as well, because homotopy limits are just limits. Now use that $\R\mathrm{Map}$ commutes with homotopy limits in the second variable.
	\end{proof}

	\begin{prop}\label{prodefrep}
		Let $A$ be a dga and let $X$ be an $A$-module. Let $E\coloneqq \R\enn_A(X)$ be the endomorphism dga of $X$. Let $\Gamma\in \proart$. Then there is a canonical weak equivalence
		$$\prodef_A(X)(\Gamma)\simeq \R\mathrm{Map}_{\ubproart}(B_{\mathrm{nu}}^\sharp E,\Gamma).$$
	\end{prop}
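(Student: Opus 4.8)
The plan is to reduce the statement to Theorem~\ref{proreps} by commuting the relevant homotopy limits. By definition $\prodef_A(X)$ is the pro-completion $\widehat{\sdefm_A(X)}$, so for $\Gamma=\{\Gamma_\alpha\}_\alpha\in\proart$ we have $\prodef_A(X)(\Gamma)\simeq\holim_\alpha\sdefm_A(X)(\Gamma_\alpha)$. First I would apply Theorem~\ref{proreps} levelwise, which gives functorial weak equivalences $\sdefm_A(X)(\Gamma_\alpha)\simeq\R\mathrm{Map}_{\ubproart}(B_{\mathrm{nu}}^\sharp E,\Gamma_\alpha)$; since these are natural in $\Gamma_\alpha$, they assemble into a weak equivalence of the cofiltered diagrams, hence after taking homotopy limits
$$\prodef_A(X)(\Gamma)\simeq\holim_\alpha\R\mathrm{Map}_{\ubproart}(B_{\mathrm{nu}}^\sharp E,\Gamma_\alpha).$$

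Next I would invoke Lemma~\ref{holimpro}, applied with the representing object $B_{\mathrm{nu}}^\sharp E$ in the first slot, to identify the right-hand side: $\holim_\alpha\R\mathrm{Map}_{\ubproart}(B_{\mathrm{nu}}^\sharp E,\Gamma_\alpha)\simeq\R\mathrm{Map}_{\ubproart}(B_{\mathrm{nu}}^\sharp E,\Gamma)$. Composing the two weak equivalences yields the claimed canonical weak equivalence $\prodef_A(X)(\Gamma)\simeq\R\mathrm{Map}_{\ubproart}(B_{\mathrm{nu}}^\sharp E,\Gamma)$. Care is needed at one point only: the levelwise weak equivalences of Theorem~\ref{proreps} must be genuinely natural in the Artinian test object so that they can be promoted to a map of diagrams whose homotopy limit is again a weak equivalence (homotopy limits preserve objectwise weak equivalences); inspecting the proof of \ref{proreps}, which is built from the natural transformations in \ref{defismc} and \ref{smcisrmap}, this naturality is clear, and the pro-completed functor $\prodef_A(X)$ is defined precisely as such a homotopy limit.

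The only genuinely delicate step is Lemma~\ref{holimpro}, but that is already established in the excerpt: its proof uses the Yoneda-type identity $\varprojlim\Gamma'\cong\Gamma$ together with the observation (as in \ref{limisexact}) that the cofiltered diagram $\Gamma'$ can be taken Reedy fibrant so that $\holim\Gamma'\simeq\Gamma$ in $\ubproart$, after which one uses that $\R\mathrm{Map}$ sends homotopy limits in the second variable to homotopy limits. So I expect no real obstacle; the proof is a short two-line assembly, and I would simply write: ``By definition $\prodef_A(X)(\Gamma)\simeq\holim_\alpha\sdefm_A(X)(\Gamma_\alpha)$; apply \ref{proreps} levelwise and then \ref{holimpro}.''
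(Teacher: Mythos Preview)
Your proposal is correct and matches the paper's proof essentially verbatim: the paper writes exactly the chain $\prodef_A(X)(\Gamma)\coloneqq \holim_\alpha\sdefm_A(X)(\Gamma_\alpha)\simeq \holim_\alpha\R\mathrm{Map}_{\ubproart}(B_{\mathrm{nu}}^\sharp E, \Gamma_\alpha)\simeq \R\mathrm{Map}_{\ubproart}(B_{\mathrm{nu}}^\sharp E, \Gamma)$, citing \ref{proreps} levelwise and then \ref{holimpro}. Your closing one-line summary is in fact the entire proof as given.
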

\begin{proof}
	Put $\Gamma=\{\Gamma_\alpha\}_\alpha$. We have
	\begin{align*}
	\prodef_A(X)(\Gamma)\coloneqq& \holim_\alpha\sdefm_A(X)(\Gamma_\alpha) &\text{by definition}\\
	\simeq& \holim_\alpha\R\mathrm{Map}_{\ubproart}(B_{\mathrm{nu}}^\sharp E, \Gamma_\alpha) &\text{by \ref{proreps} levelwise}\\
	\simeq& \R\mathrm{Map}_{\ubproart}(B_{\mathrm{nu}}^\sharp E, \Gamma) &\text{by \ref{holimpro}}
	\end{align*}as required.
	\end{proof}
\begin{cor}\label{setprocor}
		Let $A$ be a dga and let $X$ be an $A$-module. Let $E\coloneqq \R\enn_A(X)$ be the endomorphism dga of $X$. Let $\Gamma\in \proart$. Then there is an isomorphism
	$$\prodefset_A(X)(\Gamma)\cong \hom_{\mathrm{Ho}(\ubproart)}(B_{\mathrm{nu}}^\sharp E,\Gamma).$$
	\end{cor}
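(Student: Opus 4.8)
The plan is to obtain \ref{setprocor} by simply applying $\pi_0$ to Proposition \ref{prodefrep}. By definition $\prodefset_A(X)(\Gamma) = \pi_0\prodef_A(X)(\Gamma)$, and \ref{prodefrep} provides a canonical weak equivalence of simplicial sets $\prodef_A(X)(\Gamma)\simeq \R\mathrm{Map}_{\ubproart}(B_{\mathrm{nu}}^\sharp E,\Gamma)$. Since weakly equivalent simplicial sets have canonically isomorphic sets of path components, it suffices to identify $\pi_0$ of the derived mapping space. This is the standard fact that for any model category $\mathcal{M}$ and any objects $Y,Z$ there is a natural bijection $\pi_0\R\mathrm{Map}_{\mathcal{M}}(Y,Z)\cong \hom_{\mathrm{Ho}(\mathcal{M})}(Y,Z)$, which follows from the defining property of derived mapping spaces together with the fact that homotopy classes of maps between a cofibrant and a fibrant object compute morphisms in the homotopy category; see \cite[5.4.9]{hovey} or \cite{dwyerspalinski}. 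Applying this with $\mathcal{M}=\ubproart$ (a model category by \ref{ubpamodel}), $Y=B_{\mathrm{nu}}^\sharp E$, and $Z=\Gamma$ yields the claimed isomorphism.

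The only point requiring a word is functoriality in $\Gamma\in\proart$. But both sides are obtained by postcomposing with $\pi_0$: the left-hand side is $\pi_0$ of the functor $\prodef_A(X)$, the right-hand side is $\pi_0$ of the representable functor $\R\mathrm{Map}_{\ubproart}(B_{\mathrm{nu}}^\sharp E,-)$, and the weak equivalence of \ref{prodefrep} is natural in $\Gamma$; moreover the identification $\pi_0\R\mathrm{Map}_{\ubproart}(B_{\mathrm{nu}}^\sharp E,-)\cong\hom_{\mathrm{Ho}(\ubproart)}(B_{\mathrm{nu}}^\sharp E,-)$ is natural as a transformation of functors on $\ubproart$. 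So the resulting isomorphism is natural.

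I do not expect any genuine obstacle here: the statement is a formal consequence of \ref{prodefrep} and the definition of $\prodefset$. If one wanted to be maximally self-contained one could instead argue directly, noting that $\R\mathrm{Map}_{\ubproart}(B_{\mathrm{nu}}^\sharp E,\Gamma)$ can be computed as $\hom_{\ubproart}$ out of a cofibrant resolution of $B_{\mathrm{nu}}^\sharp E$ into a simplicial frame on $\Gamma$ (as in the proof of \ref{dermapsthm}), and that taking $\pi_0$ of this simplicial hom recovers the homotopy-category hom-set; but invoking the general fact about derived mapping spaces is cleaner and entirely sufficient.
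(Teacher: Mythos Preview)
Your proposal is correct and is exactly the intended argument: the paper states \ref{setprocor} as an immediate corollary of \ref{prodefrep} with no further proof, and the only content is applying $\pi_0$ together with the standard identification $\pi_0\R\mathrm{Map}\cong\hom_{\mathrm{Ho}}$.
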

There are analogous versions of the previous theorems for framed prodeformations.
			\begin{defn}
	Let $A$ be a dga and let $X$ be an $A$-module. The functor of \textbf{framed prodeformations of $X$} is the functor $\profrmdef_A(X)$. The \textbf{set-valued functor of framed prodeformations of $X$} is the functor $\profrmdefset_A(X)\coloneqq \pi_0\prodef_A(X)$. A \textbf{framed prodeformation} of $X$ is an element of the set $\profrmdefset_A(X)$.
\end{defn}
Again, $\frmdef$ agrees with $\profrmdef$ when the input is an Artinian local dga.
\begin{lem}\label{prohofiblem}
	Let $A$ be a dga and let $X$ be an $A$-module. Then there is a weak equivalence $$\profrmdef_A(X)\simeq \mathrm{hofib}\left(\prodef_A(X) \to \prodef_k(X)\right).$$
\end{lem}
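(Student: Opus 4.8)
The plan is to unwind the definition of $\profrmdef_A(X)$ as the pro-completion of $\frmdefset_A(X)$'s simplicial enhancement and then commute the two homotopy limits involved. Recall that $\frmdef_A(X)$ is defined as the homotopy fibre $\mathrm{hofib}\left(\sdefm_A(X) \to \sdefm_k(X)\right)$ (\ref{framingdefsset}), and that $\profrmdef_A(X) = \widehat{\frmdef_A(X)}$ sends a pro-Artinian dga $\Gamma = \{\Gamma_\alpha\}_\alpha$ to $\holim_\alpha \frmdef_A(X)(\Gamma_\alpha)$. Similarly $\prodef_A(X)(\Gamma) = \holim_\alpha \sdefm_A(X)(\Gamma_\alpha)$ and likewise with $A$ replaced by $k$.

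The key step is then the observation that homotopy limits commute with homotopy fibres: since a homotopy fibre is itself a homotopy pullback (along a point), and homotopy limits commute with each other, we have a natural weak equivalence
$$\holim_\alpha \mathrm{hofib}\left(\sdefm_A(X)(\Gamma_\alpha) \to \sdefm_k(X)(\Gamma_\alpha)\right) \simeq \mathrm{hofib}\left(\holim_\alpha\sdefm_A(X)(\Gamma_\alpha) \to \holim_\alpha\sdefm_k(X)(\Gamma_\alpha)\right).$$
The left-hand side is $\profrmdef_A(X)(\Gamma)$ by definition, and the right-hand side is $\mathrm{hofib}\left(\prodef_A(X)(\Gamma) \to \prodef_k(X)(\Gamma)\right)$, again by definition. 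One needs to check that the map $\prodef_A(X) \to \prodef_k(X)$ appearing here is the pro-completion of the natural forgetful transformation $\sdefm_A(X) \to \sdefm_k(X)$; this is immediate because $\holim$ is functorial, so applying $\holim_\alpha$ levelwise to the forgetful maps $\sdefm_A(X)(\Gamma_\alpha) \to \sdefm_k(X)(\Gamma_\alpha)$ produces exactly the forgetful transformation on prodeformation functors (which agrees with the one obtained via the dg-categorical description, as in \ref{cdefsareprodefs}). Naturality in $\Gamma$ follows since all the maps involved are natural in the diagram.

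The only subtlety — and the step I would expect to require the most care — is ensuring that the homotopy fibre is formed over a consistent basepoint compatible with the cofiltered system, i.e.\ that the trivial deformation of the underlying vector space is preserved under the transition maps, so that $\sdefm_k(X)(\Gamma_\alpha)$ is pointed functorially in $\alpha$ and $\holim_\alpha$ of the pointed spaces is again pointed by the compatible trivial deformations. This is routine since the trivial deformation is manifestly compatible with base change along any dga map, so the basepoints assemble into a $0$-simplex of $\holim_\alpha \sdefm_k(X)(\Gamma_\alpha)$. Once this is in place, the commutation of $\holim$ with $\mathrm{hofib}$ is a standard fact about homotopy limits in $\sset$ (see e.g.\ \cite[Chapter V]{goerssjardine}), and the proof is complete.
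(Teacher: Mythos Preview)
Your proposal is correct and takes essentially the same approach as the paper, which gives the one-line proof ``Commute the homotopy limit in the definition of $\profrmdef_A(X)$ through the homotopy fibre in the definition of $\frmdef_A(X)$.'' Your version simply unpacks this commutation in more detail, including the compatibility of basepoints, which is a reasonable elaboration.
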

\begin{proof}
	Commute the homotopy limit in the definition of $\profrmdef_A(X)$ through the homotopy fibre in the definition of $\frmdef_A(X)$.
\end{proof}

	\begin{prop}\label{frmprodefrep}
	Let $A$ be a connective dga and let $S$ be a one-dimensional $A$-module. Let $E\coloneqq \R\enn_A(S)$ be the endomorphism dga of $S$. Then $E$ is augmented. Let $\Gamma\in \proart$. Then there is a canonical weak equivalence
	$$\profrmdef_A(S)(\Gamma)\simeq \R\mathrm{Map}_{\ubproart}(B^\sharp E,\Gamma).$$
\end{prop}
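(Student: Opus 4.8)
The plan is to mimic the proof of \ref{prorepfrm}, but incorporating homotopy limits throughout, exactly as \ref{prodefrep} bootstraps \ref{proreps} and \ref{prohofiblem} bootstraps \ref{framingdefsset}. First I would note that $E$ is augmented by \ref{extzlem}, since $S$ is a one-dimensional module over the connective dga $A$; this gives us access to the usual (unital) bar construction $BE$ and the homotopy fibre sequence of nonunital dgas $\bar E \to E \to k$, where $\bar E$ is the augmentation ideal and the middle map is the surjective augmentation. As in the proof of \ref{prorepfrm}, applying the bar construction (which is right Quillen, all dgas being fibrant) produces a homotopy fibre sequence of coalgebras $BE \to B_{\mathrm{nu}}E \to B_{\mathrm{nu}}k$, and applying the equivalence $\sharp$ gives a homotopy cofibre sequence $B^\sharp_{\mathrm{nu}}k \to B^\sharp_{\mathrm{nu}}E \to B^\sharp E$ of pro-Artinian dgas.

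Next I would take derived mapping spaces out of a fixed $\Gamma \in \proart$. Taking $\R\mathrm{Map}_{\ubproart}(-,\Gamma)$ of the homotopy cofibre sequence above yields a homotopy fibre sequence
$$\R\mathrm{Map}_{\ubproart}(B^\sharp E,\Gamma) \to \R\mathrm{Map}_{\ubproart}(B^\sharp_{\mathrm{nu}}E,\Gamma) \to \R\mathrm{Map}_{\ubproart}(B^\sharp_{\mathrm{nu}}k,\Gamma).$$
On the other hand, by \ref{prodefrep} (applied to $S$ and to $S$ as a $k$-module, using $\R\enn_k(S)\simeq k$ since $S\cong k$ as $k$-modules), the latter two functors are identified with $\prodef_A(S)(\Gamma)$ and $\prodef_k(S)(\Gamma)$ respectively. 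The map between them is induced by $E \to k$, and the functoriality statement \ref{wefunc} (pushed through the homotopy limits defining prodeformations, which commute with the fibre sequences and the identifications of \ref{prodefrep}) shows that this agrees with the forgetful map $\prodef_A(S) \to \prodef_k(S)$. Since homotopy fibres are determined up to weak equivalence by the diagram, I conclude
$$\profrmdef_A(S)(\Gamma) \simeq \mathrm{hofib}\bigl(\prodef_A(S)(\Gamma) \to \prodef_k(S)(\Gamma)\bigr) \simeq \R\mathrm{Map}_{\ubproart}(B^\sharp E,\Gamma)$$
using \ref{prohofiblem} for the first equivalence.

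The main obstacle, as in the proof of \ref{prorepfrm}, is checking that the square identifying the upper horizontal map (induced by $E\to k$) with the lower horizontal map (the forgetful map) commutes, now at the level of prodeformations rather than deformations. This requires invoking \ref{wefunc} levelwise on the cofiltered system $\Gamma = \{\Gamma_\alpha\}$ defining the pro-Artinian dga, and then observing that the homotopy limit over $\alpha$ is compatible with both the homotopy fibre sequence of mapping spaces (via \ref{holimpro}, since $\R\mathrm{Map}$ commutes with homotopy limits in the second variable) and the definition $\prodef_A(X)(\Gamma)\coloneqq \holim_\alpha \sdefm_A(X)(\Gamma_\alpha)$. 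Everything else is a formal consequence of the Quillen equivalences and the exactness of taking derived mapping spaces; the genuinely new input over \ref{prorepfrm} is simply the commutation of homotopy limits past homotopy fibres, which is automatic, together with \ref{prodefrep} and \ref{prohofiblem}.
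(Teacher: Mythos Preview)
Your argument is correct, but it takes a longer route than the paper. The paper's proof is a one-liner: apply \ref{prorepfrm} levelwise to the system $\Gamma=\{\Gamma_\alpha\}_\alpha$ to obtain weak equivalences $\frmdef_A(S)(\Gamma_\alpha)\simeq \R\mathrm{Map}_{\ubproart}(B^\sharp E,\Gamma_\alpha)$, and then take $\holim_\alpha$ of both sides, using the definition of $\profrmdef$ on the left and \ref{holimpro} on the right. There is no need to revisit the fibre sequence $\bar E \to E \to k$, the coalgebra sequence, or the compatibility square from \ref{wefunc}: all of that work is already packaged inside \ref{prorepfrm}.

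What you do instead is unpack \ref{prorepfrm} and rebuild its fibre-sequence argument directly at the pro level, appealing to \ref{prodefrep} for the two base terms and \ref{prohofiblem} to identify the fibre. This is perfectly valid, and it has the minor conceptual advantage of making visible that $\profrmdef$ sits in a fibre sequence of prorepresentable functors over $\proart$. But it costs you an extra pass through \ref{wefunc} levelwise plus a commutation of holims with homotopy fibres, whereas the paper simply applies the already-established Artinian result pointwise and takes a homotopy limit. In short: both proofs are fine, yours just redoes work that \ref{prorepfrm} already encapsulates.
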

\begin{proof}This is similar to the proof of \ref{prodefrep}: apply \ref{prorepfrm} levelwise and then use \ref{holimpro}.
\end{proof}
\begin{cor}\label{setfrmprocor}
	Let $A$ be a connective dga and let $S$ be a one-dimensional $A$-module. Let $E\coloneqq \R\enn_A(S)$ be the endomorphism dga of $S$. Then $E$ is augmented.  Let $\Gamma\in \proart$. Then there is an isomorphism
	$$\profrmdefset_A(X)(\Gamma)\cong \hom_{\mathrm{Ho}(\ubproart)}(B^\sharp E,\Gamma).$$
\end{cor}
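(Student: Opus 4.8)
The plan is to obtain this purely formally from Proposition~\ref{frmprodefrep} by passing to connected components, exactly as Proposition~\ref{prorepfrmset} was deduced from Theorem~\ref{prorepfrm} and Corollary~\ref{setprocor} from Proposition~\ref{prodefrep}. First I would recall that since $A$ is connective and $S$ is one-dimensional, Lemma~\ref{extzlem} tells us that $E\coloneqq\R\enn_A(S)$ is augmented, so the (unital) bar construction $BE$ and its continuous Koszul dual $B^\sharp E\coloneqq(BE)^\sharp\in\ubproart$ are defined; this is the content of the first sentence of the statement and requires no further argument.

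Next, fix $\Gamma\in\proart$. By Proposition~\ref{frmprodefrep} there is a canonical weak equivalence of simplicial sets
$$\profrmdef_A(S)(\Gamma)\simeq \R\mathrm{Map}_{\ubproart}(B^\sharp E,\Gamma),$$
natural in $\Gamma$. Applying $\pi_0$ and using the standard fact that $\pi_0$ of a derived mapping space in a model category computes the hom-set of the homotopy category \cite[5.4.9]{hovey}, I would conclude
$$\profrmdefset_A(S)(\Gamma)=\pi_0\profrmdef_A(S)(\Gamma)\cong \pi_0\R\mathrm{Map}_{\ubproart}(B^\sharp E,\Gamma)\cong \hom_{\mathrm{Ho}(\ubproart)}(B^\sharp E,\Gamma),$$
which is the asserted isomorphism.

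There is essentially no obstacle here: the only point worth a remark is naturality of the isomorphism in $\Gamma$, which is inherited from the naturality of the weak equivalence in Proposition~\ref{frmprodefrep} together with the functoriality of $\pi_0$. The one thing I would double-check is purely bookkeeping, namely that the definition of $\profrmdefset_A(S)$ as $\pi_0$ of the framed prodeformation functor $\profrmdef_A(S)$ (rather than of $\prodef_A(S)$, as a literal reading of the displayed definition might suggest) is what is intended, so that the above chain of identifications indeed lands on $\profrmdefset_A(S)(\Gamma)$.
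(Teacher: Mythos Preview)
Your proof is correct and follows exactly the approach implicit in the paper: the corollary is deduced from Proposition~\ref{frmprodefrep} by applying $\pi_0$ and identifying $\pi_0$ of a derived mapping space with the hom-set in the homotopy category, just as Corollary~\ref{setprocor} follows from Proposition~\ref{prodefrep}. Your observation about the apparent typos (the $X$ in the statement should be $S$, and $\profrmdefset$ should be $\pi_0\profrmdef$ rather than $\pi_0\prodef$) is also well spotted.
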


\begin{rmk}\label{prodefob}
	It is worth discussing what type of object a prodeformation `is'. Using \ref{cdefsareprodefs}, then exactly as in \ref{cospanrmk} we obtain an injection $$\prodefset_A(X)(\Gamma) \into \{\text{homotopy classes of objects in }\holim_\alpha D_\mathrm{dg}(A\otimes \Gamma_\alpha)\}.$$Unfortunately, the author does not understand homotopy limits of dg categories well enough to simplify the right-hand object further. 
	\end{rmk}

\begin{rmk}\label{prodeflim}
	Let $\Gamma = \{\Gamma_\alpha\}_\alpha$ be a connective pro-Artinian dga and put $\Lambda\coloneqq  \varprojlim_\alpha \Gamma_\alpha$. As in \ref{cdefsareprodefs}, we have a map $$\defm_A(X)(\Lambda) \to \prodefset_A(X)(\Gamma)$$ and one could hope that most prodeformations of interest are in its image. Here is one possible construction of a right inverse. Given a prodeformation, use \ref{prodefob} to regard it as an object $\eta$ of $\holim_\alpha D_\mathrm{dg}(A\otimes \Gamma_\alpha)$. One can restrict along the structure maps of this homotopy limit to obtain a collection of objects $\eta_\alpha \in D_\mathrm{dg}(A\otimes \Gamma_\alpha)$. We can now discard the dg structure and simply work with triangulated categories. For brevity, write
	\begin{itemize} 
		\item $L_\alpha\coloneqq -\lot_\Lambda \Gamma_\alpha: D(A\otimes \Lambda) \to D(A\otimes \Gamma_\alpha)$
		\item $t_{\alpha\beta} \coloneqq -\lot_{\Gamma_\alpha} \Gamma_\beta:D(A\otimes \Gamma_\alpha) \to D(A\otimes \Gamma_\beta)$ for all $\alpha \to \beta$
		\item $R_\alpha$ for the right adjoint of $L_\alpha$, and $t_{\alpha\beta}^\perp$ for the right adjoint of $t_{\alpha\beta}$.
		\end{itemize}
	It is easy to see that if $\alpha \to \beta$, then we have $ t_{\alpha\beta}L_\alpha \cong L_\beta$. A calculation with adjoints and the Yoneda lemma now shows that we have $R_\beta\cong R_\alpha t_{\alpha\beta}^\perp$. Put $\zeta_\alpha \coloneqq R_\alpha \eta_\alpha.$ The unit $\id \to  t_{\alpha\beta}^\perp t_{\alpha\beta}$ combined with the fact that $ \eta_\beta \cong t_{\alpha\beta}\eta_\alpha$ gives us a map $\eta_\alpha \to t_{\alpha\beta}^\perp \eta_\beta$, and applying $R_\alpha$ now gives us a map $\zeta_\alpha \to \zeta_\beta$. Hence, the collection $\{\zeta_\alpha\}_\alpha$ forms a pro-object in $D(A\otimes \Lambda)$. Let $\zeta$ be its homotopy limit. Ideally, we would now like to prove that:
	\begin{enumerate}
		\item $\zeta$ is a deformation of $X$ over $\Lambda$
		\item the map $\defm_A(X)(\Lambda) \to \prodefset_A(X)(\Gamma)$ sends $\zeta$ to $\eta$.
		\end{enumerate}
	Unfortunately neither of these seem particularly easy to do.
	\end{rmk}

\subsection{Universal elements}

Now that we are able to work with prodeformations on the level of prorepresenting objects, we can immediately obtain a universal prodeformation. Since our deformation functors accept connective dgas as input, we need to truncate the representing object. Observe that if $P$ is any pro-Artinian dga and $\Gamma$ is a connective pro-Artinian dga then the proof of \ref{truncatedproreps} gives a natural equivalence $$\rmap_{\ubproart}(P, \Gamma)\simeq \rmap_{\proart}(\tau_{\leq 0}P,\Gamma)$$and hence there is no danger in truncation.

\begin{defn}
	Let $A$ be a dga and let $X$ be an $A$-module. Let $E\coloneqq \R\enn_A(X)$ be the endomorphism dga of $X$. The \textbf{universal prodeformation} of $X$ is the prodeformation of $X$ over $\tau_{\leq 0}B_{\mathrm{nu}}^\sharp E$ corresponding to the element $\id \in \hom_{\mathrm{Ho}(\proart)}(\tau_{\leq 0}B_{\mathrm{nu}}^\sharp E,\tau_{\leq 0}B_{\mathrm{nu}}^\sharp E)$ across the isomorphism of \ref{setprocor}.
	\end{defn}
\p One can also define the universal framed prodeformation:
\begin{defn}
	Let $A$ be a connective dga and let $S$ be a one-dimensional $A$-module. Let $E\coloneqq \R\enn_A(S)$ be the endomorphism dga of $S$, which is augmented. The \textbf{universal framed prodeformation} of $S$ is the framed prodeformation of $S$ over $\tau_{\leq 0}B^\sharp E$ corresponding to the element $\id \in \hom_{\mathrm{Ho}(\proart)}(\tau_{\leq 0}B^\sharp E,\tau_{\leq 0}B^\sharp E)$ across the isomorphism of \ref{setfrmprocor}.
\end{defn}
\begin{rmk}
	When $\tau_{\leq 0}B_{\mathrm{nu}}^\sharp E$ is weakly equivalent to an Artinian dga, then one could refer to the corresponding universal prodeformation as simply the \textbf{universal deformation}, and similarly for framed deformations.
	\end{rmk}

As in \ref{prodefob} and \ref{prodeflim}, it is not clear precisely what type of object the universal (framed) deformation is. However, in the one-dimensional case, in the spirit of \ref{prodeflim} one can identify a suitable candidate. If $A$ is a connective dga and $S$ is a one-dimensional $A$-module, put $E\coloneqq \R\enn_A(S)$. It is not hard to see that the $A$-$E^!$-bimodule $E^!$ is a deformation of $S$ over $E^!$. In view of this we make the following conjecture:
\begin{conj}\label{ufpdconj}
	Let $A$ be a connective dga and let $S$ be a one-dimensional $A$-module. Let $E\coloneqq \R\enn_A(S)$ be the derived endomorphism dga of $S$. Then the image of the deformation $E^!\in \frmdefset_A(S)(E^!)$ along the map $\frmdefset_A(S)(E^!) \to \profrmdefset_A(S)(B^\sharp E)$ of \ref{prodefob} is the universal framed prodeformation of $S$ as an $A$-module.
\end{conj}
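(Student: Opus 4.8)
The plan is to unwind the chain of weak equivalences that produce both the universal framed prodeformation and the explicit candidate $E^!$, and to check they agree by tracking an identity morphism through Koszul duality. First I would recall that by \ref{setfrmprocor} the set $\profrmdefset_A(S)(B^\sharp E)$ is identified with $\hom_{\mathrm{Ho}(\ubproart)}(B^\sharp E, B^\sharp E)$, and the universal framed prodeformation is by definition the image of $\id$ under this identification (after truncating, which by the remark preceding the definition changes nothing). On the other hand, the map $\frmdefset_A(S)(E^!) \to \profrmdefset_A(S)(B^\sharp E)$ of \ref{cdefsareprodefs}/\ref{prodefob} is induced by the canonical system of dga maps $\varprojlim B^\sharp E = E^{!} \to (B^\sharp E)_\alpha$; concretely it sends a framed deformation over $E^!$ to its pro-system of base changes along these maps. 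So the claim reduces to showing that the framed deformation $E^!$ over $E^!$, base-changed along $E^! \to (B^\sharp E)_\alpha$ for each $\alpha$, gives the pro-system classified by $\id_{B^\sharp E}$.

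The key computational step is to pass through the twisted-differential picture of \S\ref{defmthy}. By \ref{explicitwist} (in its framed form, via the homotopy fibre sequence of \ref{prorepfrm}), a morphism $f: B^\sharp E \to \Gamma$ in $\mathrm{Ho}(\ubproart)$ corresponds to a twisting cochain $\pi: \Gamma^* \to E$, and the associated framed deformation is the twist $\tilde S \otimes_\pi \Gamma$ of a cofibrant resolution $\tilde S$ of $S$. I would compute the twisting cochain attached to $\id_{B^\sharp E}$: dualising $\id$ gives the identity $BE \to BE$, which is precisely the universal twisting cochain $\tau_E: BE \to E$ (the one classifying the bar resolution, cf.\ \ref{kdisrend}), now reinterpreted levelwise on the pro-Artinian side. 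The twist of $\tilde S$ by $\tau_E$ is exactly the bar-type model for $S \lot_A S$, which as a right module is $E^!$ — this is the content of \ref{kdisrend} read as a statement about modules rather than just algebras. Thus the universal framed prodeformation is represented, levelwise, by the base changes of the $A$-$E^!$-bimodule $E^!$, which is exactly the image of $E^! \in \frmdefset_A(S)(E^!)$ under the map of \ref{prodefob}. Finally I would check that $E^!$ genuinely defines a point of $\frmdefset_A(S)(E^!)$: it is flat (in fact semifree) over $E^!$, its derived fibre $E^! \lot_{E^!} k \simeq S$ recovers $S$, and the forgetful image to $D(k\otimes E^!)$ is the trivial deformation, since $E^!$ as a graded $k\otimes E^!$-module is free, giving the required framing.

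The main obstacle I anticipate is the bookkeeping in the last paragraph: making the identification ``twist of $\tilde S$ by the universal twisting cochain $= E^!$ as an $A$-$E^!$-bimodule'' genuinely canonical, rather than merely up to quasi-isomorphism, and checking that this canonical identification is the one compatible with the framing and with the pro-structure on $B^\sharp E$. This requires care because $E^! = \varprojlim B^\sharp E$ is not Artinian, so \ref{explicitwist} must be applied levelwise and the resulting pro-system of deformations reassembled; one must verify that the homotopy limit of the level-wise twists agrees with the twist by the limit of the twisting cochains. I expect this to follow from the fact (used in \ref{prodefrep} and \ref{holimpro}) that $\R\mathrm{Map}_{\ubproart}(-,\Gamma)$ and the relevant homotopy limits commute, together with the observation that $B^\sharp E$ is strict (by \ref{strictpro}) so the pro-system is Reedy fibrant and homotopy limits are honest limits, but the details of threading a genuine module (not just a homotopy class) through this argument are where the real work lies. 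An alternative, possibly cleaner route would be to prove the conjecture by a Yoneda-style argument: show directly that the assignment $\Gamma \mapsto [E^! \lot_{E^!} \Gamma]$ represents $\profrmdef_A(S)$ with the same universal element, using \ref{wefunc} to get the required naturality; but this still ultimately rests on the same twisted-differential identification.
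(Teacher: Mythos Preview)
This statement is labelled a \emph{conjecture} in the paper, and the paper does not prove it. Immediately after stating it, the author proves a partial result (\ref{uprodefholimthm}) identifying $E^!$ with $\holim_\alpha(S\lot_{f_\alpha}\Gamma_\alpha)$ as $A$-$E^!$-bimodules, and then remarks that if the construction sketched in \ref{prodeflim} could be made to work, the conjecture would follow; but the paper explicitly says of that construction that ``neither of these seem particularly easy to do''. So you are attempting to prove something the author regards as open.

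Your strategy is essentially the one the paper gestures at: show that the system of twists $\{S\lot_{f_\alpha}\Gamma_\alpha\}_\alpha$, which by \ref{explicitwistfr} corresponds levelwise to the canonical maps $f_\alpha$, assembles to the class of $\id_{B^\sharp E}$, and separately that the image of the bimodule $E^!$ under the comparison map of \ref{cdefsareprodefs} is this same system. The obstacle you name at the end --- that one must produce a coherent element of $\pi_0\holim$ rather than merely match levelwise in $\varprojlim\pi_0$, and that this requires threading an honest module through the chain of weak equivalences --- is precisely the obstacle the paper leaves unresolved. Your observation that strictness of $B^\sharp E$ makes the diagram Reedy fibrant is correct and is used in the paper, but it does not by itself close the gap: one still needs the comparison dg functor $D_\mathrm{dg}(A\otimes E^!) \to \holim_\alpha D_\mathrm{dg}(A\otimes\Gamma_\alpha)$ to carry the specific object $E^!$ to something whose homotopy class, after all the identifications, is $\id$. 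Neither you nor the paper supply this step.

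A minor correction: your claim that the twist of $\tilde S$ by the universal twisting cochain is ``the bar-type model for $S\lot_A S$'' is garbled. The relevant computation is the one in the proof of \ref{uprodefholimthm}: with $N=\tilde S^*$, one has $N\otimes^\pi BE \simeq BE$ as $BE$-comodules (citing Positselski), and dualising gives $E^!$. The object $S\lot_A S$ models $BA$, not $BE$; it enters only through \ref{kdisrend}, which identifies $E$ with $A^!$, and does not directly yield the module-level statement you need here.
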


In the rest of this section, we will make some progress towards identifying the universal framed prodeformation of a one-dimensional module, and hence the  resolution of \ref{ufpdconj}. We will first need a framed version of \ref{explicitwist}. Let $A$ be a dga and let $S$ be a one-dimensional $A$-module. Given an Artinian dga and a morphism $f:B^\sharp\R\enn_A(S) \to \Gamma$, then exactly as in \S5.6 one can associate a twisting cochain $\pi: \Gamma^* \to \R\enn_A(S)$. Hence one can associate a twist $S\otimes_\pi\Gamma$, which we still denote by $S \lot_f \Gamma$. This is necessarily a framed deformation. 

\begin{prop}\label{explicitwistfr}
	Let $A$ be a connective dga and let $S$ be a one-dimensional $A$-module. Let $\Gamma$ be an Artinian dga. Then the map \begin{align*}
		\hom_{\mathrm{Ho}(\ubproart)}(B^\sharp \R\enn_A(X), \Gamma) &\to \defm^\mathrm{fr}_A(S) \\
		f &\mapsto [\tilde S \otimes_f \Gamma]
	\end{align*}
	is a bijection.
\end{prop}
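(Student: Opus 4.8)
The plan is to mimic the proof of \ref{explicitwist}, factoring the map under consideration through the set-valued Deligne functor as
$$\hom_{\mathrm{Ho}(\ubproart)}(B^\sharp E,\Gamma)\xrightarrow{(i)}\del(\bar E)(\Gamma)\xrightarrow{(ii)}\frmdefset_A(S)(\Gamma),$$
where $E\coloneqq\enn_A(\tilde S)\simeq\R\enn_A(S)$ for a fixed cofibrant resolution $\tilde S\to S$, which is augmented by \ref{extzlem}, and $\bar E$ is its augmentation ideal. The first thing I would record is the identity $B^\sharp E=B^\sharp_{\mathrm{nu}}(\bar E)$: indeed $E\cong\bar E\oplus k$ as augmented dgas, so $B_{\mathrm{nu}}(\bar E)=B(\bar E\oplus k)=BE$, and applying $(-)^\sharp$ gives the claim. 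Hence \ref{smcisrmap}, applied to the nonunital dga $\bar E$, yields a weak equivalence $\smc(\bar E)\simeq\R\mathrm{Map}_{\ubproart}(B^\sharp E,-)$; taking $\pi_0$ and using $\pi_0\smc=\del$ (\ref{smcisdel}) produces the bijection $(i)$, and the proof of \ref{smcisrmap} identifies it as $f\mapsto[\pi]$, where $\pi\colon\Gamma^*\to\bar E$ is the twisting cochain extracted from $f$ by dualising to a morphism $f^\circ\colon\Gamma^*\to BE$ in $\mathrm{Ho}(\cndgc)$, representing it by an honest coalgebra map (bar constructions are fibrant), and applying the bar--cobar/twisting-morphism correspondence (\ref{barcobaradj}, \ref{twnumc}). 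This is exactly the twisting cochain used to form $\tilde S\otimes_f\Gamma$, so everything reduces to step $(ii)$.

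For step $(ii)$ I would argue as in the proof of \ref{gensegal}. Since $\Gamma$ is connective, $\mathfrak m_\Gamma^1=0$, so in degree $1$ one has $\bar E\otimes\mathfrak m_\Gamma=E\otimes\mathfrak m_\Gamma$, and since the differential and product preserve $\bar E$ the Maurer--Cartan equation is insensitive to which of the two we use; hence $\mc(\bar E)(\Gamma)=\mc(E)(\Gamma)$, and therefore by \cite[6.1]{ELO} every such twist $\tilde S\otimes_\pi\Gamma$ is a bona fide derived deformation of $S$, and every derived deformation arises this way. The point of recording $\pi$ as a cochain valued in $\bar E$, rather than in $E\oplus k$ as in \S5.6, is that it equips $\tilde S\otimes_\pi\Gamma$ with a canonical framing: applying the forgetful functor $U$, the underlying $k\otimes\Gamma$-module is the trivial deformation of $S$, because $\R\enn_k(S)\simeq k$ and $\del(k)(\Gamma)$ is a point for connective $\Gamma$ (\ref{connectivereasons}), so a framing exists; the ambiguity in it is governed by the residual gauge group $\ggr(k)(\Gamma)=\Gamma^\times$, which by the split-exact sequence $\ggr(\bar E)\to\ggr(E)\to\ggr(k)$ is precisely what separates $\frmdefset_A(S)$ from $\defm_A(S)$, and lifting $\pi$ into $\bar E\otimes\mathfrak m_\Gamma$ rigidifies this choice. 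Bijectivity of $(ii)$ then follows by combining \cite[6.1]{ELO} with the identification $\frmdefset_A(S)\cong\del(\bar E)$ and the splitting above; well-definedness (independence of $\tilde S$ and of the chosen representative of $f^\circ$) follows from naturality of all the equivalences in play, exactly as in \ref{explicitwist}.

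I expect the main obstacle to be the bookkeeping around this canonical framing, namely checking that the framing on $\tilde S\otimes_f\Gamma$ implicit in step $(ii)$ agrees, under the equivalences of \ref{prorepfrm} and \ref{sfrmdefslifts}, with the one forced by the homotopy-fibre-sequence definition of $\frmdef_A(S)$. Concretely this is a diagram chase: by \ref{wefunc} the square relating $\R\mathrm{Map}_{\ubproart}(B^\sharp E,-)\to\R\mathrm{Map}_{\ubproart}(B^\sharp_{\mathrm{nu}}E,-)$ to $\frmdef_A(S)\to\sdefm_A(S)$ commutes up to homotopy, and on $\pi_0$ the lower map is the forget-the-framing map while the upper map is induced by $BE\hookrightarrow B_{\mathrm{nu}}E$, i.e. it sends $\pi\colon\Gamma^*\to\bar E$ to the same cochain viewed in $E\oplus k$; combined with \ref{explicitwist} this pins down the unframed class of $f\mapsto[\tilde S\otimes_f\Gamma]$, and the finer framed statement follows because the left-hand vertical of that square is a bijection onto $\frmdefset_A(S)(\Gamma)$. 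An alternative, more self-contained route would bypass \ref{prorepfrm}: construct the framing $\nu\colon U(\tilde S\otimes_\pi\Gamma)\to S\lot_k\Gamma$ by hand from the fact that $\pi$ lands in $\bar E$, and then redo the argument of \cite[6.1]{ELO} keeping track of framings throughout — more elementary but longer.
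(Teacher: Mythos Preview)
Your proposal is correct and takes essentially the same approach as the paper: the paper's proof is the single line ``This is similar to the proof of \ref{explicitwist} but uses \ref{prorepfrmset} instead,'' and your argument is precisely the unpacking of that sentence, factoring through $\del(\bar E)$ via the identity $B^\sharp E=B^\sharp_{\mathrm{nu}}(\bar E)$ and invoking the isomorphism $\frmdefset_A(S)\cong\del(\bar E)$ established in the proof of \ref{gensegal}. Your extended discussion of the framing bookkeeping is more than the paper provides, but it is compatible with what the paper intends and does not diverge from it.
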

\begin{proof}
	This is similar to the proof of \ref{explicitwist} but uses \ref{prorepfrmset} instead.
\end{proof}

\begin{thm}\label{uprodefholimthm}
	Let $A$ be a connective dga and let $S$ be a one-dimensional $A$-module. Let $E\coloneqq\R\enn_A(S)$ be the derived endomorphism algebra of $S$ and write $B^\sharp E = \{\Gamma_\alpha\}_\alpha$. Let $f_\alpha:B^\sharp E \to \Gamma_\alpha$ be the canonical map. Then we have a quasi-isomorphism $$E^! \simeq \holim_\alpha\left(S\lot_{f_\alpha}\Gamma_\alpha\right)$$of $A$-$E^!$-bimodules.
	\end{thm}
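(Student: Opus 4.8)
The plan is to unwind the definition of the universal framed prodeformation and track it explicitly through the chain of weak equivalences established earlier in the paper. Write $E \coloneqq \R\enn_A(S)$, which is augmented by \ref{extzlem}, and set $\mathcal{B} \coloneqq B^\sharp E = \{\Gamma_\alpha\}_\alpha$. By the definition of the universal framed prodeformation and the isomorphism of \ref{setfrmprocor}, the universal framed prodeformation is the element of $\profrmdefset_A(S)(\tau_{\leq 0}\mathcal{B})$ corresponding to $\id_{\tau_{\leq 0}\mathcal B}$. Using \ref{cdefsareprodefs} and \ref{prohofiblem}, a framed prodeformation over $\mathcal{B}$ is (up to the obvious injection) an object of $\holim_\alpha D_\mathrm{dg}(A\otimes\Gamma_\alpha)$, and the universal one restricts along the structure maps to the collection $\{S\lot_{f_\alpha}\Gamma_\alpha\}_\alpha$: indeed, the image of $\id_{\mathcal B}$ under the composite $\hom_{\mathrm{Ho}(\ubproart)}(B^\sharp E, \mathcal B)\cong \profrmdefset_A(S)(\mathcal B)$ is, levelwise, the image of the canonical map $f_\alpha: B^\sharp E \to \Gamma_\alpha$, which by the framed analogue \ref{explicitwistfr} of \ref{explicitwist} is precisely $[S\lot_{f_\alpha}\Gamma_\alpha]$. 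So the universal framed prodeformation, regarded as an object of $\holim_\alpha D_\mathrm{dg}(A\otimes\Gamma_\alpha)$, is represented by the compatible system $\{S\lot_{f_\alpha}\Gamma_\alpha\}_\alpha$, and what remains is to identify its homotopy limit, computed in $D(A)$ after forgetting the pro-structure, with $E^!$ as an $A$-$E^!$-bimodule.

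The second half of the argument is a direct computation of this homotopy limit. The key input is \ref{doubledictionary}: combining parts (3) and (4) (using that $BA$ for $A = \varprojlim\mathcal B = E^{!}$ is cohomologically locally finite, which follows since $E$ and hence $BE$, $E^!$ are cohomologically locally finite — here I would need $E$ cohomologically locally finite, or I would instead apply \ref{kdfinite} to whatever finiteness is available and note that $B^\sharp E$ has limit $E^{!!}\simeq E$-type behaviour via \ref{lpwzthm}/\ref{kdfin} when applicable) gives that $\varprojlim\mathcal B$ and $\varprojlim\{\Gamma_\alpha^{!}\}$-type objects are controlled. More directly: the twisting cochain $\pi_\alpha:\Gamma_\alpha^* \to E$ attached to $f_\alpha$ is obtained by dualising the structure map $\Gamma_\alpha^* \to B_\mathrm{nu}E$ (or $\to BE$ in the framed setting), and the twist $\tilde S \otimes_{\pi_\alpha}\Gamma_\alpha$ with its $A$-$\Gamma_\alpha$-bimodule structure is, after passing to the limit over $\alpha$, the twist of $\tilde S$ by the universal twisting cochain $\tau: E^{!*}\to E$, i.e.\ the canonical Koszul twisting morphism $BE \to E$. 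One then identifies $\tilde S \otimes_\tau E^!$ with a model for $E^!$ itself as an $A$-$E^!$-bimodule: this is exactly the statement that the (two-sided) Koszul resolution $\tilde S \otimes_\tau E^! \to S$ realises $E^!\simeq \R\enn_A(S)^!$ as a deformation of $S$ over $E^!$, which is the observation made just before \ref{ufpdconj}. Thus $\holim_\alpha(S\lot_{f_\alpha}\Gamma_\alpha) \simeq \tilde S\otimes_\tau E^! \simeq E^!$.

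The main obstacle I anticipate is the interchange of the homotopy limit with the twisting-cochain construction — that is, showing $\holim_\alpha(\tilde S\otimes_{\pi_\alpha}\Gamma_\alpha)$ is quasi-isomorphic to $\tilde S\otimes_\tau(\varprojlim\mathcal B)$ as $A$-bimodules, with the correct right module structure over $E^!\simeq \varprojlim\mathcal B$. This requires knowing that the levelwise twists assemble into a Reedy-fibrant (or otherwise homotopy-limit-computing) pro-diagram, which should follow from \ref{strictpro} and the argument of \ref{limisexact}: one may take $\mathcal B$ strict, so that $\{\Gamma_\alpha\}$ is a Reedy fibrant diagram of dgas, hence $\{\tilde S\otimes_k\Gamma_\alpha\}$ with its twisted differentials is a Reedy fibrant diagram of dg bimodules (the transition maps being surjective since the $\Gamma_\alpha$ surject onto one another and $\tilde S$ is cofibrant over $k$), so the homotopy limit is the honest limit, which is $\tilde S\otimes_k\varprojlim\Gamma_\alpha$ with the limit twisted differential $d+\varprojlim\pi_\alpha = d+\tau$. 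Granting this, the remaining step — recognising $\tilde S\otimes_\tau E^!$ as $E^!$ qua $A$-$E^!$-bimodule — is the content of \ref{kdisrend} together with the standard fact that the two-sided bar/Koszul complex $\tilde S\otimes_\tau BE^*$ computes $\R\enn_A(S)$; I would spell this out by noting that $\tilde S \otimes_\tau E^!$ is by construction a semifree $E^!$-module resolution structure whose underlying $A$-module is $\tilde S\otimes_k E^{!}$, and whose reduction mod $\mathfrak m$ — i.e.\ $\lot_{E^!}k$ — recovers $\tilde S \otimes_k k \simeq S$, exhibiting it as a deformation, combined with the identification of its $A$-$E^!$-bimodule class via \ref{explicitwistfr} applied to $\id_{E^!}$ in the (non-Artinian but augmented) limit, as justified by \ref{cdefsareprodefs}.
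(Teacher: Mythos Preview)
Your overall plan --- compute the homotopy limit of the twisted modules $\tilde S\otimes_{\pi_\alpha}\Gamma_\alpha$ and identify the answer with $E^!$ --- is the right one, and the paper does essentially this. But your execution has a genuine gap, and the paper's route around it is the main content of the proof.

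The problem is the step ``the homotopy limit is the honest limit, which is $\tilde S\otimes_k\varprojlim\Gamma_\alpha$ with the limit twisted differential''. Reedy fibrancy does give you that the homotopy limit is the honest limit, but the honest limit $\varprojlim_\alpha(\tilde S\otimes_k\Gamma_\alpha)$ is the \emph{completed} tensor product $\tilde S\,\widehat\otimes_k\,E^!$, not $\tilde S\otimes_k E^!$: tensor products do not commute with limits, and $\tilde S$ is typically infinite-dimensional as a cofibrant $A$-module resolution of $S$. So you do not get $\tilde S\otimes_\tau E^!$ on the nose, and your subsequent identification of that object with $E^!$ (which in any case is not really ``the content of \ref{kdisrend}'') does not apply.

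The paper sidesteps this by working on the dual side: instead of taking limits of modules, it takes \emph{colimits of comodules}. Writing $C_\alpha\coloneqq\Gamma_\alpha^*\subset BE$, one has $BE=\varinjlim_\alpha C_\alpha$, and for any $E$-module $N$ the twisted comodule construction gives $\varinjlim_\alpha(N\otimes^{\pi_\alpha}C_\alpha)\cong N\otimes^\pi BE$ because tensor products \emph{do} commute with colimits. Taking $N=\tilde S^*$ (which is quasi-isomorphic to $k$ as a right $E$-module, since $S$ is one-dimensional) and invoking Positselski's module--comodule Koszul duality \cite[6.5]{positselski} gives $N\otimes^\pi BE\simeq BE$. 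Now dualise: the filtered colimit becomes the desired homotopy limit, and at each finite stage one has $(N\otimes^{\pi_\alpha}C_\alpha)^*\simeq \tilde S\otimes_{\pi_\alpha}\Gamma_\alpha$ because everything in sight is finite-dimensional over $k$ (here is where one-dimensionality of $S$ is used). This yields $E^!\simeq\holim_\alpha(S\lot_{f_\alpha}\Gamma_\alpha)$ directly, with no extraneous finiteness hypotheses on $E$. Your first paragraph, tracking the universal prodeformation through \ref{setfrmprocor} and \ref{explicitwistfr}, is correct as motivation but is not needed for the proof; the paper's argument is a direct computation.
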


\begin{proof}Let $\tilde S$ be a cofibrant resolution for $S$ and replace $E$ by $ \enn_A(\tilde S)$. Let $\pi: BE \to E$ denote the universal twisting cochain. Recall from \cite[6.2]{positselski} there is a functor $-\otimes^\pi BE$ going from $E$-modules to $BE$-comodules, constructed in a similar manner to the twist functor above. Moreover, by \cite[6.5]{positselski} it descends to a functor on derived categories. If $C \into BE$ is a sub-dgc, then we get an induced twisting cochain $\rho:C \to E$. It is easy to see that if $N$ is any $E$-module, we get an inclusion $N\otimes^{\rho}C \into N\otimes^\pi BE$. In particular, each $C_\alpha\coloneqq \Gamma_\alpha^*$ is a subdgc of $BE$, and moreover $BE$ is the colimit of the $C_\alpha$. Let $\pi_\alpha:C_\alpha \to E$ be the induced twisting cochain. We obtain a system of maps $N\otimes^{\pi_\alpha}C_\alpha \to N\otimes^\pi BE$. Because tensor products preserve colimits, and $BE$ is the colimit of the $C_\alpha$, we see that the natural map $$\varinjlim_\alpha \left( N\otimes^{\pi_\alpha}C_\alpha\right) \xrightarrow{\cong} N\otimes^\pi BE$$is an isomorphism. We may replace the colimit by the homotopy colimit (as it is a colimit along inclusions) and equivalently we have a quasi-isomorphism of $BE$-comodules $$\hocolim_\alpha \left( N\otimes^{\pi_\alpha}C_\alpha\right) \xrightarrow{\simeq} N\otimes^\pi BE.$$Dualising this we obtain a quasi-isomorphism $$\holim_\alpha \left( N\otimes^{\pi_\alpha}C_\alpha\right)^* \xleftarrow{\simeq} \left(N\otimes^\pi BE\right)^*$$of $E^!$-modules. Putting $N=\tilde S^*$, we see from \cite[6.5]{positselski} that $N\otimes^\pi BE\simeq BE$, and so the right-hand side is quasi-isomorphic to $E^!$. Moreover, for each $\alpha$ we have a quasi-isomorphism $$\left( N\otimes^{\pi_\alpha}C_\alpha\right)^* \simeq \tilde S\otimes_{\pi_\alpha}\Gamma_\alpha$$using that $\tilde S \simeq k$ and so we can pass the duals into the tensor product. But $\tilde S\otimes_{\pi_\alpha}\Gamma_\alpha$ is precisely $S\lot_{f_\alpha}\Gamma_\alpha$. The claim follows.
\end{proof}
In particular, the previous theorem says that the natural maps $\Gamma_\alpha \to S\lot_{f_\alpha}\Gamma_\alpha$ assemble into a quasi-isomorphism after taking the homotopy limit.

\begin{rmk}
 If the method outlined in \ref{prodeflim} works, then one can use it together with \ref{uprodefholimthm} to prove \ref{ufpdconj}.
\end{rmk}

\begin{rmk}
	One has an isomorphism of pro-objects $\hom_k(BE,E)\cong E \otimes B^\sharp E$. Taking the limit, one gets an isomorphism $$\hom_k(BE,E)\cong \varprojlim (E \otimes B^\sharp E) \eqqcolon E \widehat{\otimes} E^!$$between the convolution algebra and the completed tensor product. Loosely, this isomorphism sends the universal twisting cochain $\pi$ to the Casimir element $$\sum_e e\otimes e^* \in \mcs(E \widehat{\otimes} E^!)$$where we sum over a (possibly infinite!) basis of $E$ and $e^*$ denotes the dual basis vector corresponding to $e$. We can twist the differentials by $\pi$ to obtain an isomorphism $$\hom^\pi_k(BE,E)\cong \varprojlim( E \otimes_\pi B^\sharp E) \eqqcolon E \widehat{\otimes}_\pi E^!.$$As before, we can twist the differential on $\tilde S \widehat{\otimes} E^!$, and it is possible to show that $\tilde S \widehat{\otimes}_\pi E^!\simeq E^!$ as bimodules. In some sense, this is a computation of the universal prodeformation. As in \cite{negron} or \cite{herscovich}, the dga $\hom^\pi_k(BE,E)$ is quasi-isomorphic to the dga $\R\hom_{E^e}(E,E)$ which computes the Hochschild cohomology of $E$. This Hochschild dga should be thought of as the universal algebra deformation of $E$, and so we see that loosely, the universal deformation of $S$ is obtained by base change from the universal deformation of $E$.
\end{rmk}

		\subsection{Deformations and quasi-equivalences}
		Let $A$ and $B$ be two dgas. Suppose that $F:D(A) \to D(B)$ is a derived equivalence given by tensoring with an $A$-$B$-bimodule (this is not really a restriction as up to homotopy, all quasi-equivalences $D(A) \to D(B)$ are of this form \cite[Corollary 4.8]{toenmorita}). We analyse how deformations behave under $F$. Let $X$ be an $A$-module and put $Y\coloneqq F(X)$.  Observe that we can enhance $F$ to a quasi-equivalence $F:D_\mathrm{dg}(A) \to D_\mathrm{dg}(B)$ of dg categories.

		\begin{lem}\label{fdefwe}
			$F$ induces a weak equivalence
			$$F:\sdefm_A(X) \xrightarrow{\simeq} \sdefm_B(Y).$$
		\end{lem}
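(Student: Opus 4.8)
The plan is to unwind the definitions of $\sdefm_A(X)$ and $\sdefm_B(Y)$ from \ref{sdefmdefn} and show that the quasi-equivalence $F$ induces compatible weak equivalences at each stage of the construction. Recall that $\sdefm_A(X)(\Gamma) = \mathcal{W}\bigl(X_\mathrm{dg}\times^h_{D_\mathrm{dg}(A)}D_\mathrm{dg}(A\otimes\Gamma)\bigr)$. The key point is that tensoring with the $A$-$B$-bimodule inducing $F$ also induces a quasi-equivalence $F_\Gamma: D_\mathrm{dg}(A\otimes\Gamma)\to D_\mathrm{dg}(B\otimes\Gamma)$ for every Artinian local dga $\Gamma$ (base change the bimodule along $k\to\Gamma$; since $F$ is an equivalence and $\Gamma$ is finite-dimensional over $k$, this remains an equivalence — one can check that the base-changed bimodule still implements a Morita equivalence, or simply note that $F_\Gamma$ is modelled by the same underlying derived tensor product computed relative to $\Gamma$). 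Moreover these fit into a square with the `reduce mod $\mathfrak{m}_\Gamma$' functors: the diagram relating $D_\mathrm{dg}(A\otimes\Gamma)\to D_\mathrm{dg}(A)$ and $D_\mathrm{dg}(B\otimes\Gamma)\to D_\mathrm{dg}(B)$ via $F_\Gamma$ and $F$ commutes up to natural quasi-isomorphism, because derived base change along $\Gamma\to k$ commutes with $-\lot_A(\text{bimodule})$.

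Granting this, the argument proceeds as follows. First I would observe that $F$ restricts to a quasi-equivalence $X_\mathrm{dg}\to Y_\mathrm{dg}$ of one-object dg categories: indeed $Y_\mathrm{dg} = \R\enn_B(Y)$ and since $F$ is quasi-fully faithful, $\R\enn_A(X)\simeq\R\enn_B(FX)=\R\enn_B(Y)$ as dgas (this is where $Y\coloneqq F(X)$ is used). Then the three functors $X_\mathrm{dg}\to X_\mathrm{dg}$ (identity up to the above), $D_\mathrm{dg}(A)\to D_\mathrm{dg}(B)$ (namely $F$), and $D_\mathrm{dg}(A\otimes\Gamma)\to D_\mathrm{dg}(B\otimes\Gamma)$ (namely $F_\Gamma$) form a map of cospans of dg categories, levelwise a quasi-equivalence, and compatible with the structure maps of the cospan up to natural quasi-isomorphism. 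Since the Tabuada model structure (\ref{tabmod}) has the quasi-equivalences as weak equivalences, and homotopy pullbacks are invariant under levelwise weak equivalences of diagrams, we conclude that the induced map on homotopy fibre products $\mathrm{dgDef}_A(X)(\Gamma)\to\mathrm{dgDef}_B(Y)(\Gamma)$ is a quasi-equivalence. Finally, since $\mathcal{W}$ is a right derived functor (\ref{wholims}) — in particular it sends quasi-equivalences to weak equivalences of simplicial sets — we obtain a weak equivalence $\sdefm_A(X)(\Gamma)\xrightarrow{\simeq}\sdefm_B(Y)(\Gamma)$, natural in $\Gamma$, which is exactly the claim.

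The main obstacle I anticipate is the careful bookkeeping needed to promote the statement ``$F$ is a derived equivalence'' to ``$F_\Gamma$ is a quasi-equivalence compatible with reduction mod $\mathfrak{m}_\Gamma$'' at the level of dg categories rather than just triangulated categories: one must check that the bimodule $M$ implementing $F$, base-changed to an $A\otimes\Gamma$-$B\otimes\Gamma$-bimodule $M\otimes_k\Gamma$, still has the property that $-\lot_{A\otimes\Gamma}(M\otimes_k\Gamma)$ is an equivalence, and that the resulting square of dg functors commutes up to coherent natural quasi-isomorphism. This is essentially formal — it follows from the projection formula $\bigl(N\lot_A M\bigr)\lot_\Gamma k\simeq\bigl(N\lot_\Gamma k\bigr)\lot_A M$ and the fact that $M\otimes_k\Gamma$ inverts to $M^\vee\otimes_k\Gamma$ where $M^\vee$ inverts $M$ — but writing it down cleanly at the enhanced level, rather than invoking \cite{toenmorita} as a black box, is the part that requires genuine care. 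Alternatively, and perhaps more cleanly, one can sidestep bimodules entirely: since the diagram of dg categories $X_\mathrm{dg}\to D_\mathrm{dg}(A)\from D_\mathrm{dg}(A\otimes\Gamma)$ depends on $A$ only through $D_\mathrm{dg}(A)$ and the base-change functor, and $F$ identifies $D_\mathrm{dg}(A)\simeq D_\mathrm{dg}(B)$ compatibly with $-\otimes\Gamma$, one can deduce the result directly from \ref{dgffdefs} (invariance of $\sdefm$ under fully faithful dg functors) applied in both directions.
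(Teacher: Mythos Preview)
Your proposal is correct and follows essentially the same approach as the paper: exhibit $F$ as a levelwise quasi-equivalence of the defining cospans $X_\mathrm{dg}\to D_\mathrm{dg}(A)\leftarrow D_\mathrm{dg}(A\otimes\Gamma)$ and $Y_\mathrm{dg}\to D_\mathrm{dg}(B)\leftarrow D_\mathrm{dg}(B\otimes\Gamma)$, deduce a quasi-equivalence of homotopy pullbacks, and apply $\mathcal{W}$. The paper's proof is terser --- it simply asserts the third vertical map is a quasi-equivalence ``because $F$ is tensoring by a bimodule'' --- whereas you spell out the base-change verification more carefully; but the architecture is identical.
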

		\begin{proof}For each Artinian dga $\Gamma$, the quasi-equivalence $F$ induces a commutative diagram of dg categories
			$$\begin{tikzcd}
			X_\mathrm{dg} \ar[r,hook]\ar[d,"F"] & D_\mathrm{dg}(A)\ar[d,"F"] & D_\mathrm{dg}(A \otimes \Gamma)\ar[d,"F"] \ar[l]\\
			Y_\mathrm{dg} \ar[r,hook]& D_\mathrm{dg}(B) & D_\mathrm{dg}(B \otimes \Gamma) \ar[l]
			\end{tikzcd}$$where the vertical maps are quasi-equivalences, and in the third column we have used that $F$ is tensoring by a bimodule. So we get a quasi-equivalence $$F:\left(X_\mathrm{dg} \times^h_{D_\mathrm{dg}(A)} D_\mathrm{dg}(A \otimes \Gamma)\right) \xrightarrow{\simeq} \left(Y_\mathrm{dg} \times^h_{D_\mathrm{dg}(B)} D_\mathrm{dg}(B \otimes \Gamma)\right)$$between the homotopy pullbacks and hence a weak equivalence $F:\sdefm_A(X) \xrightarrow{\simeq} \sdefm_B(Y)$.
		\end{proof}

		\begin{lem}\label{fprodefwe}
			$F$ induces a weak equivalence $F:\widehat{\sdefm}_A(X) \xrightarrow{\simeq}	\widehat{\sdefm}_B(Y)$.
		\end{lem}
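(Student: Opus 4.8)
The plan is to reduce Lemma \ref{fprodefwe} to Lemma \ref{fdefwe} by passing to homotopy limits. Recall that $\widehat{\sdefm}_A(X)$ is by definition the pro-completion of $\sdefm_A(X)$, so that for a connective pro-Artinian dga $\Gamma = \{\Gamma_\alpha\}_\alpha$ one has $\widehat{\sdefm}_A(X)(\Gamma) = \holim_\alpha \sdefm_A(X)(\Gamma_\alpha)$, and similarly for $B$ and $Y$. The quasi-equivalence $F:D_\mathrm{dg}(A) \to D_\mathrm{dg}(B)$ induces, for each $\alpha$, the commutative diagram of dg categories appearing in the proof of \ref{fdefwe}, natural in $\alpha$; hence a natural transformation $\sdefm_A(X) \to \sdefm_B(Y)$ of functors $\dgart \to \sset$ which, by \ref{fdefwe}, is a levelwise weak equivalence.

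First I would observe that the assignment $\Gamma_\alpha \mapsto F$ gives a morphism of the cofiltered diagrams $\{\sdefm_A(X)(\Gamma_\alpha)\}_\alpha \to \{\sdefm_B(Y)(\Gamma_\alpha)\}_\alpha$ in $\sset$, which is an objectwise weak equivalence by \ref{fdefwe}. Then I would apply the standard fact that homotopy limits are invariant under objectwise weak equivalences of diagrams (i.e.\ $\holim$ is a homotopy functor; see e.g.\ the references on simplicial homotopy theory already cited, or \cite{goerssjardine}, \cite{hovey}): taking homotopy limits over the indexing category $\alpha$ therefore yields a weak equivalence
$$\holim_\alpha \sdefm_A(X)(\Gamma_\alpha) \xrightarrow{\ \simeq\ } \holim_\alpha \sdefm_B(Y)(\Gamma_\alpha),$$
which is precisely the map $F:\widehat{\sdefm}_A(X)(\Gamma) \to \widehat{\sdefm}_B(Y)(\Gamma)$. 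Since $\Gamma$ was arbitrary and all the constructions are functorial in $\Gamma$, this is a weak equivalence of functors $\proart \to \sset$, as required.

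I do not expect a serious obstacle here: the content is entirely in \ref{fdefwe}, and the passage to pro-completions is a formal consequence of homotopy-invariance of $\holim$. The only point requiring a little care is checking that the natural transformation $\sdefm_A(X) \to \sdefm_B(Y)$ genuinely extends to a map of the pro-diagrams (rather than just being defined levelwise on Artinian test objects) — but this is immediate from the naturality of the square in the proof of \ref{fdefwe} in the variable $\Gamma$, since $F$ itself does not depend on $\Gamma$ and commutes with the base-change dg functors $D_\mathrm{dg}(A\otimes\Gamma_\alpha) \to D_\mathrm{dg}(A\otimes\Gamma_\beta)$ (using, as in \ref{fdefwe}, that $F$ is given by tensoring with an $A$-$B$-bimodule, so it commutes with $-\otimes_k\Gamma$ for any dga $\Gamma$). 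Hence the diagram-level map is genuine, and the proof goes through.
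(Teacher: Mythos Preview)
Your proposal is correct and takes exactly the same approach as the paper, which simply says ``Take homotopy limits of \ref{fdefwe}.'' You have merely spelled out in detail what this one-line proof means: that pro-completion is a levelwise homotopy limit, that \ref{fdefwe} gives an objectwise weak equivalence of diagrams (natural in $\Gamma$), and that $\holim$ preserves such.
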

		\begin{proof}
			Take homotopy limits of \ref{fdefwe}.
		\end{proof}
		
		Put $E_A\coloneqq \R\enn_A(X)$ and $E_B\coloneqq \R\enn_B(Y)$. The component of $F$ at $X$ gives us a quasi-isomorphism $E_A \to E_B$, and hence a weak equivalence $\prodef_B(Y)(B^\sharp E_B) \to \prodef_B(Y)(B^\sharp E_A)$. Combined with the weak equivalence of \ref{fprodefwe}, we get a weak equivalence $\prodef_A(X)(B^\sharp E_A) \simeq \prodef_B(Y)(B^\sharp E_B)$ and it is easy to see that this preserves the universal prodeformation. We repeat our analysis in the framed setting.
		\begin{lem}\label{frmprodefwe}
			$F$ induces a weak equivalence $F:\profrmdef_A(X) \xrightarrow{\simeq}	\profrmdef_B(Y)$.
		\end{lem}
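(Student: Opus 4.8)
The plan is to mimic the proof of Lemma~\ref{fdefwe}, but passing first through framed deformations via the homotopy fibre description of Definition~\ref{framingdefsset}, and then taking homotopy limits as in Lemma~\ref{fprodefwe}. First I would recall that by definition $\profrmdef_A(X) = \widehat{\frmdef_A(X)}$, and by Lemma~\ref{prohofiblem} this is weakly equivalent to $\mathrm{hofib}\left(\prodef_A(X) \to \prodef_k(X)\right)$. So it suffices to produce a weak equivalence between the two homotopy fibre sequences $\prodef_A(X) \to \prodef_k(X)$ and $\prodef_B(Y) \to \prodef_k(Y)$ — or rather a map of such sequences which is a levelwise weak equivalence, since homotopy fibres of weakly equivalent maps are weakly equivalent.

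The key steps: (1) The quasi-equivalence $F:D_\mathrm{dg}(A)\to D_\mathrm{dg}(B)$ gives, by Lemma~\ref{fprodefwe}, a weak equivalence $F:\prodef_A(X) \xrightarrow{\simeq} \prodef_B(Y)$. (2) The underlying-vector-space functor $D_\mathrm{dg}(A) \to D_\mathrm{dg}(k)$ is the restriction along $k \to A$; since $F$ is given by tensoring with an $A$-$B$-bimodule $M$ which is a derived equivalence, restricting $M$ along $k\to A$ and $k \to B$ identifies the underlying $k$-vector space of $X$ with that of $Y=F(X)$ up to quasi-isomorphism, so we get a commutative (up to homotopy) square
$$\begin{tikzcd}
\prodef_A(X) \ar[r,"F","\simeq"'] \ar[d] & \prodef_B(Y) \ar[d] \\
\prodef_k(X) \ar[r,"\simeq"'] & \prodef_k(Y)
\end{tikzcd}$$
where the bottom map is the weak equivalence induced by the quasi-isomorphism $X \simeq Y$ of $k$-modules (both sides being $\prodef_k$ of a one-dimensional, or at least quasi-isomorphic, vector space). (3) Taking homotopy fibres of the rows and using Lemma~\ref{prohofiblem} yields the desired weak equivalence $F:\profrmdef_A(X) \xrightarrow{\simeq} \profrmdef_B(Y)$. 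Concretely, one should set this up at finite level first — i.e. prove the analogous statement $F:\frmdef_A(X)\xrightarrow{\simeq}\frmdef_B(Y)$ using the square above with $\sdefm$ in place of $\prodef$, which is immediate from Lemma~\ref{fdefwe} applied both to $A$ and to $k$ — and then take homotopy limits over the pro-system, as in the proof of Lemma~\ref{fprodefwe}, commuting $\holim$ past $\mathrm{hofib}$.

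The main obstacle is step (2): checking that the square genuinely commutes up to homotopy, i.e. that $F$ is compatible with the forgetful maps to $\sdefm_k$. This requires knowing that the bimodule $M$ implementing $F$ restricts, along the unit maps $k\to A$ and $k \to B$, to a $k$-$k$-bimodule (i.e. dg vector space) inducing the trivial identification of underlying complexes — equivalently that $F$ is a $k$-linear dg functor compatible with the canonical dg functors $D_\mathrm{dg}(A)\to D_\mathrm{dg}(k)$ and $D_\mathrm{dg}(B)\to D_\mathrm{dg}(k)$ given by restriction of scalars. Since every dg functor here is $k$-linear and $F$ is an equivalence of $k$-linear dg categories, the induced map on $\R\enn$-algebras is a quasi-isomorphism $E_A\xrightarrow{\simeq} E_B$ commuting with the augmentations to $\R\enn_k(X)\simeq k$; the compatibility then follows from the functoriality statement of Proposition~\ref{wefunc}. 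I would therefore phrase the proof as: apply Lemma~\ref{fdefwe} to both $A$ and $k$, invoke Proposition~\ref{wefunc} (or a direct check) for the commutativity of the resulting square, take homotopy fibres to get $F:\frmdef_A(X)\xrightarrow{\simeq}\frmdef_B(Y)$, and finally take homotopy limits, exactly as in Lemma~\ref{fprodefwe}, to conclude.
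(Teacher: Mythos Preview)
Your proposal is correct and follows essentially the same approach as the paper: use \ref{prohofiblem} to express $\profrmdef$ as a homotopy fibre, apply \ref{fprodefwe} to get the weak equivalence on $\prodef$, check compatibility with the forgetful maps to $\prodef_k$, and conclude by taking homotopy fibres. The paper's proof is terser (it simply asserts that the weak equivalences of \ref{fprodefwe} commute with the forgetful functors), and the remark immediately following the proof notes exactly the alternative route you also sketch---proving $\frmdef_A(X)\simeq\frmdef_B(Y)$ at finite level first and then taking homotopy limits.
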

		\begin{proof}
			The weak equivalences of \ref{fprodefwe} commute with the forgetful functors and using \ref{prohofiblem} it follows that we get an induced weak equivalence between framed prodeformations.
		\end{proof}
		\begin{rmk}
			We could also have proved this by checking that we get a weak equivalence $F:\frmdef_A(X) \xrightarrow{\simeq} \frmdef_B(Y)$ and taking homotopy limits.
		\end{rmk}
		Our presumptive universal framed prodeformations are preserved by $F$:

		\begin{thm}\label{qeupdf}
			Let $A$ and $B$ be connective dgas. Let $S_A$ and $S_B$ be one-dimensional $A$ and $B$-modules respectively. Suppose that there is a derived equivalence $F:D(A) \to D(B)$ given by tensoring with a complex of bimodules, satisfying $F(S_A)\simeq S_B$. Put $U_A\coloneqq \R\enn_A(S_A)^!$ and $U_B\coloneqq \R\enn_B(S_B)^!$. Then there is a quasi-isomorphism of $B$-modules $F(U_A)\simeq U_B$.
		\end{thm}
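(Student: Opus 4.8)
The plan is to build on the quasi-isomorphism $E_A \to E_B$ of derived endomorphism dgas coming from the component of $F$ at $S_A$ (using $F(S_A) \simeq S_B$), and the identification $U_A = E_A^!$, $U_B = E_B^!$. First I would note that since $A$ and $B$ are connective and $S_A$, $S_B$ are one-dimensional, both $E_A$ and $E_B$ are augmented by \ref{extzlem}, so the Koszul duals $U_A = E_A^!$ and $U_B = E_B^!$ are defined and are themselves augmented dgas. The quasi-isomorphism $E_A \simeq E_B$ of augmented dgas induces, via quasi-isomorphism invariance of the bar construction and the linear dual, a quasi-isomorphism $U_A \simeq U_B$ \emph{as dgas}; this is just functoriality of $(-)^!$ noted after the definition of the Koszul dual in $\S\ref{kdsn1}$.

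The real content is upgrading this to a statement about $F(U_A)$ as a $B$-module, where $U_A$ carries its natural $A$-$E_A^!$-bimodule structure. Here I would use \ref{uprodefholimthm}: writing $B^\sharp E_A = \{\Gamma_\alpha\}_\alpha$ with canonical maps $f_\alpha : B^\sharp E_A \to \Gamma_\alpha$, we have $E_A^! \simeq \holim_\alpha (S_A \lot_{f_\alpha} \Gamma_\alpha)$ as $A$-$E_A^!$-bimodules, and likewise $E_B^! \simeq \holim_\beta (S_B \lot_{g_\beta} \Gamma'_\beta)$ with $B^\sharp E_B = \{\Gamma'_\beta\}_\beta$. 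Since $F$ is given by tensoring with a bimodule, it commutes with the derived tensor $\lot_{f_\alpha} \Gamma_\alpha$ (which only modifies the $A$-module structure, leaving the $\Gamma_\alpha$-action untouched), so $F(S_A \lot_{f_\alpha} \Gamma_\alpha) \simeq F(S_A) \lot_{f_\alpha} \Gamma_\alpha \simeq S_B \lot_{f_\alpha} \Gamma_\alpha$; this is exactly the kind of computation carried out in the proof of \ref{wefunc} for the map $Lf$. Moreover $F$, being a left adjoint part of a derived equivalence, commutes with homotopy limits here because it is itself an equivalence (so preserves all homotopy (co)limits). The quasi-isomorphism $E_A \simeq E_B$ identifies the pro-system $\{\Gamma_\alpha\}$ with $\{\Gamma'_\beta\}$ up to weak equivalence (via $B^\sharp$ applied to the quasi-isomorphism of bar constructions, which is a weak equivalence of pro-Artinian dgas by \ref{ubpamodel}), and correspondingly matches the canonical maps $f_\alpha$ with the $g_\beta$. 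Assembling these:
\begin{align*}
F(U_A) &\simeq F\left(\holim_\alpha (S_A \lot_{f_\alpha} \Gamma_\alpha)\right) \simeq \holim_\alpha F(S_A \lot_{f_\alpha} \Gamma_\alpha) \\
&\simeq \holim_\alpha (S_B \lot_{f_\alpha} \Gamma_\alpha) \simeq \holim_\beta (S_B \lot_{g_\beta} \Gamma'_\beta) \simeq U_B
\end{align*}
as $B$-modules (indeed as $B$-$E_B^!$-bimodules, if one tracks the $E_A^! \simeq E_B^!$ identification throughout).

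The main obstacle I anticipate is making precise the compatibility between the pro-system $\{\Gamma_\alpha\}$ for $E_A$ and the pro-system $\{\Gamma'_\beta\}$ for $E_B$ under the quasi-isomorphism $E_A \simeq E_B$, together with the claim that $F$ commutes with the relevant homotopy limit of $B$-modules — homotopy limits of modules over varying (pro-)rings are delicate, and the cleanest route is probably to phrase everything through the prorepresentability of $\profrmdef$ (using \ref{frmprodefrep} and \ref{setfrmprocor}) rather than manipulating $\holim$ of module categories directly. Concretely, one already has from \ref{frmprodefwe} a weak equivalence $F : \profrmdef_A(S_A) \xrightarrow{\simeq} \profrmdef_B(S_B)$ compatible with the prorepresenting objects $B^\sharp E_A \simeq B^\sharp E_B$, and it carries the universal framed prodeformation to the universal framed prodeformation; the remaining step is to identify, via \ref{uprodefholimthm} (and the remark following it, or \ref{ufpdconj} in the cases where it is known), $U_A = E_A^!$ with the underlying $A$-module of the universal framed prodeformation of $S_A$, and likewise for $B$. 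Since $F$ preserves universal framed prodeformations by \ref{frmprodefwe}, this yields $F(U_A) \simeq U_B$ once the identification of $E_A^!$ with the universal object is in hand — which is precisely what \ref{uprodefholimthm} supplies up to the homotopy-limit bookkeeping above.
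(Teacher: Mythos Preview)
Your approach is essentially the paper's: both use \ref{uprodefholimthm} to express $U_A$ and $U_B$ as homotopy limits of the levelwise twists, show that $F$ carries $S_A \lot_{f_\alpha} \Gamma_\alpha$ to $S_B \lot_{g_\alpha} \Gamma_\alpha$, and then use that $F$ (being an equivalence) commutes with the homotopy limit. The paper resolves the obstacle you anticipate by working with a \emph{single} pro-system --- namely $V_A = B^\sharp E_A = \{\Gamma_\alpha\}$, with the maps $g_\alpha: V_B \to \Gamma_\alpha$ obtained by composing $f_\alpha$ with the weak equivalence $V_B \to V_A$ --- and by first restricting each $\eta^A_\alpha$, $\eta^B_\alpha$ along the structure maps $U_A \to \Gamma_\alpha$ (resp.\ $U_B \to \Gamma_\alpha$) so that all homotopy limits are taken in the fixed categories $D(A\otimes U_A)$ and $D(B\otimes U_B)$, linked by the equivalence $F\otimes G$ where $G$ is restriction along the quasi-isomorphism $U_B \to U_A$.
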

	Before we begin the proof, we note that morally this is true because $U_A$ is the universal framed prodeformation of $S_A$, and similarly for $U_B$, and $F$ preserves the universal prodeformation. Indeed, this easily follows from the results above if \ref{ufpdconj} is true. The idea of our proof is essentially to use \ref{uprodefholimthm} and compare the homotopy limits across the two equivalent categories.
		\begin{proof}
			 As above, put $E_A\coloneqq \R\enn_A(S_A)$ and $E_B\coloneqq \R\enn_B(S_B)$. Put $V_A\coloneqq B^\sharp E_A$ and $V_B\coloneqq B^\sharp E_B$, so that one has $U_A\simeq \varprojlim V_A$ and $U_B\simeq \varprojlim V_B$. Because $F$ is a quasi-equivalence, one gets a quasi-isomorphism $E_A \to E_B$, hence a weak equivalence $V_B \to V_A$ and hence a quasi-isomorphism $U_B \to U_A$. Write $V_A = \{\Gamma_\alpha\}_\alpha$ with each $\Gamma_\alpha$ Artinian. If $f_\alpha: V_A \to \Gamma_\alpha$ is the canonical map, we obtain a map $g_\alpha: V_B \to \Gamma_\alpha$ by composition. Let $\eta^A_\alpha\coloneqq S_A\lot_{f_\alpha}\Gamma_\alpha \in D(A \otimes \Gamma_\alpha)$ and $\eta^B_\alpha\coloneqq S_B\lot_{g_\alpha}\Gamma_\alpha \in D(B \otimes \Gamma_\alpha)$ be the corresponding deformations. We have a commutative diagram $$\begin{tikzcd} D(A \otimes \Gamma_\alpha)\ar[r,"\id\otimes R_A"] \ar[d,"F\otimes\id"]& D(A\otimes U_A) \ar[d,"F\otimes G"]\\ D(B \otimes \Gamma_\alpha)\ar[r,"\id\otimes R_B"] & D(B\otimes U_B)
			\end{tikzcd}$$with vertical maps equivalences, where $R_A$ assigns a $\Gamma_\alpha$-module its underlying $U_A$-module, $R_B$ its underlying $U_B$-module, and $G$ assigns a $U_A$-module its underlying $U_B$-module.
		
		 Put $\zeta^A_\alpha\coloneqq R_A\eta^A_\alpha$. Exactly as in \ref{prodeflim}, as $\alpha$ varies the $\zeta^A_\alpha$ assemble into a pro-object $\zeta^A$ in $D(A\otimes U_A)$. By \ref{uprodefholimthm}, the homotopy limit of $\zeta^A$ is precisely the $A$-$U_A$-bimodule $U_A$. Similarly we get a pro-object $\zeta^B$ in $D(B\otimes U_B)$ whose homotopy limit is $U_B$. If $\Gamma$ is an Artinian dga and $\pi: \Gamma^* \to E_A$ is a twisting cochain, then we obtain by postcomposition a twisting cochain $\rho:\Gamma^* \to E_B$. Because the action of $E_B$ on $S_B$ is the same as the action of $E_A$ on $S_A$ via $F$ and $E_A \to E_B$, we see that $F(S_A\otimes_\pi \Gamma) \simeq S_B \otimes_\rho \Gamma$ as $B$-$\Gamma$-bimodules. In particular it follows that $F(\eta^A_\alpha)\simeq \eta^B_\alpha$ for all $\alpha$. It follows that $(F\otimes G)\zeta_\alpha^A \simeq \zeta_\alpha^B$ for all $\alpha$. Taking homotopy limits we hence have $U_B\simeq \holim_\alpha(F\otimes G)\zeta_\alpha^A$. But because $F\otimes G$ is an equivalence, it commutes with homotopy limits, and we have $$\holim_\alpha(F\otimes G)\zeta_\alpha^A\simeq (F\otimes G)\holim_\alpha\zeta_\alpha^A\simeq (F\otimes G)U_A.$$So $U_B\simeq (F\otimes G)U_A$ as $B$-$U_B$-bimodules. But forgetting the $U_B$-module structure, we have that $(F\otimes G)U_A \simeq F(U_A)$ as just $B$-modules.
		\end{proof}

		\section{Multi-pointed deformations}
		In this section, we adapt our arguments to work in the setting of multi-pointed deformation theory \cite{laudalpt, eriksen, kawamatapointed, contsdefs}; for brevity we will tend to drop the modifier `multi-'. Here, our base ring $k$ is no longer a field, but a semisimple ring of the form $\mathbb{F}^{\oplus n}$ with $\mathbb{F}$ a field and $n>1$. Note that a $k$-module has finite rank if and only if it is finite-dimensional over $\mathbb{F}$. Following Laudal \cite{laudalpt}, we refer to an augmented $k$-algebra as an $n$\textbf{-pointed} $\mathbb{F}$-algebra; note that the finite-dimensional $n$-pointed $\mathbb{F}$-algebras are precisely the (non-local) Artinian $\mathbb{F}$-algebras with precisely $n$ isomorphism classes of simple modules. 
		
		\p Let $e_i:k \to k$ be the projection onto the $i^\text{th}$ coordinate. Then $\{e_1,\ldots, e_n\}$ is a set of primitive orthogonal idempotents for $k$. If $A$ is a $k$-dga and $X$ is a right $A$-module then so is $X_i\coloneqq e_iX$, and we have $X \cong \oplus_{i=1}^nX_i$. Note also that $A_i\coloneqq e_iAe_i$ is a $\mathbb{F}$-dga, and that $X_i$ is a right $A_i$-module. An $n$-\textbf{pointed deformation of the collection} $\{X_1,\ldots,X_n\}$ is precisely a deformation of $X$ as a module over the $k$-dga $A$. Similar terms are defined analogously. The loose idea is that a pointed deformation of $\{X_1,\ldots,X_n\}$ is a compatible collection of deformations of the $A_i$-modules $X_i$, together with some extra data taking into account the $A$-linear morphisms between the $X_i$. In the derived setting, this corresponds to taking into account the higher Exts between the $X_i$.
		
		\p Our motivation for considering multi-pointed deformations is the following, after \cite{DWncdf}. Assume that $\mathbb{F}$ is an algebraically closed field of characteristic zero. Given a complete local model of a threefold flopping contraction $\pi: X \to \spec R$ over $\mathbb{F}$, one can construct a noncommutative algebra $A$ together with a derived equivalence $D^b(A)\simeq D^b(X)$ \cite{vdb}. The construction equips $A$ with an idempotent $e$, and one has $eAe\cong R$. If $E$ denotes the exceptional locus of $\pi$, then topologically $E$ is a tree of $n$ rational curves (as schemes, the curves themselves may be non-reduced). Donovan and Wemyss showed in \cite{contsdefs} that the finite-dimensional $k$-algebra $A_\con\coloneqq A/AeA$, known as the \textbf{contraction algebra} of $\pi$, controls the $n$-pointed deformations of the irreducible components of $E$. More precisely, let $C_i\cong \P^1$ be the $i^\text{th}$ irreducible component of $E$, equipped with the reduced scheme structure. Then across the equivalence $D^b(A)\simeq D^b(X)$ , the vertex simple modules $S_i\coloneqq (A_\con/\mathrm{rad}(A_\con))_i$ of the contraction algebra correspond to the structure sheaves $\mathcal{O}_{C_i}(-1)$. One says that a \textbf{noncommutative $n$-pointed deformation} of $E$ is precisely an $n$-pointed deformation of the collection $\{S_1,\ldots, S_n\}$. When $n=1$ - i.e. the exceptional locus is a single rational curve\footnote{Such flopping contractions are called \textbf{simple}.} - then \ref{dwrep} gives a precise sense in which the contraction algebra controls the noncommutative deformations of the exceptional locus.
		
		\p In \cite{me, scatsviadq} we consider the derived quotient $\dq$ as a \textbf{derived contraction algebra} of $\pi$. In the $n=1$ setting, then \ref{almostdq} gives a sense in which the derived contraction algebra controls the derived noncommutative deformations of $E$. Our motivation behind considering pointed deformation theory is to extend \ref{almostdq} to the pointed setting. In future work we will use our results to give the derived contraction algebra of a non-simple flop a derived deformation-theoretic meaning.
	
		\p In essence, any result that uses only the homological properties of $k$ remains true, as semisimple rings are precisely the rings over which every module is both projective and injective. Any result which uses the internal structure of $k$ probably becomes false as stated in the pointed setting, and requires some alterations to remain true.
		
		\subsection{Model structures and Koszul duality}
		Firstly we consider $\S\ref{modsn}$ and $\S\ref{kdsn}$, the results of which generalise straightforwardly. Everything in $\S\ref{modsn}$ goes through verbatim: the only thing that one must really check is that Pridham's proof \cite[4.3]{unifying} of \ref{proartmodel} adapts to the pointed case. In $\S\ref{kdsn}$, there are a few more things to check. One must first check that \ref{kdisrend} holds in the pointed setting: however \cite[\S19, Exercise 4]{fhtrht} holds over any base ring, so (in the notation used there) it suffices to check that the bar construction $B(A,A)$ is a cofibrant $A$-module, for which the usual proof suffices. At the level of generality in which we need it, the relevant $A_\infty$-algebra theory all works in the pointed setting; for a useful reference see \cite[\S2.4]{kalckyang3}. Everything else goes through verbatim.
		
		\subsection{Deformations and prorepresentability}

\p Now we consider $\S5$,where things start to get a little harder. We will not necessarily check that all of the statements made in $\S5$ remain true in the pointed setting: we will only use what we need to get analogues of the main results.

\p We first consider $\S5.1$. All of the basic definitions up to \ref{setdelqinvt} adapt to the pointed case. We won't check directly that \ref{setdelqinvt} or \ref{mcinvtqiso} adapt, as homotopy invariance for $\del$ and $\smc$ will follow from our later prorepresentability statements. The definition of $\pdf$ remains the same, and the Polynomial Poincar\'{e} Lemma \ref{poincare} remains true. It is easy to see that \ref{pi0del} adapts. The important thing for us to check is \ref{smcisdel}, which remains true since it is really a statement about the homotopy theory of simplicial groups \cite[2.20]{jonddefsartin}.

\p Everything in $\S5.2$ goes through in the pointed case; indeed the only $k$-linear results we use are Tabuada's \cite{tabuadamodel, tabuadadgvs}, where they are stated over arbitrary commutative base rings. As for $\S5.3$, observe that the only theorem here that needs to be checked is \ref{defisdel}; everything else follows. But it is not too hard to see that \cite[4.6]{jonddefsartin} adapts to the pointed setting; the key is that every $k$-module is flat.

\p As for $\S5.4$, it is not hard to check that the basic theory of twisting cochains adapts to the pointed case; the main thing to check is \ref{barcobaradj}, for which Loday--Vallette's proof \cite[2.2.6]{lodayvallette} adapts verbatim to the pointed setting. The proofs of \ref{mcprorep}, \ref{smcisrmap}, and \ref{proreps} all work in the pointed setting. Finally, all of the arguments of $\S5.5$ and $\S5.6$ work in the pointed setting.

\subsection{Framed deformations}
		Here is where we need to start making some changes. The results of $\S6.1$ hold in the pointed setting, as nowhere do we really use any special hypotheses on $A$ or $S$. If we keep the hypothesis that $S$ is one-dimensional, then Lemma \ref{extzlem} fails; we see that $\ext^0_A(S,S)\cong \mathbb{F}$. Hence $\R\enn_A(S)$ is $\mathbb{F}$-augmented, but not $k$-augmented (indeed if it were $k$-augmented then $\ext^0_A(S,S)$ must contain a copy of $k$). As a consequence, the rest of $\S6.2$ and $\S6.3$ both fail as written.
		
		\p The natural condition to put on $S$ in the $n$-pointed setting is that as a $k$-module, it is a copy of $k$. So in what follows, we will assume that $S\cong k$ and that the induced action of $k$ on $S$ is the identity. This forces $A$ to be $k$-augmented, and taking cohomology we see that $H^0(A)$ is $k$-augmented. Lemma \ref{extzlem} holds in this setting: the t-structure argument gives $\ext^0_A(S,S)\cong \enn_{H^0(A)}(S)$. But because $H^0(A)$ is $k$-augmented, every $H^0(A)$-linear endomorphism of $k$ necessarily factors through $H^0(A) \to k$. So one has $\enn_{H^0(A)}(S)\cong \enn_{k}(S)\cong k$, and the rest of the argument goes through. It is easy to now check that \ref{prorepfrm} and \ref{prorepfrmset} hold in the pointed setting; one has to use that $S$ is free of rank 1 over $k$ to get $\R\enn_k(S)\simeq k$.
		
		\p One must be more careful with \ref{gensegal}. The proof works up until one identifies $\ggr (k)(\Gamma)\cong 1 + \mathfrak{m}_\Gamma^0$ with the group of units $\Gamma^\times$: this no longer holds in the pointed setting. Indeed, everything in $1 + \mathfrak{m}_\Gamma^0$ is a unit, but the converse is not true: recall that $e_1,\ldots, e_n$ are the obvious set of primitive orthogonal idempotents for $k$. Then for example if $n=2$ we have $(e_1-e_2)^2=1$, and hence $e_1-e_2$ is a unit outside of $1 + \mathfrak{m}_\Gamma^0$. In general the group of units is $\Gamma^\times=(\Gamma^0)^\times=k^\times + \mathfrak{m}_\Gamma^0$, where in the first equation we use connectivity. The group of units of $k$ consists exactly of those elements $x=\sum_i\lambda_ie_i$ with all $\lambda_i \neq 0$; the inverse of $x$ is $\sum_i\lambda_i^{-1}e_i$. Say that a unit $u\in \Gamma^\times$ is \textbf{pointed} if it lies in the subgroup $1 + \mathfrak{m}_\Gamma^0$; we similarly say that an inner automorphism of $\Gamma$ is \textbf{pointed} if its corresponding unit is pointed. Then the proof of \ref{gensegal} gives in the pointed setting an isomorphism $$\defm_A(S)(\Gamma)\cong \frac{\hom_{\mathrm{Ho}(\ubproart)}(B^\sharp E, \Gamma) }{(\text{pointed inner automorphisms of }\Gamma)}.$$Theorem \ref{mysegal} holds in the pointed setting if one again replaces `inner automorphisms' with `pointed inner automorphisms'.
		
		\p We move on to $\S6.3$. Let $A$ be a $k$-algebra with an idempotent $e$. Let $S$ be the quotient of $A/AeA$ by its radical, and assume that $S\cong k$ as $k$-algebras. Braun--Chuang--Lazarev's derived quotient \cite{bcl} is defined over any base commutative ring, and so we have no problem defining $\dq$ in the pointed setting, and it has all of the properties we list. The proof of \ref{dqcon} goes through in the pointed setting: it follows easily from \cite[3.2.2]{scatsviadq}, which holds over any commutative base ring. It is easy to check that \ref{kytype}, \ref{dqcor}, and \ref{almostdq} hold in the pointed setting. Moreover, \ref{dwrep} holds in the pointed setting if, as before, one replaces `inner automorphisms' with `pointed inner automorphisms'.

		\subsection{Prodeformations}
		Finally, we consider $\S7$. The definition of prodeformations and framed deformations are the same as in the unpointed setting, and it is easy to see that all of the statements in $\S7.1$ hold in the pointed setting. The definition of the universal prodeformation in $\S7.2$ also remains the same, and the twisting result \ref{explicitwistfr} goes through. The proof of \ref{uprodefholimthm} works in the pointed setting: one only needs to check that the relevant parts of \cite{positselski} hold over semisimple rings. It is easy to check that all of the arguments of $\S7.3$ adapt to the pointed setting.

	\phantomsection
		
	\bibliographystyle{alpha}	
	\bibliography{thesisbib}

\end{document}